\newtheorem{theorem}{Theorem}[section]
\newtheorem*{theorem*}{Theorem}
\newtheorem{definition}{Definition}[section]
\newtheorem{corollary}{Corollary}[section]
\newtheorem{lemma}{Lemma}[section]
\newtheorem{proposition}{Proposition}[section]
\theoremstyle{definition}
\newtheorem{remark}{Remark}[section]
\newcommand{\R}{\mathbb R}
\newcommand{\calC}{\mathcal C}
\newcommand{\calL}{\mathcal L}
\newcommand{\dvol}{ d\text{Vol}_{g}}
\begin{document}

\title[The Yamabe-Type Equations on Closed Manifolds]{Solving the Yamabe-Type Equations on Closed Manifolds by Iteration Schemes}
\author[J. Xu]{Jie Xu}
\address{
Department of Mathematics and Statistics, Boston University, Boston, MA, USA}
\email{xujie@bu.edu}
\address{
Institute for Theoretical Sciences, Westlake University, Hangzhou, Zhejiang, CHINA}
\email{xujie67@westlake.edu.cn}

\date{}							

\maketitle

\begin{abstract} We introduce a double iterative scheme and local variational method to solve the Yamabe-type equation $ - \frac{4(n - 1)}{n - 2}\Delta_{g} u + (S_{g} + \beta ) u = \lambda u^{\frac{n + 2}{n - 2}} $ for some constant $ \beta \leqslant 0 $, locally on Riemannian domain $ (\Omega, g) $ with trivial Dirichlet condition and globally on closed manifolds $ (M, g) $; the dimensions of $ \Omega $ and $ M $ are at least 3. In contrast to the traditional global variational method, these Yamabe-type equations on closed manifolds are analyzed by local analysis and monotone iteration scheme. In particular, we do not need to use the Weyl tensor. In particular, the sign of the first eigenvalue $ \eta_{1} $ of conformal Laplacian $ \Box_{g} u : = - \frac{4(n - 1)}{n - 2}\Delta_{g} u + S_{g} u $ plays the central role. When letting $ \beta \rightarrow 0 $ from the left, we reproof the classical Yamabe problem as a natural consequence of the Yamabe-type equations. The results are classified by the sign of $ \eta_{1} $.
\end{abstract}

\section{Introduction}
In this paper, we introduce a double iterative scheme to solve the Yamabe-type equations, both in a local region as well as in a closed manifold. Precisely speaking, we are seeking for positive, smooth solutions for
\begin{equation}\label{intro:eqn0}
-a\Delta_{g} u + (S_{g} + \beta) u = \lambda u^{p-1} \; {\rm in} \; \Omega, u \equiv 0 \; {\rm on} \; \partial \Omega;
\end{equation}
for some $ \beta < 0 $, and
\begin{equation}\label{intro:eqn00}
-a\Delta_{g} u + (S_{g} + \beta) u = \lambda u^{p-1} \; {\rm in} \; M
\end{equation}
for some $ \beta \leqslant 0 $. Here $ \Omega $ is some subset of $ \R^{n} $ equipped with some Riemannian metric, $ M $ is a closed Riemannian metric, $ n $ is the dimension of the domain, $ a = \frac{4(n -1)}{n - 2}, p = \frac{2n}{n - 2} $, $ S_{g} $ is the scalar curvature with respect to $ g $, $ \lambda \in \R $ is some constant. When $ \beta = 0 $, both (\ref{intro:eqn0}) and (\ref{intro:eqn00}) are Yamabe equations. When the first eigenvalue of the conformal Laplacian $ -a\Delta_{g} + S_{g} $ is positive on closed manifold $ M $, the perturbed Yamabe equation in (\ref{intro:eqn00}) leads to the solvability of the Yamabe equation with some $ \lambda > 0 $, and hence reproof the Yamabe problem with a brand new approach such that the Weyl tensor is not used.

The double iteration method is inspired by \cite{XU2}, in which the Yamabe equation can be solved by an iterative method in a small enough bounded, open subset of $ \R^{n} $ equipped with some metric $ g $ and with nontrivial Dirichlet boundary conditions. This iterative method is developed for hyperbolic operators \cite{Hintz}, \cite{HVasy} and elliptic operators  \cite{XU2}, \cite{Xu} with a long history in PDE theory dating back to \cite{Moser1}, \cite{Moser2}. Instead of a global analysis in terms of calculus of variation, this article applies the local analysis developed especially in \cite{XU2} and \cite{WANG} to the local Yamabe-type equation. The local analysis allows us to obtain the existence of the solutions of the perturbed Yamabe equations globally, and as a natural consequence, to reproof the Yamabe problem in all dimensions by one method, in contrast to previous proofs that separately treated low dimensions and locally conformally flat manifolds. 
\medskip

This type of problems arises from the Yamabe problem. In 1960, Yamabe proposed the following generalization of the classical uniformization theorem for surfaces:
\medskip

\noindent
{\bf The Yamabe Conjecture.} {\it Given a compact Riemannian manifold $ (M, g) $ of dimension $ n \geqslant 3 $, there exists a metric conformal to $ g $ with constant scalar curvature.}       
\medskip
  
Consider the conformal change $ \tilde{g} = e^{2f} g $. Set $ e^{2f} = u^{p-2} $ with $ u>0 $. Then
\begin{equation}\label{intro:eqn1}
S_{\tilde{g}} = u^{1-p}\left(-4 \cdot \frac{n-1}{n-2} \Delta_{g} u + S_{g} u\right).
\end{equation}
\noindent 
We have that $ \tilde{g} = u^{p-2} g $ has constant scalar curvature $ \lambda $ if and only if $ u > 0 $ satisfies the Yamabe equation  
\begin{equation}\label{intro:eqn2}
-a\Delta_{g} u + S_{g} u = \lambda u^{p-1},
\end{equation}
where $\Delta_g = -d^*d$ is negative definite. It was solved in several steps by important work of Yamabe, Trudinger, Aubin and Schoen, with the final cases solved in 1984. A thorough presentation of the historical development of the solution is in \cite{PL}. There are also results for manifolds with boundary \cite{Brendle}, \cite{Escobar2}, \cite{ESC}, \cite{Marques} and open manifolds \cite{Aviles-McOwen}, \cite{Grosse}, \cite{PDEsymposium} with certain restrictions.
\medskip

The first main result for the Yamabe-type equation is on a Riemannian domain $ (\Omega, g) $.
\begin{theorem}\label{intro:thm1}
Let $ (\Omega, g) $ be Riemannian domain in $\R^n$, $ n \geqslant 3 $, with $C^{\infty} $ boundary, and with ${\rm Vol}_g(\Omega)$ and the Euclidean diameter of $\Omega$ sufficiently small. Let $ \beta < 0 $ be any constant. Assume $ S_{g} < 0 $ within the small enough closed domain $ \bar{\Omega} $. Then for any $ \lambda > 0 $, the Dirichlet problem (\ref{intro:eqn0}) has a real, positive, smooth solution $ u \in \calC^{\infty}(\Omega) \cap H_{0}^{1}(\Omega, g) $ in a very small domain $ \Omega $ that vanishes at $ \partial \Omega $.
\end{theorem}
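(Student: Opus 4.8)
The plan is to produce a positive smooth solution of \eqref{intro:eqn0} for one convenient value of $\lambda$ and then recover every $\lambda>0$ by amplitude rescaling. Indeed, if $u_0>0$ solves $-a\Delta_g u_0+(S_g+\beta)u_0=\lambda_0 u_0^{p-1}$ in $\Omega$ with $u_0\equiv0$ on $\partial\Omega$, then for $t>0$ the function $u=tu_0$ solves the same equation with $\lambda_0$ replaced by $\lambda_0 t^{2-p}$; since $p=\frac{2n}{n-2}>2$ the exponent $2-p$ is negative, so $\lambda_0 t^{2-p}$ sweeps out all of $(0,\infty)$ as $t$ does, and it suffices to build $u_0$ once. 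For this I would run the local variational method on $H^1_0(\Omega,g)$: set $Q(u)=\int_\Omega\bigl(a|\nabla_g u|^2+(S_g+\beta)u^2\bigr)\,\dvol$ and $\Lambda(\Omega)=\inf\{\,Q(u):u\in H^1_0(\Omega,g),\ \|u\|_{L^p(\Omega,g)}=1\,\}$. First I would check that $Q$ is coercive on $H^1_0(\Omega,g)$: because $\mathrm{Vol}_g(\Omega)$ and the Euclidean diameter of $\Omega$ are small, the first Dirichlet eigenvalue $\eta^{D}_1(\Omega)$ of $-a\Delta_g$ is large (Faber--Krahn, or Poincar\'e on a domain of small diameter), so $\eta^{D}_1(\Omega)>\sup_{\bar\Omega}|S_g+\beta|$, and after splitting off a little gradient energy one gets $Q(u)\geqslant c\,\|u\|_{H^1_0}^2$; in particular $\Lambda(\Omega)>0$.

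The core of the argument --- and the step I expect to be the main obstacle --- is the strict inequality $\Lambda(\Omega)<a\,\Lambda_S$, where $\Lambda_S>0$ is the optimal constant in $\|v\|_{L^p(\R^n)}^2\leqslant \Lambda_S^{-1}\|\nabla v\|_{L^2(\R^n)}^2$ (a number depending only on $n$). Since $S_g+\beta<0$ on $\bar\Omega$ one has $Q(u)\leqslant a\int_\Omega|\nabla_g u|^2\,\dvol$, hence $\Lambda(\Omega)\leqslant a\Lambda_S$; the task is to make this strict. I would do this by a test-function computation: fix $x_0\in\Omega$, pass to geodesic normal coordinates at $x_0$ (in which $g$ is Euclidean to high order since $\Omega$ is tiny), insert a truncated Aubin--Talenti bubble $u_\varepsilon$ concentrating at $x_0$, and expand $Q(u_\varepsilon)/\|u_\varepsilon\|_{L^p}^2$. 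The leading term is $a\Lambda_S$, the cutoff and the curvature produce a positive error, and the negative term $\int_\Omega(S_g+\beta)u_\varepsilon^2\,\dvol$ must be shown to beat it for small $\varepsilon$. This is precisely the Aubin/Brezis--Nirenberg dichotomy: for $n\geqslant4$ the $u_\varepsilon^2$ integral is of the right order and the hypothesis $S_g+\beta<0$ closes the estimate cleanly, while for $n=3$ that integral is borderline and one must weigh the interaction of the bubble with $\partial\Omega$ (the Robin function) against the size of $|S_g+\beta|$; making the estimate work uniformly for all $n\geqslant3$, using the smallness of $\Omega$ and the sign hypothesis together, is the delicate point.

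Granting $\Lambda(\Omega)<a\Lambda_S$, the rest is standard. A minimizing sequence $\{u_j\}\subset H^1_0(\Omega,g)$ with $\|u_j\|_{L^p}=1$ is bounded, so up to a subsequence $u_j\rightharpoonup u_0$ in $H^1_0(\Omega,g)$; if $u_0\equiv0$ the sequence must concentrate at a point and the Sobolev/Lions inequality forces $\liminf Q(u_j)\geqslant a\Lambda_S$, contradicting $\Lambda(\Omega)<a\Lambda_S$, so $u_0\not\equiv0$, and the Brezis--Lieb splitting together with $\|u_0\|_{L^p}\leqslant1$ shows $u_0$ is a minimizer with $\|u_0\|_{L^p}=1$; replacing $u_0$ by $|u_0|$ we may take $u_0\geqslant0$. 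Its Euler--Lagrange equation is $-a\Delta_g u_0+(S_g+\beta)u_0=\Lambda(\Omega)u_0^{p-1}$ weakly in $\Omega$; elliptic regularity with a De Giorgi--Nash--Moser/bootstrap iteration (the scheme of \cite{XU2} applies here) gives $u_0\in\calC^\infty(\Omega)$, the strong maximum principle gives $u_0>0$ in $\Omega$, and boundary regularity together with Hopf's lemma shows $u_0$ extends continuously to $\bar\Omega$ and vanishes exactly on $\partial\Omega$. Thus \eqref{intro:eqn0} is solved with $\lambda=\Lambda(\Omega)>0$, and the amplitude rescaling of the first paragraph then furnishes a positive smooth solution for every $\lambda>0$, completing the proof.
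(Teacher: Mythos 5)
Your overall architecture --- reduce to a single $\lambda$ by amplitude rescaling, run a local variational argument whose crux is the strict inequality ``perturbed quotient $<$ best Sobolev constant'', then concentration--compactness, elliptic regularity and the maximum principle --- is structurally the same as the paper's, which proves this statement as Proposition \ref{local:prop3} by invoking Wang's minimax theorem (Theorem \ref{local:thm1}); there too the condition $K<K_{0}$ reduces, after $\lambda$ cancels in (\ref{local:eqn17}), to precisely your inequality $\Lambda(\Omega)<aT$, which is then proved in Appendix \ref{APP}. The only cosmetic difference is that you minimize on the $\calL^{p}$-sphere while the paper works with $\inf_{u}\sup_{t>0}J^{*}(tu)$; either framework is acceptable once the key inequality is in hand.

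The gap is that you never prove the inequality you yourself single out as the main obstacle, and the route you sketch for closing it would fail in dimension three. For $n\geqslant 4$, the assertion that the sign of $S_{g}+\beta$ ``closes the estimate cleanly'' overlooks that on a curved domain the expansions of $\sqrt{\det g}$ and $g^{ij}$ contribute to the numerator at the same order $\epsilon^{\frac{4-n}{2}}$ as the helpful term $\int_{\Omega}(S_{g}+\beta)u_{\epsilon}^{2}$, with a coefficient proportional to $S_{g}(0)$ whose sign works against you (here $S_{g}<0$); the paper's Appendix shows that these curvature contributions cancel exactly against a matching term in the denominator via a sharp Beta/Gamma-function computation, and Remark \ref{local:re1} stresses that dropping any of them destroys the estimate, so some version of that cancellation must appear in your proof. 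More seriously, for $n=3$ your plan to weigh the Robin function against $\lvert S_{g}+\beta\rvert$ ``using the smallness of $\Omega$'' points in the wrong direction: by Brezis--Nirenberg the linear coefficient $-(S_{g}+\beta)/a$ must exceed roughly $\lambda_{1}(\Omega)/4$, which grows like $r^{-2}$ as the domain shrinks while $\lvert S_{g}+\beta\rvert$ stays bounded, so on a genuinely small domain the infimum equals $aT$ and is not attained. The paper escapes this only by first performing a conformal change and metric rescaling (Theorem \ref{manifold:thm4}, Remark \ref{manifold:renew}, and (\ref{APP:eqn8})) that normalizes $S_{g}(0)$ against $\lambda_{1}$ of the unit ball; without some such normalization your $n=3$ case cannot be completed, and the scaling reduction in your first paragraph does nothing to repair it since it only moves $\lambda$, not the linear term.
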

Let $ \eta_{1} $ be the first eigenvalue of the conformal Laplacian $ \Box_{g} : =  -a\Delta_{g} + S_{g} $ on $ (M, g) $. Then globally, the next main result is as follows:
\begin{theorem}\label{intro:thm2} Let $ (M, g) $ be a closed Riemannian manifold with $ \dim M \geqslant 3 $. Then
\begin{enumerate}[(i)]
\item If $ \eta_{1} < 0 $ and $ S_{g} < 0 $ somewhere, (\ref{intro:eqn00}) has a real, positive solution $ u \in \calC^{\infty}(M) $ with some $ \lambda < 0 $ and $ \beta \leqslant 0 $;
\item If $ \eta_{1} > 0 $ and $ S_{g} > 0 $ somewhere, (\ref{intro:eqn00}) has a real, positive solution $ u \in \calC^{\infty}(M) $ with some $ \lambda > 0 $ and $ \beta < 0 $.
\end{enumerate}
\end{theorem}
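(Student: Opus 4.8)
The plan is to handle the two cases by genuinely different mechanisms, reflecting that (i) is of minimization type while (ii) has the mountain--pass character of the positive Yamabe problem. Throughout write $\Box_g + \beta = -a\Delta_g + (S_g+\beta)$, whose first eigenvalue is $\eta_1 + \beta$, and let $\phi_1>0$ be a smooth, strictly positive first eigenfunction of $\Box_g$, so $\Box_g\phi_1 = \eta_1\phi_1$ on $M$. Recall from (\ref{intro:eqn1}) that a positive solution $u$ of (\ref{intro:eqn00}) produces the conformal metric $\tilde g = u^{p-2}g$ with $S_{\tilde g} = u^{1-p}\Box_g u = \lambda - \beta u^{2-p}$.

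\emph{Part (i).} First note that $\eta_1<0$ already forces $S_g<0$ on a set of positive measure, since $S_g\geqslant 0$ would make $\Box_g$ nonnegative and hence $\eta_1\geqslant 0$. Fix $\beta\leqslant 0$ and choose a constant $\lambda<0$. I would solve (\ref{intro:eqn00}) by a single monotone iteration on $M$ between an ordered pair of barriers. The lower barrier is $u_-=\varepsilon\phi_1$: since $\eta_1+\beta<0$, the subsolution inequality $(\Box_g+\beta)u_-\leqslant\lambda u_-^{p-1}$ reduces to $\eta_1+\beta\leqslant\lambda\varepsilon^{p-2}\phi_1^{p-2}$, whose right side tends uniformly to $0^-$ as $\varepsilon\to 0$, so it holds for $\varepsilon$ small. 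The upper barrier is a large constant $u_+=A$: the supersolution inequality reduces to $S_g+\beta\geqslant\lambda A^{p-2}$, which holds for $A$ large because $\lambda A^{p-2}\to-\infty$ while $S_g+\beta$ is bounded below. After shrinking $\varepsilon$ so that $0<u_-\leqslant u_+$, and picking $K$ large enough that $S_g+\beta+K>0$ and $t\mapsto Kt+\lambda t^{p-1}$ is increasing on $[0,A]$, the scheme $(\Box_g+\beta+K)u_{k+1}=Ku_k+\lambda u_k^{p-1}$ started from $u_0=u_+$ stays trapped in $[u_-,u_+]$, is monotone, and — by standard elliptic estimates and bootstrapping, the critical nonlinearity causing no trouble because the iterates are a priori bounded — converges to a solution $u\in\calC^\infty(M)$ with $u\geqslant u_->0$. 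No local analysis is needed for (i).

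\emph{Part (ii).} This is the substantial case, and here the elementary barriers are in the \emph{wrong order}: when $\eta_1>0$ one checks that $\varepsilon\phi_1$ is a subsolution of (\ref{intro:eqn00}) only for $\varepsilon$ \emph{large}, whereas the competing supersolutions (small constants, where $S_g+\beta>0$) are \emph{small}, so no ordered pair exists and a global monotone scheme cannot work — the analytic shadow of the fact that the positive Yamabe problem is a mountain--pass, not a minimization, problem. The plan is therefore to pass to the local theory. Cover $M$ by finitely many Riemannian domains $\Omega_1,\dots,\Omega_N$, each of volume and Euclidean diameter small enough that the local solvability mechanism of Theorem~\ref{intro:thm1} (and its accompanying local variational argument) applies on each $\Omega_i$ with the relevant $\lambda>0$ and $\beta<0$: on so small a scale the local best Sobolev constant is favorable and concentration is excluded, so the critical exponent effectively behaves subcritically and the Dirichlet problem (\ref{intro:eqn0}) is solved there. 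Then I would run a \emph{double} iteration: an outer Schwarz--type loop cycling through $i=1,\dots,N$ that, on $\Omega_i$, replaces the current global function by the solution of the local Dirichlet problem with boundary values inherited from that function, each local solve being itself the inner monotone iteration of Theorem~\ref{intro:thm1}. Starting from an initial datum comparable to $\phi_1$ and using $\eta_1>0$ (equivalently, coercivity of $\Box_g+\beta$ for a suitable $\beta<0$), I would show this outer process is monotone and bounded above and below away from $0$, pass to the limit, and bootstrap to obtain $u\in\calC^\infty(M)$, $u>0$, solving (\ref{intro:eqn00}) with some $\lambda>0$.

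The hard part will be the final step of (ii): carrying a uniform a priori bound through the alternating iteration in the critical case \emph{without the Weyl tensor or the positive mass theorem}. The geometric input those usually supply has to be replaced by a quantitative smallness of the $\Omega_i$ — fine enough, uniformly over $M$, that no bubble can form inside any $\Omega_i$ and that the overlaps do not break the monotonicity of the outer loop — together with a nondegeneracy argument showing the limit is not the zero function; this is where the hypothesis that $S_g$ is positive somewhere, and the freedom in choosing $\beta$ and $\lambda$, enter. Letting $\beta\to 0^-$ in (ii) and extracting a convergent subsequence then yields a solution of the Yamabe equation (\ref{intro:eqn2}) with $\lambda>0$, the promised new proof of the positive Yamabe case.
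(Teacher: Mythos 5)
Part (i) of your proposal is correct, and it takes a more classical route than the paper. You use the scaled first eigenfunction $\varepsilon\phi_1$ of $\Box_g+\beta$ as the sub-solution and a large constant as the super-solution, then run one global monotone iteration; this is essentially the Kazdan--Warner construction, which the paper itself acknowledges in Remark \ref{manifold:re2} as a viable alternative in the negative case. The paper instead manufactures one of the two barriers by solving the local Dirichlet problem of Propositions \ref{local:prop1} and \ref{local:prop2} on a small chart with constant boundary value $c$ and extending by $c$ (Theorems \ref{manifold:thm1}, \ref{manifold:thm2}, \ref{manifold:thme1}); for (i) this buys nothing you do not already get, and your shorter argument is fine.

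Part (ii) has a genuine gap, in two respects. First, the local existence result you propose to apply on every chart of a finite cover --- Theorem \ref{intro:thm1}, i.e.\ Proposition \ref{local:prop3} --- is proved only under the hypothesis $S_g<0$ on $\bar\Omega$ (for $n=3$ and $n=4$ the Appendix estimate $Q_{\epsilon,\Omega}<T$ genuinely uses the negativity of $S_g$ at the center, and the paper emphasizes that the region where $S_g<0$ is where the local analysis lives), and it is a zero-Dirichlet-data statement obtained from Wang's variational theorem; it does not solve the local problem with ``boundary values inherited from the current global iterate,'' which is what a Schwarz alternating loop needs. Since in case (ii) one has $S_g>0$ somewhere, $M$ cannot be covered by domains on which that local theorem applies, so the outer loop cannot even be set up. Second, even granting local solvability, the monotonicity, uniform bounds, and nontriviality of the limit of the alternating domain-decomposition scheme in the critical regime are exactly what you defer as ``the hard part''; no mechanism is supplied. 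The paper's route is different and avoids both problems: it works on a \emph{single} small domain where $S_g<0$ (reducing to that situation, when $S_g\geqslant 0$ everywhere, by the prescribed-curvature conformal change of Theorem \ref{manifold:thm4}), takes the local Dirichlet solution extended by zero as a global weak sub-solution, and builds a global super-solution by gluing that local solution to a small multiple of the first eigenfunction with a carefully constructed partition of unity (the long computation in Theorem \ref{manifold:thm3}); the global monotone iteration of Theorem \ref{pre:thm5} then finishes, with no domain decomposition. In other words, the ordered pair of barriers that you correctly observe cannot both be elementary is obtained by making the sub-solution compactly supported in one chart and the super-solution a glued object dominating it --- that construction is the missing content of your part (ii).
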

The result of the Yamabe-type equation mentioned in (i) above is given in Theorem \ref{manifold:thme1}, the result in (ii) above is given in Theorem \ref{manifold:thm3}. As we introduced above, the Yamabe problem is then a natural consequence of  Theorem \ref{intro:thm2}:
\begin{theorem}\label{intro:thm3} Let $ (M, g) $ be a closed Riemannian manifold with $ \dim M \geqslant 3 $. Then
\begin{enumerate}[(i)]
\item If $ \eta_{1} = 0 $, (\ref{intro:eqn2}) has a real, positive solution $ u \in \calC^{\infty}(M) $ with $ \lambda = 0 $;
\item If $ \eta_{1} < 0 $ and $ S_{g} < 0 $ somewhere, (\ref{intro:eqn2}) has a real, positive solution $ u \in \calC^{\infty}(M) $ with some $ \lambda < 0 $;
\item If $ \eta_{1} > 0 $ and $ S_{g} > 0 $ somewhere, (\ref{intro:eqn2}) has a real, positive solution $ u \in \calC^{\infty}(M) $ with some $ \lambda > 0 $.
\end{enumerate}
\end{theorem}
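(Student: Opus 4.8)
The plan is to obtain all three cases as consequences of Theorem~\ref{intro:thm2}, together with one elementary spectral observation in the borderline case $\eta_1 = 0$.

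\textbf{Case (i), $\eta_1 = 0$.} This case needs nothing beyond the spectral theory of $\Box_g$. Let $\varphi_1$ be a first eigenfunction of $\Box_g = -a\Delta_g + S_g$ on $(M,g)$; since $\eta_1$ is the bottom of a discrete spectrum, it is simple and $\varphi_1$ may be taken strictly positive and smooth. Then $\Box_g \varphi_1 = \eta_1 \varphi_1 = 0$, i.e. $-a\Delta_g \varphi_1 + S_g \varphi_1 = 0 = 0\cdot \varphi_1^{p-1}$, so $u = \varphi_1$ is already a positive smooth solution of (\ref{intro:eqn2}) with $\lambda = 0$ (conformally: the scalar curvature can be made identically zero).

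\textbf{Case (ii), $\eta_1 < 0$.} Here I would simply invoke Theorem~\ref{intro:thm2}(i) with the admissible choice $\beta = 0$: for $\beta = 0$ equation (\ref{intro:eqn00}) is literally the Yamabe equation (\ref{intro:eqn2}), and the theorem delivers a positive $u \in \calC^\infty(M)$ with some $\lambda < 0$. Nothing further is required.

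\textbf{Case (iii), $\eta_1 > 0$.} Here $\beta = 0$ is not permitted in Theorem~\ref{intro:thm2}(ii), so I would run a limiting scheme. Fix $\beta_k \uparrow 0$ with $\beta_k < 0$; Theorem~\ref{intro:thm2}(ii) yields positive $u_k \in \calC^\infty(M)$ solving $-a\Delta_g u_k + (S_g + \beta_k) u_k = \lambda_k u_k^{p-1}$ with $\lambda_k > 0$. Since the equation is invariant under $u \mapsto t u$, $\lambda \mapsto t^{2-p}\lambda$ with $2 - p = -\tfrac{4}{n-2} < 0$, I may rescale each $u_k$ so that $\lambda_k \equiv \lambda_0$ for one fixed $\lambda_0 > 0$. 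Granting the uniform bound $\|u_k\|_{L^\infty(M)} \le C$ (see below), the identity $-a\Delta_g u_k = \lambda_0 u_k^{p-1} - (S_g + \beta_k)u_k$ has right-hand side bounded in $L^\infty(M)$, so $\{u_k\}$ is bounded in $W^{2,q}(M)$ for all finite $q$ and hence in $\calC^{1,\alpha}(M)$; passing to a subsequence, $u_k \to u$ in $\calC^{1,\alpha}(M)$ with $u \ge 0$ solving $-a\Delta_g u + S_g u = \lambda_0 u^{p-1}$, and elliptic bootstrapping upgrades $u$ to $\calC^\infty(M)$. To rule out $u \equiv 0$, suppose $m_k := \|u_k\|_{L^\infty} \to 0$ and set $v_k := u_k/m_k$, so $\|v_k\|_{L^\infty} = 1$ and $-a\Delta_g v_k + (S_g + \beta_k)v_k = \lambda_0 m_k^{p-2} v_k^{p-1}$ with right-hand side tending to $0$ in $L^\infty$ (as $p - 2 > 0$); then $v_k \to v$ in $\calC^{1,\alpha}$ with $\|v\|_{L^\infty} = 1$ and $\Box_g v = 0$, contradicting the invertibility of $\Box_g$ forced by $\eta_1 > 0$. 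Hence $u \not\equiv 0$, the strong maximum principle gives $u > 0$ on $M$, and a final rescaling turns $\lambda_0$ into any prescribed positive constant.

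\textbf{The main obstacle.} The only genuinely hard point is the uniform estimate $\|u_k\|_{L^\infty(M)} \le C$: this is the familiar effect of the critical Sobolev exponent $p = \tfrac{2n}{n-2}$, where a priori the $u_k$ could concentrate and bubble off, and it is precisely where the classical resolution of the Yamabe problem splits into cases and appeals to the positive mass theorem and the Weyl tensor. In the present framework I would instead extract this bound from the uniformity in $\beta$ of the sub-/super-solution barriers (equivalently, the local variational construction) underlying Theorem~\ref{intro:thm2}(ii): those barriers and the resulting monotone iteration are controlled by $S_g$, $\eta_1$, and geometric data that remain stable as $\beta \to 0^-$, so the constructed $u_\beta$ inherit a bound independent of $\beta$ — and this argument is uniform in $n \ge 3$, requiring no use of the Weyl tensor.
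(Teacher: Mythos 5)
Your cases (i) and (ii) are correct and match the paper: (i) is the eigenvalue problem solved by the positive first eigenfunction, and (ii) is Theorem \ref{intro:thm2}(i) with $\beta = 0$, which the paper proves via Theorems \ref{manifold:thm1}, \ref{manifold:thm2} and \ref{manifold:thme1}.

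The gap is in case (iii), at precisely the step you flag as the main obstacle. The uniform bound $\lVert u_k \rVert_{\calL^{\infty}(M)} \leqslant C$ does \emph{not} follow from ``uniformity in $\beta$ of the sub-/super-solution barriers.'' The super-solution $u_{+,\beta}$ of Theorem \ref{manifold:thm3} contains the local Dirichlet solutions $\tilde{u}_{\beta}$ of Proposition \ref{local:prop3}, and those come from Wang's variational theorem, which only controls the energy level $J(\tilde{u}_{\beta}) \leqslant K_{0}$; from this the paper extracts uniform $H^{1}$ and $\calL^{p}$ bounds (see (\ref{manifold:eqns6})--(\ref{manifold:eqns8})), not an $\calL^{\infty}$ bound. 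So the barriers give you $\lVert u_{\beta} \rVert_{\calL^{p}(M,g)} \leqslant C'$ uniformly and nothing more; the passage from a critical-norm bound to an $\calL^{\infty}$ bound is exactly the concentration issue. The paper closes it in Theorem \ref{manifold:thms} by a Trudinger--Aubin Moser-type iteration: pairing the equation with $u_{\beta}^{1+2\delta}$ and using the sharp Sobolev inequality, the critical term is absorbed precisely because $\lambda_{\beta} - \kappa < aT$ \emph{strictly and uniformly in} $\beta$ --- this is the entire purpose of the artificial gap $\kappa > 0$ in the perturbed equation (\ref{manifold:eqnss2}). That yields $\lVert u_{\beta} \rVert_{\calL^{r}} \leqslant \mathcal{K}$ for some $r > p$, after which bootstrapping gives uniform $\calC^{2,\alpha}$ bounds and Arzel\`a--Ascoli finishes. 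Your proposal omits this mechanism, so the limit argument does not close as written. (Your blow-down argument for $u \not\equiv 0$ is a legitimate alternative to the paper's normalization $\lVert u_{\beta} \rVert_{\calL^{p}} \geqslant C'' > 0$, but it also presupposes the missing compactness.)

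A smaller omission: the machinery behind Theorem \ref{intro:thm2}(ii), namely Theorem \ref{manifold:thm3}, requires $S_{g} < 0$ somewhere, whereas your hypothesis in (iii) allows $S_{g} \geqslant 0$ everywhere. In that situation one must first apply the conformal change of Theorem \ref{manifold:thm4} to produce a conformal metric whose scalar curvature is negative somewhere; this reduction is part of the paper's proof of case (iii) (Theorem \ref{manifold:thm5}) and is skipped in your write-up.
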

These cover all cases of the Yamabe problem which also provides a classification in terms of the sign of $ \eta_{1} $. Case (i) is a trivial eigenvalue problem; Case (ii) is solved in Theorem \ref{manifold:thm1} and \ref{manifold:thm2} below, also covered by Theorem \ref{manifold:thme1}; Case (iii) is solved in Theorem \ref{manifold:thms} and \ref{manifold:thm5}. Theorem \ref{manifold:thm3}, in which a perturbed Yamabe equation is solved, plays a central role in Case (iii).
\medskip

Historically, the PDE (\ref{intro:eqn2}) is analyzed as the Euler-Lagrange equation of the functional
\begin{equation}\label{intro:eqn3}
\lambda(M) = \inf_{u \neq 0} \int_{u} \frac{\int_{M} a\lvert \nabla_{g} u \rvert^{2} + S_{g} u^{2} \dvol}{\left( \int_{M} u^{p} \dvol \right)^{\frac{2}{p}}} = \inf_{u \neq 0} Q(u)
\end{equation}
and the Yamabe Conjecture reduced to proving a minimizer exists. After results showing the existence the solution provided $ \lambda(M) < \lambda(\mathbb{S}^{n}) $, the problem is equivalent to the nontrivial task of finding a test function $ \phi $ such $ Q(\phi) < \lambda(\mathbb{S}^{n}) $ in (\ref{intro:eqn3}). 
\medskip

There are some critical technical differences between our approach and the classical one. In a small enough domain $ \Omega $ with radius $ r $, a local test function $ \phi(\epsilon, x) $ is chosen to show that the Yamabe quotient $ Q(\phi(\epsilon, x)) $ satisfies
\begin{equation*}
Q(\phi(\epsilon, x)) \leqslant (1 + O(r^{2}))(aT + O(\epsilon)).
\end{equation*}
with best Sobolev constant $ T $ for Euclidean Sobolev embedding $ \calL^{p} \hookrightarrow H^{1} $. Since we want $ \lambda(M) < aT $, the key is to control the smallness of $ O(r^{2}) $. In the classical approach, a local test function $ \phi \in \calC_{c}^{\infty}(\Omega) $ and a particular Riemannian metric within the conformal class were carefully chosen to show that $ Q(\phi) < \lambda(\mathbb{S}^{n}) = aT $ when $ n \geqslant 6 $ and the manifold is not locally conformally flat; Schoen successfully applied the positive mass theorem to the remaining cases. In this article, especially for the cases when $ \lambda > 0 $, we apply a direct analysis, which can be found in Appendix \ref{APP}, to show that a perturbed functional
\begin{equation*}
Q_{\beta}(u) = \frac{\int_{M} a \lvert \nabla_{g} u \rvert^{2} + \left(S_{g} + \beta \right) u^{2} \dvol}{\left( \int_{M} u^{p} \dvol \right)^{\frac{2}{p}}}, \beta < 0.
\end{equation*}
satisfies $ Q_{\beta}(\phi) < aT $ locally in $ \Omega $ for all metric $ g $ with $ n \geqslant 3 $, provided that $ \Omega $ is small enough. The first main result Theorem \ref{intro:thm1} then follows due to this key estimates. It is a special case for the existence of some local solution (\ref{local:eqn10}) of the form $ Lu = b(x) u^{p-1} + f(x, u) $ on $ \Omega $ for linear second order elliptic operator $ L $ with trivial Dirichlet boundary condition. 

We then apply monotone iteration scheme, e.g. \cite{Aviles-McOwen}, \cite{KW}, to show the existence of a positive solution of the perturbed Yamabe equation
\begin{equation}\label{intro:eqn4}
-a\Delta_{g} u_{\beta} + \left(S_{g} + \beta \right) u_{\beta} = \left( \lambda_{\beta} - \kappa \right) u_{\beta}^{p-1} \; {\rm on} \; M.
\end{equation}
Here $ \lambda_{\beta} = \inf_{u \neq 0} Q_{\beta} u $ and $ \kappa > 0 $ is some fixed positive constant. The local analysis and monotone iteration scheme give us the flexibility to choose the coefficient, especially the introduction of $ \kappa $, ahead of the nonlinear term; the introduction of $ \kappa $ cannot be achieved by classical method in which the constant ahead of the nonlinear term is the minimizer of some functional and is determined at the very last step. A limiting argument shows $ u = \lim_{\beta \rightarrow 0} u_{\beta} $ exists for a subsequence of $ \lbrace u_{\beta} \rbrace $ and solves the Yamabe equation (\ref{intro:eqn2}). The inequality $ Q_{\beta}(\phi) < aT $ holds for all $ n \geqslant 3 $, and is independent of the Weyl tensor precisely because of the introduction of $ \beta $. This method bypasses the subcritical exponent analysis of uniform boundedness of the sequence $ \lbrace \varphi_{s} \rbrace $ when $ s \rightarrow p $, where $ \varphi_{s} $ solves $ \Box_{g} \varphi_{s} = \lambda_{s} \varphi_{s}^{s - 1} $ for $ s \in (2, p) $.
\medskip

In particular, we constructed sub-solutions $ u_{-} $ and super-solutions $ u_{+} $ of the Yamabe equation (\ref{intro:eqn2}) or the perturbed Yamabe equation (\ref{intro:eqn4}), then applied monotone iterative method--see e.g. \cite{Aviles-McOwen}, \cite{KW}--to get a solution, this is one layer of iteration. The essential difficulty is to show that $ u_{-} \not\equiv 0 $. Historically the cases when $ \eta_{1} < 0 $ is easy to handle. We applied local analysis in \cite{XU2} to give a different way to construct the sub-solutions/super-solutions, this is the second layer of iteration. For the cases when $ \eta_{1} > 0 $, barely no one constructed the sub-solution/super-solution before, based on the best understanding of us. The full power of this iteration scheme can be seen in solving the boundary Yamabe problem on compact manifolds with boundary, see \cite{XU4}, \cite{XU5}. The size of the local domain we choose is also important: in particular, the scalar curvature at each point tells us the deviation of the volume of small enough geodesic ball centered at the same point from the volume of Euclidean ball with same radius. Weyl tensor cannot provide this type of information.
\medskip

The paper is organized as follows. In \S2, the preliminaries and required results for double iteration scheme are listed. In \S3, we constructed local solutions on a small enough single chart of $ (M, g) $ with Dirichlet boundary conditions for different cases classified by $ \text{sgn}(S_{g}) $ and $ \text{sgn}(\eta_{1}) $: when $ \eta_{1} < 0 $, the local analysis in \cite{XU2} is applied, see Proposition \ref{local:prop1} and Proposition \ref{local:prop2}; when $ \eta_{1} > 0 $, a local calculus of variation is applied to show a local Yamabe-type equation with Dirichlet boundary condition, see Proposition \ref{local:prop3}. When $ \eta_{1} > 0 $, the region in which $ S_{g} < 0 $ plays a central role in the local analysis.

In \S4, sub-solutions $ u_{-} $ and super-solutions $ u_{+} $ are constructed in various cases: (i) Theorem \ref{manifold:thm1} and Theorem \ref{manifold:thm2} are related to the case $ \eta_{1} < 0 $. As discussed before, they are historically easy case, here a new construction of sub- and super- solutions is applied. We modify the arguments in Theorem \ref{manifold:thm1} and Theorem \ref{manifold:thm2} a little bit to get a general result for Yamabe-type equation in Theorem \ref{manifold:thme1}. (ii) Theorem \ref{manifold:thms} and Theorem \ref{manifold:thm5} are related to the case $ \eta_{1} > 0 $. For the case $ \eta_{1} > 0 $, Theorem \ref{manifold:thm3} plays a central role for the existence of solution of a perturbed Yamabe equation. The constructions of sub- and super- solutions rely on the local analysis given in Proposition \ref{local:prop3}. To handle the cases when $ S_{g} \leqslant 0 $ everywhere and $ S_{g} \geqslant 0 $ everywhere, we shows in Theorem \ref{manifold:thm4} and Corollary \ref{manifold:cor4} that they are equivalent to the cases $ S_{g} > 0 $ somewhere, $ \eta_{1} < 0 $ and $ S_{g} < 0 $ somewhere, $ \eta_{1} > 0 $, respectively. Note that the last two results mentioned above relates to the question: whether a function $ f $ can be the prescribed scalar curvature of some metric under conformal change?

\section{The Preliminaries}
In this section, we list definitions and results required for our analysis. 
\medskip

Let $ \Omega $ be a connected, bounded, open subset of $ \R^{n} $ with smooth boundary $ \partial \Omega $ equipped with some Riemannian metric $ g $ that can be extended smoothly to $ \bar{\Omega} $. We call $ (\Omega, g) $ a Riemannian domain. Furthermore, let $ (\bar{\Omega}, g) $ be a compact manifold with smooth boundary extended from $ (\Omega, g) $. Let $ (M, g) $ be a closed manifold with $ \dim M \geqslant 3 $. Let $ (\bar{M}', g) $ be a general compact manifold with interior $ M' $ and smooth boundary $ \partial M' $. Throughout this article, we denote the space of smooth functions with compact support by $ \calC_{c}^{\infty} $, smooth functions by $ \calC^{\infty} $, and continuous functions by $ \calC_{0} $.
\medskip

We define two equivalent versions of the $\calL^p$ norms and two equivalent versions of the Sobolev norms on $ (\Omega, g) $. We also define global $ \calL^{p} $ norms and Sobolev norms on $ (M, g) $.
\begin{definition}\label{pre:def1} Let $ (\Omega, g) $ be a Riemannian domain. Let $ (M, g) $ be a closed  Riemannian $n$-manifold with volume density $ d\omega $ with local expression $\dvol$. Let $u$ be a real valued function. Let $ \langle v,w \rangle_g$ and $ |v|_g = \langle v,v \rangle_g^{1/2} $ denote the inner product and norm  with respect to $g$. 

(i) 
For $1 \leqslant p < \infty $,
\begin{align*}
\mathcal{L}^{p}(\Omega)\ &{\rm is\ the\ completion\ of}\  \left\{ u \in \calC_c^{\infty}(\Omega) : \Vert u\Vert_p^p :=\int_{\Omega} \lvert u \rvert^{p} dx < \infty \right\},\\
\mathcal{L}^{p}(\Omega, g)\ &{\rm is\ the\ completion\ of}\ \left\{ u \in \calC_c^{\infty}(\Omega) : \Vert u\Vert_{p,g}^p :=\int_{\Omega} \left\lvert u \right\rvert^{p} d\text{Vol}_{g} < \infty \right\}, \\
\mathcal{L}^{p}(M, g)\ &{\rm is\ the\ completion\ of}\ \left\{ u \in \calC^{\infty}(M) : \Vert u\Vert_{p,g}^p :=\int_{M} \left\lvert u \right\rvert^{p} d\text{Vol}_{g} < \infty \right\}.
\end{align*}

(ii) For $\nabla u$  the Levi-Civita connection of $g$, 
and for $ u \in \calC^{\infty}(\Omega) $ or $ u \in \calC^{\infty}(M) $,
\begin{equation}\label{pre:eqn1}
\lvert \nabla^{k} u \rvert_g^{2} := (\nabla^{\alpha_{1}} \dotso \nabla^{\alpha_{k}}u)( \nabla_{\alpha_{1}} \dotso \nabla_{\alpha_{k}} u).
\end{equation}
\noindent In particular, $ \lvert \nabla^{0} u \rvert^{2}_g = \lvert u \rvert^{2} $ and $ \lvert \nabla^{1} u \rvert^{2}_g = \lvert \nabla u \rvert_{g}^{2}.$\\

(iii) For $ s \in \mathbb{N}, 1 \leqslant p < \infty $,
\begin{align}\label{pre:eqn2}
W^{s, p}(\Omega) &= \left\{ u \in \mathcal{L}^{p}(\Omega) : \lVert u \rVert_{W^{s,p}(\Omega)}^{p} : = \int_{\Omega} \sum_{j=0}^{s} \left\lvert D^{j}u \right\rvert^{p} dx < \infty \right\}, \\
W^{s, p}(\Omega, g) &= \left\{ u \in \mathcal{L}^{p}(\Omega, g) : \lVert u \rVert_{W^{s, p}(\Omega, g)}^{p} = \sum_{j=0}^{s} \int_{\Omega} \left\lvert \nabla^{j} u \right\rvert^{p}_g d\text{Vol}_{g} < \infty \right\} \nonumber, \\
W^{s, p}(M, g) &= \left\{ u \in \mathcal{L}^{p}(M, g) : \lVert u \rVert_{W^{s, p}(M, g)}^{p} = \sum_{j=0}^{s} \int_{M} \left\lvert \nabla^{j} u \right\rvert^{p}_g d\omega < \infty \right\} \nonumber.
\end{align}
\noindent Here $ \lvert D^{j}u \rvert^{p} := \sum_{\lvert \alpha \rvert = j} \lvert \partial^{\alpha} u \rvert^{p} $ 
in the weak sense. Similarly, $ W_{0}^{s, p}(\Omega) $ is the completion of $ \calC_{c}^{\infty}(\Omega) $ with respect to the 
$ W^{s, p} $-norm.
In particular, $ H^{s}(\Omega) : = W^{s, 2}(\Omega) $ and $ H^{s}(\Omega, g) : = W^{s, 2}(\Omega, g) $, $ H^{s}(M, g) : = W^{s, 2}(M, g) $ are the usual Sobolev spaces. We similarly define $H_{0}^{s}(\Omega), H_{0}^{s}(\Omega,g)$.

(iv) We define the $ W^{s, p} $-type Sobolev space on $ (\bar{M}', g) $ the same as in (iii) when $ s \in \mathbb{N}, 1 \leqslant p < \infty $.
\end{definition}

The elliptic regularity for Riemannian domain $ (\Omega, g) $ and closed manifold $ (M, g) $, and for $ W^{s, p} $-type and $ H^{s} $-type, are listed below, respectively.
\begin{theorem}\label{pre:thm1} Let $ (\Omega, g) $ be a Riemmannian domain. Let $ (M, g) $ be a closed Riemannian manifold and $ (\bar{M}', g) $ be a compact Riemannian manifold with smooth boundary.

(i)\cite[Ch.~5, Thm.~1.3]{T} Let $ L $ be a second order elliptic operator of the form $ Lu = -\Delta_{g} u + Xu $ where $ X $ is a first order differential operator with smooth coefficients on $ \bar{M}' $. For $ f \in \calL^{2}(M', g) $, a solution $ u \in H_{0}^{1}(\Omega, g) $ to $ Lu = f $ in $ M' $ with $ u \equiv 0 $ on $ \partial M' $ belongs to $ H^{2}(M', g) $, and
\begin{equation}\label{pre:eqn3}
\lVert u \rVert_{H^{2}(M', g)} \leqslant C^{*} \left( \lVert f \rVert_{\calL^{2}(M', g)} + \lVert u \rVert_{H^{1}(M', g)} \right).
\end{equation}
$ C^{*} = C^*(L, M', g) $ depends on $ L $ and $ (\bar{M}', g) $. 
\medskip

(ii)\cite[Vol.~3, Ch.~17]{Hor}Let $ -\Delta_{g} $ be the Laplacian with respect to Riemannian metric $ g $ on $ \Omega $, and let $ u \in H_{0}^{1}(\Omega, g) $ be a weak solution of $ -\Delta_{g} u = f $. \\
(Interior Regularity) If $ f \in W^{s, p}(\Omega, g) $, then $ u \in W^{s + 2, p}(\Omega, g) $ . Also, if $ u \in \mathcal{L}^{p}(\Omega, g) $, then
\begin{equation}\label{pre:eqn4}
  \lVert u \rVert_{W^{s + 2, p}(\Omega,g)} \leqslant D_{1} (\lVert -\Delta_{g} u \rVert_{W^{s, p}(\Omega,g)} + \lVert u \rVert_{\mathcal{L}^{p}(\Omega,g)}), 
\end{equation}
for some $D_{1} > 0$ where $ D_{1} $ only depends on $ -\Delta_{g} $, $ \Omega$ and $ \partial \Omega $. \\
(Schauder Estimates) If $ f \in \calC^{s, \alpha}(\Omega) $, then $ u \in \calC^{s + 2, \alpha}(\Omega) $. Also, if $ u \in \calC^{0, \alpha}(\Omega) $, then
\begin{equation}\label{pre:eqn5}
  \lVert u \rVert_{\calC^{s + 2, \alpha}(\Omega)} \leqslant D_{2} (\lVert -\Delta_{g} u \rVert_{\calC^{s, \alpha}(\Omega)} + \lVert u \rVert_{\calC^{0, \alpha}(\Omega)}), 
\end{equation}
for some $D_{2} >0 $ where $ D_{2} $ only depends on $ -\Delta_{g} $, $ \Omega$ and $ \partial \Omega $.
\medskip

(iii) \cite[Thm.~2.4]{PL}Let $ -\Delta_{g} $ be the Laplace-Beltrami operator, and let $ u \in H^{1}(M, g) $ be a weak solution of $ -\Delta_{g} u = f $. If $ f \in W^{s, p}(M, g) $, then $ u \in W^{s + 2, p}(M, g) $. Also, if $ u \in \mathcal{L}^{p}(M, g) $, then above estimates in (\ref{pre:eqn4}) and (\ref{pre:eqn5}) also hold when replacing $ (\Omega, g) $ by $ (M, g) $.
\end{theorem}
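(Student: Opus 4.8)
The plan is to deduce all three parts from the classical Euclidean elliptic estimates by localization, since in any coordinate chart the Laplace--Beltrami operator has the divergence form $-\Delta_{g} u = -\frac{1}{\sqrt{\det g}}\,\partial_{i}\big(\sqrt{\det g}\,g^{ij}\partial_{j} u\big)$, which on a precompact chart is a uniformly elliptic second order operator with smooth coefficients. For part (ii), interior regularity, I would fix $x_{0} \in \Omega$ and a coordinate ball $B \Subset \Omega$, and invoke the interior $\calL^{p}$ estimates of Agmon--Douglis--Nirenberg (Calder\'on--Zygmund theory): if $-\Delta_{g} u = f \in W^{s,p}(B)$ then $u \in W^{s+2,p}(B')$ for $B' \Subset B$ with $\lVert u \rVert_{W^{s+2,p}(B')} \leqslant C(\lVert f \rVert_{W^{s,p}(B)} + \lVert u \rVert_{\calL^{p}(B)})$, and likewise the interior Schauder estimates on the H\"older scale. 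Near $\partial\Omega$ I would straighten the boundary by a smooth diffeomorphism onto a half-ball; the pulled-back operator stays uniformly elliptic with smooth coefficients, the Dirichlet datum remains $u \equiv 0$ because $u \in H_{0}^{1}(\Omega,g)$, and the boundary versions of the same $\calL^{p}$ and Schauder estimates apply. A finite cover of $\bar\Omega$ by interior and boundary charts together with a subordinate partition of unity, and summation of the local estimates, then yields (\ref{pre:eqn4}) and (\ref{pre:eqn5}); the constants $D_{1},D_{2}$ depend only on the $\calC^{\infty}$ data of $-\Delta_{g}$ and $\partial\Omega$ through this fixed finite cover. Higher $s$ comes from differentiating the equation---the commutator $[\partial^{\alpha},-\Delta_{g}]$ is a lower order operator with smooth bounded coefficients on each precompact chart---and bootstrapping.

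For part (iii), the closed manifold $M$ has no boundary, so only the interior estimates are needed: cover $M$ by finitely many coordinate charts with a subordinate partition of unity $\{\chi_{k}\}$, observe that $\chi_{k} u$ satisfies $-\Delta_{g}(\chi_{k} u) = \chi_{k} f + (\text{first order in } u)$ with data compactly supported in the $k$-th chart, apply the Euclidean interior $\calL^{p}$ and Schauder estimates chart by chart, and sum over $k$ to get the global $W^{s+2,p}(M,g)$ and $\calC^{s+2,\alpha}$ bounds. This is exactly the content of \cite[Thm.~2.4]{PL}, and the compact-with-boundary analogue (straightening $\partial M'$ as above) gives the corresponding statement on $(\bar M',g)$.

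For part (i), consider $Lu = -\Delta_{g} u + Xu = f$ on $(\bar M', g)$ with $u \in H_{0}^{1}(M',g)$ and $f \in \calL^{2}(M',g)$. I would rewrite this as $-\Delta_{g} u = f - Xu$. Since $X$ is a first order operator with smooth, hence bounded, coefficients on the compact set $\bar M'$, we have $Xu \in \calL^{2}(M',g)$, so the right-hand side lies in $\calL^{2}(M',g)$. Applying the boundary $H^{2}$ estimate for $-\Delta_{g}$ with zero Dirichlet data (the $s=0$, $p=2$ case of part (iii) on $(\bar M',g)$) gives $u \in H^{2}(M',g)$ and
\[
\lVert u \rVert_{H^{2}(M',g)} \leqslant C\big(\lVert f - Xu \rVert_{\calL^{2}(M',g)} + \lVert u \rVert_{H^{1}(M',g)}\big) \leqslant C^{*}\big(\lVert f \rVert_{\calL^{2}(M',g)} + \lVert u \rVert_{H^{1}(M',g)}\big),
\]
where the last step absorbs $\lVert Xu \rVert_{\calL^{2}(M',g)} \leqslant C\lVert u \rVert_{H^{1}(M',g)}$; this recovers \cite[Ch.~5, Thm.~1.3]{T} with $C^{*} = C^{*}(L,M',g)$.

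The main obstacle is the boundary analysis: flattening $\partial\Omega$ (respectively $\partial M'$) by a smooth diffeomorphism while keeping the transformed operator uniformly elliptic with coefficients controlled in the norms used by the $\calL^{p}$ and Schauder machinery, and checking that membership in $H_{0}^{1}$ genuinely encodes the vanishing trace so that the boundary estimates apply. Everything else---interior estimates, partitions of unity, the bootstrap to higher $s$, and the lower-order absorption in part (i)---is routine. Since all of this is classical, the cleanest presentation is to record these reductions and then cite \cite{T}, \cite{Hor}, \cite{PL} as in the statement.
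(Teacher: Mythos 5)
Your sketch is the correct standard argument (localization, Agmon--Douglis--Nirenberg interior estimates, boundary flattening, partition of unity, and absorption of the first-order term for part (i)), and it matches what the paper does: Theorem \ref{pre:thm1} is stated as a cited preliminary from \cite{T}, \cite{Hor}, and \cite{PL} with no proof given, exactly the reduction-plus-citation you recommend at the end. No gaps.
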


Sobolev embeddings, both on $ (\Omega, g) $ and $ (M, g) $, play an important role in this article.
\begin{theorem}\label{pre:thm2}\cite[Ch.~4]{Adams}  (Sobolev Embeddings) 
Let $ \Omega \in \R^{n} $ be a bounded, open set with smooth boundary $ \partial \Omega $ and equipped with any Riemannian metric $ g $.

(i) For $ s \in \mathbb{N} $ and $ 1 \leqslant p \leqslant p' < \infty $ such that
\begin{equation}\label{pre:eqn6}
   \frac{1}{p} - \frac{s}{n} \leqslant \frac{1}{p'},
\end{equation}
\noindent  $ W^{s, p}(\Omega) $ continuously embeds into $ \mathcal{L}^{p'}(\Omega) $ with the following estimates: 
\begin{equation}\label{pre:eqn6a}
\lVert u \rVert_{\calL^{p'}(\Omega, g)} \leqslant K \lVert u \rVert_{W^{s, p}(\Omega, g)}.
\end{equation}

(ii) For $ s \in \mathbb{N} $, $ 1 \leqslant p < \infty $ and $ 0 < \alpha < 1 $ such that
\begin{equation}\label{pre:eqn7}
  \frac{1}{p} - \frac{s}{n} \leqslant -\frac{\alpha}{n},
\end{equation}
Then  $ W^{s, p}(\Omega) $ continuously embeds in the H\"older space $ \calC^{0, \alpha}(\Omega) $ with the following estimates:
\begin{equation}\label{pre:eqn7a}
\lVert u \rVert_{\calC^{0, \alpha}(\Omega)} \leqslant K' \lVert u \rVert_{W^{s, p}(\Omega, g)}.
\end{equation}

(iii) The above results and estimates also hold when applying to the closed manifold $ (M, g) $.
\end{theorem}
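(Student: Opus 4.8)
The plan is to reduce the entire statement to the classical Euclidean Sobolev embedding on a bounded domain with smooth boundary — which is exactly \cite[Ch.~4]{Adams} applied with $g$ replaced by the flat metric $\delta$ — and then to propagate it, first to a Riemannian domain $(\Omega,g)$ by a uniform comparison of Riemannian and Euclidean quantities, and finally to a closed manifold $(M,g)$ by a finite partition of unity. So the only thing that really needs to be argued is that passing from $\delta$ to $g$, and then from charts to $M$, costs nothing but constants.

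\emph{From the flat metric to $(\Omega,g)$.} Since $g$ extends smoothly to the compact set $\bar\Omega$, there is $\Lambda\geqslant 1$ with $\Lambda^{-1}(\delta_{ij})\leqslant (g_{ij}(x))\leqslant \Lambda(\delta_{ij})$ as quadratic forms for all $x\in\bar\Omega$, together with uniform bounds on $g_{ij}$, $g^{ij}$ and their derivatives up to order $s$. Two consequences follow. First, $\Lambda^{-n/2}\,dx\leqslant\dvol\leqslant\Lambda^{n/2}\,dx$, so $\Vert\cdot\Vert_{p}$ and $\Vert\cdot\Vert_{p,g}$ are equivalent norms on $\calC_c^\infty(\Omega)$, hence on $\mathcal L^{p}(\Omega)=\mathcal L^{p}(\Omega,g)$. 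Second, writing the covariant derivatives of \eqref{pre:eqn1} in coordinates, $\nabla^{j}u$ equals $D^{j}u$ plus a sum of terms consisting of products of Christoffel symbols and their derivatives (of order $\leqslant j-2$) contracted with $D^{k}u$, $k<j$; since on $\bar\Omega$ all these coefficients are bounded, an induction on $s$ gives $c^{-1}\Vert u\Vert_{W^{s,p}(\Omega)}\leqslant\Vert u\Vert_{W^{s,p}(\Omega,g)}\leqslant c\,\Vert u\Vert_{W^{s,p}(\Omega)}$ with $c=c(s,p,\Omega,g)$. Feeding these equivalences into the Euclidean statement yields (i), (ii) and the estimates \eqref{pre:eqn6a}, \eqref{pre:eqn7a}; for the H\"older conclusion one also uses that the $g$-distance and the Euclidean distance are comparable on $\bar\Omega$, so the two $\calC^{0,\alpha}$ seminorms are comparable as well.

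\emph{From $(\Omega,g)$ to $(M,g)$ — part (iii).} Cover $M$ by finitely many coordinate charts $(U_i,\varphi_i)$, $i=1,\dots,N$, with each $\overline{\varphi_i(U_i)}$ a bounded domain with smooth boundary, and fix a smooth partition of unity $\{\chi_i\}$ subordinate to $\{U_i\}$. Given $u\in W^{s,p}(M,g)$, decompose $u=\sum_{i}\chi_i u$; by the Leibniz rule each $\chi_i u$, pushed forward by $\varphi_i$, lies in $W^{s,p}$ of a bounded smooth domain carrying a smooth metric, with $\Vert(\varphi_i)_*(\chi_i u)\Vert_{W^{s,p}}\leqslant C_i\Vert u\Vert_{W^{s,p}(M,g)}$, the constant absorbing the (bounded) derivatives of $\chi_i$. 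Apply the domain case to each summand, sum over the finite index set, and translate back to $M$ using that the $\mathcal L^{p'}$ (resp. $\calC^{0,\alpha}$) norms on $M$ and on the finitely many charts are mutually equivalent. Since the atlas and the partition of unity are fixed and finite, the final constant depends only on $(M,g)$, $s$, $p$, $p'$ (resp. $\alpha$).

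\emph{Main obstacle.} The one genuinely delicate point is the higher-order comparison in the second paragraph: controlling $\lvert\nabla^{j}u\rvert_g$ by the flat derivatives $\lvert D^{\leqslant j}u\rvert$ and conversely. Each covariant differentiation introduces Christoffel symbols, so $\nabla^{j}u$ is a universal polynomial expression in the $\Gamma^{k}_{ij}$ and their derivatives applied to $D^{\leqslant j}u$; the induction closes only because all these coefficients are bounded on the compact set $\bar\Omega$, but writing the estimate cleanly for arbitrary $s$ takes some care. Everything else — the volume and distance comparisons, and the partition-of-unity bookkeeping on $M$ — is routine.
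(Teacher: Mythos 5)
The paper offers no proof of this theorem — it is quoted from \cite[Ch.~4]{Adams}, with the Riemannian versions implicitly obtained by exactly the reduction you describe (uniform two-sided comparison of $g$ with the flat metric on the compact set $\bar\Omega$, hence equivalence of the volume forms and of the $W^{s,p}$-norms via the triangular relation between $\nabla^{j}u$ and $D^{\leqslant j}u$, followed by a finite partition of unity for the closed manifold). Your argument is correct and is the standard route; the one delicate step you flag — the inductive comparison of covariant and flat derivatives through bounded Christoffel symbols — closes as you say, so nothing is missing.
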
 

A more precise version of Sobolev embedding is provided by Gagliardo-Nirenberg interpolation inequality.
\begin{proposition}\label{pre:prop1}\cite[Thm.~3.70]{Aubin} Let $ M $ be either $ \R^{n} $, or a compact Riemannian manifold with or without boundary. Let $ q, r,l $ be real numbers  with $ 1 \leqslant q, r,l \leqslant \infty $, and let $ j, m $ be integers with $ 0 \leqslant j < m $.
Define  $\alpha $ by solving
\begin{equation}\label{pre:eqn8}
\frac{1}{l} = \frac{j}{n} + \alpha \left(\frac{1}{r} - \frac{m}{n}\right) + \frac{1- \alpha}{q},
\end{equation}
as long as $l >0.$ If $\alpha \in \left[\frac{j}{m},1\right]$, then there exists a constant $ C_{0} $, depending only on $ n, m , j , q, r,  \alpha $ and on the manifold, such that for all $ u \in \calC_{c}^{\infty}(M) $ (with $ \int_{M} u = 0 $ in the compact case without boundary),
\begin{equation}\label{pre:eqn9}
\lVert \nabla^{j} u \rVert_{\calL^\ell(\Omega,g)} \leqslant C_{0} \lVert \nabla^{m} u \rVert_{\calL^r(\Omega,g)}^{\alpha} \lVert u \rVert_{\calL^q(\Omega,g)}^{1 - \alpha}.
\end{equation}
(If $ r = \frac{n}{m - j} \neq 1 $, then (\ref{pre:eqn8}) is not valid for $ \alpha = 1 $.)
\end{proposition}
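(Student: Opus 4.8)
\emph{Proof proposal.} The plan is to reconstruct Nirenberg's original argument: first prove the inequality on $\R^{n}$ for $u \in \calC_{c}^{\infty}(\R^{n})$, then transport it to a compact manifold via a finite atlas and a partition of unity. On $\R^{n}$ the chain rule reduces everything, through a finite induction, to the base case $j = 0$, $m = 1$, so I would begin there. Its cornerstone is the Gagliardo--Nirenberg--Sobolev inequality $\lVert v \rVert_{\calL^{n/(n-1)}} \leqslant \prod_{i=1}^{n} \lVert \partial_{i} v \rVert_{\calL^{1}}^{1/n} \leqslant \lVert \nabla v \rVert_{\calL^{1}}$ for $v \in \calC_{c}^{\infty}(\R^{n})$, obtained by writing $v(x) = \int_{-\infty}^{x_{i}} \partial_{i} v \, dt$, multiplying these $n$ representations, and integrating the variables one at a time with the generalized H\"older inequality (the Gagliardo trick). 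Feeding $v = |u|^{\gamma}$ with $\gamma = \frac{(n-1)r}{n-r}$ into this and applying H\"older to $\lVert\, |u|^{\gamma-1}\nabla u\rVert_{\calL^{1}}$ yields the scaling-critical Sobolev estimate $\lVert u \rVert_{\calL^{r^{*}}} \leqslant C\lVert \nabla u \rVert_{\calL^{r}}$, $r^{*} = \frac{nr}{n-r}$, valid for $1 \leqslant r < n$.

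To finish the base case: when $r < n$, the relation (\ref{pre:eqn8}) with $j = 0$, $m = 1$ is exactly $\frac{1}{\ell} = \frac{\alpha}{r^{*}} + \frac{1-\alpha}{q}$, so $\ell$ lies between $q$ and $r^{*}$ precisely for $\alpha \in [0,1]$, and H\"older's inequality gives $\lVert u \rVert_{\calL^{\ell}} \leqslant \lVert u \rVert_{\calL^{r^{*}}}^{\alpha}\lVert u \rVert_{\calL^{q}}^{1-\alpha} \leqslant C\lVert \nabla u \rVert_{\calL^{r}}^{\alpha}\lVert u \rVert_{\calL^{q}}^{1-\alpha}$. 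When $r > n$, I would instead estimate pointwise on balls: the Riesz-potential bound $\bigl| u(x) - |B_{\rho}(x)|^{-1}\int_{B_{\rho}(x)}u \bigr| \leqslant C\int_{B_{\rho}(x)}|x-y|^{1-n}|\nabla u(y)|\,dy$ together with H\"older (the kernel integrable since $(n-1)\frac{r}{r-1} < n$) gives $|u(x)| \leqslant C\rho^{-n/q}\lVert u \rVert_{\calL^{q}} + C\rho^{1-n/r}\lVert \nabla u \rVert_{\calL^{r}}$; optimizing over $\rho > 0$ produces the $\calL^{\infty}$ endpoint, and interpolating it against $\lVert u \rVert_{\calL^{q}}$ by H\"older covers every finite $\ell$. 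The borderline $r = n$ follows by a limiting argument from $r < n$.

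The higher-order case $1 \leqslant j < m$ I would obtain by a finite induction, with the second-order estimate as the workhorse: integrating by parts, $\lVert \nabla w \rVert_{\calL^{2}}^{2} = -\int w\,\Delta w \leqslant \lVert w \rVert_{\calL^{a}}\lVert \Delta w \rVert_{\calL^{a'}} \leqslant \sqrt{n}\,\lVert w \rVert_{\calL^{a}}\lVert \nabla^{2} w \rVert_{\calL^{a'}}$ with $\frac{1}{a}+\frac{1}{a'}=1$ (using $|\Delta w| \leqslant \sqrt{n}\,|\nabla^{2}w|$), which is precisely the $\ell = 2$, $\alpha = \tfrac{1}{2}$ instance of (\ref{pre:eqn9}). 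Applying it to $w = \nabla^{k}u$, combining with the base case applied to $\nabla^{k}u$, and interpolating the $\calL^{p}$-norms of the intermediate derivatives by H\"older, one threads together the general inequality; at each stage the exponents are pinned down by (\ref{pre:eqn8}), the endpoint $\alpha = \tfrac{j}{m}$ corresponding to the purely interpolative chain and the excluded case $r = \tfrac{n}{m-j}$, $\alpha = 1$ to the borderline Sobolev embedding into $\calL^{\infty}$.

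Finally, to pass from $\R^{n}$ to a compact $(M,g)$ with or without boundary: fix a finite atlas $\{(U_{\beta},\varphi_{\beta})\}$ and a subordinate partition of unity $\{\chi_{\beta}\}$ in which $g$ is uniformly comparable to the Euclidean metric and the Christoffel symbols and their derivatives up to order $m$ are bounded; write $u = \sum_{\beta}\chi_{\beta}u$, push each $\chi_{\beta}u$ to $\calC_{c}^{\infty}(\R^{n})$, apply the Euclidean inequality, expand $\nabla^{k}(\chi_{\beta}u)$ by the product rule, dominate the flat derivatives of $\chi_{\beta}u$ by the covariant derivatives $|\nabla^{i}u|_{g}$, $i \leqslant k$, plus lower-order terms, and sum over $\beta$; this produces the inequality up to an additive $\lVert u \rVert_{\calL^{q}}$ on the right, coming from the derivatives of $\chi_{\beta}$ and the curvature of $g$. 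When $M$ is closed without boundary the hypothesis $\int_{M}u = 0$ lets a Poincar\'e-type inequality ($\lVert u\rVert_{\calL^{q}}\leqslant C\lVert \nabla^{m}u\rVert_{\calL^{r}}$ for mean-zero $u$) absorb that additive term --- this is exactly why the mean-zero condition is imposed, constants being the sole obstruction since $\nabla$ annihilates them --- while the case with boundary reduces to the closed case by composing with a bounded extension operator $W^{m,p}(M,g)\to W^{m,p}(\widehat{M},g)$. I expect the main obstacle to be the exponent bookkeeping in the higher-order induction, together with the control of the Christoffel/commutator terms in the chart transfer: routine in principle, but genuinely fiddly.
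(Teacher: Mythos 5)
The paper does not prove this proposition at all: it is quoted verbatim as a known result, cited to \cite[Thm.~3.70]{Aubin}, and used as a black box. So there is nothing in the paper to compare your argument against line by line; what you have written is a reconstruction of the classical Nirenberg proof, which is essentially the argument given in the cited reference (Gagliardo's slicing trick for the $\calL^{n/(n-1)}$ estimate, the substitution $v = |u|^{\gamma}$ for the scaling-critical Sobolev inequality, Morrey-type potential estimates for $r>n$, induction on the order of differentiation, and a partition-of-unity transfer with Poincar\'e absorbing the additive lower-order term under the mean-zero hypothesis). Your identification of why $\int_M u = 0$ is needed on a closed manifold, and of the excluded borderline $r = n/(m-j)$, $\alpha = 1$, is correct.

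One soft spot in the sketch deserves flagging: your ``workhorse'' second-order estimate is only justified for the $\calL^{2}$ gradient norm, i.e. $\lVert \nabla w\rVert_{\calL^{2}}^{2} \leqslant \sqrt{n}\,\lVert w\rVert_{\calL^{a}}\lVert\nabla^{2}w\rVert_{\calL^{a'}}$. The induction that threads together the general case actually requires the full family $\lVert\nabla w\rVert_{\calL^{p}}^{2} \leqslant C\lVert w\rVert_{\calL^{a}}\lVert\nabla^{2}w\rVert_{\calL^{b}}$ with $\tfrac{2}{p} = \tfrac{1}{a}+\tfrac{1}{b}$ for general $p$, and for $p\neq 2$ a single integration by parts does not suffice: one must integrate by parts against $|\nabla w|^{p-2}\nabla w$ and handle the resulting commutator and regularization issues (the standard treatments do this carefully). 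Without that, the chain at the endpoint $\alpha = j/m$ does not close. This is a genuine, if classical, gap in the outline rather than a wrong idea; filling it is routine but is precisely the part of the proof that is not a one-line computation.
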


To see how different choices of $ \lambda $ affect the behaviors of solutions of Yamabe equation $ -a \Delta_{g} u + S_{g} u = \lambda u^{p-1} $ in $ (\Omega, g) $ and $ u \equiv c > 0 $ on $ \partial \Omega $, we need both strong and weak maximum principles.
\begin{theorem}\label{pre:thm3}
(i) \cite[Cor.~3.2]{GT} (Weak Maximum Principle) Let $ L $ be a second order elliptic operator of the form
\begin{equation*}
Lu = -\sum_{\lvert \alpha \rvert = 2} -a_{\alpha}(x) \partial^{\alpha} u + \sum_{\lvert \beta \rvert = 1} -b_{\beta}(x) \partial^\beta u + c(x) u
\end{equation*}
\noindent where $ a_{\alpha}, b_{\beta}, c \in \calC^{\infty}(\Omega) $ are smooth and bounded real-valued functions on the bounded domain $ \Omega \subset \R^{n} $. Let $ u \in \calC^2(\bar{\Omega}) $. Denote $u^{-} : = \min(u, 0)$, and $ u^{+} : = \max(u, 0) $. Then we have
\begin{equation}\label{pre:eqn10}
\begin{split}
Lu \geqslant 0, c(x) \geqslant 0 & \Rightarrow \inf_{\Omega} u = \inf_{\partial \Omega} u^{-}; \\
Lu \leqslant 0, c(x) \geqslant 0 & \Rightarrow \sup_{\Omega} u = \sup_{\partial \Omega} u^{+}.
\end{split}
\end{equation}

(ii) \cite[Thm.~3.1]{GT} Let $ u $, $ L $ and its coefficients be the same as in (i) above. Then we have
\begin{equation}\label{pre:eqn10a}
\begin{split}
Lu \geqslant 0, c(x) = 0 & \Rightarrow \inf_{\Omega} u = \inf_{\partial \Omega} u; \\
Lu \leqslant 0, c(x) = 0 & \Rightarrow \sup_{\Omega} u = \sup_{\partial \Omega} u.
\end{split}
\end{equation}

(iii) \cite[Thm.~3.5]{GT} (Strong Maximum Principle) Let $ u \in \calC^{2}(\Omega) $. Assume that $\partial \Omega \in \calC^{\infty}$. Let $ L $ be a second order uniformly elliptic operator as above. If $Lu \geqslant 0$,  $ c(x) \geqslant 0 $, and if $ u $ attains a nonpositive minimum over $ \bar{\Omega} $ in an interior point, then $ u $ is constant within $ \Omega $; If $ Lu \leqslant 0 $, $ c(x) \geqslant 0 $, and if $ u $ attains a nonnegative maximum over $ \bar{\Omega} $ in an interior point, then $ u $ is constant within $ \Omega $.

(iv) \cite[Ch.~8]{GT} All weak and strong maximum principles above hold when $ u \in H^{1}(\Omega, g) $ or $ u \in H^{1}(M, g) $, respectively, provided that $ L $ is uniformly elliptic with some other restrictions.
\end{theorem}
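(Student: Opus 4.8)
\emph{Proof sketch.}\ Every assertion here is a classical fact about linear second-order elliptic operators: parts (i)--(iii) are Corollary~3.2, Theorem~3.1 and Theorem~3.5 of \cite{GT}, and part (iv) is the content of \cite[Ch.~8]{GT}, so the plan is to recall the standard arguments rather than to devise a new one. I would begin with (ii), the case $c\equiv 0$. Normalize the sign so that the principal part of $L$ acts like $-\Delta_{g}$; then a \emph{strict} subsolution $Lu<0$ can have no interior maximum, since at an interior maximum $x_{0}$ one has $Du(x_{0})=0$ and $D^{2}u(x_{0})\leqslant 0$, and the trace of the product of a positive-definite matrix with a negative-semidefinite one is $\leqslant 0$, forcing $Lu(x_{0})\geqslant 0$. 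Strictness is removed by replacing $u$ with $u_{\varepsilon}=u+\varepsilon e^{\gamma x_{1}}$ and choosing $\gamma$ so large, using uniform ellipticity and boundedness of the coefficients, that $L(e^{\gamma x_{1}})<0$ on $\bar{\Omega}$; letting $\varepsilon\downarrow 0$ gives $\sup_{\Omega}u=\sup_{\partial\Omega}u$, and the supersolution case follows by applying this to $-u$. To obtain (i), the case $c\geqslant 0$, restrict to the open set $\Omega^{+}=\{u>0\}$, on which $L_{0}u:=Lu-cu\leqslant Lu\leqslant 0$ with $L_{0}$ carrying no zeroth-order term, apply (ii) to $L_{0}$ on $\Omega^{+}$, and note that $u$ vanishes on the part of $\partial\Omega^{+}$ interior to $\Omega$, so that $\sup_{\Omega}u^{+}$ is attained on $\partial\Omega$; the infimum statement is the same argument for $-u$.

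For (iii), the strong maximum principle, the essential tool is Hopf's boundary point lemma: if $v$ is a subsolution in a ball $B\subset\Omega$ with $v<v(x_{0})$ inside $B$ and $v(x_{0})\geqslant 0$ at a point $x_{0}\in\partial B$ admitting an interior tangent ball, then the outward normal derivative of $v$ at $x_{0}$ is strictly positive. I would prove this by comparing $v$ on an annulus with the barrier $w=e^{-\mu r^{2}}-e^{-\mu R^{2}}$, where $r$ is the distance to the centre and $\mu$ is chosen large — again via uniform ellipticity and bounded coefficients — so that $Lw\leqslant 0$ on the annulus; then $v-v(x_{0})+\varepsilon w$ is a subsolution that is non-positive on the annulus boundary for small $\varepsilon$, hence non-positive inside, which yields the normal-derivative inequality. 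Granting Hopf's lemma, the strong maximum principle follows by a connectedness argument: were $u$ to attain a non-positive interior minimum $m$ without being constant, the set $\{u=m\}\cap\Omega$ would be a proper non-empty closed subset, so one could inscribe a ball in $\{u>m\}$ whose closure first touches $\{u=m\}$ at a single boundary point $x_{1}$; Hopf's lemma applied to $-u$ at $x_{1}$ forces $Du(x_{1})\neq 0$, contradicting that $x_{1}$ is an interior minimum. Hence $u\equiv m$, and the maximum statement is symmetric.

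For (iv) the plan is to replace pointwise evaluation by testing against Sobolev functions. Given $u\in H^{1}(\Omega,g)$ with $Lu\leqslant 0$ weakly and $c\geqslant 0$, I would test the weak inequality against $\phi=(u-k)^{+}$ for $k\geqslant\sup_{\partial\Omega}u^{+}$, so that $\phi\in H^{1}_{0}(\Omega,g)$; the lower-order terms are absorbed using Cauchy--Schwarz and the Sobolev embedding of Theorem~\ref{pre:thm2}, which is exactly where the ``other restrictions'' on $L$ enter (e.g.\ $b_{\beta}\in\calL^{q}$ with $q>n$ together with $c\geqslant 0$), leaving $\int_{\{u>k\}}\lvert\nabla u\rvert_{g}^{2}\,\dvol\leqslant 0$, so $(u-k)^{+}\equiv 0$ and $u\leqslant k$ a.e.; on a closed $M$ there is no boundary term and one argues with $k$ just above the relevant level. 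The strong version for $H^{1}$ solutions is then deduced, as in \cite[Ch.~8]{GT}, from the local boundedness and Harnack inequality for weak solutions established by Moser iteration, which I would invoke rather than reprove. The main obstacle — the only genuinely nontrivial input — is precisely the strong maximum principle: Hopf's boundary point lemma with its barrier construction in the $\calC^{2}$ setting, and the De Giorgi--Nash--Moser Harnack inequality underlying the $H^{1}$ setting; everything else is the elementary ``no interior extremum plus exponential barrier'' routine.
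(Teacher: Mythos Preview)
Your sketch is correct and faithfully reproduces the standard Gilbarg--Trudinger arguments. Note, however, that the paper itself does not prove this theorem: it is stated as a preliminary result with each part cited directly to \cite{GT} (Corollary~3.2, Theorem~3.1, Theorem~3.5, and Chapter~8 respectively), and no proof is given. So there is nothing in the paper to compare against beyond the citations, and your outline is precisely the content those citations point to.
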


Let's denote the conformal Laplacian by
\begin{equation}\label{pre:eqn11}
\Box_{g} u : = -a\Delta_{g} u + S_{g} u, u \in \calC_{c}^{\infty}(\Omega) \; {\rm or} \; u \in \calC^{\infty}(M).
\end{equation}
The next two results from Kazdan and Warner \cite{KW} for closed manifold, and from Escobar \cite{ESC} for compact manifold with boundary below, respectively, show that the sign of $ \lambda $ in the Yamabe equation is determined by the sign of $ \eta_{1} $. Here $ \eta_{1} $ is the first eigenvalue of conformal Laplacian with respect to appropriate boundary conditions, if necessary.
\begin{theorem}\label{pre:thm4}
(i) \cite[Lemma~2.5]{KW} Let $ (M, g) $ be a closed manifold. Let $ \eta_{1} $ be the first nonzero eigenvalue of the conformal Laplacian $ \Box_{g} $. Then the sign of $ \lambda $, the constant scalar curvature with respect to $ \tilde{g} = u^{p-2} g $, must be the same as the sign of $ \eta_{1} $.

(ii) \cite[Lemma~1.1]{ESC} Let $ (\bar{M}', g) $ be a compact manifold with boundary. Then the same conclusion above holds on $ (\bar{M}', g) $ with the boundary condition $ \frac{\partial u}{\partial \nu} + \frac{n - 2}{2} h_{g} u = 0 $, where $ h_{g} $ is the mean curvature with respect to $ \partial M' $.
\end{theorem}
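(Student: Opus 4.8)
The plan is to reduce both statements to one integration‑by‑parts identity obtained by pairing the (perturbed) Yamabe equation for $ u $ against the first eigenfunction of $ \Box_{g} $, and then to read off the sign of $ \lambda $ from the positivity of the two functions involved. So the first thing I would do, in the closed case (i), is produce a strictly positive first eigenfunction of $ \Box_{g} $. Let $ \eta_{1} $ be the lowest eigenvalue, characterized variationally by $ \eta_{1} \int_{M} u^{2} \dvol \leqslant \int_{M} \big( a |\nabla_{g} u|_{g}^{2} + S_{g} u^{2} \big) \dvol $ for all $ u \in H^{1}(M, g) $, with equality for $ u = \varphi_{1} $; since $ |\nabla_{g}|\varphi_{1}||_{g} = |\nabla_{g} \varphi_{1}|_{g} $ a.e., $ |\varphi_{1}| $ is again a minimizer, so I may take $ \varphi_{1} \geqslant 0 $. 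Elliptic regularity (Theorem \ref{pre:thm1}) gives $ \varphi_{1} \in \calC^{\infty}(M) $, and choosing a constant $ C $ with $ S_{g} - \eta_{1} + C \geqslant 0 $ on $ M $ rewrites the eigenvalue equation as $ \big( -a\Delta_{g} + (S_{g} - \eta_{1} + C) \big) \varphi_{1} = C \varphi_{1} \geqslant 0 $, so the strong maximum principle (Theorem \ref{pre:thm3}) forbids an interior zero; hence $ \varphi_{1} > 0 $ on $ M $, and the same reasoning shows $ \eta_{1} $ is simple.

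Next I would carry out the pairing. Suppose $ u \in \calC^{\infty}(M) $, $ u > 0 $, solves $ \Box_{g} u = \lambda u^{p-1} $; equivalently, by (\ref{intro:eqn1}), $ \tilde{g} = u^{p-2} g $ has constant scalar curvature $ \lambda $. Since $ \Box_{g} $ is formally self‑adjoint, Green's second identity on the closed manifold gives $ \int_{M} (\Box_{g} u) \varphi_{1} \dvol = \int_{M} u (\Box_{g} \varphi_{1}) \dvol $, i.e. $ \lambda \int_{M} u^{p-1} \varphi_{1} \dvol = \eta_{1} \int_{M} u \varphi_{1} \dvol $. Both integrals are strictly positive because $ u > 0 $ and $ \varphi_{1} > 0 $, so $ \lambda $ and $ \eta_{1} $ have the same sign, and in particular one vanishes exactly when the other does; this is (i).

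For (ii) I would repeat the same two steps on $ (\bar{M}', g) $ using the conformally covariant boundary operator. Here the eigenvalue problem is $ \Box_{g} \varphi_{1} = \eta_{1} \varphi_{1} $ in $ M' $ with $ \partial_{\nu} \varphi_{1} + \tfrac{n-2}{2} h_{g} \varphi_{1} = 0 $ on $ \partial M' $; the associated quadratic form acquires a boundary term proportional to $ \int_{\partial M'} h_{g} u^{2} $, its minimizer $ \varphi_{1} $ is positive by the boundary strong maximum principle, and $ u $ — being the conformal factor of a metric with prescribed (e.g. minimal) boundary — satisfies the same Robin condition. In Green's identity the extra term is $ -a \int_{\partial M'} (\varphi_{1} \partial_{\nu} u - u \partial_{\nu} \varphi_{1}) $, and substituting $ \partial_{\nu} u = -\tfrac{n-2}{2} h_{g} u $ and $ \partial_{\nu} \varphi_{1} = -\tfrac{n-2}{2} h_{g} \varphi_{1} $ makes this integrand vanish identically; after that, the computation of the previous paragraph applies verbatim and yields (ii).

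The step I expect to be the only nontrivial one is the positivity of $ \varphi_{1} $: it relies on the reduction to a nonnegative minimizer via $ |\nabla_{g}|\varphi_{1}||_{g} = |\nabla_{g}\varphi_{1}|_{g} $ together with the constant‑shift trick that makes the strong maximum principle applicable, and this is also what gives the simplicity of $ \eta_{1} $. In the boundary case the extra point to be careful about is matching the boundary operator in the eigenproblem to the boundary condition actually obeyed by $ u $, so that the boundary contributions in Green's identity cancel identically rather than merely being estimated; had $ u $ instead satisfied a Dirichlet‑type condition on $ \partial M' $, one would pair it against a Dirichlet eigenfunction and get the cancellation from $ \varphi_{1}|_{\partial M'} = 0 $.
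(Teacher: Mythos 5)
Your proof is correct and is precisely the standard argument of the cited sources: the paper states Theorem \ref{pre:thm4} as a quoted result of Kazdan--Warner and Escobar without reproving it, and your pairing of $\Box_{g}u=\lambda u^{p-1}$ against a positive first eigenfunction (with the Robin boundary terms cancelling in case (ii)) is exactly how those lemmas are established. The only ancillary ingredient, positivity of $\varphi_{1}$, is handled by the usual $|\varphi_{1}|$-minimizer plus constant-shift maximum-principle argument, which is also what the paper invokes in Remark \ref{pre:re1}.
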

\begin{remark}\label{pre:re1}
Note that $ \eta_{1} $ satisfies the PDE
\begin{equation*}
-a\Delta_{g} \varphi + S_{g} \varphi = \eta_{1} \varphi.
\end{equation*}
It is well-known that the first eigenfunction $ \varphi > 0 $ on closed manifolds $ M $, or on $ \bar{M}' $ on compact manifold with smooth boundary and the same boundary condition in Theorem \ref{pre:thm4}(ii).
\end{remark}

The key step that converts our local solutions on Riemannian domain to manifolds is the application of monotone iteration method, in terms of obtaining solutions of Yamabe equation/perturbed Yamabe equation provided the existences of subsolution and supersolution. Here we cite a result due to Kazdan and Warner, see \cite{KW2} and \cite{KW}.
\begin{theorem}\label{pre:thm5}\cite[Lemma~2.6]{KW} Let $ (M, g) $ be a closed manifold with $ \dim M \geqslant 3 $. Let $ h, H \in \calC^{\infty}(M) $ for some $ p > n = \dim M $. Let $ m > 1 $ be a constant. If there exists functions $ u_{-}, u_{+} \in \calC_{0}(M) \cap H^{1}(M, g) $ such that
\begin{equation}\label{pre:eqn12}
\begin{split}
-a\Delta_{g} u_{-} + hu_{-} & \leqslant Hu_{-}^{m} \; {\rm in} \; (M, g); \\
-a\Delta_{g} u_{+} + hu_{+} & \geqslant Hu_{+}^{m} \; {\rm in} \; (M, g),
\end{split}
\end{equation}
hold weakly, with $ 0 \leqslant u_{-} \leqslant u_{+} $ and $ u_{-} \not\equiv 0 $, then there is a $ u \in W^{2, p}(M, g) $ satisfying
\begin{equation}\label{pre:eqn13}
-a\Delta_{g} u + hu = Hu^{m} \; {\rm in} \; (M, g).
\end{equation}
In particular, $ u \in \calC^{\infty}(M) $.
\end{theorem}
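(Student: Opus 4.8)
\emph{Proof proposal.} The plan is to run a classical monotone iteration between $u_-$ and $u_+$. Since $u_+ \in \calC_0(M)$ and $M$ is compact, $M_0 := \sup_M u_+ < \infty$. Choose a constant $K > 0$ so large that $h + K > 0$ everywhere on $M$ and so that the nonlinearity $F(x,t) := H(x)t^m + Kt$ is nondecreasing in $t$ on $[0, M_0]$; since $m > 1$ and $H$ is bounded, $\partial_t F(x,t) = m H(x) t^{m-1} + K \geqslant K - m\lVert H\rVert_{\calL^\infty} M_0^{m-1}$, so taking $K > \lVert h\rVert_{\calL^\infty} + m\lVert H\rVert_{\calL^\infty} M_0^{m-1}$ suffices. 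With this $K$ the operator $\mathcal{A} := -a\Delta_g + (h + K)$ is coercive on $H^1(M,g)$, so for every $w \in \calL^2(M,g)$ the equation $\mathcal{A}v = w$ has a unique weak solution $v \in H^1(M,g)$, which lies in $H^2(M,g)$ by Theorem~\ref{pre:thm1}(iii); moreover $\mathcal{A}$ obeys the weak maximum principle of Theorem~\ref{pre:thm3}(iv) since its zeroth-order coefficient $h+K$ is positive.

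Next I would set up the iteration: let $u_0 := u_+$ and, inductively, let $u_{k+1} \in H^1(M,g)$ be the unique weak solution of
\[
-a\Delta_g u_{k+1} + (h + K)u_{k+1} = H u_k^m + K u_k \quad \text{in } (M,g),
\]
which is well defined because the right-hand side is bounded, hence in $\calL^2(M,g)$, once we know $0 \leqslant u_k \leqslant M_0$. I claim $0 \leqslant u_- \leqslant u_{k+1} \leqslant u_k \leqslant \cdots \leqslant u_0 = u_+$ for all $k$, proved by induction via the maximum principle. Adding $Ku_+$ to the supersolution inequality for $u_0 = u_+$ and subtracting the equation for $u_1$ gives $\mathcal{A}(u_0 - u_1) \geqslant 0$ weakly, hence $u_1 \leqslant u_0$; adding $Ku_-$ to the subsolution inequality for $u_-$, using $F(\cdot, u_-) \leqslant F(\cdot, u_0)$ (monotonicity of $F$ on $[0,M_0]$, valid since $0 \leqslant u_- \leqslant u_0 \leqslant M_0$), and subtracting the equation for $u_1$ gives $\mathcal{A}(u_1 - u_-) \geqslant 0$, hence $u_1 \geqslant u_-$. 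The inductive step is identical: from $u_- \leqslant u_k \leqslant u_{k-1} \leqslant M_0$ and monotonicity of $F$ one gets $\mathcal{A}(u_k - u_{k+1}) \geqslant 0$ and $\mathcal{A}(u_{k+1} - u_-) \geqslant 0$, so $u_- \leqslant u_{k+1} \leqslant u_k$. Therefore $\{u_k(x)\}$ is decreasing and bounded below by $u_-(x) \geqslant 0$, so it converges pointwise to a limit $u$ with $u_- \leqslant u \leqslant u_+$; in particular $u \not\equiv 0$ since $u_- \not\equiv 0$.

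Then I would upgrade the convergence and pass to the limit. Because $0 \leqslant u_k \leqslant M_0$ for all $k$, the right-hand sides $H u_k^m + K u_k$ are uniformly bounded in $\calL^\infty(M,g)$, hence in $\calL^p(M,g)$ for every $p$; applying the $W^{2,p}$ elliptic estimate of Theorem~\ref{pre:thm1}(iii) to $u_{k+1}$, after moving the lower-order term $(h+K)u_{k+1}$ to the right-hand side (again uniformly bounded), yields $\sup_k \lVert u_k \rVert_{W^{2,p}(M,g)} < \infty$. Fixing $p > n$ and using the Sobolev embedding $W^{2,p} \hookrightarrow \calC^{1,\alpha}$ from Theorem~\ref{pre:thm2}(iii), the sequence is precompact in $\calC^1(M)$; combined with the pointwise monotone convergence, $u_k \to u$ in $\calC^1(M)$ and weakly in $W^{2,p}(M,g)$, so $u \in W^{2,p}(M,g)$ and, passing to the limit in the weak formulation, $u$ solves $-a\Delta_g u + (h+K)u = Hu^m + Ku$, i.e.\ $-a\Delta_g u + hu = Hu^m$. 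Finally, a standard bootstrap using the Schauder estimates of Theorem~\ref{pre:thm1} — $Hu^m \in \calC^{1,\alpha}$ forces $u \in \calC^{3,\alpha}$, and iterating with the smoothness of $h, H$ — gives $u \in \calC^\infty(M)$.

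The main obstacle is the monotonicity bookkeeping: one must fix $K$ once and for all so that $F(x,t) = H(x)t^m + Kt$ is nondecreasing on the \emph{entire} interval $[0, \sup_M u_+]$, and then verify that every iterate remains in that interval, so the comparison arguments keep applying at each step. This is precisely where the hypotheses $m > 1$ and $u_+ \in \calC_0(M)$ (so $\sup_M u_+ < \infty$) enter; without them a single linear shift need not tame the nonlinearity. Everything else — solvability of the linear problems, the maximum principle, the elliptic and Sobolev estimates — is furnished by the results already recorded in \S2.
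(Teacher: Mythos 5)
Your proposal follows essentially the same route as the paper: this theorem is quoted from Kazdan--Warner, and the paper's own account of the argument (Remark \ref{pre:re2}) is exactly your monotone iteration $-a\Delta_g u_{k+1} + k u_{k+1} = -f(x,u_k) + k u_k$ with $f(x,u) = hu - Hu^m$ and $k$ chosen as a supremum of $\partial f/\partial u$ over the ordered interval, which is the same device as your requirement that $F(x,t) = H(x)t^m + Kt$ be nondecreasing on $[0,\sup_M u_+]$. The comparison step via testing $\mathcal{A}w \leqslant 0$ against $w = \max\{0, v-u\}$, the uniform $W^{2,p}$ bounds, and the passage to the limit all match the paper's sketch.

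There is one genuine, if small, gap at the very end. Your bootstrap ``$Hu^m \in \calC^{1,\alpha}$ forces $u \in \calC^{3,\alpha}$, and iterating\dots gives $u \in \calC^\infty$'' silently assumes that $t \mapsto t^m$ composes smoothly with $u$. For the exponent relevant here, $m = p-1 = \frac{n+2}{n-2}$, this is a non-integer for $n \geqslant 5$, and $t^m$ is only finitely differentiable at $t = 0$; so the iteration of Schauder estimates stalls at any zero of $u$. You only record $u \not\equiv 0$, which is not enough. The missing step is to show $u > 0$ everywhere \emph{before} bootstrapping: since $u \geqslant u_- \geqslant 0$ with $u_- \not\equiv 0$, and $u$ satisfies $-a\Delta_g u + \tilde{M} u \geqslant 0$ for $\tilde{M} = \max\{h - Hu^{m-1}, 0\}$, the strong maximum principle (Theorem \ref{pre:thm3}(iii)) shows that $u(x_0) = 0$ at an interior point would force $u \equiv 0$, a contradiction. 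This is precisely the argument in Remark \ref{pre:re2}(ii); once $u > 0$, the right-hand side $Hu^m$ is as smooth as $u$ allows and your bootstrap closes. Everything else in your write-up is correct.
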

\begin{remark}\label{pre:re2}
(i)In the proof of Kazdan and Warner \cite{KW2}, \cite{KW}, they constructed the monotone iteration steps as
\begin{equation*}
-a\Delta_{g} u_{j + 1} + k u_{j +1} = -f(x, u_{j}) + ku_{j}, j \in \mathbb{Z}_{\geqslant 0}.
\end{equation*}
where
\begin{equation*}
f(x, u) : = hu - Hu^{m}, k = \sup_{x \in M, u_{-}(x) \leqslant u(x) \leqslant u_{+}(x)} \frac{\partial f(x, u)}{\partial u}, k > 0, u_{0} : = u_{+}.
\end{equation*}
The requirement of $ u_{-}, u_{+} \in W^{2, p}(M, g) $ for $ p $ large enough implies that $ u_{-}, u_{+} \in \calC_{0}(M) $, due to Sobolev embedding in Theorem \ref{pre:thm2}(iii). Starting with $ u_{0} = u_{+} $, the function $ f(x, u_{0}) \in \calL^{p}(M, g) $, hence $ u_{1} \in W^{2, p}(M, g) $ by elliptic regularity. Inductively with  $ f(x, u_{j}) \in \calL^{p}(M, g) $, elliptic regularity in Theorem \ref{pre:thm1}(iii) shows that $ u_{j + 1} \in W^{2, p}(M, g) $, hence $ u_{j + 1} \in \calC_{0}(M) $ and thus $ f(x, u_{j + 1}) \in \calL^{p}(M, g) $ by the same Sobolev embedding argument. It follows that Theorem \ref{pre:thm5} still holds if we start with $ u_{-}, u_{+} \in \calC_{0}(M) $. In addition, $ u_{-}, u_{+} \in H^{1}(M, g) $ are required, since then the sub-solution and super-solution at least hold in the weak sense and a maximum principle in weak sense applies. We see this by claiming that $ (-a\Delta_{g} + k) (u - v) \geqslant 0 $ with $ u, v \in H^{1}(M) \cap \calC_{0}(M) $ implies $ u \geqslant v $. We choose $ w = \max \lbrace 0, v - u \rbrace $, then $ w \in H^{1}(M) $ and $ w \geqslant 0 $. Observe also that $ (-a\Delta_{g} + k) w \leqslant 0 $. Thus
\begin{equation*}
0 \geqslant \int_{M} w (-a\Delta_{g} w + kw) d\omega = \int_{M} \left( a \lvert \nabla_{g} w \rvert^{2} + k w^{2} \right) d\omega \geqslant 0 \Rightarrow w = 0.
\end{equation*}
Thus we must have $ u \geqslant v $.
\medskip

(ii)The relation $ u_{-} \leqslant \dotso \leqslant u_{j + 1} \leqslant u_{j} \leqslant \dotso \leqslant u_{+} $ is proved by taking subtraction of two adjacent iterations above, applying mean value theorem and maximum principle in $ H^{1} $-spaces. It then follows from \cite{KW} that $ u \in W^{2, p}(M, g) $ solves (\ref{pre:eqn13}). Therefore $ u \in \calC^{1, \alpha}(M) $ by Sobolev embedding. When $ h, H \in \calC_{0}(M) $, the requirement of $ u_{-} > 0 $ can be replaced by 
\begin{equation*}
u_{-} \geqslant 0 \; {\rm and}\; u_{-} \not\equiv 0.
\end{equation*}
If $ u_{-} $ satisfies the restrictions above, it follows that $ u $ has the same property. Choosing $ M = \max_{x \in M} \lbrace h - Hu^{m-1}, 0 \rbrace $, (\ref{pre:eqn13}) implies that
\begin{equation*}
-a\Delta_{g} u + M u \geqslant - a\Delta_{g} u + \left(h - Hu^{m - 1} \right)u = 0.
\end{equation*}
By strong maximum principle in Theorem \ref{pre:thm3}, we conclude that if $ u(x_{0}) = 0 $ for some $ x_{0} \in M $ then $ u \equiv 0 $, contradicting to $ u \geqslant u_{-} $ and $ u_{-} \not\equiv 0 $. Hence $ u > 0 $ on $ M $.
\medskip

(iii) In later sections, Theorem \ref{pre:thm5} are used by taking $ h = S_{g} $ or $ h = S_{g} + \beta $ with some $ \beta < 0 $, $ H = \lambda $ or $ H = \lambda - \kappa $ with some $ \kappa > 0 $ and $ m = p - 1 = \frac{n +2}{n - 2} $ with $ u_{-}, u_{+} \in \calC_{0}(M) \cap H^{1}(M, g) $.
\end{remark}

\begin{theorem}\label{pre:thm6}\cite[Thm. 7]{LY} Let $ (\bar{M}, g) $ be a compact manifold with smooth boundary, let $ r_{inj} $ be the injectivity radius of $ M $, and let $ h_{g} $ be the minimum of the mean curvature of $ \partial M $. Choose $ K \geq 0$  such that  $ Ric_{g} \geqslant -(n - 1)K $. If $ \lambda_{1} $ is the first nonzero eigenvalue
\begin{equation*}
-\Delta_{g} u = \lambda_{1} u \; {\rm in} \; M', u \equiv 0 \; {\rm on} \; \partial M'.
\end{equation*}
Then
\begin{equation}\label{pre:eqn14}
\lambda_{1} \geqslant \frac{1}{\gamma} \left( \frac{1}{4(n - 1) r_{inj}^{2}} \left( \log \gamma \right)^{2} - (n - 1)K \right),
\end{equation}
where
\begin{equation}\label{pre:eqn15}
\gamma = \max \left\lbrace \exp[{1 + \left( 1 - 4(n - 1)^{2} r_{inj}^{2}K \right)^{\frac{1}{2}}}], \exp[{-2(n - 1)h_{g} r_{inj}} ] \right\rbrace.
\end{equation}
\end{theorem}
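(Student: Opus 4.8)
\emph{Proof proposal.} Theorem \ref{pre:thm6} is quoted verbatim from Li--Yau \cite[Thm.~7]{LY}, so the plan is to reproduce their gradient-estimate argument; I outline the steps. Let $u$ be a first Dirichlet eigenfunction, $-\Delta_g u = \lambda_1 u$ in $M'$ and $u \equiv 0$ on $\partial M'$; by the usual argument (cf.\ Theorem \ref{pre:thm3}) we may take $u > 0$ in $M'$ and normalize $\sup_{\bar M'} u = 1$. Introduce a free parameter $\beta > 1$, to be optimized at the end, and set $f := \log(\beta - u)$ and $\phi := |\nabla f|_g^2$. A direct computation gives $\Delta_g f = \lambda_1 u/(\beta - u) - \phi$, and the whole proof amounts to an upper bound of the shape $\sup_{\bar M'} \phi \le C(\beta,n)\big(\lambda_1 + (n-1)K\big)$, to be matched at the end against the elementary lower bound for $\sup_{\bar M'} \sqrt{\phi}$ obtained by integrating $f$.

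\emph{Interior maximum.} Suppose first that $\phi$ attains its maximum over $\bar M'$ at an interior point $p$. Apply the Bochner formula $\tfrac12 \Delta_g \phi = |\nabla^2 f|_g^2 + \langle \nabla f, \nabla(\Delta_g f)\rangle_g + Ric_g(\nabla f, \nabla f)$ at $p$. Since $\nabla \phi(p) = 0$, the vector $\nabla f(p)$ is a null eigendirection of the Hessian of $f$, which forces the refined inequality $|\nabla^2 f|_g^2 \ge (\Delta_g f)^2/(n-1)$ at $p$ — this is precisely the step that produces the constant $n-1$. Combining this with $\Delta_g \phi(p) \le 0$, $Ric_g \ge -(n-1)K$, and with $\nabla\phi(p)=0$ used to simplify $\langle\nabla f,\nabla(\Delta_g f)\rangle_g$, one is left with a quadratic inequality at $p$ relating $\phi(p)$, $\lambda_1$, $K$, and $t := u(p)/(\beta - u(p)) \in [0,\tfrac1{\beta-1}]$; solving it gives $\phi(p) \le C_1(\beta,n)\big(\lambda_1 + (n-1)K\big)$ with $C_1$ explicit.

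\emph{Boundary maximum.} Otherwise $\phi$ is maximized at some $q \in \partial M'$, where $\partial_\nu \phi(q) \ge 0$ for the outward unit normal $\nu$. Because $u \equiv 0$ on $\partial M'$, the tangential part of $\nabla f$ vanishes there, so $\phi = f_\nu^2$ and $\partial_\nu \phi = 2 f_\nu\, \nabla^2 f(\nu,\nu)$ along $\partial M'$; moreover $u\equiv 0$ together with the eigenvalue equation gives a relation of the form $u_{\nu\nu} = -(n-1)h_g u_\nu$ on $\partial M'$, by which $\nabla^2 f(\nu,\nu)$ is expressed through $h_g$. Plugging this in, the inequality $\partial_\nu\phi(q)\ge 0$ becomes a sign condition that is impossible unless $h_g < 0$, and in that case it bounds $\sqrt{\phi(q)}$ by a constant multiple of $-(n-1)h_g$. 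Hence in all cases $\sup_{\bar M'}\sqrt{\phi} \le \max\{\text{interior bound},\ \text{boundary bound}\}$, and tracking the normalizations is what makes the two competing exponentials in $\gamma$ of \eqref{pre:eqn15} appear.

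\emph{Integration and optimization; main obstacle.} Let $p_0$ realize $u(p_0)=1$ and choose a point $q_0$ with $u(q_0)$ suitably small, joined to $p_0$ by a minimizing geodesic $\sigma \subset \bar M'$ of length $\le r_{inj}$; then $\big|\log(\beta-u(q_0)) - \log(\beta-1)\big| \le \big(\sup_{\bar M'}\sqrt\phi\big)\, r_{inj}$, so $\sup_{\bar M'}\sqrt\phi \gtrsim r_{inj}^{-1}\log\tfrac{\beta}{\beta-1}$. Feeding in the upper bounds from the two previous steps, rearranging for $\lambda_1$, and optimizing over $\beta$ (equivalently over $\beta/(\beta-1)$, whose admissible range is governed by $\gamma$) yields \eqref{pre:eqn14}. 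The two delicate points — and the main obstacle — are: (1) the boundary computation, where the Dirichlet condition and the eigenvalue equation must be used to trade the transverse second derivative of $f$ for the mean curvature $h_g$ with the correct sign, so that the boundary alternative genuinely contributes the factor $\exp[-2(n-1)h_g r_{inj}]$; and (2) the selection in the last step of a short geodesic along which $u$ actually drops, which is what forces $r_{inj}$ (and, through the geometry of $\partial M'$, $h_g$ again) into the bound. The remainder is the standard Bochner–maximum-principle bookkeeping, carried out in \cite{LY}.
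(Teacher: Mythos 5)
This theorem is quoted from Li--Yau \cite{LY} and the paper offers no proof of its own, so there is nothing internal to compare against; your outline correctly reproduces the gradient-estimate strategy of the cited source (the test function $\log(\beta - u)$, the Bochner/maximum-principle dichotomy between an interior and a boundary maximum of $|\nabla \log(\beta-u)|_g^2$ with the mean curvature entering through the transverse second derivative on $\partial M'$, and the final integration along a short geodesic followed by optimization in $\beta$). The quantitative bookkeeping that actually produces the explicit constants in (\ref{pre:eqn14})--(\ref{pre:eqn15}) is left as a placeholder in your sketch, but the approach is the right one and is exactly the one carried out in \cite{LY}.
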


\begin{remark}\label{pre:re3}
We apply Theorem \ref{pre:thm6} above for Riemannian domain $ (\Omega, g) $, for which $ (\bar{\Omega}, g) $ is considered to be a compact manifold with boundary. In Riemannian normal coordinates centered at $ P \in \Omega$, $g$ agrees with the Euclidean metric up to terms of order $O(r^2)$. Thus if $\Omega$ is a $g$-geodesic ball of radius $r$, the mean curvature of $\partial\Omega$ is close $(n-1)/r$, the mean curvature of a Euclidean $r$-ball in $\R^n$.  In (\ref{pre:eqn15}), as $r\to 0$,  $K$ can be taken to be unchanged (since $g$ is independent of $r$), $r_{inj}\to 0$, and $h\cdot r_{inj}\to n-1$. Thus $\gamma\to e^2$,  the right hand side of (\ref{pre:eqn14}) goes to infinity as $r\to 0$, and $\lambda_1\to\infty. $ If $\Omega$ is a general, small enough Riemannian domain with a small enough injectivity radius, then $\Omega$ sits inside a $g$-geodesic ball $\Omega''$ of small radius.  By the Rayleigh quotient characterization of $\lambda_1$, we have $\lambda_{1,\Omega''} \leqslant \lambda_{1,\Omega} $.  Thus for all Riemannian domains $(\Omega,g)$, $ \lambda_{1}^{-1} \rightarrow 0 $ as the radius of $\Omega$ goes to zero.
\end{remark}

\section{The Local Analysis of Yamabe-Type Equation}
In this section, we consider the solvability of the Yamabe equation
\begin{equation}\label{local:eqn1}
\begin{split}
-a\Delta_{g} u + \left(S_{g} + \beta \right) u & = \lambda u^{p-1} \; {\rm in} \; (\Omega, g); \\
u & \equiv c \; {\rm on} \; \partial \Omega
\end{split}
\end{equation}
with some constant $ c > 0 $ and $ \beta = 0 $ in Proposition \ref{local:prop1} and \ref{local:prop2}, due to an iterative scheme from \cite{XU2}. We then consider the case $ c = 0 $ and $ \beta < 0 $ in Proposition \ref{local:prop3}, due to a local analysis in calculus of variations. The small enough Riemannian domain $ (\Omega, g) $ is chosen to be a subset of the closed manifold $ (M, g) $. Note that all results are dimensional specific, i.e. different dimensions of manifolds may lead to different choices of $ \lambda $.
\medskip

In this section, we introduce local analysis of Yamabe equation/perturbed Yamabe equation for (i) $ S_{g} < 0 $ everywhere and $ \lambda < 0 $ in Proposition \ref{local:prop1}; (ii) $ S_{g} > 0 $ somewhere and $ \lambda < 0 $ in Proposition \ref{local:prop2}; (iii) $ S_{g} < 0 $ somewhere and $ \lambda > 0 $ in Proposition \ref{local:prop3}. In the proofs of Proposition \ref{local:prop1} and \ref{local:prop2}, we refer to Theorem 2.3, Theorem 2.8 and Theorem 3.1 in \cite{XU2}.
\medskip

The first result is for the case $ S_{g} < 0 $ everywhere and $ \lambda < 0 $. Note that $ S_{g} < 0 $ implies $ \int_{M} S_{g} < 0 $, hence $ \eta_{1} $ in (\ref{pre:eqn11}) is negative. Therefore by Theorem \ref{pre:thm4} we search for the solution of Yamabe equation with $ \lambda < 0 $.
\begin{proposition}\label{local:prop1}\cite{XU2}
Let $ (\Omega, g) $ be Riemannian domain in $\R^n$, $ n \geqslant 3 $, with $C^{\infty} $ boundary, and with ${\rm Vol}_g(\Omega)$ and the Euclidean diameter of $\Omega$ sufficiently small. Assume $ S_{g} < 0 $ on $ \bar{\Omega} $. Then for some $ \lambda < 0 $, (\ref{local:eqn1}) with $ \beta = 0 $ has a real, positive, smooth solution $ u \in \calC^{\infty}(\Omega) $ such that $ \inf_{x \in \bar{\Omega}} u = c > 0 $. 
\end{proposition}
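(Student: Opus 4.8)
The plan is to solve the Dirichlet problem \eqref{local:eqn1} with $\beta = 0$, $c > 0$ by a monotone iteration scheme on the small Riemannian domain $(\Omega, g)$, following the strategy of \cite{XU2}. Fix $\lambda < 0$ to be chosen. The natural candidates for a super-solution and sub-solution are constant functions: since $S_g < 0$ on $\bar\Omega$, for a large constant $A > 0$ we have $-a\Delta_g A + S_g A = S_g A < 0 < \lambda A^{p-1}$ only if $\lambda > 0$, so constants do not work directly. Instead I would look for the solution near a positive constant by first solving the \emph{linear} problem: let $w$ solve $-a\Delta_g w + S_g w = 0$ in $\Omega$, $w \equiv c$ on $\partial\Omega$. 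Because $\mathrm{Vol}_g(\Omega)$ and the diameter are sufficiently small, the first Dirichlet eigenvalue $\lambda_1$ of $-\Delta_g$ blows up (Remark \ref{pre:re3}), so the operator $-a\Delta_g + S_g$ is invertible with trivial boundary data and the maximum principle (Theorem \ref{pre:thm3}, using $S_g < 0$ after shifting, or directly via the sign of the zeroth-order term controlled by the spectral gap) forces $w$ to be close to $c$ in $\calC^0$, say $\tfrac{c}{2} \leqslant w \leqslant 2c$, by elliptic estimates (Theorem \ref{pre:thm1}) together with the smallness of $\Omega$.

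Next I would set up the iteration for the nonlinear problem. Define $f(x,u) = -S_g u + \lambda u^{p-1}$ and, on the order interval $[\frac{c}{4}, 3c]$, choose $k = \sup |\partial_u(-f)|$ so that $u \mapsto f(x,u) + ku$ is monotone; then iterate $-a\Delta_g u_{j+1} + k u_{j+1} = f(x,u_j) + k u_j$ in $\Omega$, $u_{j+1} \equiv c$ on $\partial\Omega$. For the sub-solution take $u_- \equiv \delta$ for a small constant $\delta \in (0, c)$: since $S_g < 0$, $-a\Delta_g \delta + S_g\delta = S_g\delta < 0$, and we need this $\leqslant \lambda \delta^{p-1}$; as $\lambda < 0$ this requires $S_g \delta \leqslant \lambda\delta^{p-1}$, i.e. $\delta^{p-2} \leqslant \lambda/S_g$ (a positive ratio), which holds for $\delta$ small. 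For the super-solution take $u_+ = w + \sigma$ where $\sigma > 0$ is a constant chosen with $S_g \sigma \geqslant \lambda u_+^{p-1}$ possible since the right side is negative and $S_g\sigma < 0$ — one checks $-a\Delta_g u_+ + S_g u_+ = S_g(w+\sigma) \geqslant \lambda(w+\sigma)^{p-1}$ reduces, using $w \leqslant 2c$, to an inequality among constants that holds by enlarging $\sigma$; and $u_+ \geqslant c \geqslant \delta = u_-$ on $\bar\Omega$, with $u_+ \geqslant c$ on $\partial\Omega$ matching the boundary data as needed for the ordered-iteration argument. The weak maximum principle (Theorem \ref{pre:thm3}(iv)) then gives $u_- \leqslant u_{j+1} \leqslant u_j \leqslant u_+$, the sequence converges in $H^1$, and elliptic regularity (Theorem \ref{pre:thm1}) bootstraps the limit $u$ to $\calC^\infty(\Omega)$ solving \eqref{local:eqn1}. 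Finally $\inf_{\bar\Omega} u = c$: on $\partial\Omega$, $u = c$; in the interior, $-a\Delta_g u + (S_g - \lambda u^{p-2})u = 0$ with $S_g - \lambda u^{p-2} < 0$, so by the strong maximum principle $u$ cannot attain an interior minimum below its boundary value unless constant, forcing $u \geqslant c$, and combined with $u \leqslant u_+$ one adjusts $\sigma$ so that in fact $\inf u = c$ is attained exactly on the boundary.

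The main obstacle is ensuring that the constructed sub- and super-solutions are genuinely \emph{ordered}, $u_- \leqslant u_+$, \emph{and} compatible with the fixed boundary value $c$ simultaneously, while keeping the order interval $[\delta, 3c]$ inside a range on which the iteration's shift constant $k$ is uniformly controlled — all of this hinges on the smallness of $(\Omega, g)$, which is exactly what makes the linear solver $w$ close to the constant $c$ and the zeroth-order coefficient manageable. I expect this is where \cite{XU2}'s precise quantitative estimates (their Theorems 2.3, 2.8, 3.1), invoked in the proposition statement, do the real work; the remaining steps are the now-standard monotone iteration (Theorem \ref{pre:thm5}-type argument adapted to the Dirichlet setting) plus elliptic bootstrapping.
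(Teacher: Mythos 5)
Your route is genuinely different from the paper's. The paper does not build sub- and super-solutions for the local problem at all: it runs the two-step iteration of \cite{XU2}, $au_k - a\Delta_g u_k = au_{k-1} - S_g u_{k-1} + \lambda u_{k-1}^{p-1}$ with $u_k \equiv c$ on $\partial\Omega$, starting from a linear solve with a chosen source $f_0$, and propagates uniform bounds $\lVert u_k\rVert_{H^2}\leqslant 1$, $0 < u_k \leqslant C_{M_n}$ by induction; the constant $\lambda$ is then fixed so that $\lvert\lambda\rvert C_{M_n}^{p-2}\leqslant \inf(-S_g)$, which keeps the right-hand side nonnegative at every step and gives $\inf_{\bar\Omega}u_k = c$ for all $k$ via the maximum principle applied to $au_k - a\Delta_g u_k \geqslant a u_{k-1}$. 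Your Kazdan--Warner-style ordered iteration between $u_-$ and $u_+$ is a legitimate alternative and would produce \emph{a} positive smooth solution, but it misses the quantitative coupling between $\lambda$ and the $\calC^0$ bound on the solution that the paper's scheme is designed to deliver.

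That omission creates a genuine gap at the last step, which is not cosmetic because $\inf_{\bar\Omega}u = c$ is exactly what Theorem \ref{manifold:thm1} later uses (it extends $u$ by the constant $c$ and needs $u \geqslant c$ for $\max\{u,c\}$ to be the solution inside $\Omega$). Your sub-solution is $u_-\equiv\delta$ with $\delta$ \emph{small}, so the iteration only yields $u\geqslant\delta$, not $u\geqslant c$. Your fallback argument writes the equation as $-a\Delta_g u + (S_g-\lambda u^{p-2})u=0$ and asserts $S_g - \lambda u^{p-2} = S_g + \lvert\lambda\rvert u^{p-2}<0$; but this requires $u^{p-2}\leqslant \inf_{\bar\Omega}(-S_g)/\lvert\lambda\rvert$, while your super-solution $u_+=w+\sigma$ forces $\sigma^{p-2}\geqslant \sup_{\bar\Omega}(-S_g)/\lvert\lambda\rvert$ — so the only upper bound you control on $u$ lies \emph{above} the threshold where the sign argument works, and "adjusting $\sigma$" cannot resolve this since $\sigma$ must be large for $u_+$ to be a super-solution. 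The repair is easy and brings you back in line with the paper's choice of $\lambda$: once $\lvert\lambda\rvert c^{p-2}\leqslant\inf_{\bar\Omega}(-S_g)$, the constant $c$ itself is a sub-solution ($S_g c\leqslant\lambda c^{p-1}$), so take $u_-\equiv c$; then $u\geqslant c$ throughout and $u=c$ on $\partial\Omega$ give $\inf_{\bar\Omega}u=c$ immediately. (Two small slips worth noting: your sub-solution condition should read $\delta^{p-2}\leqslant S_g/\lambda$, not $\lambda/S_g$, and $-a\Delta_g u_+ + S_g u_+ = S_g\sigma$, not $S_g(w+\sigma)$; neither affects the conclusions there.)
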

\begin{proof} As in \cite{XU2}, this result is proved by applying the following iterative scheme
\begin{equation}\label{local:eqn2}
\begin{split}
au_{0} - a\Delta_{g} u_{0} & = f_{0} \; {\rm in} \; (\Omega, g), u_{0} \equiv c \; {\rm on} \; \partial \Omega; \\
au_{k} - a\Delta_{g} u_{k} & = a u_{k - 1} - S_{g} u_{k - 1} + \lambda u_{k - 1}^{p-1}  \; {\rm in} \; (\Omega, g), u_{k} \equiv c \; {\rm on} \; \partial \Omega, k \in \mathbb{N}.
\end{split}
\end{equation}
Setting $ \tilde{u}_{k} = u_{k} - c, k \in \mathbb{Z}_{\geqslant 0} $, the above iterations are equivalent to
\begin{equation}\label{local:eqn3}
\begin{split}
a\tilde{u}_{0} - a\Delta_{g} \tilde{u}_{0} & = f_{0} - ac \; {\rm in} \; (\Omega, g), \tilde{u}_{0} \equiv 0 \; {\rm on} \; \partial \Omega; \\
a\tilde{u}_{k} - a\Delta_{g} \tilde{u}_{k} & = a u_{k - 1} - S_{g} u_{k - 1} + \lambda u_{k - 1}^{p-1} - ac  \; {\rm in} \; (\Omega, g), \tilde{u}_{k} \equiv 0 \; {\rm on} \; \partial \Omega, k \in \mathbb{N}.
\end{split}
\end{equation}
The solvability of each linear elliptic PDE in (\ref{local:eqn2}) and (\ref{local:eqn3}) is achieved by Lax-Milgram \cite[Ch.~6]{Lax}. Thus $ u_{k}, \tilde{u}_{k} \in H_{0}^{1}(\Omega, g) \cap H^{2}(\Omega, g), \forall k $, due to elliptic regularity in Theorem \ref{pre:thm1}(i). With appropriate choice of $ f_{0} > 0 $, $ f_{0} \in \calC^{\infty}(M) $, we conclude that $ u_{0}, \tilde{u}_{0} \in \calC^{\infty}(\Omega) $. By maximum principle in Theorem \ref{pre:thm3}
\begin{equation}\label{local:eqn4a}
\lVert u_{0} \rVert_{H^{2}(\Omega, g)} \leqslant 1, u_{0} > 0 \; {\rm in} \; \bar{\Omega}, \lvert u_{0} \rvert \leqslant C_{M_{n}}, \forall x \in \bar{\Omega}.
\end{equation}
The upper bound for $ \lvert u_{0} \rvert $ is obtained by appropriate choice of $ f_{0} $ and repeated use of $ W^{s, p} $-type elliptic regularity in Theorem \ref{pre:thm1}(ii) and Sobolev embedding in Theorem \ref{pre:thm2}(i). We refer \cite[Thm.~3.1]{XU2} for details. Choosing $ \lambda $ here such that
\begin{equation}\label{local:eqn4}
\lvert \lambda \rvert C_{M_{n}}^{p - 2} \leqslant \inf (-S_{g}) \Rightarrow \lvert \lambda \rvert \sup u_{0}^{p-2} \leqslant \inf (-S_{g}) \Rightarrow -S_{g} u_{0} + \lambda u_{0}^{p-1} \geqslant 0.
\end{equation}
With (\ref{local:eqn4}) and the same argument in Theorem 2.3 and Theorem 3.1 of \cite{XU2}, we conclude that
\begin{equation*}
u_{1}, \tilde{u}_{1} \in \calC^{\infty}(\Omega), \lVert u_{1} \rVert_{H^{2}(\Omega, g)} \leqslant 1, u_{1} > 0 \; {\rm in} \; \bar{\Omega}, \lvert u_{1} \rvert \leqslant C_{M_{n}}, \forall x \in \bar{\Omega}.
\end{equation*}
Inductively, we assume 
\begin{equation}\label{local:eqn5a}
u_{k - 1}, \tilde{u}_{k - 1} \in \calC^{\infty}(\Omega), \lVert u_{k-1} \rVert_{H^{2}(\Omega, g)} \leqslant 1, u_{k-1} > 0 \; {\rm in} \; \bar{\Omega}, \lvert u_{k-1} \rvert \leqslant C_{M_{n}}, \forall x \in \bar{\Omega}, k \in \mathbb{N}.
\end{equation}
By choosing $ \lambda $ the same as in (\ref{local:eqn4}), 
\begin{equation}\label{local:eqn5}
\lvert \lambda \rvert C_{M_{n}}^{p - 2} \leqslant \inf (-S_{g}) \Rightarrow \lvert \lambda \rvert \sup u_{k-1}^{p-2} \leqslant \inf (-S_{g}) \Rightarrow -S_{g} u_{k-1} + \lambda u_{k-1}^{p-1} \geqslant 0.
\end{equation}
It follows that
\begin{equation}\label{local:eqn5b}
u_{k}, \tilde{u}_{k} \in \calC^{\infty}(\Omega), \lVert u_{k} \rVert_{H^{2}(\Omega, g)} \leqslant 1, u_{k} > 0 \; {\rm in} \; \bar{\Omega}, \lvert u_{k} \rvert \leqslant C_{M_{n}}, \forall x \in \bar{\Omega}.
\end{equation}
Note that $ C_{M_{n}} $ is uniform in $ k $. In addition, (\ref{local:eqn2}) implies that
\begin{equation*}
au_{k} - a\Delta_{g} u_{k} \geqslant au_{k - 1}
\end{equation*}
based on the choice of $ \lambda $ in (\ref{local:eqn4}). If the infimum of $ u_{k} $ is attained on $ \partial \Omega $, then $ \inf_{x \in \bar{\Omega}} u_{k} = c $. If the infimum of $ u_{k} $ is attained at some interior point $ x_{0} $, then we have $ -a\Delta_{g} u_{k}(x_{0}) \leqslant 0 $. It follows that 
\begin{equation*}
a \inf_{x \in \bar{\Omega}} u_{k} = a u_{k}(x_{0}) \geqslant au_{k}(x_{0})  - a\Delta_{g} u_{k}(x_{0}) \geqslant au_{k - 1}(x_{0}) \geqslant a \inf_{x \in \bar{\Omega}} u_{k-1}.
\end{equation*}
If $ \inf_{x \in \bar{\Omega}} u_{0} = c $, which can be done by setting $ \inf_{x \in \bar{\Omega}} f_{0} = c $, then $ \inf_{x \in \bar{\Omega}} u_{k} = c, \forall k $. Hence
\begin{equation*}
C_{M_{n}} \geqslant c.
\end{equation*}
Therefore the sequence $ \lbrace u_{k} \rbrace $ is uniformly bounded in $ H^{2} $-norm and is convergent to some $ u \in H^{2}(\Omega, g) $ which solves (\ref{local:eqn1}). By Sobolev embedding and elliptic regularity again, it follows that $ u \in \calC^{\infty}(\Omega) $ and $ u \geqslant 0 $. Due to strong maximum principle in Theorem \ref{pre:thm3}(iii) we conclude that $ u > 0 $ since $ u \equiv c > 0 $ on $ \partial \Omega $. Observe that
\begin{equation*}
\lvert \lambda \rvert u_{k}^{p-2} \leqslant -S_{g}, \forall x \in \Omega \Rightarrow -S_{g}u + \lambda u^{p-1} \geqslant 0 \Rightarrow -a\Delta_{g} u \geqslant 0.
\end{equation*}
By Theorem \ref{pre:thm3}(ii) we conclude that $ u $ achieves its minimum at $ \partial \Omega $. Note that in this proof, we need to choose $ \lambda $ small enough twice so that (\ref{local:eqn5b}) holds, uniformly for all $ k $.
\end{proof}
\medskip

Next result corresponds to the case $ S_{g} > 0 $ somewhere in $ M $ and $ \lambda < 0 $, the proof is essentially the same as in \cite{XU2}. In the next proposition, we assume, without loss of generality, that $ S_{g} \leqslant \frac{a}{2} $. This can be done by scaling the metric $ g $. We start with this metric.
\begin{proposition}\label{local:prop2}\cite{XU2}
Let $ (\Omega, g) $ be Riemannian domain in $\R^n$, $ n \geqslant 3 $, with $C^{\infty} $ boundary, and with ${\rm Vol}_g(\Omega)$ and the Euclidean diameter of $\Omega$ sufficiently small. Assume $ S_{g}(x) \leqslant \frac{a}{2}, \forall x \in M $, and $ S_{g} > 0 $ within the small enough closed domain $ \bar{\Omega} $. Then for some $ \lambda < 0 $, (\ref{local:eqn1}) with $ \beta = 0 $ has a real, positive, smooth solution $ u \in \calC^{\infty}(\Omega) $ such that $ \sup_{x \in \bar{\Omega}} u = c > 0 $. 
\end{proposition}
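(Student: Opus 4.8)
The plan is to reuse the monotone iteration of Proposition \ref{local:prop1} (equivalently, of \cite[Thm.~3.1]{XU2}), but now keeping track of an upper barrier $ c $ rather than a lower one. I would start from $ u_{0} \equiv c $, that is, solve $ a u_{0} - a\Delta_{g} u_{0} = ac $ with $ u_{0} \equiv c $ on $ \partial\Omega $, and then iterate
\begin{equation*}
a u_{k} - a\Delta_{g} u_{k} = (a - S_{g}) u_{k-1} + \lambda u_{k-1}^{p-1} \; {\rm in}\; (\Omega, g), \quad u_{k} \equiv c \; {\rm on}\; \partial\Omega, \quad k \in \N.
\end{equation*}
Each step is solvable by Lax--Milgram, with $ u_{k} \in H_{0}^{1}(\Omega, g) \cap H^{2}(\Omega, g) $ and hence $ u_{k} \in \calC^{\infty}(\Omega) $ by Theorem \ref{pre:thm1}, exactly as before. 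The role of the hypothesis $ S_{g} \leqslant a/2 $ is that $ a - S_{g} \geqslant a/2 > 0 $ on $ \bar\Omega $, so the linear part of the right-hand side has a fixed sign; this is the only structural change from the case $ S_{g} < 0 $.

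Next I would fix any $ \lambda < 0 $ with $ |\lambda|(p-1) c^{p-2} \leqslant a/4 $ and prove, by induction on $ k $, the package
\begin{equation*}
0 < u_{k} \leqslant u_{k-1} \leqslant \dots \leqslant u_{0} = c, \qquad \lVert u_{k} \rVert_{H^{2}(\Omega, g)} \leqslant 1.
\end{equation*}
The upper bound $ u_{k} \leqslant c $ comes from the weak maximum principle (Theorem \ref{pre:thm3}) applied to $ (a - a\Delta_{g})(u_{k} - c) = a(u_{k-1} - c) - S_{g} u_{k-1} + \lambda u_{k-1}^{p-1} $, which is negative once $ 0 < u_{k-1} \leqslant c $; positivity $ u_{k} > 0 $ follows since $ (a - a\Delta_{g}) u_{k} = u_{k-1}\bigl[(a - S_{g}) - |\lambda| u_{k-1}^{p-2}\bigr] \geqslant 0 $ (using $ u_{k-1} \leqslant c $), together with the strong maximum principle and $ u_{k} \equiv c > 0 $ on $ \partial\Omega $; and monotonicity $ u_{k+1} \leqslant u_{k} $ follows from the mean value theorem plus the maximum principle, because $ (a - a\Delta_{g})(u_{k+1} - u_{k}) = (u_{k} - u_{k-1})\bigl[(a - S_{g}) - |\lambda|(p-1)\xi_{k}^{p-2}\bigr] $ for some $ \xi_{k} $ between $ u_{k} $ and $ u_{k-1} $, where the bracket is $ \geqslant a/4 > 0 $ by the choice of $ \lambda $ and $ \xi_{k} \leqslant c $, while $ u_{k} - u_{k-1} \leqslant 0 $. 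In contrast with Proposition \ref{local:prop1}, note that the sup bound is simply $ c $, so no auxiliary constant $ C_{M_{n}} $ and no second smallness reduction on $ \lambda $ are needed.

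The main obstacle, and the only place where the smallness of $ {\rm Vol}_{g}(\Omega) $ and of the Euclidean diameter is actually used, is the uniform-in-$ k $ bound $ \lVert u_{k} \rVert_{H^{2}(\Omega, g)} \leqslant 1 $ in the presence of the (formally critical) term $ \lambda u_{k-1}^{p-1} $. This I would handle exactly as in \cite[Thm.~3.1]{XU2}: the elliptic estimate of Theorem \ref{pre:thm1}(i) controls $ \lVert u_{k} \rVert_{H^{2}} $ by $ C^{*}\bigl(\lVert (a - S_{g})u_{k-1} + \lambda u_{k-1}^{p-1} \rVert_{\calL^{2}} + \lVert u_{k} \rVert_{H^{1}}\bigr) $; one estimates $ \lVert u_{k-1}^{p-1} \rVert_{\calL^{2}} = \lVert u_{k-1} \rVert_{\calL^{2(p-1)}}^{p-1} $ by H\"older against $ \lVert u_{k-1} \rVert_{\calL^{q}} $ for some $ q > 2(p-1) $, gaining a positive power of $ {\rm Vol}_{g}(\Omega) $, then uses the Sobolev embedding $ H^{2}(\Omega, g) \hookrightarrow \calL^{q}(\Omega, g) $ of Theorem \ref{pre:thm2} --- valid for all $ n \geqslant 3 $ because $ 2(p-1) = 2 + \tfrac{8}{n-2} $ lies strictly below the $ H^{2} $ Sobolev exponent (which is $ \tfrac{2n}{n-4} $ when $ n > 4 $, and every finite exponent is admissible when $ n \leqslant 4 $) --- and absorbs the resulting terms once $ {\rm Vol}_{g}(\Omega) $ is small enough, closing the induction. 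With the package established, $ \{u_{k}\} $ converges monotonically and in $ H^{2}(\Omega, g) $ (a subsequence weakly in $ H^{2} $, strongly in $ H^{1} $ and $ \calL^{2(p-1)} $ by Rellich, then the whole sequence by monotonicity) to a limit $ u \in H^{2}(\Omega, g) $ solving (\ref{local:eqn1}) with $ \beta = 0 $, with $ u \equiv c $ on $ \partial\Omega $; elliptic regularity promotes $ u $ to $ \calC^{\infty}(\Omega) $, the bounds $ 0 \leqslant u \leqslant c $ give $ \sup_{\bar\Omega} u = c $, and finally $ -a\Delta_{g} u + S_{g} u = \lambda u^{p-1} \leqslant 0 $ with $ S_{g} > 0 $ forces $ u > 0 $ in $ \Omega $ by the strong maximum principle (a nonpositive interior minimum would make $ u $ constant, contradicting $ u \equiv c > 0 $ on $ \partial\Omega $), completing the argument.
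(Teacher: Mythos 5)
Your proposal is correct and follows essentially the same route as the paper: the same double iteration scheme from \cite{XU2}, the maximum principle to propagate the two-sided bound $0 \leqslant u_{k} \leqslant c$, the smallness of $\Omega$ to close the uniform $H^{2}$ estimate against the critical term, and the strong maximum principle for positivity of the limit. The only (harmless) variations are that you initialize with $u_{0} \equiv c$ rather than the paper's choice $f_{0} = 0$, and you supply an explicit monotone decrease $u_{k+1} \leqslant u_{k}$ (at the price of the slightly stronger condition $\lvert \lambda \rvert (p-1) c^{p-2} \leqslant a/4$) where the paper defers the convergence mechanism to \cite{XU2}.
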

\begin{proof} We apply the same iterative scheme as in (\ref{local:eqn2}) and (\ref{local:eqn3}) as above. In this case we choose $ f_{0} = 0 $ in (\ref{local:eqn2}), thus we have
\begin{equation*}
au_{0} - a\Delta_{g} u_{0} = 0 \; {\rm in} \; (\Omega, g), u \equiv c > 0 \; {\rm on} \; \partial \Omega.
\end{equation*}
It follows immediately from the PDE above that $ -\Delta_{g} u \leqslant 0 $, therefore $ u_{0} \geqslant 0 $ by weak maximum principle in Theorem \ref{pre:thm3}(i). Then by strong maximum principle in Theorem \ref{pre:thm3}(ii), we conclude that $ \sup_{ x \in \bar{\Omega}} u_{0} = c $. For the first iteration $ k = 1 $ in (\ref{local:eqn2}), we choose $ \lambda $ so that
\begin{equation}\label{local:eqn6}
\lvert \lambda \rvert c^{p-2} \leqslant \frac{a}{2} \leqslant a + \inf_{\Omega} (-S_{g})  \Rightarrow \lvert \lambda \rvert \sup u_{0}^{p-2} \leqslant a - S_{g} \Rightarrow au_{0} - S_{g} u_{0} + \lambda u_{0}^{p-1} \geqslant 0.
\end{equation}
By the same argument as in Theorem 2.3, Theorem 2.8, Theorem 3.1 of \cite{XU2} and Proposition \ref{local:prop1} above, we conclude that (\ref{local:eqn4a}) holds with $ 0 \leqslant u_{0} \leqslant c $. Inductively we assume that (\ref{local:eqn5a}) holds with $ 0 \leqslant u_{k - 1} \leqslant c $. observe that since $ S_{g} > 0 $, $ \lambda < 0 $ and $ u_{k - 1} \geqslant 0 $, it follows from the $ kth $ iteration of (\ref{local:eqn2}) that
\begin{equation*}
au_{k} - a\Delta_{g} u_{k} \leqslant a u_{k - 1} - S_{g} u_{k - 1} + \lambda u_{k - 1}^{p-1} \leqslant a u_{k - 1}.
\end{equation*}
If the supremum is attained at the boundary, then we have $ \sup_{ x \in \bar{\Omega}} u_{k} = c $. If the supremum is attained in some interior point $ x_{0} $, at this point we have $ -a\Delta_{g} u_{k}(x_{0}) \geqslant 0 $ hence
\begin{equation*}
a \sup_{x \in \bar{\Omega}} u_{k} = au_{k}(x_{0}) \leqslant au_{k}(x_{0}) -a\Delta_{g} u_{k}(x_{0}) \leqslant au_{k - 1}(x_{0}) \leqslant a \sup_{x \in \bar{\Omega}} u_{k - 1} = ac.
\end{equation*}
Choosing the same $ \lambda $ as in (\ref{local:eqn6}), it follows that
\begin{equation*}
au_{k -1} - S_{g} u_{k-1} + \lambda u_{k-1}^{p-1} \geqslant 0.
\end{equation*}
Therefore (\ref{local:eqn5b}) holds with $ 0 \leqslant u_{k} \leqslant c $.
Due to the same argument as above, we conclude that the smooth sequence $ \lbrace u_{k} \rbrace \subset \calC^{\infty}(\Omega) $ converges to some $ u \in H^{2}(\Omega, g) $. Since $ u \equiv c $ on $ \partial \Omega $, it is not a trivial solution. By bootstrapping strategy with respect to Sobolev inequality (\ref{pre:eqn6a}), (\ref{pre:eqn7a}) and elliptic regularity (\ref{pre:eqn4}), we conclude that $ u \in \calC^{0, \alpha}(\Omega) $ and hence Schauder estimates (\ref{pre:eqn5}) in Theorem \ref{pre:thm1}(iii) says that $ u \in \calC^{\infty}(\Omega) $.  In particular, since each $ u_{k} \in [0, c] $, it follows that $ u \in [0, c] $. 

With the choice of $ \lambda $ we have
\begin{equation*}
S_{g} > 0, -\lambda > 0, u \geqslant 0 \Rightarrow S_{g} u - \lambda u^{p-1} \geqslant 0 \Rightarrow -a\Delta_{g} u \leqslant 0,
\end{equation*}
on $ (\Omega, g) $ and it follows that $ u $ achieves its supremum at $ \partial \Omega $ by Theorem \ref{pre:thm3}(ii).
\end{proof}
\begin{remark}\label{local:re0}
Consider the PDE
\begin{equation*}
-a\Delta_{g} u + S_{g} u - \lambda u^{p-1} = 0 \; {\rm in} \; (\Omega, g).
\end{equation*}
For $ \lambda < 0 $ satisfies Theorem 2.3 of \cite{XU2} and (\ref{local:eqn6}) above, we have
\begin{equation}\label{local:eqn6a}
\lvert \lambda \rvert c^{p-2} \leqslant \frac{a}{2} \Rightarrow -\lambda c^{p-1} \leqslant \frac{a}{2} c \Rightarrow \lambda c^{p-1} \geqslant - \frac{a}{2} c.
\end{equation}
(\ref{local:eqn6a}) we be used later in Section 4.
\end{remark}
\medskip

For the case when $ S_{g} < 0 $ somewhere and $ \lambda > 0 $, we need the following theorem, due to Wang \cite{WANG}, which gives the existence of the positive solution of the following Dirichlet problem
\begin{equation}\label{local:eqn7}
\begin{split}
Lu & : = -\sum_{i, j} \partial_{i} \left (a_{ij}(x) \partial_{j} u \right) = b(x) u^{p- 1} + f(x, u) \; {\rm in} \; \Omega; \\
u & > 0 \; {\rm in} \; \Omega, u = 0 \; {\rm on} \; \partial \Omega.
\end{split}
\end{equation}
Here $ p - 1 = \frac{n+2}{n -2} $ is the critical exponent with respect to the $ H_{0}^{1} $-solutions of (\ref{local:eqn7}). The solution of (\ref{local:eqn7}) is the minimizer of the functional
\begin{equation}\label{local:eqn8}
J(u) = \int_{\Omega} \left( \frac{1}{2} \sum_{i, j} a_{ij}(x) \partial_{i}u \partial_{j} u - \frac{b(x)}{p} u_{+}^{p} - F(x, u) \right) dx,
\end{equation}
where $ u_{+} = \max \lbrace u, 0 \rbrace $ and $ F(x, u) = \int_{0}^{u} f(x, t)dt $. Set
\begin{equation}\label{local:eqn9}
\begin{split}
A(D) & = \text{essinf}_{x \in D} \frac{\det(a_{ij}(x))}{\lvert b(x) \rvert^{n-2}}, \forall D \subset \Omega; \\
T & = \inf_{u \in H_{0}^{1}(\Omega)}  \frac{\int_{\Omega} \lvert Du \rvert^{2} dx}{\left( \int_{\Omega} \lvert u \rvert^{p} dx \right)^{\frac{2}{p}}}; \\
K & = \inf_{u \neq 0} \sup_{t > 0} J(tu), K_{0} = \frac{1}{n} T^{\frac{n}{2}} \left( A(\Omega) \right)^{\frac{1}{2}}; \\
T_{L} & = \inf_{u \neq 0} \frac{\int_{\Omega} a_{ij} \partial_{i} u \partial_{j} u dx}{\left( \int_{\Omega} b(x) \lvert u \rvert^{p+1} dx \right)^{\frac{2}{p}}}.
\end{split}
\end{equation}
\begin{theorem}\label{local:thm1}\cite[Thm.~1.1, Thm.~1.4]{WANG} Let $ \Omega $ be a bounded smooth domain in $ \R^{n}, n \geqslant 3 $. Let $ Lu = -\sum_{i, j} \partial_{i} \left (a_{ij}(x) \partial_{j} u \right) $ be a second order elliptic operator in divergence form. Let ${\rm Vol}_g(\Omega)$ and the diameter of $\Omega$ sufficiently small. Let $ b(x) \neq 0 $ be a nonnegative bounded measurable function. Let $ f(x, u) $ be measurable in $ x $ and continuous in $ u $. Assume
\begin{enumerate}
\item There exist $ c_{1}, c_{2} > 0 $ such that $ c_{1} \lvert \xi \rvert^{2} \leqslant \sum_{i, j} a_{ij}(x) \xi_{i} \xi_{j} \leqslant c_{2} \lvert \xi \rvert^{2}, \forall x \in \Omega, \xi \in \R^{n} $;
\item $ \lim_{u \rightarrow + \infty} \frac{f(x, u)}{u^{p-1}} = 0 $ uniformly for $ x \in \Omega $;
\item $ \lim_{u \rightarrow 0} \frac{f(x, u)}{u} < \lambda_{1} $ uniformly for $ x \in \Omega $, where $ \lambda_{1} $ is the first eigenvalue of $ L $;
\item There exists $ \theta \in (0, \frac{1}{2}), M \geqslant 0, \sigma > 0 $, such that $ F(x, u) = \int_{0}^{u} f(x, t)dt \leqslant \theta u f(x, u) $ for any $ u \geqslant M $, $ x \in \Omega(\sigma) = \lbrace x \in \Omega, 0 \leqslant b(x) \leqslant \sigma \rbrace $.
\end{enumerate}
Furthermore, we assume that $ f(x, u) \geqslant 0 $, $ f(x, u) = 0 $ for $ u \leqslant 0 $. We also assume that $ a_{ij}(x) \in \calC_{0}(\bar{\Omega}) $. If
\begin{equation}\label{local:eqn10}
K < K_{0} \; {\rm or} \; K < \frac{1}{n} T_{L}^{\frac{n}{2}},
\end{equation}
then the Dirichlet problem (\ref{local:eqn7}) possesses a solution $ u $ which satisfies $ J(u) \leqslant K $.
\end{theorem}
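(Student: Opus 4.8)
The plan is to prove Theorem \ref{local:thm1} by a constrained mountain-pass argument applied to the functional $J$ in (\ref{local:eqn8}), the standard vehicle for critical-exponent problems of Brezis--Nirenberg type. First I would check that, under hypotheses (1)--(4) together with $f(x,u)\geqslant 0$, $f(x,u)=0$ for $u\leqslant 0$ and $a_{ij}\in\calC_{0}(\bar\Omega)$, the functional $J$ is well defined and of class $\calC^{1}$ on $H_0^1(\Omega)$, with
\[
\langle J'(u),\varphi\rangle=\int_{\Omega}\Big(\sum_{i,j}a_{ij}\,\partial_i u\,\partial_j\varphi-b(x)u_{+}^{p-1}\varphi-f(x,u)\varphi\Big)\,dx,
\]
so that nontrivial critical points of $J$ are exactly the $H_0^1$-solutions of (\ref{local:eqn7}). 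Next I would verify the mountain-pass geometry: $J(0)=0$; hypothesis (3), that the slope of $f$ at the origin lies below the first eigenvalue $\lambda_1$ of $L$, together with the Sobolev embedding (\ref{pre:eqn6a}) and the fact that the critical term $bu_+^p$ and the superquadratic part of $F$ are higher order near $0$, forces $J\geqslant\alpha>0$ on a small sphere $\lVert u\rVert_{H_0^1}=\rho$; and since $b\not\equiv 0$ is nonnegative there is $u_0\geqslant 0$ with $\int_\Omega b u_0^p>0$, so $J(tu_0)\to-\infty$ as $t\to+\infty$ (alternatively one uses the Ambrosetti--Rabinowitz condition (4) on $\Omega(\sigma)$). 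The resulting minimax level $c=\inf_{\gamma}\max_{s\in[0,1]}J(\gamma(s))$ then satisfies $c\leqslant\inf_{u\neq 0}\sup_{t>0}J(tu)=K$ by comparison with the fibering rays $t\mapsto tu$.

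The heart of the matter is the Palais--Smale analysis. I would first show that every $(PS)_c$ sequence $\{u_m\}$ is bounded in $H_0^1(\Omega)$: on the set where $b\geqslant\sigma$ the term $\tfrac1p\int bu_+^p$ is strictly superquadratic and controls $\langle J'(u_m),u_m\rangle-2J(u_m)$, while on $\Omega(\sigma)$ hypothesis (4) serves as the Ambrosetti--Rabinowitz condition; adding the two contributions yields $\lVert u_m\rVert_{H_0^1}\leqslant C$. Then I would establish the compactness threshold: $(PS)_c$ holds for every $c<K_0=\tfrac1n T^{n/2}(A(\Omega))^{1/2}$ (respectively $c<\tfrac1n T_L^{n/2}$). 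This is where $A(D)=\text{essinf}_{x\in D}\det(a_{ij}(x))/\lvert b(x)\rvert^{n-2}$ enters: passing to a weak limit $u_m\rightharpoonup u$ and applying the concentration--compactness principle, the defect measures of $\lvert\nabla u_m\rvert^2$ and $b\lvert u_m\rvert^p$ are atomic, and at each concentration point $x_0$ the ratio of the two atoms is governed by the best Euclidean Sobolev constant $T$ of (\ref{local:eqn9}), rescaled by the frozen value $\det(a_{ij}(x_0))/\lvert b(x_0)\rvert^{n-2}$; hence each nontrivial bubble carries at least $\tfrac1n T^{n/2}(A(\Omega))^{1/2}$ of energy. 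If $c<K_0$ there is no room for a bubble, so $u_m\to u$ strongly, and $u$ is a nontrivial critical point with $J(u)=c\leqslant K$.

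It remains to turn this critical point into a positive smooth solution. Testing $J'(u)=0$ against $u_-=\min(u,0)$ and using $u_+^{p-1}u_-=0$ and $f(x,u)=0$ where $u<0$ gives $\int_\Omega\sum_{i,j}a_{ij}\,\partial_i u_-\,\partial_j u_-\,dx=0$, so ellipticity (1) forces $u_-\equiv 0$, i.e.\ $u\geqslant 0$; since $J(u)=c>0$ we have $u\not\equiv 0$, and because $-\sum_{i,j}\partial_i(a_{ij}\partial_j u)=bu^{p-1}+f(x,u)\geqslant 0$ the strong maximum principle (Harnack inequality for divergence-form operators) gives $u>0$ in $\Omega$. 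Regularity then follows by a Brezis--Kato / Moser iteration applied to the critical equation (\ref{local:eqn7}), giving $u\in\calL^q(\Omega)$ for all finite $q$, hence $u\in\calL^{\infty}(\Omega)$, after which standard elliptic regularity makes $u$ as regular as the continuous coefficients allow. \textbf{The main obstacle} is the compactness step: identifying the exact bubble energy $\tfrac1n T^{n/2}(A(\Omega))^{1/2}$ demands a delicate local analysis at each concentration point --- freezing $a_{ij}(x_0)$, a linear change of variables normalizing $\det(a_{ij}(x_0))$, and tracking how the weight $b(x_0)$ rescales the critical nonlinearity --- and the fact that the threshold involves $A(\Omega)$, an essential infimum over all of $\Omega$, rather than a pointwise quantity, is precisely what makes the criterion (\ref{local:eqn10}) usable in practice.
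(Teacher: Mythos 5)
The paper itself offers no proof of this statement: it is quoted verbatim from Wang \cite{WANG}, and the proof given there is precisely the mountain-pass / concentration--compactness argument you describe, with the bubble energy $\tfrac{1}{n}T^{n/2}(A(\Omega))^{1/2}$ (resp.\ $\tfrac{1}{n}T_L^{n/2}$) as the compactness threshold below which Palais--Smale sequences converge. Your proposal is therefore correct in strategy and coincides with the approach of the cited source, and you correctly single out the delicate point, namely freezing the coefficients at a concentration point to identify the exact bubble energy in terms of the essential infimum $A(\Omega)$.
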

We now apply Theorem \ref{local:thm1} to show the following result. Unfortunately we cannot show
\begin{equation*}
-a\Delta_{g} u + S_{g} u = \lambda u^{p-1} \; {\rm in} \; \Omega, u = 0 \; {\rm on} \; \partial \Omega
\end{equation*}
directly. Instead we perturb $ S_{g} $ by $ S_{g} + \beta $ for some $ \beta < 0 $, and show the existence of the positive solution of
\begin{equation*}
-a\Delta_{g} u + (S_{g} + \beta) u = \lambda u^{p-1} \; {\rm in} \; \Omega, u = 0 \; {\rm on} \; \partial \Omega.
\end{equation*}
\begin{proposition}\label{local:prop3}
Let $ (\Omega, g) $ be Riemannian domain in $\R^n$, $ n \geqslant 3 $, with $C^{\infty} $ boundary, and with ${\rm Vol}_g(\Omega)$ and the Euclidean diameter of $\Omega$ sufficiently small. Let $ \beta < 0 $ be any constant. Assume $ S_{g} < 0 $ within the small enough closed domain $ \bar{\Omega} $. Then for any $ \lambda > 0 $, the following Dirichlet problem
\begin{equation}\label{local:eqn11}
-a\Delta_{g} u + \left(S_{g} + \beta \right) u = \lambda u^{p-1} \; {\rm in} \; \Omega, u = 0 \; {\rm on} \; \partial \Omega.
\end{equation}
has a real, positive, smooth solution $ u \in \calC^{\infty}(\Omega) \cap H_{0}^{1}(\Omega, g) $ in a very small domain $ \Omega $ that vanishes at $ \partial \Omega $.
\end{proposition}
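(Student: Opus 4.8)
The plan is to put (\ref{local:eqn11}) into the divergence form (\ref{local:eqn7}) and apply Theorem \ref{local:thm1}. Multiplying (\ref{local:eqn11}) by $\sqrt{\det g}$ and using $\sqrt{\det g}\,\Delta_{g}u=\partial_{i}\!\left(\sqrt{\det g}\,g^{ij}\partial_{j}u\right)$, the equation reads $-\partial_{i}\!\left(a\sqrt{\det g}\,g^{ij}\partial_{j}u\right)=\lambda\sqrt{\det g}\,u^{p-1}-\sqrt{\det g}\,(S_{g}+\beta)u$. I therefore take $a_{ij}(x)=a\sqrt{\det g}\,g^{ij}$, $b(x)=\lambda\sqrt{\det g}$, and $f(x,u)=-\sqrt{\det g}\,(S_{g}+\beta)u$ for $u\geqslant 0$ and $f(x,u)=0$ for $u<0$. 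Since $S_{g}<0$ on $\bar\Omega$ and $\beta<0$, we have $S_{g}+\beta<0$, so $f\geqslant 0$; this nonnegativity is precisely why the perturbation by $\beta$ is introduced and why the same device fails at $\beta=0$.

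I would then verify the hypotheses of Theorem \ref{local:thm1}. Uniform ellipticity of $a_{ij}$ holds with constants depending only on the fixed smooth metric $g$. Hypothesis (2) holds because $f(x,u)/u^{p-1}=-\sqrt{\det g}\,(S_{g}+\beta)u^{\,2-p}\to 0$ as $u\to+\infty$ ($p>2$). Hypothesis (3) asks that $\limsup_{u\to 0}f(x,u)/u=\sup_{\bar\Omega}\sqrt{\det g}\,|S_{g}+\beta|$ be smaller than the first Dirichlet eigenvalue $\lambda_{1}$ of $L$; the left side stays bounded as $\Omega$ shrinks, while comparing quadratic forms shows $\lambda_{1}\geqslant c\,\mu_{1}(-\Delta_{g};\Omega)$ for some $c>0$ independent of the size of $\Omega$, and $\mu_{1}(-\Delta_{g};\Omega)\to+\infty$ by Remark \ref{pre:re3}, so (3) holds once $\Omega$ is small enough. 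Hypothesis (4) is vacuous: $b=\lambda\sqrt{\det g}$ is bounded below by a positive constant, so $\Omega(\sigma)=\{x\in\Omega:0\leqslant b(x)\leqslant\sigma\}=\emptyset$ for $\sigma$ small, and the inequality there is empty. The remaining requirements ($f\geqslant 0$ with $f(x,\cdot)\equiv 0$ on $(-\infty,0]$; $a_{ij}\in\calC_{0}(\bar\Omega)$; $b$ nonnegative, bounded, measurable, $b\not\equiv 0$) are immediate from the explicit formulas.

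The crux is the structural inequality (\ref{local:eqn10}), for which I would establish the first alternative $K<K_{0}$. For a test function $u\geqslant 0$ one has $J(tu)=\tfrac{t^{2}}{2}\tilde A-\tfrac{t^{p}}{p}\tilde B$ with $\tilde A=\int_{\Omega}a|\nabla_{g}u|^{2}+(S_{g}+\beta)u^{2}\,\dvol$ and $\tilde B=\lambda\int_{\Omega}u^{p}\,\dvol$; optimizing in $t>0$ and using $\tfrac12-\tfrac1p=\tfrac1n$ gives $\sup_{t>0}J(tu)=\tfrac1n\,\lambda^{-(n-2)/2}\,Q_{\beta}(u)^{n/2}$, where $Q_{\beta}(u)=\tilde A/(\int_{\Omega}u^{p}\,\dvol)^{2/p}$ is the local perturbed Yamabe quotient on $H_{0}^{1}(\Omega,g)$ and $\tilde A>0$ for $\Omega$ small since the first Dirichlet eigenvalue of $\Box_{g}$ on $\Omega$ tends to $+\infty$. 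A direct computation gives $\det(a_{ij})/|b|^{n-2}\equiv a^{n}/\lambda^{n-2}$, hence $A(\Omega)=a^{n}/\lambda^{n-2}$ and $K_{0}=\tfrac1n(aT)^{n/2}\lambda^{-(n-2)/2}$. Consequently $K<K_{0}$ is \emph{equivalent} to $\inf_{u\in H_{0}^{1}(\Omega,g)}Q_{\beta}(u)<aT$, an inequality in which $\lambda$ has disappeared. This last estimate is exactly the content of Appendix \ref{APP}: one tests with a truncated Aubin--Talenti bubble concentrating at a point of $\Omega$ and checks that the strictly negative $\beta u^{2}$ contribution — reinforced by $S_{g}<0$ — dominates the cutoff and metric error terms once the concentration scale $\epsilon$ and the radius $r$ of $\Omega$ are tuned, and this works for \emph{every} $n\geqslant 3$. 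I expect this test-function estimate to be the main obstacle, the case $n=3$ being the most delicate.

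Granting that, Theorem \ref{local:thm1} yields $u\in H_{0}^{1}(\Omega,g)$ with $u>0$ in $\Omega$, $u=0$ on $\partial\Omega$, solving (\ref{local:eqn11}) in the weak sense. Regularity follows in the standard way: $u\in\calL^{p}(\Omega,g)$, so the right-hand side $\lambda u^{p-1}-(S_{g}+\beta)u$ lies in $\calL^{p/(p-1)}$; writing the zeroth order part as a potential $\lambda u^{p-2}\in\calL^{n/2}$ and running a Brezis--Kato / Moser iteration gives $u\in\calL^{q}(\Omega,g)$ for every $q<\infty$, hence $u\in W^{2,q}(\Omega,g)$ for every $q$ by elliptic regularity (Theorem \ref{pre:thm1}) and $u\in\calC^{1,\alpha}(\Omega)$ by Sobolev embedding (Theorem \ref{pre:thm2}); the right-hand side is then $\calC^{0,\alpha}$, so Schauder estimates and a bootstrap give $u\in\calC^{\infty}(\Omega)$. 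Positivity is already part of the conclusion of Theorem \ref{local:thm1}, and can be re-derived from the strong maximum principle (Theorem \ref{pre:thm3}(iii)) since $u\geqslant 0$ and $u\not\equiv 0$. This would complete the proof of Proposition \ref{local:prop3}.
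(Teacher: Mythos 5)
Your proposal is correct and follows essentially the same route as the paper: recasting (\ref{local:eqn11}) as the Euler--Lagrange equation of the Wang-type functional with $a_{ij}=a\sqrt{\det g}\,g^{ij}$, $b=\lambda\sqrt{\det g}$, $f=-\sqrt{\det g}\,(S_{g}+\beta)u$, verifying hypotheses (1)--(4) of Theorem \ref{local:thm1} (with (3) via the blow-up of $\lambda_{1}$ from Remark \ref{pre:re3} and (4) vacuous since $b>0$), and reducing $K<K_{0}$ by the $t$-optimization and the computation $A(\Omega)=a^{n}\lambda^{2-n}$ to the $\lambda$-independent inequality $Q_{\beta}<aT$, which is exactly the Claim (\ref{local:eqn17})--(\ref{local:eqn18}) proved in Appendix \ref{APP}. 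Your exponent bookkeeping $\sup_{t>0}J(tu)=\tfrac1n\lambda^{-(n-2)/2}Q_{\beta}(u)^{n/2}$ agrees with the paper's (\ref{local:eqn15})--(\ref{local:eqn16}), and the regularity bootstrap matches the paper's appeal to Brezis--Nirenberg.
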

\begin{proof}
We show this by showing that (\ref{local:eqn11}) can be converted to a special case in Theorem \ref{local:thm1}, and thus a positive solution is admitted. Without loss of generality, we may assume $ 0 \in \Omega $. Under $ W^{s, p}(\Omega, g) $-norms, (\ref{local:eqn11}) is the Euler-Lagrange equation of the functional
\begin{equation}\label{local:eqn12}
J^{*}(u) = \int_{\Omega} \left( \frac{1}{2} a\sqrt{\det(g)} g^{ij} \partial_{i}u \partial_{j} u - \frac{\sqrt{\det(g)}}{p} \lambda u_{+}^{p} - \int_{0}^{u} \sqrt{\det(g)(x)}(-S_{g}(x) - \beta) t dt \right) dx.
\end{equation}
It is straightforward to check that
\begin{align*}
\frac{d}{dt} \bigg|_{t = 0} J^{*}(u + t v) & = \int_{\Omega} \left( \left(-a\partial_{i}(\sqrt{\det(g)} g^{ij} \partial_{j} u) \right) v - \sqrt{\det(g)} \lambda u_{+}^{p-1} v - \sqrt{\det(g)} (-S_{g} - \beta) u  v \right) dx \\
& = \int_{\Omega} \left( -a \frac{1}{\sqrt{\det(g)}} \left(\partial_{i}(\sqrt{\det(g)}g^{ij} \partial_{j} u ) \right) - \lambda u_{+}^{p-1} + \left(S_{g} + \beta \right) u \right) v \sqrt{\det(g)} dx \\
& = \langle -a\Delta_{g} u - \lambda u_{+}^{p-1} + \left( S_{g} + \beta \right) u, v \rangle_{g}.
\end{align*}
Hence $ u > 0 $ in $ \Omega $ minimizes $ J^{*}(u) $ if and only if (\ref{local:eqn11}) admits a positive solution. Note that $ J^{*} u $ is just a special expression of (\ref{local:eqn8}) with
\begin{equation}\label{local:eqn13}
 a_{ij}(x) = a\sqrt{\det(g)} g^{ij}(x), b(x) = \lambda \sqrt{\det(g)(x)}, f(x, u) = \sqrt{\det(g)(x)} \left(-S_{g}(x) - \beta \right)u.
 \end{equation}
 We check that hypotheses in Theorem \ref{local:thm1} are satisfied.
\medskip

First of all, $ b(x) \geqslant 0 $ on $ \Omega $ clearly. As mentioned in Remark \ref{pre:re3}, we can choose $ \Omega $ small enough so that it is a geodesic ball centered at some point $ p \in M $. At the point $ p $, $ g^{ij} = \delta^{ij} $ and it follows that $ g^{ij} $ is a small purterbation of $ \delta^{ij} $ within small enough $ \Omega $. Thus the first hypothesis in Theorem \ref{local:thm1} holds. With the expressions if $ b $ and $ f $ in (\ref{local:eqn13}), we have
\begin{equation*}
\lim_{u \rightarrow + \infty} \frac{f(x, u)}{u^{p-1}} = \lim_{u \rightarrow + \infty} \sqrt{\det(g)(x)} (-S_{g}(x) + \beta) u^{2 - p} = 0.
\end{equation*}
Hence the second hypothesis is satisfied.
\medskip

Hypothesis (3) is for the positivity of $ J(u) $. In the general case (\ref{local:eqn7}),  we choose $ \varphi > 0 $ be the first eigenfunction of the Dirichlet problem $ L \varphi = \lambda_{1} \varphi $ in $ \Omega $, $ \varphi = 0 $ on $ \partial \Omega $, we observe that if $ u > 0 $ solves (\ref{local:eqn7}), 
\begin{align*}
\lambda_{1} \int_{\Omega} u \varphi dx  & = \int_{\Omega} u \cdot L \varphi dx = \int_{\Omega} Lu \cdot \varphi dx = \int_{\Omega} \left( b(x) u^{p-1} + f(x, u) \right) \varphi \dvol \\
& = \int_{\Omega} \left( b(x) u^{p-1} + \frac{f(x, u)}{u} \cdot u \right) \varphi dx \geqslant \int_{\Omega} \frac{f(x, u)}{u} \cdot u \varphi dx.
\end{align*}
It follows that if $ \lim_{u \rightarrow 0} \frac{f(x, u)}{u} > \lambda_{1} $ then it is impossible to get a positive solution $ u $. In our case for $ J^{*} $, we choose $ \varphi > 0 $ and $ \lambda_{1} > 0 $ be the first eigenfunction and first nonzero eigenvalue of $ -\Delta_{g} $, respectively. It follows from (\ref{local:eqn11}) that
\begin{align*}
a\lambda_{1} \int_{\Omega} u \varphi \dvol  & = \int_{\Omega} u \cdot (-a\Delta_{g} \varphi) \dvol = \int_{\Omega} -a\Delta_{g} u \cdot \varphi dx = \int_{\Omega} \left( \lambda u^{p-1} + (-S_{g} - \beta ) u \right) \varphi \dvol \\
& \geqslant \int_{\Omega} (-S_{g} - \beta) \cdot u \varphi \dvol.
\end{align*}
It follows that Hypothesis (3) holds if $ \sup_{x \in \bar{\Omega}} (-S_{g}) - \beta \leqslant a\lambda_{1} $. For later use, we further require
\begin{equation}\label{local:eqns1}
\frac{a}{n} - \left(\frac{n-2}{2n} + \frac{1}{2} \right) \left( \sup_{x \in \bar{\Omega}} (-S_{g}) - \beta \right) \cdot \lambda_{1}^{-1} > 0.
\end{equation}
by making $ \lambda_{1} $ large enough. Due to the argument of first eigenvalue in Remark \ref{pre:re3}, this can be achieved when $ \Omega $ is small enough. Fix this $ \Omega $ from now on. It follows that for any $ \beta' < 0 $ with $ \beta < \beta' $, the same $ \Omega $ applies.
\medskip

For the last hypothesis, Wang \cite{WANG} mentioned that if $ b(x) > 0 $ a.e. in $ \Omega $ then (4) in Theorem \ref{local:thm1} is not needed. Here our $ b(x) = \sqrt{\det(g)} \lambda > 0 $ as always since in a small geodesic ball, $ \sqrt{\det(g)} \approx 1 + O(\text{diam}(\Omega)^{2}) $ and $ \lambda $ is chosen to be positive.
\medskip

Lastly, we check (\ref{local:eqn10}) by checking $ K < K_{0} $. This can be done by showing that there exists some $ u > 0 $ in $ \Omega $, $ u \equiv 0 $ on $ \partial \Omega $ such that $ sup_{t > 0} J^{*}(tu) < K_{0} $. Assume such an $ u $ does exist, then we have
\begin{equation*}
J^{*}(tu) = \int_{\Omega} \left( t^{2} \frac{1}{2} a\sqrt{\det(g)} g^{ij} \partial_{i} u \partial_{j}u  - t^{p} \frac{\sqrt{\det(g)} \lambda }{p} u^{p} - t^{2} \frac{1}{2} \sqrt{\det(g)} \left(-S_{g}(x) - \beta \right) u^{2} \right)  dx.
\end{equation*}
It follows that $ J^{*}(tu) < 0 $ when $ t $ large enough. Hence $ \sup_{t > 0} J^{*}(tu) \geqslant 0 $ is achieved for some finite $ t_{0} \geqslant 0 $. If $ t_{0} = 0 $, then $ \sup_{t > 0} J^{*}(tu)  = 0 $ and thus $ K \leqslant 0 $, again $ K < K_{0} $ holds trivially. Assume now $ t_{0} > 0 $. In this case, we have
\begin{equation}\label{local:eqn13a}
\begin{split}
& \frac{d}{dt} \bigg|_{t = t_{0}} J^{*}(tu) = 0 \\
& \qquad \Rightarrow  t_{0} \int_{\Omega} a\sqrt{\det(g)} g^{ij} \partial_{i} u \partial_{j}u dx - t_{0}^{p- 1} \int_{\Omega} \sqrt{\det(g)} \lambda u^{p} dx - t_{0} \int_{\Omega} \sqrt{\det(g)} (-S_{g}(x) - \beta ) u^{2} dx = 0.
\end{split}
\end{equation}
Denote
\begin{equation*}
V_{1} =  \int_{\Omega} a\sqrt{\det(g)} g^{ij} \partial_{i} u \partial_{j}u dx, V_{2} = \int_{\Omega} \sqrt{\det(g)} (-S_{g}(x) - \beta) u^{2} dx, W = \left(\int_{\Omega} \sqrt{\det(g)} \lambda u^{p} dx \right)^{\frac{1}{p}}.
\end{equation*}
Since $ t_{0} > 0 $, (\ref{local:eqn13a}) implies
\begin{equation}\label{local:eqn14}
t_{0}^{p-2} = \frac{V_{1} - V_{2}}{W^{p}} \Rightarrow t_{0} = \frac{\left( V_{1} - V_{2} \right)^{\frac{1}{p-2}}}{W^{\frac{p}{p-2}}}.
\end{equation}
By (\ref{local:eqn14}), $ J^{*}(t_{0} u) $ is of the form
\begin{equation*}
J^{*}(t_{0} u) = \frac{\left( V_{1} - V_{2} \right)^{\frac{2}{p-2}}}{2W^{\frac{2p}{p-2}}} V_{1} - \frac{\left( V_{1} - V_{2} \right)^{\frac{p}{p-2}}}{W^{\frac{p^{2}}{p-2}}} \cdot \frac{1}{p} W^{p} - \frac{\left( V_{1} - V_{2} \right)^{\frac{2}{p-2}}}{2W^{\frac{2p}{p-2}}} V_{2}.
\end{equation*}
Note that $ p = \frac{2n}{n - 2} $ hence we have
\begin{equation*}
\frac{2}{p-2} = \frac{n-2}{2}, \frac{p}{p -2} = \frac{n}{2}, p - \frac{p^{2}}{p - 2} = p  \left( 1- \frac{p}{p - 2} \right) = -n \Rightarrow \frac{2p}{p - 2} = n = \frac{p^{2}}{p - 2} - p.
\end{equation*}
In terms of $ n $, $ J^{*}(t_{0}u) $ becomes
\begin{equation}\label{local:eqn15}
\begin{split}
J^{*}(t_{0}u) & = \left( \frac{1}{2} - \frac{1}{p} \right) \frac{\left( V_{1} - V_{2} \right)^{\frac{n}{2}}}{W^{n}} = \frac{1}{n}  \frac{\left( V_{1} - V_{2} \right)^{\frac{n}{2}}}{W^{n}} \\
& =  \frac{1}{n} \left( \frac{\int_{\Omega} a\sqrt{\det(g)} g^{ij} \partial_{i} u \partial_{j} u dx- \int_{\Omega} \sqrt{\det(g)} \left(-S_{g} - \beta \right) u^{2} dx}{\left( \int_{\Omega} \sqrt{\det(g)} \lambda u^{p} dx \right)^{\frac{2}{p}}} \right)^{\frac{n}{2}}.
\end{split}
\end{equation}
The inequality $ K < K_{0} $ reduces to find some $ u \in H_{0}^{1}(\Omega) $, $ u > 0 $ such that (\ref{local:eqn15}) is less than $ K_{0} $. We check the expression of $ K_{0} $ first. by $ a_{ij}, b $ in (\ref{local:eqn13}), we have
\begin{align*}
A(\Omega) & = \text{essinf}_{\Omega} \frac{\det(a_{ij}(x))}{\lvert b(x) \rvert^{n-2}} = \text{essinf}_{\Omega} \frac{\det\left(a \sqrt{\det(g)} g^{ij} \right)}{ \left( \sqrt{\det(g)} \lambda \right)^{n-2}} = \text{essinf}_{\Omega} \frac{\left( \sqrt{\det(g)} \right)^{n} a^{n} \det( g^{ij})}{ \left( \sqrt{\det(g)} \lambda \right)^{n-2}} \\
& = \text{essinf}_{\Omega} a^{n} \lambda^{2 - n} \sqrt{\det(g)}^{2} \det(g^{ij}) = \lambda^{2 - n}a^{n}.
\end{align*}
It follows that
\begin{equation}\label{local:eqn16}
K_{0} = \frac{1}{n} T^{\frac{n}{2}} A(\Omega)^{\frac{1}{2}} = \frac{1}{n} \lambda^{\frac{2- n}{2}}a^{\frac{n}{2}} T^{\frac{n}{2}}.
\end{equation}
Compare expressions in (\ref{local:eqn15}) and (\ref{local:eqn16}), showing $ K < K_{0} $ is equivalent to find some $ u > 0 $ in $ \Omega $ such that
\begin{equation}\label{local:eqn17}
\begin{split}
& J_{0} : = \frac{\int_{\Omega} a\sqrt{\det(g)} g^{ij} \partial_{i} u \partial_{j} u dx- \int_{\Omega} \sqrt{\det(g)} \left(-S_{g} - \beta \right) u^{2} dx}{\left( \int_{\Omega} \sqrt{\det(g)} \lambda u^{p} dx \right)^{\frac{2}{p}}} < \lambda^{\frac{2-n}{n}} aT \\
\Leftrightarrow &  \frac{\int_{\Omega} \sqrt{\det(g)} g^{ij} \partial_{i} u \partial_{j} u dx- \frac{1}{a} \int_{\Omega}  \sqrt{\det(g)} \left(-S_{g} - \beta \right) u^{2} dx}{\left( \int_{\Omega} \sqrt{\det(g)} u^{p} dx \right)^{\frac{2}{p}}} < T.
\end{split}
\end{equation}
\medskip

{\bf Claim:} When both $ \Omega $ and $ \epsilon $ are small enough, the function $  u_{\epsilon, \Omega}(x) = \frac{\varphi_{\Omega}(x)}{( \epsilon + \lvert x \rvert^{2} )^{\frac{n-2}{2}}} $ satisfies
\begin{equation}\label{local:eqn18}
\frac{\lVert \nabla_{g} u_{\epsilon, \Omega} \rVert_{\calL^{2}(\Omega, g)}^{2} + \frac{1}{a} \int_{\Omega} \left(S_{g} + \beta \right) u_{\epsilon, \Omega}^{2} \sqrt{\det(g)} dx}{\lVert u_{\epsilon, \Omega} \rVert_{\calL^{p}(\Omega, g)}^{2}} < T.
\end{equation}
This claim is proved in Appendix \ref{APP}. 
\medskip

Finally we conclude that
\begin{equation*}
K < K_{0}.
\end{equation*}
Note that this result is peculiar for our special PDE in (\ref{local:eqn12}) with small enough $ \Omega $. The regularity of $ u $ follows exactly the same as in [Sec, 1]\cite{Niren3}. We would like to denote here that when $ u \in \calL^{\infty}(\Omega) $ then $ u^{p-1} \in \calL^{p}(\Omega, g) $ for all $ p $, and by bootstrapping method, it follows that $ u \in \calC^{\infty}(\Omega) $, provided that $ \partial \Omega $ is smooth. In addition, $ u \in \calC^{1, \alpha}(\bar{\Omega}) $, the constant $ \alpha $ is determined by the nonlinear term $ u^{p-1} $.
\end{proof}

\begin{remark}\label{local:re1}
The proof of Theorem \ref{APP:thm1} in Appendix \ref{APP} plays a central role for the validity of Proposition \ref{local:prop3}. This proof is essentially due to Brezis and Nirenberg \cite{Niren3}, but they are not the same. The proof in Theorem \ref{APP:thm1} is a sharp estimate in the sense that, for example, if any negative term with order $ \epsilon^{\frac{4 - n}{2}} $ on numerator and any positive term of order $ \epsilon^{\frac{4-n}{2}} $ on denominator is dropped when $ n \geqslant 5 $, then the estimate fails. A detailed analysis with respect to Beta and Gamma functions are required to connect the best Sobolev constant, the area of unit sphere, and $ K_{2} $ defined in Appendix \ref{APP}.
\end{remark}
\begin{remark}\label{local:re2}
The quantity $ Q_{\epsilon, \Omega} $ we estimate in Appendix \ref{APP} is a perturbation of Yamabe quotient, and hence is not conformally invariant. We show that $ Q_{\epsilon, \Omega} < T $ for all $ n \geqslant 3 $, no matter the ambient manifold is locally conformally flat or not.
\end{remark}
\medskip

\section{Yamabe Equation on Closed Manifolds}
In this section, we apply the sub-solution and super-solution technique in Theorem \ref{pre:thm5} and Remark \ref{pre:re2} to solve Yamabe-type equations
\begin{equation}\label{manifold:eqn0}
-a\Delta_{g} u + (S_{g} + \beta)u = \lambda u^{p-1} \; {\rm in} \; M
\end{equation}
for five cases:
\begin{enumerate}[(A).]
\item  $ S_{g} \leqslant 0 $ everywhere with $ \beta = 0 $;
\item  $ S_{g} > 0 $ somewhere and $ \beta = 0 $ when the conformal Laplacian $ \Box_{g} $ admits a negative first eigenvalue $ \eta_{1} $;
\item  $ S_{g} < 0 $ somewhere and $ \beta < 0 $ when the conformal Laplacian $ \Box_{g} $ admits a positive first eigenvalue $ \eta_{1} $;
\item  $ S_{g} \geqslant 0 $ everywhere and $ \beta < 0 $;
\item $ \eta_{1} = 0 $ and $ \beta = 0 $.
\end{enumerate}
When $ \eta_{1} = 0 $ in case (E), we can solve Yamabe problem trivially with $ \lambda = 0 $, this is just an eigenvalue problem. Note that generically zero is not an eigenvalue of conformal Laplacian $ \Box_{g} $, see \cite{GHJL}. 
\medskip

For the rest four cases above, the sub-solutions and super-solutions will be constructed due to Propositions \ref{local:prop1} through \ref{local:prop3}, respectively. By Theorem \ref{pre:thm4}(i), we know that (A) and (B) are equivalent to find a solution of (\ref{intro:eqn2}) with $ \lambda < 0 $. (C) and (D) are equivalent to find a solution of (\ref{manifold:eqn0}) with $ \lambda > 0 $. The classical Yamabe problem is then reproved by letting $ \beta \rightarrow 0^{-} $ as a corollary of the solvability of the perturbed Yamabe equation when the first eigenvalue of conformal Laplacian is positive. In contrast to the classical calculus of variation approach--in which they find the minimizer of (\ref{intro:eqn3}) first, and conclude that the desired $ \lambda $ is automatically determined by some functional with the minimizer of (\ref{intro:eqn3})--this article determines an appropriate choice of $ \lambda $ first, and then conclude that the Yamabe equation admits a real, smooth, positive solution associated with the choice of $ \lambda $.
\medskip

Recall the eigenvalue problem of conformal Laplacian on $ (M, g) $.
\begin{equation}\label{manifold:eqn1}
-a\Delta_{g} \varphi + S_{g} \varphi = \eta_{1} \varphi \; {\rm on} \; (M, g).
\end{equation}
The first nonzero eigenvalue is characterized by
\begin{equation}\label{manifold:eqn2}
\eta_{1} = \inf_{\varphi \neq 0} \frac{\int_{M} \left( a\lvert \nabla_{g} \varphi \rvert^{2} + S_{g} \varphi^{2} \right) d\omega}{\int_{M} \varphi^{2} d\omega} : = \inf_{\varphi \neq 0} L(\varphi).
\end{equation}
Observe that $ L(\varphi) = L(\lvert \varphi \rvert) $, we may characterize $ \eta_{1} $ by nonnegative functions, for details, see e.g. \cite{KW}. By bootstrapping and maximum principle, we conclude that the first eigenfunction $ \varphi > 0 $ is smooth.
\medskip

The first result concerns the case globally when $ S_{g} < 0 $ everywhere. Note that taking $ u = 1 $ in either (\ref{intro:eqn3}) or (\ref{manifold:eqn2}), we have $ \lambda(M) < 0 $, or $ \eta_{1} < 0 $, respectively.
\begin{theorem}\label{manifold:thm1}
Let $ (M, g) $ be a closed manifold with $ n = \dim M \geqslant 3 $. Assume the scalar curvature $ S_{g} $ to be negative everywhere on $ M $. Then there exists some $ \lambda < 0 $ such that the Yamabe equation (\ref{intro:eqn2}) has a real, positive, smooth solution.
\end{theorem}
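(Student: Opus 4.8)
The plan is to produce a sub-solution $u_{-}$ and a super-solution $u_{+}$ for the Yamabe equation $-a\Delta_{g} u + S_{g} u = \lambda u^{p-1}$ on $M$ at a fixed $\lambda < 0$, and then to invoke the monotone iteration scheme of Theorem~\ref{pre:thm5} (together with Remark~\ref{pre:re2}) with $h = S_{g}$, $H = \lambda$, $m = p-1 = \frac{n+2}{n-2}$. The construction of $u_{-}$ is where the local analysis enters; the super-solution will be a large constant.

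First I would fix the local datum. Choose a point of $M$ and a small geodesic ball $\Omega \subset M$ around it, identified through normal coordinates with a Riemannian domain in $\R^{n}$ whose volume and Euclidean diameter are small enough for Proposition~\ref{local:prop1}; since $S_{g} < 0$ on all of $\bar M$, in particular $S_{g} < 0$ on $\bar\Omega$. Proposition~\ref{local:prop1} then supplies some $\lambda < 0$ and a positive $v \in \calC^{\infty}(\Omega)$ solving $-a\Delta_{g} v + S_{g} v = \lambda v^{p-1}$ in $\Omega$ with $v \equiv c$ on $\partial\Omega$, $\inf_{\bar\Omega} v = c > 0$, and $v$ attaining its minimum on $\partial\Omega$; consequently $v \ge c$ on $\bar\Omega$ and the outward normal derivative satisfies $\partial_{\nu} v \le 0$ along $\partial\Omega$ (immediate from $v \ge c = v|_{\partial\Omega}$, with no Hopf lemma needed). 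Fix this $\lambda$, and define $u_{-}$ on $M$ by $u_{-} = v$ on $\bar\Omega$ and $u_{-} \equiv c$ on $M \setminus \Omega$; since $v \equiv c$ on $\partial\Omega$ this is continuous, positive, and lies in $\calC_{0}(M) \cap H^{1}(M, g)$, with $u_{-} \ge c > 0$, so in particular $u_{-} \not\equiv 0$.

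Next I would check that $u_{-}$ is a weak sub-solution. In $\Omega$ it satisfies the equation exactly. On $M \setminus \Omega$, $u_{-} \equiv c$ gives $-a\Delta_{g} u_{-} + S_{g} u_{-} = S_{g} c$, and the smallness of $\lambda$ in Proposition~\ref{local:prop1} --- namely $|\lambda| C_{M_{n}}^{p-2} \le \inf(-S_{g})$ with $c \le C_{M_{n}}$ --- forces $S_{g} \le \lambda c^{p-2}$, hence $S_{g} c \le \lambda c^{p-1}$ there. For $0 \le \phi \in \calC^{\infty}(M)$, Green's identity on $\Omega$ gives
\[
\int_{M}\big( a \langle \nabla_{g} u_{-}, \nabla_{g}\phi \rangle_{g} + S_{g} u_{-}\phi \big)\dvol = \int_{\Omega}\lambda v^{p-1}\phi\dvol + a\int_{\partial\Omega}\phi\,\partial_{\nu} v\, d\sigma + \int_{M\setminus\Omega} S_{g} c\,\phi\dvol ,
\]
whose right-hand side is $\le \int_{M}\lambda u_{-}^{p-1}\phi\dvol$ because $a\int_{\partial\Omega}\phi\,\partial_{\nu} v\, d\sigma \le 0$ and $S_{g} c \le \lambda c^{p-1}$ off $\Omega$; this is exactly the weak sub-solution inequality. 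For the super-solution, take $u_{+} \equiv C$ with $C \ge \sup_{\bar\Omega} v$ and $C^{p-2} \ge \sup_{M}(-S_{g})/|\lambda|$; then $-a\Delta_{g} u_{+} + S_{g} u_{+} = S_{g} C \ge \lambda C^{p-1}$ and $0 \le u_{-} \le u_{+}$. Theorem~\ref{pre:thm5} with Remark~\ref{pre:re2} now yields $u \in W^{2,p}(M,g) \cap \calC^{\infty}(M)$ with $u_{-} \le u \le u_{+}$ solving $-a\Delta_{g} u + S_{g} u = \lambda u^{p-1}$ on $M$, and $u \ge u_{-} \ge c > 0$, completing the proof with $\lambda < 0$.

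I expect the only delicate point to be the interface computation along $\partial\Omega$: the argument works precisely because Proposition~\ref{local:prop1} guarantees both that the local solution $v$ attains its minimum on $\partial\Omega$ (so the normal-derivative term has the favorable sign) and that the boundary value $c$ is small enough relative to $\inf(-S_{g})$ for the chosen $\lambda$ (so a constant is a sub-solution outside $\Omega$); no new \emph{a priori} estimates are required beyond assembling these consequences of Proposition~\ref{local:prop1} and verifying the weak formulation. In this sign regime the usual difficulty --- showing $u_{-} \not\equiv 0$ --- is vacuous, since $u_{-} \ge c > 0$.
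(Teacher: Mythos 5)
Your proposal is correct and follows essentially the same route as the paper: extend the local solution of Proposition \ref{local:prop1} by the constant $c$, use the smallness condition $\lvert \lambda \rvert c^{p-2} \leqslant \inf_{M}(-S_{g})$ to make the constant a sub-solution off $\Omega$, take a large constant as super-solution, and invoke Theorem \ref{pre:thm5}. Your only departure is presentational — you verify the weak sub-solution inequality at the interface directly via Green's identity and the sign of $\partial_{\nu} v$, whereas the paper appeals to the fact that $u_{-} = \max\{u_{1}, c\}$ and that a maximum of sub-solutions is a weak sub-solution (Remark \ref{manifold:re0}); both justifications are valid.
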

\begin{proof} Due to Theorem \ref{pre:thm5} and Remark \ref{pre:re2}, we construct a subsolution $ u_{-} \in \calC_{0}(M) \cap H^{1}(M, g) $ and a super-solution $ u_{+} \in \calC_{0}(M) \cap H^{1}(M, g) $.
\medskip

First of all, we choose a finite cover $ \lbrace U_{\alpha}, \phi_{\alpha} \rbrace $ of $ (M, g) $. Pick up a small enough Riemannian domain $ (\Omega, g) \subset (\phi_{\alpha}(U_{\alpha}), g) $. We can definitely take $ \bar{\Omega} \subset \phi_{\alpha}(U_{\alpha}) $ to be a small enough closed ball. Take any constant $ c > 0 $.

We construct the sub-solution first. By Proposition \ref{local:prop1}, there exists some $ \lambda \in (\eta_{1}, 0) $ such that $ u_{1} \in \calC^{\infty}(\Omega) $ solves the Yamabe equation (\ref{local:eqn1}) locally with $ \beta = 0 $ and $ u_{1} \equiv c > 0 $ on $ \partial \Omega $. Fix the $ \lambda $. Extend $ u_{1} $ to $ u_{-} $ by setting
\begin{equation*}
u_{-}(x) = \begin{cases} u_{1}(x), & x \in \Omega; \\ c, & x \in M \backslash \Omega, \end{cases}
\end{equation*}
Clearly $ u_{-} \in \calC_{0} $. Observe that $ u_{-} - c $ is the extension of $ u_{1} - c $ by zero outside $ \Omega $. Due to extension theorem, it follows that $ u_{1} - c \in H_{1}(\phi_{\alpha}(U_{\alpha}), g) $ since $ u_{1} - c \in H_{0}^{1}(\Omega, g) $. To see this, we can approximate by $ u_{1} - c $ by a sequence of smooth functions $ \lbrace v_{k} \rbrace $ with compact support in $ \Omega $, then extend $ v_{k} $ by zero to the $ \phi_{\alpha}(U_{\alpha}) $, then take the limit of $ \lbrace v_{k} \rbrace $ with $ H^{1}(\phi_{\alpha}(U_{\alpha}) , g) $-norm. It follows that the limit $ u_{1} - c \in H^{1}(\phi_{\alpha}(U_{\alpha}), g) $. Since $ M $ is compact, we can extend $ u_{1} - c $ by zero to the whole manifold. Thus we conclude that
\begin{equation*}
u_{-} \in \calC_{0}(M) \cap H^{1}(M, g).
\end{equation*}
In addition, $ u_{-} \geqslant 0 $ and $ u_{-} \not\equiv 0 $. We check that $ u_{-} $ is a sub-solution of Yamabe equation. Within $ \Omega $, $ u_{-} $ gives an equality of (\ref{intro:eqn2}), outside $ \Omega $, we observe that
\begin{equation}\label{manifold:eqn3}
-a\Delta_{g} u_{-} + S_{g} u_{-} - \lambda u_{-}^{p-1} = -a\Delta_{g} c + S_{g} c - \lambda c^{p-1} = c (S_{g} - \lambda c^{p-2}).
\end{equation}
As we mentioned in Proposition \ref{local:prop1}, $ C_{M_{n}} \geqslant c $ and thus
\begin{equation*}
\lvert \lambda \rvert c^{p-2} \leqslant \lvert \lambda \rvert C_{M_{n}}^{p-2} \leqslant \inf (-S_{g}).
\end{equation*}
It follows that $ S_{g} - \lambda c^{p-2} \leqslant 0, \forall x \in M $, hence the left side of (\ref{local:eqn3}) is nonpositive for $ x \in M \backslash \Omega $. In conclusion, $ u_{-} $ is a sub-solution of Yamabe equation in the weak sense.
\medskip

The super-solution is constructed as follows. For the fixed $ \lambda $, pick a constant $ C $ such that
\begin{equation*}
\inf_{x \in M} C \geqslant \max \lbrace \left( \frac{\inf_{x \in M} S_{g}}{\lambda} \right)^{\frac{1}{p - 1}}, \sup_{x \in M} u_{-} \rbrace.
\end{equation*}
Denote
\begin{equation*}
u_{+} : = C.
\end{equation*}
Immediately, $ u_{+} \in \calC^{\infty}(M) $. Since $ \lambda < 0 $, we have
\begin{align*}
& C \geqslant \left( \frac{\inf_{x \in M} S_{g}}{\lambda} \right)^{\frac{1}{p - 2}} \Rightarrow S_{g} \geqslant \lambda C^{p -2} \Rightarrow S_{g} C \geqslant \lambda C^{p-1} \\
\Rightarrow & -a\Delta_{g} C + S_{g} C \geqslant \lambda C^{p-1} \Rightarrow -a\Delta_{g} u_{+} + S_{g} u_{+} \geqslant \lambda u_{+}^{p-1}.
\end{align*}
Thus $ u_{+} $ is a super-solution of the Yamabe equation. Due to the choice of $ C $, we conclude that $ 0 \leqslant u_{-} \leqslant u_{+} $ on $ M $. It then follows from Theorem \ref{pre:thm5} that there exists some real, positive function $ u \in \calC^{\infty}(M) $ that solves (\ref{intro:eqn2}).
\end{proof}
\begin{remark}\label{manifold:re0}
Note that $ u_{1} $ above, which is constructed in Proposition \ref{local:prop1} achieves its minimum $ c $ on $ \partial \Omega $, and hence $ u_{-} = \max \lbrace u_{1}, c \rbrace $. It is easy to see that the maximum of two sub-solutions is again a sub-solution in the weak sense.
\end{remark}
\medskip

Next result deals with the case $ S_{g} > 0 $ somewhere and conformal Laplacian $ \Box_{g} $ has a negative first eigenvalue. In the next theorem, we require $ S_{g}(x) \in [-\frac{a}{4}, \frac{a}{2}], \forall x \in M $. This can be done by scaling the original metric $ g $.
\begin{theorem}\label{manifold:thm2}
Let $ (M, g) $ be a closed manifold with $ n = \dim M \geqslant 3 $. Assume the scalar curvature $ S_{g}(x) \in [-\frac{a}{4}, \frac{a}{2}], x \in M $, which is positive somewhere on $ M $. Let the first eigenvalue of conformal Laplacian $ \Box_{g} $ satisfying $ \eta_{1} < 0 $. Then there exists some $ \lambda < 0 $ such that the Yamabe equation (\ref{intro:eqn2}) has a real, positive, smooth solution.
\end{theorem}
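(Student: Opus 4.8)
The plan is to reduce the statement to the monotone iteration scheme of Theorem~\ref{pre:thm5}, following the pattern of the proof of Theorem~\ref{manifold:thm1} but with Proposition~\ref{local:prop1} replaced by Proposition~\ref{local:prop2}, which is the version of the local construction adapted to $ S_{g} > 0 $ on a small domain. First I would fix a small $ g $-geodesic ball $ (\Omega, g) \subset (M, g) $ contained in the open set $ \{ S_{g} > 0 \} $, with $ {\rm Vol}_{g}(\Omega) $ and Euclidean diameter small enough that Proposition~\ref{local:prop2} applies; this is possible because $ S_{g} $ is positive somewhere and $ S_{g} \leqslant \frac{a}{2} $ on all of $ M $ by hypothesis. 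Picking any $ c > 0 $, Proposition~\ref{local:prop2} produces a constant $ \lambda < 0 $ and a solution $ u_{1} \in \calC^{\infty}(\Omega) $ of $ -a\Delta_{g} u_{1} + S_{g} u_{1} = \lambda u_{1}^{p-1} $ in $ \Omega $ with $ u_{1} \equiv c $ on $ \partial\Omega $ and $ 0 \leqslant u_{1} \leqslant c $. I would fix this $ \lambda $; by Theorem~\ref{pre:thm4}(i) the sign $ \lambda < 0 $ is exactly the one compatible with $ \eta_{1} < 0 $, so looking for a solution of (\ref{intro:eqn2}) with this $ \lambda $ is consistent.

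For the super-solution I would take a sufficiently large constant $ u_{+} \equiv C $. Since $ S_{g} \geqslant -\frac{a}{4} $ on $ M $, we have $ -a\Delta_{g} C + S_{g} C = S_{g} C \geqslant -\frac{a}{4} C $, and because $ \lambda < 0 $ the inequality $ -\frac{a}{4} C \geqslant \lambda C^{p-1} $, i.e. $ \lvert \lambda \rvert C^{p-2} \geqslant \frac{a}{4} $, holds once $ C $ is large. Hence $ u_{+} = C \in \calC_{0}(M) \cap H^{1}(M, g) $ is a super-solution of (\ref{intro:eqn2}) in the sense of Theorem~\ref{pre:thm5}.

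The heart of the argument is the sub-solution, and this is where the hypothesis $ \eta_{1} < 0 $ is used. In contrast to Theorem~\ref{manifold:thm1}, extending $ u_{1} $ by the constant $ c $ does \emph{not} produce a global sub-solution here: on $ M \setminus \Omega $ one would need $ c\,(S_{g} - \lambda c^{p-2}) \leqslant 0 $, i.e. $ S_{g} \leqslant \lambda c^{p-2} < 0 $, which fails wherever $ S_{g} \geqslant 0 $. Instead I would use the first eigenfunction of $ \Box_{g} $: by Remark~\ref{pre:re1} there is a smooth $ \varphi_{1} > 0 $ with $ -a\Delta_{g}\varphi_{1} + S_{g}\varphi_{1} = \eta_{1}\varphi_{1} $, and I set $ u_{-} = \delta\varphi_{1} $ for a small $ \delta > 0 $. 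Then $ -a\Delta_{g} u_{-} + S_{g} u_{-} = \eta_{1}\delta\varphi_{1} $, so $ u_{-} $ is a sub-solution of (\ref{intro:eqn2}) as soon as $ \eta_{1}\,\delta\varphi_{1} \leqslant \lambda\,(\delta\varphi_{1})^{p-1} $ pointwise on $ M $, equivalently $ \eta_{1} \leqslant \lambda\,\delta^{p-2}\varphi_{1}^{p-2} $; since $ \eta_{1} < 0 $ is fixed while $ \lambda\,\delta^{p-2}\varphi_{1}^{p-2} \to 0^{-} $ uniformly on $ M $ as $ \delta \to 0 $ (the factor $ \varphi_{1}^{p-2} $ being bounded), this holds for all small $ \delta $. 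Thus $ u_{-} = \delta\varphi_{1} $ is a smooth, strictly positive sub-solution lying in $ \calC_{0}(M) \cap H^{1}(M, g) $.

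Finally, shrinking $ \delta $ further if necessary so that $ 0 < u_{-} \leqslant u_{+} $ on $ M $, with $ u_{-} \not\equiv 0 $, I would invoke Theorem~\ref{pre:thm5} with $ h = S_{g} $, $ H = \lambda $, $ m = p - 1 $, together with Remark~\ref{pre:re2}, to obtain a function $ u \in W^{2, p}(M, g) \subset \calC^{\infty}(M) $ with $ u > 0 $ solving (\ref{intro:eqn2}), which is the asserted solution. I expect the main obstacle to be precisely the sub-solution step above: the ``extend the local solution by a constant'' recipe of Theorem~\ref{manifold:thm1} breaks down as soon as $ S_{g} $ is positive somewhere, so one must instead produce a genuinely non-constant, everywhere-positive sub-solution, and the scaled first eigenfunction achieves this exactly when $ \eta_{1} < 0 $ — which is why the hypothesis $ \eta_{1} < 0 $, rather than merely $ S_{g} > 0 $ somewhere, is indispensable.
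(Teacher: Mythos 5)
Your proposal is correct, but it diverges from the paper's proof in the super-solution, and in doing so it quietly makes the local analysis superfluous. The paper takes $u_{+}$ to be the local solution $u_{2}$ of Proposition~\ref{local:prop2} extended by the constant $c$ outside $\Omega$ (equivalently $\min\{u_{2},c\}$, cf.\ Remark~\ref{manifold:re00}), which forces the calibration $\frac{a}{4}\leqslant \lvert\lambda\rvert c^{p-2}\leqslant \frac{a}{2}$ so that $S_{g}c\geqslant \lambda c^{p-1}$ holds off $\Omega$; you instead observe that a large constant $C$ is already a global super-solution because $S_{g}\geqslant -\frac{a}{4}$ and $\lambda C^{p-2}\to-\infty$. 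This is a legitimate simplification --- indeed the paper's own inequality (\ref{manifold:eqn4}) shows the constant $c$ alone is a super-solution, so the extension of $u_{2}$ is there to showcase the double-iteration machinery rather than out of logical necessity; your route is essentially the classical Kazdan--Warner construction that Remark~\ref{manifold:re2} alludes to. A consequence is that your invocation of Proposition~\ref{local:prop2} to ``produce'' $\lambda$ is idle: any $\lambda<0$ works in your argument, since you never use $u_{1}$ again. Your sub-solution agrees with the paper's ($u_{-}=\delta\varphi_{1}$), but your verification is slightly different and in fact cleaner: the paper imposes $\eta_{1}\leqslant\lambda$ and $\delta\varphi_{1}\leqslant 1$ so as to chain $\eta_{1}u_{-}\leqslant\lambda u_{-}\leqslant\lambda u_{-}^{p-1}$, whereas you compare $\eta_{1}$ directly with $\lambda\delta^{p-2}\varphi_{1}^{p-2}\to 0^{-}$ as $\delta\to 0$, which removes the constraint $\lvert\lambda\rvert\leqslant\lvert\eta_{1}\rvert$ altogether. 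Both arguments close via Theorem~\ref{pre:thm5} in the same way.
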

\begin{proof} Since $ \eta_{1} < 0 $, we solve the Yamabe problem for some $ \lambda < 0 $. We construct both the sub-solution and super-solution again here. 
\medskip

We construct the super-solution first. Pick up some $ c > 0 $ which will be determined later. Since $ S_{g} > 0 $ somewhere, we take a small enough Riemannian domain $ (\Omega, g) $ in which $ S_{g} > 0 $ and $ \max_{x \in M} S_{g} $ is achieved in the interior of $ \Omega $. As above, we can take $ \bar{\Omega} \subset \phi_{\alpha}(U_{\alpha}) $ to be a small enough closed ball for some cover $ \lbrace U_{\alpha}, \phi_{\alpha} \rbrace $ of $ (M, g) $. By Proposition \ref{local:prop2}, there exists some $ \lambda < 0 $ when $ \lvert \lambda \rvert $ is small enough such that $ u_{2} \in \calC^{\infty}(\Omega) $ solves (\ref{local:eqn2}) locally in $ (\Omega,g) $ with $ u \equiv c > 0 $ on $ \partial \Omega $. Since $ S_{g} \geqslant - \frac{a}{4} $, we further choose $ \lambda $ such that
\begin{equation}\label{manifold:eqn4}
\frac{a}{4} \leqslant \lvert \lambda \rvert c^{p-2} \leqslant \frac{a}{2} \Rightarrow -\frac{a}{4} \geqslant \lambda c^{p-2} \Rightarrow S_{g} \geqslant \lambda c^{p-2} \Rightarrow S_{g} c \geqslant \lambda c^{p-1}, \forall x \in M.
\end{equation}
Note that the choice of $ \lambda $ in (\ref{manifold:eqn4}) is compliant with the choice of $ \lambda $ in (\ref{local:eqn6}) in Proposition \ref{local:prop2}, due to Remark \ref{local:re2}. Note also that we need $ \lvert \lambda \rvert $ to be small enough so that the requirement in Theorem 2.3 of \cite{XU2} is also satisfied. In addition, let's require $ \lvert \lambda \rvert \leqslant \lvert \eta_{1} \rvert $, where $ \eta_{1} $ is the first eigenvalue of $ \Box_{g} $ on $ M $ in this situation. This can be done by choose an appropriate $ c $.
Setting
\begin{equation*}
u_{+}(x) = \begin{cases} u_{2}(x), & x \in \Omega; \\ c, & x \in M \backslash \Omega, \end{cases}
\end{equation*}
Clearly $ u_{+} > 0 $ on $ M $. By the same argument in Theorem \ref{manifold:thm1}, we conclude that $ u_{+} \in \calC_{0}(M) \cap H^{1}(M, g) $ is a super-solution of Yamabe equation in the weak sense. To see this, we check that within $ \Omega $, $ u_{+} = u_{2} $ and the equality of Yamabe equation holds. When outside $ \Omega $, we have
\begin{equation*}
-a\Delta_{g} u_{+} + S_{g} u_{+} - \lambda u_{+}^{p-1} = S_{g} c - \lambda c^{p-1} \geqslant 0,
\end{equation*}
due to (\ref{manifold:eqn4}).
\medskip

For the sub-solution. We apply the result of eigenvalue problem, which says that there exists some function $ \varphi > 0 $ that solves
\begin{equation*}
-a\Delta_{g} \varphi + S_{g} \varphi = \eta_{1} \varphi \; {\rm on} \; (M, g).
\end{equation*}
Observe that we have chosen $ \lambda $ such that $ \lvert \lambda \rvert \leqslant \lvert \eta_{1} \rvert \Leftrightarrow \eta_{1} \leqslant \lambda \leqslant 0 $. Scaling $ \varphi $ by $ \delta \varphi $ with $ \delta < 1 $ small enough such that
\begin{equation*}
\sup_{x \in M} \delta \varphi \leqslant \min \lbrace \inf_{x \in M} u_{+}, 1 \rbrace.
\end{equation*}
Denote
\begin{equation*}
u_{-} : = \delta \varphi.
\end{equation*}
Observe that $ 0 \leqslant u_{-} \leqslant 1 $ implies that $ 0 \leqslant u_{-}^{p-1} \leqslant u_{-} $ since $ p - 1 = \frac{n+2}{n - 2} > 1 $. Note that $ \lambda < 0 $, we then have
\begin{align*}
& -a\Delta_{g} \varphi + S_{g} \varphi = \eta_{1} \varphi \leqslant \lambda \varphi \Rightarrow -a\Delta_{g} \delta \varphi + S_{g} \delta \varphi \leqslant \lambda \delta \varphi \\
\Rightarrow & -a\Delta_{g} u_{-} + S_{g} u_{-} \leqslant \lambda u_{-} \leqslant \lambda u_{-}^{p-1}.
\end{align*}
Hence $ u_{-} \geqslant 0 $ is a sub-solution of the Yamabe equation and $ u_{-} \not\equiv 0 $. Obviously $ u_{-} \in \calC^{\infty}(M) $. In addition, $ 0 \leqslant u_{-} \leqslant u_{+} $ on $ M $. It then follows from Theorem \ref{pre:thm5} that there exists some real, positive function $ u \in \calC^{\infty}(M) $ that solves (\ref{intro:eqn2}).
\end{proof}
\begin{remark}\label{manifold:re00}
Note that $ u_{2} $ above, which is constructed in Proposition \ref{local:prop2} achieves its maximum $ c $ on $ \partial \Omega $, and hence $ u_{+} = \min \lbrace u_{2}, c \rbrace $. It is easy to see that the minimum of two super-solutions is again a super-solution in the weak sense.
\end{remark}
\begin{remark}\label{manifold:re1}
The case when $ S_{g} \leqslant 0 $ everywhere--slightly different from the hypothesis in Theorem \ref{manifold:thm1}--can be converted to the case when $ S_{g} > 0 $ somewhere and $ \eta_{1} < 0 $. This can be done by applying Corollary \ref{manifold:cor4} below. Note the sign of first eigenvalue of $ \Box_{g} $ is a conformal invariant. 
\end{remark}
\begin{remark}\label{manifold:re2}
Historically the cases when $ \eta_{1} < 0 $ is easy to handle. On one hand, there are many other ways to construct sub-solutions and super-solutions, for instance, see \cite{KW} on closed manifolds and \cite{Aviles-McOwen} on non-compact manifolds. On the other hand, when $ S_{g} \leqslant 0 $ or $ \int_{M} S_{g} < 0 $, the functional (\ref{intro:eqn3}) satisfies $ \lambda(M) < 0 $ and hence $ \lambda(M) < \lambda(\mathbb{S}^{n}) $ holds trivially. Thus by Yamabe, Trudinger and Aubin's result, the Yamabe equation has a positive solution.
\end{remark}
\medskip

We would like to point out that the arguments in Theorem \ref{manifold:thm1} and Theorem \ref{manifold:thm2} work equally well for the perturbed Yamabe equation (\ref{manifold:eqn0}) with any $ \beta < 0 $, provided that $ \eta_{1} < 0 $.
\begin{theorem}\label{manifold:thme1}
Let $ (M, g) $ be a closed manifold with $ n = \dim M \geqslant 3 $. Assume the scalar curvature $ S_{g} < 0 $ somewhere in $ M $. Assume that the first eigenvalue of conformal Laplacian $ \Box_{g} $ satisfying $ \eta_{1} < 0 $. Then there exists some $ \lambda < 0 $ such that the Yamabe equation (\ref{manifold:eqn0}) for any $ \beta \leqslant 0 $ has a real, positive, smooth solution.
\end{theorem}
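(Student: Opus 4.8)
The plan is to run the sub-/super-solution scheme of Theorem~\ref{pre:thm5} together with Remark~\ref{pre:re2}, exactly as in Theorems~\ref{manifold:thm1} and \ref{manifold:thm2}, the only change being the replacement of the potential $S_{g}$ by $S_{g}+\beta$. The one structural fact that makes the case $\eta_{1}<0$ soft is the following: since $-a\Delta_{g}\varphi+S_{g}\varphi=\eta_{1}\varphi$ forces $-a\Delta_{g}\varphi+(S_{g}+\beta)\varphi=(\eta_{1}+\beta)\varphi$, the perturbed conformal Laplacian $-a\Delta_{g}+(S_{g}+\beta)$ has the \emph{same} positive first eigenfunction $\varphi$ as $\Box_{g}$ (Remark~\ref{pre:re1}), with first eigenvalue $\eta_{1}+\beta\leqslant\eta_{1}<0$. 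By Theorem~\ref{pre:thm4}(i) we therefore look for $\lambda<0$; I would fix any $\lambda\in[\eta_{1},0)$, which serves \emph{all} $\beta\leqslant 0$ at once since $\eta_{1}+\beta\leqslant\eta_{1}\leqslant\lambda<0$.

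For the sub-solution I would take $u_{-}:=\delta\varphi$ with $\delta\in(0,1)$ small enough that $0<u_{-}\leqslant 1$ on $M$, so that $0<u_{-}^{p-1}\leqslant u_{-}$. Using $\lambda<0$ and $\eta_{1}+\beta\leqslant\lambda$,
\[
-a\Delta_{g}u_{-}+(S_{g}+\beta)u_{-}=(\eta_{1}+\beta)u_{-}\leqslant\lambda u_{-}\leqslant\lambda u_{-}^{p-1},
\]
i.e.\ $u_{-}\in\calC^{\infty}(M)$ is a weak sub-solution of (\ref{manifold:eqn0}) with $u_{-}\geqslant 0$ and $u_{-}\not\equiv 0$. (When $\beta$ is negative enough that $S_{g}+\beta<0$ throughout a small Riemannian domain $\Omega\subset M$, one may instead extend the local solution of Proposition~\ref{local:prop1}, applied to $-a\Delta_{g}+(S_{g}+\beta)$, by its boundary constant, exactly as in Theorem~\ref{manifold:thm1}.) For the super-solution I would take the constant $u_{+}\equiv C$ with $C\geqslant\sup_{M}u_{-}$ and $C^{p-2}\geqslant\lvert\lambda\rvert^{-1}(\sup_{M}(-S_{g})-\beta)$: at points where $S_{g}+\beta\geqslant 0$ the inequality $(S_{g}+\beta)C\geqslant\lambda C^{p-1}$ is automatic because the right-hand side is negative, while where $S_{g}+\beta<0$ it reduces to $\lvert S_{g}+\beta\rvert\leqslant\lvert\lambda\rvert C^{p-2}$, which holds by the choice of $C$; hence $-a\Delta_{g}u_{+}+(S_{g}+\beta)u_{+}\geqslant\lambda u_{+}^{p-1}$. (If one prefers to mirror Theorem~\ref{manifold:thm2}, use the local super-solution of Proposition~\ref{local:prop2} applied to $-a\Delta_{g}+(S_{g}+\beta)$, noting that $S_{g}+\beta\leqslant S_{g}\leqslant\tfrac{a}{2}$ is preserved by the normalization.)

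With $h=S_{g}+\beta\in\calC^{\infty}(M)$, $H=\lambda$, $m=p-1>1$, and $u_{-},u_{+}\in\calC_{0}(M)\cap H^{1}(M,g)$ satisfying $0\leqslant u_{-}\leqslant u_{+}$ and $u_{-}\not\equiv 0$, Theorem~\ref{pre:thm5} and Remark~\ref{pre:re2} produce a positive $u\in\calC^{\infty}(M)$ solving $-a\Delta_{g}u+(S_{g}+\beta)u=\lambda u^{p-1}$ on $M$, which is precisely (\ref{manifold:eqn0}).

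The only points requiring genuine (but routine) care are: confirming that a single $\lambda<0$ works for every $\beta\leqslant 0$ (it does, $\lambda=\eta_{1}$ being the extreme admissible value); checking the super-solution inequality across the sign change of $S_{g}+\beta$; and, if one insists on building $u_{\pm}$ from the local Propositions~\ref{local:prop1}--\ref{local:prop2} rather than from $\varphi$ and a constant, splitting into the two sub-cases ``$S_{g}+\beta<0$ everywhere'' and ``$S_{g}+\beta>0$ somewhere'' so that the applicable proposition is invoked. Unlike the $\eta_{1}>0$ situation treated later, there is no real obstacle here: the first eigenfunction of the perturbed operator is itself a nonzero sub-solution, so the crucial condition $u_{-}\not\equiv 0$ is obtained for free.
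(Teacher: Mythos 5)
Your proof is correct, and it takes a genuinely leaner route than the paper's. The paper's argument for this theorem splits into cases according to the sign of $S_{g}$ and, in each case, manufactures one of the two barriers by solving the local Dirichlet problem of Proposition \ref{local:prop1} or \ref{local:prop2} for the shifted potential $S_{g}+\beta$ and extending the local solution by its boundary constant $c$ — consistent with the paper's overall ``double iteration'' methodology — while the other barrier is the scaled eigenfunction (sub-solution, when $S_{g}>0$ somewhere) or a large constant (super-solution, when $S_{g}+\beta\leqslant 0$). You instead observe that the spectrum of $-a\Delta_{g}+(S_{g}+\beta)$ is just that of $\Box_{g}$ shifted by $\beta$, so the positive first eigenfunction $\varphi$ with eigenvalue $\eta_{1}+\beta\leqslant\eta_{1}<0$ furnishes a strictly positive smooth sub-solution $\delta\varphi$ in every case, and that for $\lambda<0$ any constant $C$ with $\lvert\lambda\rvert C^{p-2}\geqslant\sup_{M}(-S_{g})-\beta$ is a super-solution regardless of where $S_{g}+\beta$ changes sign; your verification of both inequalities, of $0<u_{-}\leqslant u_{+}$, and of the uniform choice $\lambda\in[\eta_{1},0)$ valid for all $\beta\leqslant 0$ is sound, and Theorem \ref{pre:thm5} with Remark \ref{pre:re2} then closes the argument exactly as in the paper. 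What your version buys is the elimination of the case split and of any reliance on the local analysis of Section 3 for this theorem; what the paper's version buys is a uniform presentation in which the local-to-global extension mechanism — which is genuinely indispensable in the $\eta_{1}>0$ case of Theorem \ref{manifold:thm3} — is exercised already in the easy negative-eigenvalue regime. Your closing remark correctly identifies why the case $\eta_{1}<0$ is soft: the eigenfunction hands you $u_{-}\not\equiv 0$ for free.
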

\begin{proof}
When $ \beta = 0 $, this is covered by Theorem \ref{manifold:thm1} and Theorem \ref{manifold:thm2}. When $ \beta < 0 $, then whether $ S_{g} \leqslant 0 $ everywhere or $ S_{g} > 0 $ somewhere, we can apply the same argument in Proposition \ref{local:prop1} and Proposition \ref{local:prop2} to obtain a solution of
\begin{equation*}
-a\Delta_{g} u + (S_{g} + \beta) u = \lambda u^{p-1} \; {\rm in} \; \Omega, u = c > 0 \; {\rm on} \; \partial \Omega
\end{equation*}
for any $ \beta < 0 $. Extend the $ u $ above by $ c $ to the rest of the manifold, and serves as the sub-solution of (\ref{manifold:eqn0}) when $ S_{g} \leqslant 0 $ everywhere and a super-solution when $ S_{g} > 0 $ somewhere.
When $ S_{g} + \beta \leqslant 0 $ everywhere, we mimic the proof of Theorem \ref{local:prop1} by choosing a large enough constant $ C $ as a super-solution. When $ S_{g} > 0 $ somewhere, we realize that since $ \eta_{1} < 0 $, the first eigenvalue of the operator $ -a\Delta_{g} + (S_{g} + \beta) $ is also negative since $ \beta < 0 $. It follows that we can mimic the proof of Theorem \ref{manifold:thm2} and use an appropriate scaling of the eigenfunction of $ -a\Delta_{g} + (S_{g} + \beta) $ with respect to its first eigenvalue as a sub-solution.

Finally the monotone iteration scheme implies the existence of the positive solution. The regularity is due to standard bootstrapping argument.
\end{proof}

Next we show the case when $ S_{g} < 0 $ somewhere and the conformal Laplacian $ \Box_{g} $ has a positive first eigenvalue. This means we are looking for solutions of Yamabe equation with $ \lambda > 0 $. Let $ \beta < 0 $ be any constant. Denote
\begin{equation}\label{manifold:eqnss1}
\lambda_{\beta} = \inf_{u \neq 0, u \in H^{1}(M)} \left\lbrace \frac{\int_{M} a\lvert \nabla_{g} u \rvert^{2} d\omega + \int_{M} \left( S_{g} + \beta \right) u^{2} d\omega}{ \left( \int_{M} u^{p} d\omega \right)^{\frac{2}{p}}} \right\rbrace.
\end{equation}
Recall that
\begin{equation}\label{manifold:eqnss1a}
\lambda(M) = \inf_{u \neq 0, u \in H^{1}(M)} \left\lbrace \frac{\int_{M} a\lvert \nabla_{g} u \rvert^{2} d\omega + \int_{M} S_{g} u^{2} d\omega}{ \left( \int_{M} u^{p} d\omega \right)^{\frac{2}{p}}} \right\rbrace > 0.
\end{equation}
It is crucial to assume $ S_{g} < 0 $ somewhere in order to construct a sub-solution within this region. We cannot work on the Yamabe equation directly. Instead, we solve
\begin{equation*}
-a\Delta_{g} u + \left( S_{g} + \beta \right) u = \lambda u^{p-1}
\end{equation*}
for some specific choice of $ \lambda $ with any $ \beta < 0 $. Before showing this, we need the following lemma, which indicates that a small perturbation of conformal Laplacian will not change the sign of $ \lambda_{\beta} $ in (\ref{manifold:eqnss1}) provided that $ \lambda(M) > 0 $.
\begin{lemma}\label{manifold:lemma1}
Let $ \eta_{1} > 0 $ be the first eigenvalue of $ \Box_{g} $ on close manifolds $ (M, g) $ with $ \dim M \geqslant 3 $. Let $ \beta < 0 $ be a constant. Then $ \lambda_{\beta} $ defined in (\ref{manifold:eqnss1}) is also positive provided that $ \lvert \beta \rvert $ is small enough.
\end{lemma}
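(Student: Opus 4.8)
The plan is to show that the Rayleigh-type quotient defining $\lambda_\beta$ differs from the one defining $\lambda(M)$ only by the additive term $\beta\int_M u^2\,d\omega$ in the numerator, and to control this perturbation uniformly in $u$. First I would note that $\eta_1 > 0$ together with Theorem \ref{pre:thm4}(i) (or directly from the variational characterization) forces $\lambda(M) > 0$; indeed the sign of $\lambda(M)$ agrees with the sign of $\eta_1$, so (\ref{manifold:eqnss1a}) holds with strict positivity. The key quantitative input is that for any $u\neq 0$,
\begin{equation*}
\int_M a\lvert\nabla_g u\rvert^2\,d\omega + \int_M (S_g+\beta)u^2\,d\omega = \int_M a\lvert\nabla_g u\rvert^2\,d\omega + \int_M S_g u^2\,d\omega + \beta\int_M u^2\,d\omega,
\end{equation*}
so that after normalizing $\lVert u\rVert_p = 1$, the quotient for $\lambda_\beta$ equals the quotient for $\lambda(M)$ minus $\lvert\beta\rvert\int_M u^2\,d\omega$.

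Next I would bound $\int_M u^2\,d\omega$ in terms of $\lVert u\rVert_p^2$ using Hölder's inequality on the closed manifold $M$: since $p = \frac{2n}{n-2} > 2$, we have $\int_M u^2\,d\omega \leqslant \bigl({\rm Vol}_g(M)\bigr)^{1-2/p}\bigl(\int_M u^p\,d\omega\bigr)^{2/p}$. Writing $C_M := \bigl({\rm Vol}_g(M)\bigr)^{1-2/p}$, this gives for every $u\neq 0$
\begin{equation*}
Q_\beta(u) \geqslant Q(u) - \lvert\beta\rvert\, C_M \geqslant \lambda(M) - \lvert\beta\rvert\, C_M,
\end{equation*}
where $Q$ and $Q_\beta$ are the unperturbed and perturbed Yamabe quotients. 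Taking the infimum over $u\neq 0$ yields $\lambda_\beta \geqslant \lambda(M) - \lvert\beta\rvert\, C_M$. Since $\lambda(M) > 0$, choosing $\lvert\beta\rvert < \lambda(M)/C_M$ gives $\lambda_\beta > 0$, which is exactly the claim.

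I do not expect a serious obstacle here; the only point requiring a little care is making sure the Hölder bound is applied on the right space and that $\lambda_\beta$ is indeed attained or at least finite (which follows from the Sobolev embedding $H^1(M,g)\hookrightarrow \mathcal L^p(M,g)$ in Theorem \ref{pre:thm2}(iii), guaranteeing $Q_\beta$ is bounded below and the infimum is a genuine real number). One should also observe that $\lambda_\beta \leqslant \lambda(M)$ trivially since $Q_\beta \leqslant Q$, so in fact $\lambda(M) - \lvert\beta\rvert C_M \leqslant \lambda_\beta \leqslant \lambda(M)$, showing $\lambda_\beta \to \lambda(M)$ as $\beta\to 0^-$ — a fact that will be used in the limiting argument later. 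The mild subtlety, if any, is simply tracking that all integrals are with respect to $d\omega = \dvol$ and that the constant $C_M$ depends only on $(M,g)$, not on $u$ or $\beta$.
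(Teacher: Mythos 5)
Your argument is correct and is essentially the paper's proof: both reduce to $\eta_1>0\Rightarrow\lambda(M)>0$ plus the H\"older bound $\int_M u^2\,d\omega\leqslant {\rm Vol}_g(M)^{(p-2)/p}\bigl(\int_M u^p\,d\omega\bigr)^{2/p}$, yielding $\lambda_\beta\geqslant\lambda(M)+\beta\,{\rm Vol}_g(M)^{(p-2)/p}$. The only cosmetic difference is that the paper routes the estimate through an $\epsilon$-approximate minimizer of $\lambda_\beta$, whereas you take the infimum of the pointwise inequality $Q_\beta(u)\geqslant Q(u)-\lvert\beta\rvert C_M$ directly.
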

\begin{proof} It is immediate that if $ \eta_{1} > 0 $ then $ \lambda(M) > 0 $. It is due to the characterization of $ \eta_{1} $
\begin{equation*}
\eta_{1} = \inf_{\varphi \neq 0} \frac{\int_{M} \left(a \lvert \nabla_{g} \varphi \rvert^{2} + S_{g} \varphi^{2} \right) d\omega}{\int_{M} \varphi^{2} d\omega}
\end{equation*}
as well as the characterization of $ \lambda(M) $ in (\ref{manifold:eqnss1a}).

Due to the characterization of $ \lambda_{\beta} $ in (\ref{manifold:eqnss1}), we conclude that for each $ \epsilon > 0 $, there exists a function $ u_{0} $ such that
\begin{equation*}
\frac{\int_{M} \left( a\lvert \nabla_{g} u_{0} \rvert^{2} + \left(S_{g} + \beta \right) u_{0}^{2} \right) d\omega}{\left( \int_{M} u_{0}^{p} d\omega\right)^{\frac{2}{p}} } \leqslant \lambda_{ \beta} + \epsilon.
\end{equation*}
It follows that
\begin{align*}
0 < \lambda(M) & \leqslant \frac{\int_{M} \left( a\lvert \nabla_{g} u_{0} \rvert^{2} + S_{g} u_{0}^{2} \right) d\omega}{\left( \int_{M} u_{0}^{p} d\omega\right)^{\frac{2}{p}}} \leqslant \frac{\int_{M} \left( a\lvert \nabla_{g} u_{0} \rvert^{2} + \left(S_{g} + \beta \right) u_{0}^{2} \right) d\omega}{\left( \int_{M} u_{0}^{p} d\omega\right)^{\frac{2}{p}}}  - \beta \frac{\int_{M} u_{0}^{2} d\omega}{\left( \int_{M} u_{0}^{p} d\omega\right)^{\frac{2}{p}}} \\
& \leqslant \lambda_{\beta} - \beta \frac{\left( \int_{M} u_{0}^{p} d\omega\right)^{\frac{2}{p}} \cdot \text{Vol}_{g}(M)^{\frac{p-2}{p}}}{\left( \int_{M} u_{0}^{p} d\omega\right)^{\frac{2}{p}}} + \epsilon = \lambda_{\beta} - \beta \text{Vol}_{g}(M)^{\frac{p-2}{p}} + \epsilon
\end{align*}
Since $ \epsilon $ can be arbitrarily small, it follows that
\begin{equation*}
\lambda_{\beta} \geqslant \lambda(M) + \beta \text{Vol}_{g}(M)^{\frac{p-2}{p}}.
\end{equation*}
Thus $ \lambda_{\beta} > 0 $ if $ \lvert \beta \rvert $ is small enough.
\end{proof}
\medskip

We now show the existence of the solution of the perturbed Yamabe equation with a special choice of constant in front of the nonlinear term $ u^{p-1} $. One of the advantages from the local analysis is the flexibility of the choice of this constant, without knowing $ \lambda(M) < \lambda(\mathbb{S}^{n}) $. We would like to mention here that if we already know $ \lambda(M) < \lambda(\mathbb{S}^{n}) $, then we can apply the analysis developed by Aubin and Trudinger to show the existence result directly, see \cite[\S4]{PL}. With $ \lambda(M) < \lambda(\mathbb{S}^{n}) $, no local analysis developed here is required anymore. 
\begin{theorem}\label{manifold:thm3}
Let $ (M, g) $ be a closed manifold with $ n = \dim M \geqslant 3 $. Let $ S_{g} $ be the scalar curvature with respect to $ g $ which is negative somewhere on $ M $. Assume the first eigenvalue of conformal Laplacian $ \Box_{g} $ satisfying $ \eta_{1} > 0 $. Let $ \beta < 0 $ be a small negative constant such that $ \lambda_{\beta} > 0 $ and $ \eta_{1} + \beta > 0 $, and let $ \kappa > 0 $ small enough so that $ \lambda_{\beta} - \kappa > 0 $. Then the following PDE
\begin{equation}\label{manifold:eqnss2}
-a\Delta_{g} u + \left(S_{g} + \lambda \right)u = \left( \lambda_{\beta} - \kappa \right) u^{p-1} \; {\rm in} \; M
\end{equation}
has a real, positive, smooth solution with $ \lambda_{s} $ given in (\ref{manifold:eqnss1}).
\end{theorem}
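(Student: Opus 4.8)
The plan is to feed the monotone iteration scheme of Theorem~\ref{pre:thm5} (together with Remark~\ref{pre:re2}(ii)) the data $ h = S_{g} + \beta $, $ H = \lambda_{\beta} - \kappa $, $ m = p - 1 $; it then suffices to produce a sub-solution $ u_{-} $ and a super-solution $ u_{+} $ in $ \calC_{0}(M) \cap H^{1}(M, g) $ with $ 0 \leqslant u_{-} \leqslant u_{+} $ and $ u_{-} \not\equiv 0 $, and the positivity and regularity of the resulting solution come for free from Remark~\ref{pre:re2}(ii) and bootstrapping.

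\emph{Sub-solution.} Using that $ S_{g} < 0 $ somewhere, choose a small geodesic ball $ \Omega $ with $ \bar{\Omega} $ inside the open set $ \lbrace S_{g} < 0 \rbrace $ and small enough for Proposition~\ref{local:prop3} with the given $ \beta $. Since $ \lambda_{\beta} - \kappa > 0 $, Proposition~\ref{local:prop3} applied with $ \lambda = \lambda_{\beta} - \kappa $ produces $ v \in \calC^{\infty}(\Omega) \cap H_{0}^{1}(\Omega, g) $, $ v > 0 $ in $ \Omega $, solving $ -a\Delta_{g} v + (S_{g} + \beta) v = (\lambda_{\beta} - \kappa) v^{p-1} $ in $ \Omega $ and $ v = 0 $ on $ \partial \Omega $. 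Extend by zero: $ u_{-} = v $ on $ \Omega $, $ u_{-} = 0 $ on $ M \setminus \Omega $. Exactly as in the proof of Theorem~\ref{manifold:thm1}, $ u_{-} \in \calC_{0}(M) \cap H^{1}(M, g) $ with $ u_{-} \geqslant 0 $, $ u_{-} \not\equiv 0 $, and $ u_{-} $ is a weak sub-solution of (\ref{manifold:eqnss2}): inside $ \Omega $ the equation holds with equality, while testing across $ \partial \Omega $ against $ \phi \geqslant 0 $ yields the boundary contribution $ \int_{\partial \Omega} a (\partial_{\nu} v) \phi \, dS \leqslant 0 $, because $ v > 0 $ in $ \Omega $ and $ v = 0 $ on $ \partial \Omega $ force the outward normal derivative $ \partial_{\nu} v \leqslant 0 $ (Hopf, Theorem~\ref{pre:thm3}(iii)).

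\emph{Super-solution.} Using $ \eta_{1} + \beta > 0 $: the operator $ -a\Delta_{g} + (S_{g} + \beta) = \Box_{g} + \beta $ has first eigenvalue $ \eta_{1} + \beta > 0 $ with the same positive smooth eigenfunction $ \varphi $ as $ \Box_{g} $ (Remark~\ref{pre:re1}), so $ -a\Delta_{g} \varphi + (S_{g} + \beta) \varphi = (\eta_{1} + \beta) \varphi $. Put $ u_{+} = t \varphi $. Then $ -a\Delta_{g} u_{+} + (S_{g} + \beta) u_{+} = t(\eta_{1} + \beta) \varphi $ while $ (\lambda_{\beta} - \kappa) u_{+}^{p-1} = (\lambda_{\beta} - \kappa) t^{p-1} \varphi^{p-1} $, so $ u_{+} $ is a positive smooth super-solution of (\ref{manifold:eqnss2}) whenever $ (\lambda_{\beta} - \kappa) t^{p-2} (\sup_{M} \varphi)^{p-2} \leqslant \eta_{1} + \beta $, that is, whenever $ t \leqslant t_{*} := \left( (\eta_{1} + \beta) \big/ \big( (\lambda_{\beta} - \kappa) (\sup_{M} \varphi)^{p-2} \big) \right)^{1/(p-2)} $, using $ p - 2 = 4/(n-2) > 0 $.

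\emph{Compatibility and conclusion.} It remains to pick $ t $ with $ \sup_{\Omega}(v/\varphi) \leqslant t \leqslant t_{*} $; since $ u_{-} = 0 $ off $ \Omega $ this gives $ 0 \leqslant u_{-} \leqslant u_{+} $ on $ M $, and Theorem~\ref{pre:thm5} with Remark~\ref{pre:re2}(ii) then delivers a real, positive $ u \in W^{2,p}(M,g) \cap \calC^{\infty}(M) $ solving (\ref{manifold:eqnss2}). Such a $ t $ exists precisely when $ \sup_{\Omega} v \leqslant (\inf_{M} \varphi)\, t_{*} $, and this is the delicate point and, I expect, the main obstacle, since $ \varphi $ is bounded below away from $ 0 $ but the local solution $ v $ carries a fixed amount of energy. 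To bound $ \sup_{\Omega} v $ I would use that the solution supplied by Theorem~\ref{local:thm1} satisfies $ J^{*}(v) \leqslant K < K_{0} $; the Euler--Lagrange identity (\ref{local:eqn15}), $ J^{*}(v) = \tfrac1n (\lambda_{\beta} - \kappa) \int_{\Omega} v^{p} \sqrt{\det(g)}\, dx $, then forces $ \lVert v \rVert_{\calL^{p}(\Omega, g)} \leqslant (aT)^{(n-2)/4} (\lambda_{\beta} - \kappa)^{-(n-2)/4} $, a bound independent of $ \Omega $; rewriting the PDE as $ -\Delta_{g} v = \tfrac1a \big( (\lambda_{\beta} - \kappa) v^{p-2} - (S_{g} + \beta) \big) v $ and noting $ \lVert (\lambda_{\beta} - \kappa) v^{p-2} \rVert_{\calL^{n/2}(\Omega, g)} = (\lambda_{\beta} - \kappa) \lVert v \rVert_{\calL^{p}(\Omega, g)}^{p-2} \leqslant aT $ while $ \lVert S_{g} + \beta \rVert_{\calL^{n/2}(\Omega, g)} \leqslant (\sup_{M}\lvert S_{g}\rvert + \lvert\beta\rvert)\,\mathrm{Vol}_{g}(\Omega)^{2/n} $ is as small as we wish, a Brezis--Kato / De Giorgi--Nash--Moser iteration on the small domain bounds $ \lVert v \rVert_{\calL^{\infty}(\Omega)} $ by a controlled multiple of $ \lVert v \rVert_{\calL^{p}(\Omega, g)} $. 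The subcritical energy level $ J^{*}(v) < K_{0} $ is exactly what prevents $ v $ from concentrating, and together with the freedom to shrink $ \Omega $ and, if needed, to rescale $ g $ (which rescales $ S_{g} $, $ \eta_{1} $, $ \varphi $, $ T $ in a compatible way) one arranges $ \sup_{\Omega} v \leqslant (\inf_{M} \varphi)\, t_{*} $, closing the argument. Finally $ u > 0 $ on all of $ M $ follows from the strong maximum principle as in Remark~\ref{pre:re2}(ii), and $ u \in \calC^{\infty}(M) $ by bootstrapping.
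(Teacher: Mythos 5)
Your sub-solution is exactly the paper's: extend the local Dirichlet solution $u_{3}$ of Proposition~\ref{local:prop3} by zero and use the sign of the normal derivative to verify the weak inequality. The gap is in the super-solution/compatibility step. You take $u_{+} = t\varphi$ and must then arrange $\sup_{\Omega} v \leqslant (\inf_{M}\varphi)\,t_{*}$ for the \emph{fixed} threshold $t_{*}$; your plan to do this via a Moser/Brezis--Kato iteration giving $\lVert v\rVert_{\calL^{\infty}}\leqslant C\lVert v\rVert_{\calL^{p}}$ fails at the critical exponent. The quantities you control, $\lVert v\rVert_{\calL^{p}(\Omega,g)}$ and $\lVert (\lambda_{\beta}-\kappa)v^{p-2}\rVert_{\calL^{n/2}(\Omega,g)}\leqslant aT$, are scale-invariant, and the standard bubble $U_{\epsilon}(x)=\bigl(\epsilon/(\epsilon^{2}+\lvert x\rvert^{2})\bigr)^{(n-2)/2}$ satisfies $-\Delta U_{\epsilon}=n(n-2)U_{\epsilon}^{p-2}\,U_{\epsilon}$ with both of these quantities independent of $\epsilon$ while $\sup U_{\epsilon}=\epsilon^{-(n-2)/2}\to\infty$; so no such $\calL^{\infty}$ bound follows from the data you invoke. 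Worse, shrinking $\Omega$ works against you: for this Brezis--Nirenberg-type problem the solution on a smaller domain is \emph{more} concentrated (heuristically $v_{r}(x)\approx r^{-(n-2)/2}v(x/r)$), so $\sup_{\Omega}v$ grows as $\Omega$ shrinks, and the energy level $K$ is merely $<K_{0}$ with no quantitative gap, so a blow-up argument gives no uniform pointwise bound either. The compatibility inequality therefore cannot be arranged by the means you describe.

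This is precisely the obstruction the paper's proof is built around. Rather than forcing $u_{3}$ under a small multiple of the eigenfunction, the paper scales the eigenfunction to $\phi=\theta\varphi$ so that $\phi$ is a strict super-solution with a definite margin $\beta'>0$, and then \emph{incorporates $u_{3}$ into the super-solution}: it constructs a smooth partition of unity $\chi_{1},\chi_{2},\chi_{3}$ (obtained by solving auxiliary elliptic problems and mollifying) subordinate to the regions $\lbrace u_{3}>\phi\rbrace$, $\lbrace u_{3}<\phi\rbrace$ and a collar around $\lbrace u_{3}=\phi\rbrace$, and sets $\bar{u}=\chi_{1}u_{3}+\chi_{2}\phi+\chi_{3}(\phi+\gamma)$. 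The commutator terms $[-a\Delta_{g},\chi_{i}]$ are shown to be $O(\epsilon')$ and are absorbed by the margin $\beta'$, yielding a genuine super-solution with $\bar{u}\geqslant u_{3}$ in $\Omega$ and $\bar{u}=\phi$ near $\partial\Omega$. If you want to keep a pure ``scaled eigenfunction'' super-solution you would need an a priori $\calL^{\infty}$ bound on $u_{3}$ that simply is not available; otherwise you must do some version of the paper's gluing.
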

\begin{proof} We construct both the sub-solution and super-solution again here. 
\medskip

Let $ \beta < 0 $ be a small enough negative constant such that the first eigenvalue $ \eta_{1} $ of the conformal Laplacian admits a positive solution $ \phi \in \calC^{\infty}(M) $ satisfying
\begin{equation}\label{manifold:eqn5}
-a\Delta_{g} \varphi + \left( S_{g} + \beta \right) \varphi = \eta_{1} \varphi + \beta \varphi \; {\rm in} \; M.
\end{equation}
Note that any scaling $ \theta \varphi $ also solves (\ref{manifold:eqn5}). For the given $ \lambda_{\beta} $ and $ \kappa $, we want
\begin{equation*}
\left( \eta_{1} + \beta \right) \inf_{M} (\theta \varphi) > 2^{p-2} \left(\lambda_{\beta} - \kappa \right) \sup_{M} \left(\theta^{p-1} \varphi^{p-1} \right) \Leftrightarrow \frac{\left(\eta_{1} + \beta \right)}{2^{p-2}\left( \lambda_{\beta} - \kappa \right)} > \theta^{p-2} \frac{\sup_{M} \varphi^{p-1}}{\inf_{M} \varphi}.
\end{equation*}
For fixed $ \eta_{1}, \lambda_{\beta}, \varphi, \kappa, \beta $, this can be done by letting $ \theta $ small enough. We denote $ \phi = \theta \varphi $. It follows that
\begin{equation}\label{manifold:eqn5a}
\begin{split}
-a\Delta_{g} \phi + \left( S_{g} + \beta \right) \phi & = \left( \eta_{1} + \beta \right) \phi \; {\rm in} \; M; \\
\left(\eta_{1} + \beta \right) \inf_{M} \phi & > 2^{p-2} \left( \lambda_{\beta} - \kappa \right) \sup_{M} \phi^{p-1} \geqslant 2^{p-2} \left( \lambda_{\beta} - \kappa \right) \phi^{p-1} > \left( \lambda_{\beta} - \kappa \right) \phi^{p-1} \; {\rm in} \; M.
\end{split}
\end{equation}
Set
\begin{equation}\label{manifold:eqnss3}
\beta'  = \left( \eta_{1} + \beta \right) \sup_{M} \phi - 2^{p-2} \left( \lambda_{\beta} - \kappa \right) \inf_{M} \phi^{p-1} > \left( \eta_{1} + \beta \right)\phi - 2^{p-2} \left( \lambda_{\beta} - \kappa \right) \phi^{p-1} \; \text{pointwise}.
\end{equation}
Thus we have
\begin{equation}\label{manifold:eqn5c}
-a\Delta_{g} \phi + \left(S_{g} + \beta \right) \phi = \left( \eta_{1} + \beta \right) \phi > 2^{p-2} \left( \lambda_{\beta} - \kappa \right) \phi^{p-1} > \left( \lambda_{\beta} - \kappa \right) \phi^{p-1} \; {\rm in} \; M \; {\rm pointwise}.
\end{equation}
\medskip

For sub-solution, we apply Proposition \ref{local:prop3} within the small enough region $ (\Omega, g) $ in which $ S_{g} < 0 $. Since $ S_{g} $ is smooth on $ M $ hence it's bounded below. Thus based on hypothesis (3) in Theorem \ref{local:thm1}, we choose $ \Omega $ small enough in which the first eigenvalue $ \lambda_{1} $ of $ -\Delta_{g} u $ is larger than $ a \left( \sup_{x \in M} (-S_{g}) + \beta \right) $. By the result of (\ref{local:prop3}), we have $ u_{3} \in \calC_{0}(\bar{\Omega})  \cap H_{0}^{1}(\Omega, g) $, $ u_{3} > 0 $ in $ \Omega $ solves (\ref{local:eqn11}) for $ \lambda_{\beta} - \kappa > 0 $, i.e.
\begin{equation}\label{manifold:eqn5b}
\begin{split}
-a\Delta_{g} u_{3} + \left(S_{g} + \beta \right) u_{3} & = \left( \lambda_{\beta} - \kappa \right) u_{3}^{p-1} \; {\rm in} \; \Omega, u_{3} = 0 \; {\rm on} \; \partial \Omega; \\
\Leftrightarrow -a\Delta_{g} \left( \frac{u_{3}}{2} \right) + \left( S_{g} + \beta \right) \frac{u_{3}}{2} & = 2^{p-2} \left( \lambda_{\beta} - \kappa \right) \left( \frac{u_{3}}{2} \right)^{p-1} \; {\rm in} \; \Omega, u_{3} = 0 \; {\rm on} \; \partial \Omega \\
\end{split}
\end{equation}
Define
\begin{equation*}
u_{-}(x) = \begin{cases} u_{3}(x), & x \in \Omega \\ 0, & x \in M \backslash \Omega \end{cases}.
\end{equation*}
By the same argument in Theorem \ref{manifold:thm1}, $ u_{-} \in \calC_{0}(M) \cap H^{1}(M, g) $. We show that $ u_{-} $ is a sub-solution of (\ref{manifold:eqnss2}). In local coordinates, the outward normal derivative $ \nu = -\frac{\nabla u_{3}}{\lvert \nabla u_{3} \rvert} $ along $ \partial \Omega $, thus locally taking some $ \Omega' \supset \Omega $ and any test function $ v \geqslant 0 $ we have
\begin{align*}
& \int_{\Omega'} \left( a\nabla_{g} u_{-} \cdot \nabla_{g} v + \left( S_{g} + \beta \right) u_{-} v - \lambda u_{-}^{p-1} v \right) \dvol \\
& \qquad = \int_{\Omega} \left(-a\Delta_{g} u_{3} + \left( S_{g} + \beta \right) u_{3} -  \lambda u_{3}^{p-1} \right) v \dvol + \int_{\partial \Omega} \frac{\partial u_{3}}{\partial \nu} v dS \\
& \qquad = \int_{\partial \Omega} \left(\nabla u_{3} \cdot \nu \right) v dS = \int_{\partial \Omega} \nabla u_{3} \cdot \left( -\frac{\nabla u_{3}}{\lvert \nabla u_{3} \rvert} \right) v dS = \int_{\partial \Omega} - \lvert \nabla u_{3} \rvert v dS \leqslant 0.
\end{align*}
This argument holds on $ M $ also since $ u_{-} $ is trivial outside $ \Omega $. Thus we conclude
\begin{equation*}
-a\Delta_{g} u_{-} + \left( S_{g} + \beta \right) u_{-} - \left( \lambda_{\beta} - \kappa \right) u_{-}^{p-1} \leqslant 0 \; \text{weakly in} \; M.
\end{equation*}
\medskip

We now construct the super-solution. Pick up $ \gamma \ll 1 $ such that
\begin{equation}\label{manifold:eqns0}
0 < 20 \left( \lambda_{\beta} - \kappa \right) \gamma + 2\gamma \cdot \left( \sup_{M} \lvert S_{g} \rvert + \lvert \beta \rvert \right) \gamma < \frac{\beta'}{2}, 31 \left( \lambda_{\beta} - \kappa \right) (\phi + \gamma)^{p-2} \gamma < \frac{\beta'}{2}. 
\end{equation}
This is dimensional specific. Set
\begin{align*}
V & = \lbrace x \in \Omega: u_{3}(x) > \phi(x) \rbrace, V' = \lbrace x \in \Omega: u_{3}(x) < \phi(x) \rbrace, D_{0} = \lbrace x \in \Omega : u_{3}(x) = \phi(x) \rbrace, \\
D_{0}' & = \lbrace x \in \Omega : \text{dist}(x, D_{0}) < \delta \rbrace, D_{0}'' = \left\lbrace x \in \Omega : \text{dist}(x, D_{0}) > \frac{\delta}{2} \right\rbrace, \\
\partial D_{0}' & = \lbrace x \in \Omega : \text{dist}(x, D_{0}) = \delta \rbrace, \partial D_{0}'' = \left\lbrace x \in \Omega : \text{dist}(x, D_{0}) = \frac{\delta}{2} \right\rbrace.
\end{align*}
The constant $ \delta > 0 $ is chosen so that
\begin{equation*}
\sup_{x \in D_{0}'} \lvert u_{3}(x) - \phi(x) \rvert < \gamma.
\end{equation*}
If $ \phi \geqslant u_{3} $ pointwise, then $ \phi $ is a super-solution. If not, a good candidate of super-solution will be $ \max \lbrace u_{3}, \phi \rbrace $ in $ \Omega $ and $ \phi $ outside $ \Omega $, this is an $ H^{1} \cap \calC_{0} $-function. Let $ \nu $ be the outward normal derivative of $ \partial V $ along $ D_{0} $. If $ \frac{\partial u_{3}}{\partial \nu} = \frac{\partial \phi}{\partial \nu} $ on $ D_{0} $ then the super-solution has been constructed. However, this is in general not the case. If not, then $ \frac{\partial u_{3} - \partial \phi}{\partial \nu} \neq 0 $, which follows that $ 0 $ is a regular point of the function $ u_{3} - \phi $ and hence $ D_{0} $ is a smooth submanifold of $ \Omega $. Define
\begin{equation}\label{manifold:eqn6}
\Omega_{1} = V \cap D_{0}'', \Omega_{2} = V' \cap D_{0}'', \Omega_{3} = D_{0}'.
\end{equation}
Clearly $ \Omega_{i}, i = 1, 2, 3 $ are open sets, $ \bigcup_{i} \Omega_{i} = \Omega $ and $ \Omega_{1} \cap \Omega_{2} = \emptyset $. Note that $ D_{0}, D_{0}' $ will never intersect $ \partial \Omega $. Without loss of generality, we may assume that all $ \Omega_{i}, i = 1, 2, 3 $ are connected, since if not, then we do local analysis in all components where $ u_{3} - \phi $ changes sign and combine results together. For any $ 0 < \epsilon \ll \frac{\delta}{4} $, set
\begin{align*}
D_{1} & = \lbrace x \in V: \text{dist}(x, \partial D_{0}') < \epsilon \rbrace, D_{1}' = \lbrace x \in V': \text{dist}(x, \partial D_{0}') < \epsilon \rbrace, \\
D_{2} & = \lbrace x \in V: \text{dist}(x, \partial D_{0}'') < \epsilon \rbrace, D_{2}' = \lbrace x \in V': \text{dist}(x, \partial D_{0}'') < \epsilon \rbrace, \\
E_{1} & = \partial D_{1} \cap \Omega_{1} \cap \Omega_{3}, E_{2} = \partial D_{2} \cap \Omega_{1}, E_{1}' = \partial D_{1}' \cap \Omega_{2} \cap \Omega_{3}, E_{2}' = \partial D_{2}' \cap \Omega_{2}; \\
F_{1} & = \left(\Omega_{1} \cap \Omega_{3} \right) \backslash \left( D_{1} \cup D_{2} \right), F_{2} =  \left(\Omega_{2} \cap \Omega_{3} \right) \backslash \left( D_{1}' \cup D_{2}' \right).
\end{align*}
In conclusion, $ D_{1}, D_{2}, E_{1}, E_{2}, F_{1} $ are subsets of $ V $, in which $ u_{3} > \phi $; $ D_{1}', D_{2}', E_{1}', E_{2}', F_{2} $ are subsets of $ V' $, in which $ u_{3} < \phi $. Note that $ \partial F_{1} = E_{1} \cup E_{2} $, $ \partial F_{2} = E_{1}' \cup E_{2}' $. We now construct appropriate partition of unity subordinate to $ \Omega_{i}, i = 1, 2, 3 $. Choose a local coordinate system $ \lbrace x_{i} \rbrace $ on $ \Omega $, we consider the PDE
\begin{equation}\label{manifold:eqns1}
\begin{split}
&  -\sum_{i, j} g^{ij} (u_{3} - \phi - \gamma) \frac{\partial^{2} v_{1}}{\partial x_{i} \partial x_{j}} - 2 \sum_{i, j} g^{ij} \left( \frac{\partial u_{3}}{\partial x_{j}} - \frac{\partial \phi}{\partial x_{j}} \right) \frac{\partial v_{1}}{\partial x_{i}} \\
& \qquad - \sum_{i, j, k} (u_{3} - \phi - \gamma) g^{ij} \Gamma_{ij}^{k} \frac{\partial v_{1}}{\partial x_{k}}  = 0 \; {\rm in} \; F_{1}; \\
& v_{1} = 0 \; {\rm on} \; E_{2}; v_{1}  = 1 \; {\rm on} \; E_{1}.
\end{split}
\end{equation}
This seemingly complicated PDE (\ref{manifold:eqns1}) is a linear, second order elliptic PDE. For each fixed $ \epsilon > 0 $, this differential operator is uniformly elliptic. Since two parts of boundaries do not intersect, (\ref{manifold:eqns1}) has a unique smooth solution $ v_{1} $ due to \cite[Vol.~3]{Hor}. Since the zeroth order term of this differential operator is zero, it follows from maximum principle that both maximum and minimum of $ v_{1} $ are achieved at $ \partial F_{1} $, it follows that
\begin{equation*}
0 \leqslant v_{1} \leqslant 1 \; {\rm in} \; F_{1}.
\end{equation*}
Let $ \psi $ be the standard mollifier defined on $ B_{0}(1) $. Note that $ \psi $ can be defined as a radial function in terms of $ \lvert x \rvert $. Scaling this we get
\begin{equation*}
\psi_{\epsilon}(x) = \frac{1}{\epsilon^{n}} \varphi\left( \frac{x}{\epsilon} \right), x \in B_{0}(\epsilon).
\end{equation*}
Define
\begin{equation}\label{manifold:eqns2}
\begin{split}
v_{1}'' & : = \min \lbrace 1, \max \lbrace v_{1}, 0 \rbrace \rbrace, v_{1}'(x) : = v_{1}'' * \psi_{\epsilon}(x), \forall x \in \overline{\Omega_{1} \cap \Omega_{3}}; \\
\chi_{1}(x) & : = \begin{cases} 1, & x \in \Omega_{1} \backslash \bar{\Omega}_{3} \\ v_{1}'(x), & x \in \overline{\Omega_{1} \cap \Omega_{3}} \\ 0, & x \in \Omega \backslash \bar{\Omega}_{1} \end{cases}. 
\end{split}
\end{equation}
Since $ v_{1} $ is only nontrivial within $ F_{1} $, we conclude that $ \chi_{1}(x) = 0 $ for all $ x \in \partial D_{0}'' \cap \bar{\Omega}_{1} $ since $ E_{2} $ has an $ \epsilon $-gap away from $ \partial D_{0}'' $ due to construction. By the same argument, $ \chi_{1}(x) = 1 $ for all $ x \in \Omega_{1} \backslash \Omega_{3} $. In particular, $ \chi_{1}(x) = 1 $ on $ \partial D_{0}' \cap \Omega_{1} $. Thus $ \chi_{1} $ is continuous. Actually $ \chi_{1} $ is smooth. For instantce, we pick up a point $ x \in \partial D_{0}'' \cap \bar{\Omega}_{1} $, then we have
\begin{equation*} 
\partial^{\alpha} v_{1}'(x) = \int_{B_{x}(\epsilon)} v_{1}''(y) \partial^{\alpha} \psi_{\epsilon}(x - y) dy \equiv 0
\end{equation*}
since $ v_{1}''(y) \equiv 0 $ on $ B_{x}(\epsilon) $ due to the construction. The smoothness at $ \partial D_{0}' \cap \Omega_{1} $ is almost the same. Lastly, $ \chi_{1} \in [0, 1] $. For example, we pick up a point $ x \in \Omega_{1} \cap \Omega_{3} $, and have
\begin{align*}
1 - \chi_{1}(x) & = 1 - v_{1}'(x) = \int_{B_{0}(\epsilon)} 1 \cdot \psi_{\epsilon}(y) dy - \int_{B_{0}(\epsilon)} v_{1}''(x - y) \psi_{\epsilon}(y) dy \\
& = \int_{B_{0}(\epsilon)} (1 - v_{1}''(x - y)) \cdot \psi_{\epsilon}(y) dy \geqslant 0.
\end{align*}
The last inequality is immediate from the construction of $ v_{1}'' $. By a similar argument, we can see that $ \chi_{1}(x) \geqslant 0 $. We then consider the PDE
\begin{equation}\label{manifold:eqns3}
\begin{split}
&  -\sum_{i, j} g^{ij} \frac{\partial^{2} v_{2}}{\partial x_{i} \partial x_{j}} - \sum_{i, j, k} g^{ij} \Gamma_{ij}^{k} \frac{\partial v_{2}}{\partial x_{k}}  = 0 \; {\rm in} \; F_{2}; \\
& v_{2} = 0 \; {\rm on} \; E_{2}'; v_{2}  = 1 \; {\rm on} \; E_{1}'.
\end{split}
\end{equation}
By the same argument as above, we conclude that (\ref{manifold:eqns3}) has a unique smooth solution $ v_{2} \in [0, 1] $ in $ F_{2} $. Define
\begin{equation}\label{manifold:eqns4}
\begin{split}
v_{2}'' & : = \min \lbrace 1, \max \lbrace v_{2}, 0 \rbrace \rbrace, v_{2}'(x) : = v_{2}'' * \psi_{\epsilon}(x), \forall x \in \overline{\Omega_{2} \cap \Omega_{3}}; \\
\chi_{2}(x) & : = \begin{cases} 1, & x \in \Omega_{2} \backslash \bar{\Omega}_{3} \\ v_{1}'(x), & x \in \overline{\Omega_{2} \cap \Omega_{3}} \\ 0, & x \in \Omega \backslash \bar{\Omega}_{2} \end{cases}. 
\end{split}
\end{equation}
In the same manner, we conclude that $ \chi_{2} \in [0, 1] $ is smooth on $ \Omega $. Lastly we define
\begin{equation}\label{manifold:eqns5}
\chi_{3}(x) = 1 - \chi_{1}(x) - \chi_{2}(x), \forall x \in \Omega.
\end{equation}
Observe that the smooth function $ \chi_{3} \equiv 1 $ in $ \Omega_{3} \backslash \left(\overline{\Omega_{1} \cup \Omega_{2}} \right) $, and vanishes outside $ \Omega_{3} $. We use $ \chi_{1}, \chi_{2}, \chi_{3} $ defined in (\ref{manifold:eqns2}), (\ref{manifold:eqns4}) and (\ref{manifold:eqns5}) as a smooth partition of unity subordinate to $ \Omega_{i}, i = 1, 2, 3 $. Define
\begin{equation}\label{manifold:eqn6a}
\bar{u} = \chi_{1} u_{3} + \chi_{2} \phi + \chi_{3} \left( \phi + \gamma \right).
\end{equation}
We show that $ \bar{u} \in \calC^{\infty}(\Omega) $ is a classical super-solution of Yamabe equation in $ \Omega $. This verification is classified in five different cases. Note that
\begin{equation}\label{manifold:eqn6b}
\Omega = \bigcup_{i = 1}^{3} \Omega_{i} = \left( \Omega_{1} \backslash \bar{\Omega}_{3} \right) \cup \left( \Omega_{2} \backslash \bar{\Omega}_{3} \right) \cup \left( \Omega_{3} \backslash \left( \overline{\Omega_{1} \cup \Omega_{2}} \right) \right) \cup \left( \Omega_{1} \cap \Omega_{3} \right) \cup \left( \Omega_{2} \cup \Omega_{3} \right).
\end{equation}
In first set of (\ref{manifold:eqn6b}), we have
\begin{equation*}
\bar{u} = \chi_{1} u_{3} = u_{3} \; {\rm in} \; \Omega_{1} \backslash \bar{\Omega}_{3} \Rightarrow -a\Delta_{g} \bar{u} + \left( S_{g} + \beta \right) \bar{u} - \left( \lambda_{\beta} - \kappa \right) \left( \bar{u} \right)^{p-1} = 0 \; {\rm in} \; \Omega_{1} \backslash \bar{\Omega}_{3}.
\end{equation*}
due to (\ref{manifold:eqn5b}). Similarly, in the second set of (\ref{manifold:eqn6b}), we have
\begin{equation*}
\bar{u} = \chi_{2} \phi = \phi \; {\rm in} \; \Omega_{2} \backslash \bar{\Omega}_{3} \Rightarrow -a\Delta_{g} \bar{u} + \left( S_{g} + \beta \right) \bar{u} - \left( \lambda_{\beta} - \kappa \right) \left( \bar{u} \right)^{p-1} > 0 \; {\rm in} \; \Omega_{2} \backslash \bar{\Omega}_{3}.
\end{equation*}
due to (\ref{manifold:eqn5c}). In the third set of (\ref{manifold:eqn6b}), we have
\begin{align*}
\bar{u} & = \chi_{3} \left( \phi + \gamma \right) = \phi + \gamma \; {\rm in} \; \Omega_{3} \backslash \left( \Omega_{1} \cap \Omega_{2} \right) \\
\Rightarrow & -a\Delta_{g} \bar{u} + \left( S_{g} + \beta \right) \bar{u} - \left( \lambda_{\beta} - \kappa \right) \left( \bar{u} \right)^{p-1} = -a \Delta_{g} \phi + \left( S_{g} + \beta \right) \left( \phi + \gamma \right) - \left( \lambda_{\beta} - \kappa \right) \left( \phi + \gamma \right)^{p-1} \\
& \geqslant -\left( S_{g} + \beta \right) \phi + \left( \eta_{1} + \beta \right) \phi + \left( S_{g} + \beta \right) \left( \phi + \gamma \right) - 2^{p-2} \left( \lambda_{\beta} - \kappa \right) \phi^{p-1} - 2^{p-2} \left( \lambda_{\beta} - \kappa \right) \gamma^{p-1} \\
& \geqslant \beta' + \left( S_{g} + \beta \right) \gamma - 16 \left( \lambda_{\beta} - \kappa \right) \gamma > 0; \\
\Rightarrow & -a\Delta_{g} \bar{u} + \left( S_{g} + \beta \right) \bar{u} - \left( \lambda_{\beta} - \kappa \right) \left( \bar{u} \right)^{p-1} > 0 \; {\rm in} \; \Omega_{3} \backslash \left( \overline{\Omega_{1} \cup \Omega_{2}} \right).
\end{align*}
The last line is due to the choices of $ \beta', \gamma $ in (\ref{manifold:eqns0}) and the observation that $ 2^{p-2} = 2^{\frac{4}{n - 2}} \leqslant 16, \forall n \geqslant 3 $.

In the fourth set of (\ref{manifold:eqn6b}), we have
\begin{equation*}
\bar{u} = \chi_{1} u_{3} + \chi_{3} \left( \phi + \gamma \right) \; {\rm in} \; \Omega_{1} \cap \Omega_{3}.
\end{equation*}
Note that in $ \Omega_{1} \cap \Omega_{3} \subset \Omega $, $ S_{g} < 0 $, $ \lambda > 0 $, note that $ \phi > u_{3} - \gamma $ and $ u_{3} > \phi $ in $ \Omega_{1} \cap \Omega_{3} $, hence we have
\begin{align*}
\bar{u} & = \chi_{1} u_{3} + \chi_{3} \left( \phi + \gamma \right) < \chi_{1} u_{3} + \chi_{3} \left( u_{3} + \gamma \right) =u_{3} + \chi_{3} \gamma; \\
\Rightarrow \left( S_{g} + \beta \right) \bar{u} &  >  \left( S_{g} + \beta \right) u_{3} +  \left( S_{g} + \beta \right) \chi_{3} \gamma; \\
- \left( \lambda_{\beta} - \kappa \right) \left( \bar{u} \right)^{p - 1} & \geqslant - \left( \lambda_{\beta} - \kappa \right) \left( u_{3} + \chi_{3} \gamma \right)^{p-1} \geqslant - \left( \lambda_{\beta} - \kappa \right) u_{3}^{p-1} - \left(2^{p-1} - 1 \right) \left( \lambda_{\beta} - \kappa \right) u_{3}^{p-2} \chi_{3} \gamma \\
& \geqslant - \left( \lambda_{\beta} - \kappa \right) u_{3}^{p-1} - \left(2^{p-1} - 1 \right) \left( \lambda_{\beta} - \kappa \right) (\phi + \gamma)^{p-2} \chi_{3} \gamma \\
& \geqslant - \left( \lambda_{\beta} - \kappa \right) u_{3}^{p-1} - 31 \left( \lambda_{\beta} - \kappa \right) (\phi + \gamma)^{p-2} \chi_{3} \gamma.
\end{align*}
Here we use the inequality $ (a + b)^{p-1} \leqslant a^{p-1} + \left(2^{p-1} - 1 \right) a^{p-2} b $ when $ 0 \leqslant b \leqslant a $. Here $ a = u_{3}, b = \chi_{3} \gamma $ in $ \Omega_{1} \cap \Omega_{3} $. The constant $ 31 $ is the largest possible value of $ \left(2^{p-1} - 1 \right) $ for all $ n \geqslant 3 $.

The hard part is $ -a\Delta_{g} \bar{u} $. Define the commutator between $ -a\Delta_{g} $ and any smooth function $ f $ by
\begin{equation}\label{manifold:eqn6c}
[-a\Delta_{g}, f]u = -a\Delta_{g} (fu) - f\left(-a\Delta_{g} u\right) = -a\Delta_{g} (fu) + f\left(a\Delta_{g} u \right).
\end{equation}
Immediately, $ [-a\Delta_{g}, 1] = 0 $. In local coordinates, we see that
\begin{align*}
-a\Delta_{g} \bar{u} & = -a\Delta_{g} \left( \chi_{1} u_{3} \right) -a\Delta_{g} \left( \chi_{3} \left( \phi + \gamma \right) \right) \\
& = \chi_{1} \left( -a\Delta_{g} u_{3} \right) + [-a\Delta_{g}, \chi_{1} ] u_{3} + \chi_{3} \left( -a\Delta_{g} (\phi + \gamma) \right) + [-a\Delta_{g}, \chi_{3}] \left( \phi + \gamma \right) \\
& : = V_{1} + V_{2} + W_{1} + W_{2}.
\end{align*}
For $ V_{1} + W_{1} $, we use $ u_{3} < \phi + \gamma $ in $ \Omega_{1} \cap \Omega_{3} $ and have
\begin{align*}
V_{1} + W_{1} & = \chi_{1} \left( -a\Delta_{g} u_{3} \right) + \chi_{3} \left( -a\Delta_{g} \phi \right) \\
& = \chi_{1} \left(-\left( S_{g} + \beta \right) u_{3} + \left( \lambda_{\beta} - \kappa \right) u_{3}^{p-1} \right) + \chi_{3} \left( -\left( S_{g} + \beta \right) \phi + \left( \eta_{1} + \beta \right) \phi \right) \\
& \geqslant -\chi_{1} \left( S_{g} + \beta \right) u_{3} + \chi_{1} \left( \lambda_{\beta} - \kappa \right) u_{3}^{p-1} - \chi_{3} \left( S_{g} + \beta \right) u_{3} + \chi_{3} \left( \lambda_{\beta} - \kappa \right) u_{3}^{p-1} \\
& \qquad + \chi_{3} \left( \left( S_{g} + \beta \right) \gamma + \left( \eta_{1} + \beta \right) \phi - \left( \lambda_{\beta} - \kappa \right) u_{3}^{p-1} \right) \\
& \geqslant -\left( S_{g} + \beta \right) u_{3} + \left( \lambda_{\beta} - \kappa \right) u_{3}^{p-1} \\
& \qquad + \chi_{3} \left(\left( S_{g} + \beta \right) \gamma + \left( \eta_{1} + \beta \right) \phi - \left( \lambda_{\beta} - \kappa \right) (\phi + \gamma)^{p-1} \right) \\
& \geqslant -\left( S_{g} + \beta \right) u_{3} + \left( \lambda_{\beta} - \kappa \right) u_{3}^{p-1} \\
& \qquad + \chi_{3} \left(\left( S_{g} + \beta \right) \gamma + \left( \eta_{1} + \beta \right) \phi - 2^{p-2}\left( \lambda_{\beta} - \kappa \right) \phi^{p-1} - 2^{p-2} \left( \lambda_{\beta} - \kappa \right) \gamma^{p-1} \right) \\
& \geqslant -\left( S_{g} + \beta \right) u_{3} + \left( \lambda_{\beta} - \kappa \right) u_{3}^{p-1} \\
& \qquad + \chi_{3} \left(\left( S_{g} + \beta \right) \gamma + \left( \eta_{1} + \beta \right) \phi - 2^{p-2}\left( \lambda_{\beta} - \kappa \right) \phi^{p-1} - 16 \left( \lambda_{\beta} - \kappa \right) \gamma \right).
\end{align*}
To check $ V_{2} + W_{2} $, we recall that $ \chi_{1} + \chi_{3} = 1 $ in $ \Omega_{1} \cap \Omega_{3} $, hence
\begin{equation*}
0 = [-a\Delta_{g}, 1] = [-a\Delta_{g}, \chi_{1}] + [-a\Delta_{g}, \chi_{3}] \Rightarrow [-a\Delta_{g}, \chi_{1}] = -[-a\Delta_{g}, \chi_{3}].
\end{equation*}
It follows that
\begin{align*}
V_{2} + W_{2} & = [-a\Delta_{g}, \chi_{1} ] u_{3}  + [-a\Delta_{g}, \chi_{3}] \left( \phi + \gamma \right) = [-a\Delta_{g}, \chi_{1}] \left( u_{3} - \phi - \gamma \right) \\
& = \chi_{1} \left(a\Delta_{g} \right) \left( u_{3}- \phi - \gamma \right) -a\Delta_{g} \left(\chi_{1} \left( u_{3} - \phi - \gamma \right) \right) \\
& = -\sum_{i, j} g^{ij} (u_{3} - \phi - \gamma) \frac{\partial^{2} \chi_{1}}{\partial x_{i} \partial x_{j}} - \sum_{i, j} 2g^{ij} \left( \frac{\partial u_{3}}{\partial x_{j}} - \frac{\partial \phi}{\partial x_{j}} \right) \frac{\partial \chi_{1}}{\partial x_{i}} \\
& \qquad - \sum_{i, j, k} g^{ij} \Gamma_{ij}^{k} (u_{3} - \phi - \gamma) \frac{\partial \chi_{1}}{\partial x_{k}}.
\end{align*}
Next we show that
\begin{equation}\label{manifold:eqn6d}
V_{2} + W_{2} \geqslant -L \epsilon'
\end{equation}
in $ \Omega_{1} \cap \Omega_{3} $ for some constant $ L > 0 $ which is independent of $ \epsilon' $. Here $ \epsilon' > 0 $ can be arbitrarily small. We verify (\ref{manifold:eqn6d}) in three types of points. Set
\begin{equation*}
G_{1} : = \lbrace x \in \Omega : \text{dist}(x, E_{1}) \leqslant \epsilon \rbrace, G_{2} : = \lbrace x \in \Omega: \text{dist}(x, E_{2}) \leqslant \epsilon \rbrace.
\end{equation*}
For points $ F_{1} \backslash \left(G_{1} \cup G_{2} \right)  $, i.e. points in $ \Omega_{1} \cap \Omega_{3} $ that is at least distance $ \epsilon $ from either $ E_{1} $ or $ E_{2} $, we have
\begin{equation*}
\chi_{1}(x) = \int_{B_{0}(\epsilon)} v_{1}(x - y) \psi_{\epsilon}(y) dy.
\end{equation*}
Since $ v_{1} \in \calC^{\infty} $, it follows that
\begin{equation*}
\left\lvert \partial^{\alpha} \chi_{1}(x) - \partial^{\alpha} v_{1}(x) \right\rvert \leqslant \epsilon', \forall \lvert \alpha \rvert \leqslant 2.
\end{equation*}
Here $ \epsilon' \xrightarrow{\epsilon \rightarrow 0} 0 $. Hence in this case,
\begin{align*}
V_{2} + W_{2} & \geqslant -\sum_{i, j} g^{ij} (u_{3} - \phi - \gamma) \frac{\partial^{2} v_{1}}{\partial x_{i} \partial x_{j}} - \sum_{i, j} 2g^{ij} \left( \frac{\partial u_{3}}{\partial x_{j}} - \frac{\partial \phi}{\partial x_{j}} \right) \frac{\partial v_{1}}{\partial x_{i}} \\
& \qquad - \sum_{i, j, k} g^{ij} \Gamma_{ij}^{k} (u_{3} - \phi - \gamma) \frac{\partial v_{1}}{\partial x_{k}} - L\epsilon' \\
& = -L\epsilon'.
\end{align*}
Here $ L $ depends on $ u_{3}, \phi, g^{ij}, \Gamma_{ij}^{k} $ and is independent of $ \epsilon' $. When $ x $ in $ \Omega_{1} \cap \Omega_{3} \cap G_{1} $ or $ \Omega_{1} \cap \Omega_{3} \cap G_{2} $, we conclude that
\begin{equation*}
\partial^{\alpha} \chi_{1} = o(1), \forall \alpha, \forall x \in \Omega_{1} \cap \Omega_{3} \cap \left( G_{1} \cup G_{2} \right).
\end{equation*}
This is due to the smoothness of $ \chi_{1} $ and $ \partial^{\alpha} \chi_{1} = 0 $ on $ \partial D_{0}' \cap \Omega_{1} $ and $ \partial D_{0}'' \cap \bar{\Omega}_{1} $. To be precise, when $ x \in \Omega_{1} \cap \Omega_{3} \cap G_{1} $, $ u_{3} - \phi - \gamma \rightarrow 0 $ when $ \epsilon \rightarrow 0 $; furthermore, the first order derivative satisfies
\begin{align*}
\frac{\partial \chi_{1}}{\partial x_{j}} = \int_{B_{x}(\epsilon)} v_{1}'(y) \frac{\partial \psi_{\epsilon}(x - y)}{\partial x_{j}} dy = \int_{B_{x} \cap \Omega_{1} \cap \Omega_{3} \cap G_{1}} \frac{\partial v_{1}}{\partial y_{j}} \psi_{\epsilon}(x - y) dy.
\end{align*}
The boundary terms on $ \partial D_{1} $ are killed since on $ \partial D_{1} $ we have $ v_{1} = 1 $. The second order derivative is of the form
\begin{align*}
\frac{\partial^{2} \chi_{1}}{\partial x_{i}x_{j}} & = \int_{B_{x}(\epsilon)} v_{1}'(y) \frac{\partial^{2} \psi_{\epsilon}(x - y)}{\partial x_{i}x_{j}} dy = \int_{B_{x}(\epsilon)} v_{1}'(y) (-1)^{2} \frac{\partial^{2} \psi_{\epsilon}(x - y)}{\partial y_{i}y_{j}} dy \\
& =  \int_{B_{x}(\epsilon) \cap \Omega_{1} \cap \Omega_{3}} \frac{\partial^{2} v_{1}(y)}{\partial y_{i}y_{j}} \psi_{\epsilon}(x - y) dy - \int_{\partial \left(B_{x}(\epsilon) \cap \partial D_{1} \right)} \frac{\partial v_{1}}{\partial x_{i}} \nu_{j} \psi_{\epsilon}(x - y) dS
\end{align*}
The first term above can be controlled by the PDE (\ref{manifold:eqns1}); for the the second term above, we observe that $ u_{3} - \phi - \gamma = O(\epsilon) $ and $ \frac{\partial v_{1}}{\partial x_{i}} = O(\gamma) $ due to the construction of PDE, thus by the fact that $ \epsilon \ll \gamma $,
\begin{equation*}
-g^{ij} (u_{3} - \phi - \gamma) \int_{\partial \left(B_{x}(\epsilon) \cap \partial D_{1} \right)} \frac{\partial v_{1}}{\partial x_{i}} \nu_{j} \psi_{\epsilon}(x - y) dS \rightarrow 0
\end{equation*}
as $ \epsilon \rightarrow 0 $.
When $ x \in \Omega_{1} \cap \Omega_{3} \cap G_{2} $, the first order derivatives are controlled exactly as just discussed. Note that $ u_{3} - \phi - \gamma < 0 $ here, the second order derivative will have
\begin{align*}
\frac{\partial^{2} \chi_{1}}{\partial x_{i}x_{j}} & = \int_{B_{x}(\epsilon)} v_{1}'(y) \frac{\partial^{2} \psi_{\epsilon}(x - y)}{\partial x_{i}x_{j}} dy = \int_{B_{x}(\epsilon)} v_{1}'(y) (-1)^{2} \frac{\partial^{2} \psi_{\epsilon}(x - y)}{\partial y_{i}y_{j}} dy \\
& =  \int_{B_{x}(\epsilon) \cap \Omega_{1} \cap \Omega_{3}} \frac{\partial^{2} v_{1}(y)}{\partial y_{i}y_{j}} \psi_{\epsilon}(x - y) dy - \int_{\partial \left(B_{x}(\epsilon) \cap \partial D_{2} \right)} \frac{\partial v_{1}}{\partial x_{i}} \nu_{j} \psi_{\epsilon}(x - y) dS \\
& \geqslant  \int_{B_{x}(\epsilon) \cap \Omega_{1} \cap \Omega_{3}} \frac{\partial^{2} v_{1}(y)}{\partial y_{i}y_{j}} \psi_{\epsilon}(x - y) dy.
\end{align*}
The last inequality is due to the fact that $ v_{1} $ is descending to zero, therefore the derivative is negative when very close to $ \partial D_{2} $. It follows that the boundary part above is positive. Within the $ 2\epsilon $-strip of $ \partial D_{0}' $ and $ \partial D_{0}'' $, we have
\begin{align*}
V_{2} + W_{2} & = -\sum_{i, j} g^{ij} (u_{3} - \phi - \gamma) \frac{\partial^{2} \chi_{1}}{\partial x_{i} \partial x_{j}} - \sum_{i, j} 2g^{ij} \left( \frac{\partial u_{3}}{\partial x_{j}} - \frac{\partial \phi}{\partial x_{j}} \right) \frac{\partial \chi_{1}}{\partial x_{i}} \\
& \qquad - \sum_{i, j, k} g^{ij} \Gamma_{ij}^{k} (u_{3} - \phi - \gamma) \frac{\partial \chi_{1}}{\partial x_{k}} = o(1) \geqslant -L\epsilon'.
\end{align*}
Therefore (\ref{manifold:eqn6d}) holds. It follows that
\begin{equation}\label{manifold:eqns7}
\begin{split}
-a\Delta_{g} \bar{u} & = V_{1} + V_{2} + W_{1} + W_{2} \geqslant -\left( S_{g} + \beta \right) u_{3} + \left( \lambda_{\beta} - \kappa \right) u_{3}^{p-1} \\
& \qquad + \chi_{3} \left(\left( S_{g} + \beta \right) \gamma + \left( \eta_{1} + \beta \right) \phi - 2^{p-2}\left( \lambda_{\beta} - \kappa \right) \phi^{p-1} - 16 \left( \lambda_{\beta} - \kappa \right) \gamma \right)- L\epsilon'.
\end{split}
\end{equation}
Combining all estimates together, we have
\begin{align*}
& -a\Delta_{g} \bar{u} + \left( S_{g} + \beta \right) \bar{u} - \left( \lambda_{\beta} - \kappa \right) \bar{u}^{p-1} \geqslant -\left( S_{g} + \beta \right) u_{3} + \left( \lambda_{\beta} - \kappa \right) u_{3}^{p-1} \\
& \qquad + \chi_{3} \left( \left( S_{g} + \beta \right) \gamma + \left( \eta_{1} + \beta \right) \phi - 2^{p-2}\left( \lambda_{\beta} - \kappa \right) \phi^{p-1} - 16 \left( \lambda_{\beta} - \kappa \right) \gamma \right)- L\epsilon' \\
& \qquad \qquad + \left( S_{g} + \beta \right) u_{3} + \left( S_{g} + \beta \right) \chi_{3} \gamma - \left( \lambda_{\beta} - \kappa \right) u_{3}^{p-1} - 31 \left( \lambda_{\beta} - \kappa \right) (\phi + \gamma)^{p-2} \chi_{3} \gamma \\
& \qquad = \chi_{3} \left( \left( \eta_{1} + \beta \right) \phi - 2^{p-2}\left( \lambda_{\beta} - \kappa \right) \phi^{p-1} - 16 \left( \lambda_{\beta} - \kappa \right) \gamma + 2 \left( S_{g} + \beta \right) \gamma - 31 \left( \lambda_{\beta} - \kappa \right) (\phi + \gamma)^{p-2} \gamma \right) \\
& \qquad \qquad - L\epsilon' \\
& \geqslant \chi_{3} \left( \beta' -16 \left( \lambda_{\beta} - \kappa \right) \gamma  + 2 \left( S_{g} + \beta \right) \gamma - 31 \left( \lambda_{\beta} - \kappa \right) (\phi + \gamma)^{p-2} \gamma \right) - L\epsilon'.
\end{align*}
Due to the choice of $ \gamma $ in (\ref{manifold:eqns0}) and the fact that $ \epsilon' $ can be arbitrarily small by making $ \epsilon $ arbitrarily small, we conclude that
\begin{equation*}
-a\Delta_{g} \bar{u} + \left( S_{g} + \beta \right) \bar{u} - \left( \lambda_{\beta} - \kappa \right) \bar{u}^{p-1} \geqslant 0 \; {\rm in} \; \Omega_{1} \cap \Omega_{3}.
\end{equation*}
 In the fifth set of (\ref{manifold:eqn6b}), we have
\begin{equation*}
\bar{u} = \chi_{2} \phi + \chi_{3} \left( \phi + \gamma \right).
\end{equation*}
Repeat the same argument above in replacing $ u_{3} $ by $ \phi $, we conclude that $ \bar{u} $ is a super-solution in $ \Omega_{2} \cap \Omega_{3} $. The argument is essentially the same thus we just sketch here. In $ \Omega_{2} \cap \Omega_{3} $,
\begin{align*}
\bar{u} & = \phi + \chi_{3} \gamma \leqslant \phi + \gamma; \\
\left( S_{g} + \beta \right) \bar{u} & \geqslant \left( S_{g} + \beta \right) \phi + \left( S_{g} + \beta \right) \gamma; \\
-\left( \lambda_{\beta} - \kappa \right) \left(\bar{u} \right)^{p-1} & \geqslant - 2^{p-2} \left( \lambda_{\beta} - \kappa \right) \phi^{p-1} - 16 \left( \lambda_{\beta} - \kappa \right) \gamma.
\end{align*}
The Laplacian part can be estimated as
\begin{align*}
-a\Delta_{g} \bar{u} & = -a\Delta_{g} \left( \chi_{2} \phi \right) -a\Delta_{g} \left( \chi_{3} \left( \phi + \gamma \right) \right) \\
& = \chi_{2} \left( -a\Delta_{g} \phi \right) + [-a\Delta_{g}, \chi_{2} ] \phi + \chi_{3} \left( -a\Delta_{g} \left(\phi + \gamma \right) \right) + [-a\Delta_{g}, \chi_{3}] \left( \phi + \gamma \right) \\
& \geqslant -a\Delta_{g} \phi + [-a\Delta_{g}, \chi_{2}] \left( -\gamma \right) = -a\Delta_{g} \phi - a\Delta_{g} \left( -\gamma \chi_{2} \right).
\end{align*}
We observe that $ - a\Delta_{g} \left( -\gamma \chi_{2} \right) \geqslant -L \epsilon' $ with arbitrarily small $ \epsilon' $ by the same reasoning above, due to the construction in (\ref{manifold:eqns3}) and (\ref{manifold:eqns4}). The rest are the same as above, hence we conclude that
\begin{equation*}
-a\Delta_{g} \bar{u} + \left( S_{g} + \beta \right) \bar{u} - \left( \lambda_{\beta} - \kappa \right) \left( \bar{u} \right)^{p-1} \geqslant 0 \; {\rm in} \; \Omega_{2} \cap \Omega_{3}.
\end{equation*}
Hence $ \bar{u} $ is a super-solution of Yamabe equation (\ref{intro:eqn2}) in $ \Omega $ pointwise. By the setup of $ \Omega_{i}, i = 1, 2, 3 $, it is immediate to check that
\begin{equation*}
\bar{u} \geqslant u_{3} \; {\rm in} \; \Omega.
\end{equation*}
Since $ \bar{u} = \phi $ near $ \partial \Omega $, we define
\begin{equation}\label{manifold:eqn6d}
u_{+} : = \begin{cases} \bar{u}, & \; {\rm in} \; \Omega; \\ \phi, & \; {\rm in} \; M \backslash \Omega. \end{cases}
\end{equation}
It follows that $ u_{+} \in \calC^{\infty}(M) $ is a super-solution of (\ref{manifold:eqn5}) and we conclude that $ 0 \leqslant u_{-} \leqslant u_{+} $. Lastly, by Theorem \ref{pre:thm5} and Remark \ref{pre:re2}, we conclude that there exists a solution $ u \in \calC^{\infty}(M) $ that solves the perturbed Yamabe equation (\ref{manifold:eqnss2}) with the fixed $ \lambda_{s} > 0 $ given in (\ref{manifold:eqnss1}). By maximum principle and the fact that $ u \geqslant u_{-} \not\equiv 0 $, we conclude that $ u > 0 $ on $ M $.
\end{proof}
\begin{remark}\label{manifold:re001} Recall that the local PDE (\ref{local:eqn11}) holds for all $ \lambda > 0 $. This is crucial since it gives us the flexibility to choose the constant associate with $ u^{p-1} $ term, hence we can introduce the extra constant $ \kappa > 0 $ here.
\end{remark}
\medskip

We now apply Theorem \ref{manifold:thm3} to show the existence of the solution of Yamabe equation with $ \eta_{1} > 0 $. Note that from perturbed Yamabe equation (\ref{manifold:eqn5}) to the Yamabe equation, we need to pass the limit by letting $ \beta \rightarrow 0^{-} $. Yamabe tried to let $ u^{s - 1} \rightarrow u^{p-1} $ when $ s \rightarrow p $, and the uniform boundedness of the sequence is not that easy. Due to the local construction of our perturbed Yamabe equations in (\ref{manifold:eqnss2}), we obtain the uniformly positive upper and lower bounds of $ \lVert u_{\beta} \rVert_{\calL^{p}(M, g)} $ when $ \beta \in [\beta_{0}, 0] $, with some fixed $ \beta_{0} < 0 $. With the introduction of the extra positive constant $ \kappa > 0 $, a similar argument due to Trudinger and Aubin then applies to show the uniform boundedness of $ \lVert u_{\beta} \rVert_{\calL^{r}(M, g)} $ for all $ \beta \in [\beta_{0}, 0] $ for some $ r > p $. In contrast to classical method, we obtain this without using $ \lambda(M) < \lambda(\mathbb{S}^{n}) $.
\begin{theorem}\label{manifold:thms}
Let $ (M, g) $ be a closed manifold with $ n = \dim M \geqslant 3 $. Let $ \kappa > 0 $ be some small enough positive constant. Assume the scalar curvature $ S_{g} < 0 $ somewhere on $ M $ and the first eigenvalue $ \eta_{1} > 0 $. Then there exists some $ \lambda > 0 $ such that the Yamabe equation (\ref{intro:eqn2}) has a real, positive, smooth solution.
\end{theorem}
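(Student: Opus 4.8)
The plan is to produce, via Theorem~\ref{manifold:thm3}, a one-parameter family of positive solutions of the perturbed equation and then let the perturbation tend to zero. Since $\eta_{1}>0$ we have $\lambda(M)>0$, and combining the lower bound $\lambda_{\beta}\geqslant\lambda(M)+\beta\,\text{Vol}_{g}(M)^{\frac{p-2}{p}}$ established in the proof of Lemma~\ref{manifold:lemma1} with the trivial upper bound $\lambda_{\beta}\leqslant\lambda(M)$ (valid because $Q_{\beta}(u)<Q(u)$ when $\beta<0$) one gets $\lambda_{\beta}\to\lambda(M)$ as $\beta\to0^{-}$. First I would fix $\beta_{0}<0$ with $|\beta_{0}|$ so small that $\eta_{1}+\beta>0$ and $\lambda_{\beta}>\tfrac12\lambda(M)$ for all $\beta\in[\beta_{0},0)$, and then shrink the given $\kappa>0$ so that $\kappa<\tfrac12\lambda(M)$; then $\lambda_{\beta}-\kappa>0$ and $\eta_{1}+\beta>0$ hold throughout $[\beta_{0},0)$, so Theorem~\ref{manifold:thm3} supplies, for each such $\beta$, a real, positive $u_{\beta}\in\calC^{\infty}(M)$ with
\begin{equation*}
-a\Delta_{g}u_{\beta}+\left(S_{g}+\beta\right)u_{\beta}=\left(\lambda_{\beta}-\kappa\right)u_{\beta}^{p-1}\; {\rm in}\; M.
\end{equation*}

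The next step is a uniform two-sided $\calL^{p}$ control of $\{u_{\beta}\}$. Testing this equation against $u_{\beta}$ and using the variational characterization \eqref{manifold:eqnss1} of $\lambda_{\beta}$ gives
\begin{equation*}
\lambda_{\beta}\lVert u_{\beta}\rVert_{\calL^{p}(M,g)}^{2}\leqslant\int_{M}a\lvert\nabla_{g}u_{\beta}\rvert^{2}+\left(S_{g}+\beta\right)u_{\beta}^{2}\,d\omega=\left(\lambda_{\beta}-\kappa\right)\lVert u_{\beta}\rVert_{\calL^{p}(M,g)}^{p},
\end{equation*}
whence $\lVert u_{\beta}\rVert_{\calL^{p}(M,g)}^{p-2}\geqslant\lambda_{\beta}/(\lambda_{\beta}-\kappa)>1$, a lower bound bounded away from $1$ uniformly in $\beta\in[\beta_{0},0)$. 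For the upper bound there are two routes: directly, one notes that $u_{\beta}\leqslant u_{+}^{\beta}$, where $u_{+}^{\beta}$ is the super-solution built in the proof of Theorem~\ref{manifold:thm3} out of the local solution $u_{3}^{\beta}$ of \eqref{local:eqn11} on the \emph{fixed} domain $\Omega$ (with $\lambda=\lambda_{\beta}-\kappa$ ranging over a compact subset of $(0,\infty)$) and a fixed small multiple $\phi=\theta\varphi$ of the first $\Box_{g}$-eigenfunction --- the admissibility constraint on $\theta$ in \eqref{manifold:eqn5a} survives the passage $\beta\to0^{-}$ since $\eta_{1}+\beta\to\eta_{1}$ and $\lambda_{\beta}\to\lambda(M)$ --- so that $\lVert u_{\beta}\rVert_{\calL^{\infty}(M)}\leqslant\lVert u_{+}^{\beta}\rVert_{\calL^{\infty}(M)}\leqslant C$ uniformly; alternatively, as indicated in the paragraph preceding this theorem, an argument in the spirit of Trudinger and Aubin --- in which the strictly positive gap $\kappa$ plays the role that $\lambda(M)<\lambda(\mathbb{S}^{n})$ plays in the classical proof --- yields a uniform bound $\lVert u_{\beta}\rVert_{\calL^{r}(M,g)}\leqslant C$ for some fixed $r>p$.

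Once a uniform bound $\lVert u_{\beta}\rVert_{\calL^{r}(M,g)}\leqslant C$ with $r>p$ is in hand, the critical exponent is no longer an obstruction: $u_{\beta}^{p-1}$ is uniformly bounded in $\calL^{r/(p-1)}(M,g)$, and because $r>p$ forces $r/(p-1)>p/(p-1)$, each application of the elliptic estimate \eqref{pre:eqn4} of Theorem~\ref{pre:thm1}(iii) followed by the Sobolev embedding of Theorem~\ref{pre:thm2}(iii) strictly raises the integrability exponent, so after finitely many steps $u_{\beta}$ is uniformly bounded in $\calL^{\infty}(M)$, after which the Schauder estimate \eqref{pre:eqn5} gives a uniform bound $\lVert u_{\beta}\rVert_{\calC^{2,\alpha}(M)}\leqslant C$. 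By Arzel\`a--Ascoli there is a sequence $\beta_{j}\to0^{-}$ with $u_{\beta_{j}}\to u$ in $\calC^{2}(M)$; since $\lambda_{\beta_{j}}\to\lambda(M)$, the limit $u$ solves
\begin{equation*}
-a\Delta_{g}u+S_{g}u=\left(\lambda(M)-\kappa\right)u^{p-1}\; {\rm in}\; M,
\end{equation*}
that is, the Yamabe equation \eqref{intro:eqn2} with $\lambda:=\lambda(M)-\kappa>0$. It then remains to check positivity: as a uniform limit of positive functions $u\geqslant0$, and from the uniform lower bound above $\lVert u\rVert_{\calL^{p}(M,g)}=\lim_{j}\lVert u_{\beta_{j}}\rVert_{\calL^{p}(M,g)}>1$, so $u\not\equiv0$; writing the equation as $-a\Delta_{g}u+Nu\geqslant0$ with $N=\max_{M}\{(\lambda(M)-\kappa)u^{p-2}-S_{g},\,0\}$ and applying the strong maximum principle of Theorem~\ref{pre:thm3}(iii) exactly as in Remark~\ref{pre:re2}(ii) forces $u>0$ on $M$, and elliptic regularity gives $u\in\calC^{\infty}(M)$. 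The main obstacle in this program is the uniform a priori estimate of the second and third paragraphs --- controlling the family $\{u_{\beta}\}$ above the critical Sobolev exponent as $\beta\to0^{-}$ --- which is exactly the point where the freedom, afforded by the local analysis, to prescribe the coefficient $\lambda_{\beta}-\kappa$ (rather than a minimizer value) in front of $u^{p-1}$ substitutes for the classical test-function estimate $\lambda(M)<\lambda(\mathbb{S}^{n})$.
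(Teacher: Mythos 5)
Your overall architecture is the same as the paper's: produce the family $\{u_{\beta}\}$ from Theorem~\ref{manifold:thm3}, get uniform two-sided $\calL^{p}$ control plus a supercritical $\calL^{r}$ bound, bootstrap to $\calC^{2,\alpha}$, and pass to the limit $\beta\to 0^{-}$. Your explicit identification $\lambda_{\beta}\to\lambda(M)$ (squeezing between $\lambda(M)+\beta\,{\rm Vol}_{g}(M)^{(p-2)/p}$ and $\lambda(M)$) is a clean variant of the paper's monotonicity-and-continuity argument, and the lower bound $\lVert u_{\beta}\rVert_{\calL^{p}}>1$ and the endgame (Arzel\`a--Ascoli, maximum principle) match the paper.

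The one genuine gap is your ``route (a)'': the claim $\lVert u_{\beta}\rVert_{\calL^{\infty}(M)}\leqslant\lVert u_{+}^{\beta}\rVert_{\calL^{\infty}(M)}\leqslant C$ uniformly in $\beta$. The super-solution contains the local critical-exponent solution $\tilde{u}_{\beta}$ of \eqref{local:eqn11}, which is produced by Wang's variational theorem and is controlled only through its energy, $J(\tilde{u}_{\beta})\leqslant K_{0}$. That yields a uniform $H^{1}(\Omega,g)$ and hence $\calL^{p}(\Omega,g)$ bound on $\tilde{u}_{\beta}$, but \emph{not} a uniform sup bound: for a family of critical-exponent solutions, passing from uniform energy bounds to uniform $\calL^{\infty}$ bounds is precisely the concentration/blow-up issue, and neither Wang's theorem nor anything in the paper supplies it. If route (a) were available the whole Moser-iteration step would be superfluous. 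The correct logic --- and the paper's --- is that the two routes are not alternatives but consecutive: the super-solution comparison gives only $\lVert u_{\beta}\rVert_{\calL^{p}(M,g)}\leqslant C'$ uniformly (via $\lVert u_{+,\beta}\rVert_{\calL^{p}}\lesssim\lVert\tilde{u}_{\beta}\rVert_{\calL^{p}(\Omega,g)}+\lVert\phi\rVert_{\calL^{p}}$ and the energy bound), and this $\calL^{p}$ bound is the indispensable \emph{input} to the Trudinger--Aubin iteration of your route (b), whose absorption step needs both $\lVert u_{\beta}\rVert_{\calL^{p}}$ normalized (the paper rescales the metric so that $\lVert u_{\beta}\rVert_{\calL^{p}}\leqslant 1$) and the strict gap $\lambda_{\beta}-\kappa<aT$. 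On that last point, your bound $\lambda_{\beta}\leqslant\lambda(M)$ must still be supplemented by $\lambda(M)\leqslant aT$ (or by the paper's Appendix estimate $\lambda_{\beta}<aT$) before the iteration closes. So: delete the $\calL^{\infty}$ overclaim, keep the comparison argument at the $\calL^{p}$ level, and feed it into route (b); then the proof is the paper's.
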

\begin{proof} By Theorem \ref{manifold:thm3}, we have a sequence of real, positive, smooth solutions $ \lbrace u_{\beta} \rbrace $ when $ \beta < 0 $ and $ \lvert \beta \rvert $ is small enough, i.e.
\begin{equation}\label{manifold:eqnss4}
-a\Delta_{g} u_{\beta} + \left(S_{g} + \beta\right) u_{\beta} = \left( \lambda_{\beta} - \kappa \right) u_{\beta}^{p-1} \; {\rm in} \; M.
\end{equation}
We show first that $ \lbrace \lambda_{\beta} \rbrace $ is bounded above, and is increasing and continuous when $ \beta \rightarrow 0^{-} $. It then follows that we can choose $ \kappa $ uniformly so that
\begin{equation*}
\lambda_{\beta} - \kappa > 0, \forall \beta \in [\beta_{0}, 0].
\end{equation*}
We may assume $ \int_{M} d\omega = 1 $ for this continuity verification, since otherwise only an extra term with respect to $ \text{Vol}_{g} $ will appear. Recall that
\begin{equation*}
\lambda_{\beta} = \inf_{u \neq 0, u \in H^{1}(M)} \left\lbrace \frac{\int_{M} a\lvert \nabla_{g} u \rvert^{2} d\omega + \int_{M} \left( S_{g} + \beta \right) u^{2} d\omega}{\left( \int_{M} u^{p} d\omega \right)^{\frac{2}{p}}} \right\rbrace.
\end{equation*}
It is immediate that if $ \beta_{1} < \beta_{2} < 0 $ then $ \lambda_{\beta_{1}} \leqslant \lambda_{\beta_{2}} $. For continuity we assume $ 0 < \beta_{2} - \beta_{1} < \gamma $. For each $ \epsilon > 0 $, there exists a function $ u_{0} $ such that
\begin{equation*}
\frac{\int_{M} a\lvert \nabla_{g} u_{0} \rvert^{2} d\omega + \int_{M} \left( S_{g} + \beta_{1} \right) u_{0}^{2} d\omega}{\left( \int_{M} u_{0}^{p} d\omega \right)^{\frac{2}{p}}} < \lambda_{\beta_{1}} + \epsilon.
\end{equation*}
It follows that
\begin{align*}
\lambda_{\beta_{2}} & \leqslant \frac{\int_{M} a\lvert \nabla_{g} u_{0} \rvert^{2} d\omega + \int_{M} \left( S_{g} + \beta_{2} \right) u_{0}^{2} d\omega}{\left( \int_{M} u_{0}^{p} d\omega \right)^{\frac{2}{p}}} \\
& \leqslant \frac{\int_{M} a\lvert \nabla_{g} u_{0} \rvert^{2} d\omega + \int_{M} \left( S_{g} + \beta_{1} \right) u_{0}^{2} d\omega}{\left( \int_{M} u_{0}^{p} d\omega \right)^{\frac{2}{p}}} + \frac{\left( \beta_{2} - \beta_{1} \right) \int_{M} u_{0}^{2} d\omega}{\left( \int_{M} u_{0}^{p} d\omega \right)^{\frac{2}{p}}} \\
& \leqslant \lambda_{\beta_{1}} + \epsilon + \beta_{2} - \beta_{1} < \lambda_{\beta_{1}} + \epsilon + \beta_{2} - \beta_{1}.
\end{align*}
Since $ \epsilon $ is arbitrarily small, we conclude that
\begin{equation*}
0 < \beta_{2} - \beta_{1} < \gamma \Rightarrow \lvert \lambda_{\beta_{2}} - \lambda_{\beta_{1}} \rvert \leqslant 2\gamma.
\end{equation*}
By the argument in Appendix \ref{APP}, we conclude that $ \lambda_{\beta} < aT $, where $ T $ is the best Sobolev constant for the embedding $ \calL^{p}(\R^{n}) \hookrightarrow H^{1}(\R^{n}) $. Fix some $ \beta_{0} < 0 $ with $ \eta_{1, \beta_{0}} > 0 $, we have
\begin{equation}\label{manifold:eqnss5}
\lambda_{\beta_{0}} \leqslant \lambda_{\beta} \leqslant aT, \forall \beta \in [\beta_{0}, 0], \lim_{\beta \rightarrow 0^{-}} \lambda_{\beta} - \kappa : = \lambda.
\end{equation}
Next we show $ \lVert u_{\beta} \rVert_{H^{1}(M)} \leqslant C $ uniformly when $ \beta \in [\beta_{0}, 0] $. Pairing the PDE (\ref{manifold:eqnss4}) with $ u_{\beta} $, it follows that
\begin{equation}\label{manifold:eqns6a}
\begin{split}
a\lVert \nabla_{g} u_{\beta} \rVert_{\calL^{2}(M, g)}^{2} & = \left(\lambda_{\beta} - \kappa \right) \lVert u_{\beta} \rVert_{\calL^{p}(M, g)}^{p} - \int_{M} \left( S_{g} + \beta \right) u_{\beta}^{2} d\omega \\
& \leqslant  \left(\lambda_{\beta} - \kappa \right) \lVert u_{\beta} \rVert_{\calL^{p}(M, g)}^{p} + \left( \sup_{M} \lvert S_{g} \rvert + \lvert \beta_{0} \rvert \right) \lVert u_{\beta} \rVert_{\calL^{2}(M, g)}^{2}.
\end{split}
\end{equation}
Thus it suffices to show
\begin{equation*}
\lVert u_{\beta} \rVert_{\calL^{p}(M, g)} \leqslant C', \forall \beta \in [\beta_{0}, 0].
\end{equation*}
Denote the corresponding local solutions of (\ref{local:eqn11}) by $ \lbrace \tilde{u}_{\beta} \rbrace $, i.e.
\begin{equation*}
-a\Delta_{g} \tilde{u}_{\beta} + \left(S_{g} + \beta\right) \tilde{u}_{\beta} = \left( \lambda_{\beta} - \kappa \right) \tilde{u}_{\beta}^{p-1} \; {\rm in} \; \Omega, \tilde{u}_{\beta} = 0 \; {\rm on} \; \partial \Omega
\end{equation*}
with fixed domain $ \Omega $. Again we have
\begin{equation}\label{manifold:eqns6}
\begin{split}
a\lVert \nabla_{g} \tilde{u}_{\beta} \rVert_{\calL^{2}(\Omega, g)}^{2} & =  \left( \lambda_{\beta} - \kappa \right) \lVert \tilde{u}_{\beta} \rVert_{\calL^{p}(\Omega, g)}^{p} - \int_{\Omega} \left( S_{g} + \beta \right) u_{\beta}^{2} \dvol \\
\Rightarrow \left( \lambda_{\beta } - \kappa \right) \lVert \tilde{u}_{\beta} \rVert_{\calL^{p}(\Omega, g)}^{p} & \leqslant a\lVert \nabla_{g} \tilde{u}_{\beta} \rVert_{\calL^{2}(\Omega, g)}^{2} + \left( \sup_{\Omega} \lvert S_{g} \rvert + \lvert \beta \rvert \right) \lVert u_{\beta} \rVert_{\calL^{2}(\Omega, g)}^{2}.
\end{split}
\end{equation}
Recall the fuctional $ J(u) $ in (\ref{local:eqn8}) and the quantity $ K_{0} $ in (\ref{local:eqn9}), due to Theorem 1.1 of \cite{WANG}, each solution $ \tilde{u}_{\beta} $ satisfies
\begin{equation}\label{manifold:eqns7}
J(\tilde{u}_{\beta}) \leqslant K_{0} \Rightarrow \frac{a}{2} \lVert \nabla_{g} \tilde{u}_{\beta} \rVert_{\calL^{2}(\Omega, g)}^{2} - \frac{\left( \lambda_{\beta} - \kappa \right)}{p} \lVert \tilde{u}_{\beta} \rVert_{\calL^{p}(\Omega, g)}^{p} - \frac{1}{2} \int_{\Omega} \left(S_{g} + \beta \right) \tilde{u}_{\beta}^{2} \dvol \leqslant K_{0}.
\end{equation}
Note that since $ \lambda_{\beta} $ is bounded above and below when $ \beta \in [\beta_{0}, 0] $, the quantity $ K_{0} $ is uniformly bounded above, and we denote it by $ K_{0} $ again. Apply the estimate (\ref{manifold:eqns6}) into (\ref{manifold:eqns7}), we have
\begin{align*}
\frac{a}{2} \lVert \nabla_{g} \tilde{u}_{\beta} \rVert_{\calL^{2}(\Omega, g)}^{2} & \leqslant K_{0} + \frac{1}{p} \left(a\lVert \nabla_{g} \tilde{u}_{\beta} \rVert_{\calL^{2}(\Omega, g)}^{2} + \left( \sup_{\Omega} \lvert S_{g} \rvert + \lvert \beta \rvert \right) \lVert u_{\beta} \rVert_{\calL^{2}(\Omega, g)}^{2} \right) \\
& \qquad + \frac{1}{2} \left( \sup_{\Omega} \lvert S_{g} \rvert + \lvert \beta \rvert \right) \lVert u_{\beta} \rVert_{\calL^{2}(\Omega, g)}^{2} \\
& \leqslant K_{0} + \frac{a(n - 2)}{2n} \lVert \nabla_{g} \tilde{u}_{\beta} \rVert_{\calL^{2}(\Omega, g)}^{2} \\
& \qquad + \left( \frac{n - 2}{2n} + \frac{1}{2} \right) \left( \sup_{\Omega} \lvert S_{g} \rvert + \lvert \beta \rvert \right) \cdot \lambda_{1}^{-1} \lVert \nabla_{g} \tilde{u}_{\beta} \rVert_{\calL^{2}(\Omega, g)}^{2}; \\
\Rightarrow & \left(\frac{a}{n} - \left( \frac{n - 2}{2n} + \frac{1}{2} \right) \left( \sup_{\Omega} \lvert S_{g} \rvert + \lvert \beta \rvert \right) \cdot \lambda_{1}^{-1} \right) \lVert \nabla_{g} \tilde{u}_{\beta} \rVert_{\calL^{2}(\Omega, g)}^{2} \leqslant K_{0}.
\end{align*}
We have already chosen $ \Omega $ small enough so that (\ref{local:eqns1}) holds for $ \beta_{0} $, thus for all $ \beta \in [\beta_{0}, 0] $, (\ref{local:eqns1}) holds. It follows from above that there exists a constant $ C_{0}' $ such that
\begin{equation*}
\lVert \nabla_{g} \tilde{u}_{\beta} \rVert_{\calL^{2}(\Omega, g)}^{2} \leqslant C_{0}', \forall \beta \in [\beta_{0}, 0].
\end{equation*}
Apply (\ref{manifold:eqns6}) with the other way around, we conclude that
\begin{align*}
\left( \lambda_{\beta} - \kappa \right) \lVert \tilde{u}_{\beta} \rVert_{\calL^{p}(\Omega, g)}^{p} & \leqslant a\lVert \nabla_{g} \tilde{u}_{\beta} \rVert_{\calL^{2}(\Omega, g)}^{2} + \left( \sup_{\Omega} \lvert S_{g} \rvert + \lvert \beta \rvert \right)  \lVert \tilde{u}_{\beta} \rVert_{\calL^{2}(\Omega, g)}^{2} \\
& \leqslant \left( a + \left( \sup_{\Omega} \lvert S_{g} \rvert + \lvert \beta \rvert \right)  \lambda_{1}^{-1} \right) \lVert \nabla_{g} u_{\beta} \rVert_{\calL^{2}(\Omega, g)}^{2}.
\end{align*}
We conclude that
\begin{equation}\label{manifold:eqns8}
\lVert u_{\beta} \rVert_{\calL^{p}(\Omega, g)}^{p} \leqslant C_{1}', \forall \beta \in [\beta_{0}, 0].
\end{equation}
Recall the construction of super-solution of each $ u_{\beta} $ in Theorem \ref{manifold:thm3}, we have
\begin{equation*}
0 < u_{\beta} \leqslant u_{+, \beta} = \begin{cases} \bar{u}_{\beta}, & {\rm in} \; \Omega \\ \phi, & {\rm in} \; M \backslash \Omega \end{cases}.
\end{equation*}
where $ \bar{u}_{\beta} $ is of the form
\begin{equation*}
\bar{u}_{\beta} = \chi_{1} \tilde{u}_{\beta} + \chi_{2} \phi + \chi_{3} (\phi + \gamma).
\end{equation*}
It follows from the uniform upper bound of (\ref{manifold:eqns8}) that
\begin{equation*}
\lVert u_{\beta} \rVert_{\calL^{p}(M, g)}^{p} \leqslant \lVert u_{+, \beta} \rVert_{\calL^{p}(M, g)}^{p} \leqslant E_{0} \left( \lVert \tilde{u}_{\beta} \rVert_{\calL^{p}(\Omega, g)}^{p} + \lVert \phi \rVert_{\calL^{p}(M, g)}^{p} \right).
\end{equation*}
Recall in determining $ \phi $ we require
\begin{equation*}
\frac{\left( \eta_{1} + \beta \right)}{2^{p-2}\left( \lambda_{\beta} - \kappa \right)} > \delta^{p-2} \frac{\sup_{M} \varphi^{p-1}}{\inf_{M} \varphi}.
\end{equation*}
Since when $ \beta \in [\beta_{0}, 0] $, we have $ \lambda_{\beta} \in [\lambda_{\beta_{0}}, aT] $, we can choose a fixed scaling $ \delta $ for all $ \beta $, thus $ \lVert \phi \rVert_{\calL^{p}(M, g)}^{p} $ is uniformly bounded. It follows that there exists a constant $ C' $ such that
\begin{equation*}
\lVert u_{\beta} \rVert_{\calL^{p}(M, g)}^{p} \leqslant C', \forall \beta \in [\beta_{0}, 0].
\end{equation*}
Due to H\"older's inequality, we conclude that
\begin{equation*}
\lVert u_{\beta} \rVert_{\calL^{2}(M, g)} \leqslant \lVert u_{\beta} \rVert_{\calL^{p}(M, g)}^{\frac{2}{p}} \cdot \text{Vol}_{g}(M)^{\frac{p-2}{p}}.
\end{equation*}
Thus by (\ref{manifold:eqns6a}) we conclude that
\begin{equation}\label{manifold:eqns9}
\lVert u_{\beta} \rVert_{H^{1}(M, g)} \leqslant C, \forall \beta \in [\beta_{0}, 0].
\end{equation}
According to the definition of $ \lambda_{\beta} $, we apply the solution of (\ref{manifold:eqnss4}) with the form (\ref{manifold:eqns6a}), it follows that for all $ \beta \in [\beta_{0}, 0] $,
\begin{align*}
& \lambda_{\beta} \leqslant \frac{\int_{M} a\lvert \nabla_{g} u_{\beta} \rvert^{2} d\omega + \int_{M} \left( S_{g} + \beta \right) u_{\beta}^{2} d\omega}{\left( \int_{M} u_{\beta}^{p} d\omega \right)^{\frac{2}{p}}} = \left( \lambda_{\beta} - \kappa \right) \cdot \frac{\lVert u_{\beta} \rVert_{\calL^{p}(M, g)}^{p}}{\left( \int_{M} u_{\beta}^{p} d\omega \right)^{\frac{2}{p}}} \\
\Rightarrow & \frac{\lambda_{\beta}}{\lambda_{\beta} - \kappa} \leqslant \lVert u_{\beta} \rVert_{\calL^{p}(M, g)}^{p - \frac{2}{p}} \Rightarrow \lVert u_{\beta} \rVert_{\calL^{p}(M, g)} \geqslant 1.
\end{align*}
The last cancellation is due to the fact that $ \lambda_{\beta} \geqslant \lambda_{\beta} - \kappa \geqslant \lambda_{\beta_{0}} - \kappa > 0, \forall \beta \in [\beta_{0}, 0] $. It follows that
\begin{equation}\label{manifold:eqns10}
\lVert u_{\beta} \rVert_{H^{1}(M, g)} \leqslant C, 1 \leqslant \lVert u_{\beta} \rVert_{\calL^{p}(M, g)} \leqslant C', \forall \beta \in [\beta_{0}, 0].
\end{equation}
Due to uniform boundedness of $ \calL^{p} $-norms of $ u_{\beta} $, we can scale the metric $ g \mapsto \gamma g $, $ \gamma < 1 $ uniformly for all $ \beta \in [\beta_{0}, 0] $ such that
\begin{equation}\label{manifold:eqns11}
0 < C'' \leqslant \lVert u_{\beta} \rVert_{\calL^{p}(M, g)} \leqslant 1, \forall \beta \in [\beta_{0}, 0].
\end{equation}
We still denote the new metric by $ g $. This scaling does not affect the local estimates in Appendix \ref{APP} since $ \beta < 0 $, and hence does not affect the local solvability of the PDE in Proposition \ref{local:prop3}. This scaling does not affect the fact that $ \lambda_{\beta_{0}} \leqslant \lambda_{\beta} \leqslant aT $ in the sense that both $ \lambda_{\beta} $ and $ T $ are scaled by the factor $ \lambda^{\frac{2}{n}} $, since $ \beta < 0 $, this does not affect the choice of $ \kappa $, i.e. the ratio
\begin{equation*}
\frac{\lambda_{\beta} - \kappa}{T}
\end{equation*}
has a uniform upper bound after we scaling the metric $ g $ by $ \lambda < 1 $. We now apply the analysis, essentially due to Trudinger and Aubin, to show that there exists some $ r > p $, such that
\begin{equation}\label{manifold:eqns12}
\lVert u_{\beta} \rVert_{\calL^{r}(M, g)} \leqslant \mathcal{K}, \forall \beta \in [\beta_{0}, 0].
\end{equation}
Let $ \delta > 0 $ be some constant. Pairing (\ref{manifold:eqnss4}) with $ u_{\beta}^{1 + 2\delta} $ and denote $ w_{\beta} = u_{\beta}^{1 + \delta} $, it follows that
\begin{align*}
& \int_{M} a \nabla_{g} u_{\beta} \cdot \nabla_{g} \left(u_{\beta}^{1 + 2\delta} \right) d\omega + \int_{M} \left(S_{g} + \beta \right) u_{\beta}^{2 + 2\delta} d\omega = \left(\lambda_{\beta} - \kappa \right) \int_{M} u_{\beta}^{p + 2\delta} d\omega; \\
\Rightarrow & \frac{1 + 2\delta}{(1 + \delta)^{2}} \int_{M} a \lvert \nabla_{g} w_{\beta} \rvert^{2} d\omega = \left(\lambda_{\beta} - \kappa \right) \int_{M} w_{\beta}^{2} u_{\beta}^{p-2} d\omega - \int_{M} \left(S_{g} + \beta \right) w_{\beta}^{2} d\omega.
\end{align*}
According to the sharp Sobolev inequality on closed manifolds \cite[Thm.~2.3, Thm.~3.3]{PL}, it follows that for any $ \epsilon > 0 $,
\begin{align*}
\lVert w_{\beta} \rVert_{\calL^{p}(M, g)}^{2} & \leqslant (1 + \epsilon) \frac{1}{T} \lVert \nabla_{g} w_{\beta} \rVert_{\calL^{2}(M, g)}^{2} + C_{\epsilon}' \lVert w_{\beta} \rVert_{\calL^{2}(M, g)}^{2} \\
& = (1 + \epsilon) \frac{1}{aT} \cdot \frac{(1 + \delta)^{2}}{1 + 2\delta} \left( \frac{1 + 2\delta}{(1 + \delta)^{2}} \int_{M} a \lvert \nabla_{g} w_{\beta} \rvert^{2} d\omega \right) + C_{\epsilon}' \lVert w_{\beta} \rVert_{\calL^{2}(M, g)}^{2} \\
& = (1 + \epsilon) \frac{\lambda_{\beta} - \kappa}{aT} \cdot \frac{(1 + \delta)^{2}}{1 + 2\delta} \int_{M} w_{\beta}^{2} u_{\beta}^{p - 2} d\omega + C_{\epsilon}  \lVert w_{\beta} \rVert_{\calL^{2}(M, g)}^{2} \\
& \leqslant (1 + \epsilon) \frac{\lambda_{\beta} - \kappa}{aT} \cdot \frac{(1 + \delta)^{2}}{1 + 2\delta} \lVert w_{\beta} \rVert_{\calL^{p}(M, g)}^{2} \lVert u_{\beta} \rVert_{\calL^{p}(M, g)}^{p} + C_{\epsilon}  \lVert w_{\beta} \rVert_{\calL^{2}(M, g)}^{2} \\
& \leqslant (1 + \epsilon) \frac{\lambda_{\beta} - \kappa}{aT} \cdot \frac{(1 + \delta)^{2}}{1 + 2\delta} \lVert w_{\beta} \rVert_{\calL^{p}(M, g)}^{2} + C_{\epsilon}  \lVert w_{\beta} \rVert_{\calL^{2}(M, g)}^{2}.
\end{align*}
Note that $ C_{\epsilon} $ is uniform over all $ \beta $. The last two lines above are due to H\"older's inequality and (\ref{manifold:eqns11}). Since $ \lambda_{\beta} \leqslant aT $, $ \lambda_{\beta} - \kappa < aT $; thus we can choose $ \epsilon, \delta $ small enough such that
\begin{equation*}
(1 + \epsilon) \frac{\lambda_{\beta} - \kappa}{aT} \cdot \frac{(1 + \delta)^{2}}{1 + 2\delta} < 1.
\end{equation*}
It follows that
\begin{equation*}
\lVert w_{\beta} \rVert_{\calL^{p}(M, g)}^{2} \leqslant \mathcal{K}_{1} \lVert w_{\beta} \rVert_{\calL^{2}(M, g)}^{2} \leqslant \mathcal{K}_{2} \lVert u_{\beta} \rVert_{\calL^{p}(M, g)}^{1 + \delta} \leqslant \mathcal{K}_{3}, \forall \beta \in [\beta_{0}, 0].
\end{equation*}
Note that $ \lVert w_{\beta} \rVert_{\calL^{p}(M, g)} = \lVert u_{\beta} \rVert_{\calL^{p(1 + \delta)}(M, g)}^{1 + \delta} $, it follows that (\ref{manifold:eqns12}) holds. Since $ r > p $, by bootstrapping method we conclude that
\begin{equation}\label{manifold:eqns13}
\lVert u_{\beta} \rVert_{\calC^{2, \alpha}(M)} \leqslant \mathcal{K}', \forall \beta \in [\beta_{0}, 0].
\end{equation}
Due to Arzela-Ascoli, we pass the limit by letting $ \beta \rightarrow 0^{-} $ on both sides of (\ref{manifold:eqnss4}), it follows that
\begin{equation*}
-a\Delta_{g} u + S_{g} u = \lambda u^{p-1} \; {\rm on} \; M.
\end{equation*}
Another bootstrapping method shows that $ u \in \calC^{\infty}(M) $ and $ u \geqslant 0 $ since $ u $ is the limit of a positive sequence. Lastly we show that $ u > 0 $. Due to uniform boundedness in (\ref{manifold:eqns13}) and lower bound of $ \calL^{p}$-norms in (\ref{manifold:eqns11}), we conclude that
\begin{equation*}
\lVert u \rVert_{\calL^{p}(M, g)} \geqslant C'' > 0.
\end{equation*}
It follows from maximum principle that $ u > 0 $ on $ M $.
\end{proof}
\begin{remark}\label{manifold:re002}
With the introduction of $ \kappa > 0 $ in (\ref{manifold:eqnss4}), we can apply this method for all cases, even when the manifold is conformal to the standard sphere $ \mathbb{S}^{n} $, since we still have $ \lambda_{\beta} - \kappa < \lambda(\mathbb{S}^{n}) = aT $.
\end{remark}
\medskip

Lastly we show the case when $ S_{g} \geqslant 0 $ everywhere. Due to \cite{KW}, this amounts to say $ \eta_{1} > 0 $. Therefore we consider the solution of Yamabe equation when $ \lambda > 0 $. We show this by showing that there exists some $ \tilde{g} = e^{2f} g = u^{p-2} g $ for some positive $ u \in \calC^{\infty}(M) $ such that $ S_{\tilde{g}} < 0 $ somewhere. Obviously, $ S_{\tilde{g}} $ cannot be too negative too often, actually due to Theorem \ref{pre:thm4}(i) and taking $ \varphi = 1 $ in (\ref{manifold:eqn2}), a necessary condition is $ \int_{M} S_{\tilde{g}} > 0 $. Consequently we can apply Theorem \ref{manifold:thm3} to obtain some metric with positive constant scalar curvature.
\begin{theorem}\label{manifold:thm4} Let $ (M, g) $ be a closed manifold with $ n = \dim M \geqslant 3 $. Assume the scalar curvature $ S_{g} \geqslant 0 $ everywhere on $ M $. Then there exists some smooth function $ H \in \calC^{\infty}(M) $ which is negative somewhere such that $ H $ is the scalar curvature with respect to some conformal change of $ g $.
\end{theorem}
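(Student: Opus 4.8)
The plan is to produce the conformal metric \emph{by hand}, without solving any PDE. Recall from (\ref{intro:eqn1}) that for $\tilde g = u^{p-2} g$ with $u > 0$ one has $S_{\tilde g} = u^{1-p}\,\Box_g u = u^{1-p}\big(-a\Delta_g u + S_g u\big)$; so it suffices to exhibit a single $u \in \calC^\infty(M)$ with $u > 0$ everywhere and $\Box_g u$ negative at one point, and then to take $H := u^{1-p}\,\Box_g u$, which is automatically smooth because $u$ is smooth and strictly positive.

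First I would fix $p \in M$, let $\iota$ be the injectivity radius at $p$, and pick a smooth cutoff $\chi : [0,\infty)\to[0,1]$ with $\chi\equiv 1$ on $[0,\iota/2]$ and $\chi\equiv 0$ on $[\iota,\infty)$. Set $\rho(x) := d_g(p,x)^2\,\chi\big(d_g(p,x)\big)$; since $d_g(p,\cdot)^2$ is smooth on the injectivity ball and $\rho$ vanishes outside it, $\rho\in\calC^\infty(M)$ with $\rho\geq 0$, $\rho(p)=0$, and, computing in $g$-normal coordinates at $p$ (where $\Delta_g$ agrees with the Euclidean Laplacian at the origin), $\Delta_g\rho(p)=\Delta_{\mathrm{Eucl}}(|x|^2)\big|_{0}=2n$. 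Now put $u := 1 + c\,\rho$ for a constant $c>0$ to be fixed. Then $u\in\calC^\infty(M)$, $u\geq 1>0$, $u(p)=1$, $\Delta_g u(p)=2nc$, hence
\begin{equation*}
\Box_g u(p) = -a\,\Delta_g u(p) + S_g(p)\,u(p) = S_g(p) - 2anc .
\end{equation*}
Since $S_g(p)\geq 0$, choosing $c > S_g(p)/(2an)$ makes $\Box_g u(p) < 0$.

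It then follows that $\tilde g := u^{p-2} g$ is a Riemannian metric conformal to $g$, and that $H := S_{\tilde g} = u^{1-p}\,\Box_g u$ is a smooth function on $M$ with $H(p) = \Box_g u(p) < 0$; thus $H$ is negative somewhere and is realized as the scalar curvature of the conformal change $\tilde g$, which is precisely the assertion of the theorem. The step I would flag as the crux is simply the choice of $u$: one must keep it globally positive while forcing $\Box_g u<0$ somewhere, and the non-negative bump $c\rho$, which is flat at $p$ with $\Delta_g\rho(p)>0$, achieves exactly this, with no limiting or concentration argument needed. For the downstream use mentioned before the statement one checks, as a remark, that $H=S_g\geq 0$ off $\mathrm{supp}\,\rho$ and $\int_M S_{\tilde g}\,d\mathrm{Vol}_{\tilde g}=\int_M\big(a|\nabla_g u|^2+S_g u^2\big)\,\dvol>0$ when $S_g\not\equiv 0$, so $\mathrm{sgn}(\eta_1)$ is preserved (whence $\eta_1(\tilde g)>0$) and Theorem \ref{manifold:thm3} applies to $\tilde g$.
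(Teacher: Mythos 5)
Your proof is correct, and it takes a genuinely different and more elementary route than the paper's. The paper builds the conformal factor by solving a linear PDE: it constructs a function $F$ that is very negative near a point $q$, has zero mean, and has small $H^{s-2}(M,g)$-norm, solves $-a\Delta_g u' = F$ via Lax--Milgram, and then invokes elliptic regularity, Sobolev embedding and a Bishop--Gromov volume bound to get $\lvert u'\rvert \leqslant C/8$, so that $u = u' + C/4$ is positive and $H = u^{1-p}(F + S_g u)$ is negative at $q$. You instead write the conformal factor down explicitly: $u = 1 + c\rho$ with $\rho \geqslant 0$ a bump vanishing to second order at $p$ and $\Delta_g\rho(p) = 2n > 0$ (consistent with the paper's convention $\Delta_g = -d^*d$, so $-a\Delta_g u(p) = -2anc < 0$), and the Laplacian term alone forces $\Box_g u(p) = S_g(p) - 2anc < 0$ once $c > S_g(p)/(2an)$, while global positivity of $u$ is automatic since $\rho \geqslant 0$. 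This buys a substantial simplification: no linear solvability argument, no elliptic estimates, no Sobolev embedding, no volume comparison. It also retains everything the paper extracts downstream: $H(p)$ can be made arbitrarily negative by increasing $c$ (the point of Remark \ref{manifold:renew}), $H = S_g \geqslant 0$ off $\mathrm{supp}\,\rho$, and conformal invariance of $\mathrm{sgn}(\eta_1)$ lets Theorem \ref{manifold:thm3} be applied to $\tilde g$, exactly as needed in Theorem \ref{manifold:thm5}. The only adjustment I would make is cosmetic: support the cutoff strictly inside the injectivity ball (say $\chi \equiv 0$ on $[\iota/2,\infty)$), or equivalently set $\rho = \lvert x\rvert^2\chi(\lvert x\rvert)$ in normal coordinates and extend by zero, so that smoothness of $\rho$ on all of its support is immediate and one never touches points where $d_g(p,\cdot)$ could fail to be smooth.
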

\begin{proof} We show this by constructing the smooth function $ H $ with desired property for which the following PDE
\begin{equation}\label{manifold:eqnaa}
-a\Delta_{g} u + S_{g} u = Hu^{p-1} \; {\rm in} \; (M, g)
\end{equation}
admits a smooth, real, positive solution $ u $. Without loss of generality, we can assume $ \sup_{x \in M} (S_{g}) \leqslant 1 $ for some $ g $ after scaling. 
\medskip 

To construct $ H $, we first construct a smooth function $ F \in \calC^{\infty}(M) $ such that: (i) $ F $ is very negative at some point in $ M $; (ii) $ \int_{M} F d\omega = 0 $; and (iii) $ \lVert F \rVert_{H^{s - 2}(M, g)} $ is very small simultaneously. Here $ s = \frac{n}{2} + 1 $ if $ n $ is even and $ s = \frac{n + 1}{2} $ if $ n $ is odd. For simplicity, we discuss the even dimensional case, the odd dimensional case is exactly the same except the order $ s $.

Pick up a normal ball $B_{r} $ in $ M $ with radius $ r < 1 $ centered at some point $ q \in M $. Choose a function 
\begin{equation}\label{manifold:eqn7a}
f(q) = - C< -1, f \leqslant 0, \lvert f \rvert \leqslant C, f \in \calC^{\infty}(M), f \equiv 0 \; {\rm on} M \backslash B_{r}.
\end{equation}
Such an $ f $ can be chosen as any smooth bump function or cut-off function in partition of unity multiplying by $ - 1 $. For $ f $, we have
\begin{equation}\label{manifold:eqn7}
\lvert \partial^{\alpha} f \rvert \leqslant \frac{C'}{r^{\lvert \alpha \rvert}}, \forall \lvert \alpha \rvert \leqslant \frac{n}{2} - 1 = s - 2.
\end{equation}
By (\ref{manifold:eqn7}), we have
\begin{equation}\label{manifold:eqn8}
\left\lvert \int_{M} f d\omega \right\rvert = \left\lvert \int_{B_{r}} f \dvol \right\rvert \leqslant C \text{Vol}_{g}(B_{r}) \leqslant C \cdot \Gamma r^{n}.
\end{equation}
Here $ C $ is the upper bound of $ \lvert f \rvert $. The hypothesis of the statement says that $ S_{g} \geqslant 0 $ everywehere, hence $ \Gamma $ depends only on the dimension of $ M $ for every geodesic ball centered at any point of $ M $, due to Bishop-Gromov comparison inequality. Specifically, the volume of a geodesic ball of radius $ r \ll 1 $ centered at $ p $ is given by $ Vol_{g}(B_{r}) = \left( 1 - \frac{S_{g}(0)}{6(n + 2)} r^{2} + O(r^{4}) \right) Vol_{e}(B_{r}) $. Thus $ \Gamma $ is fixed when $ (M, g) $ is fixed.
Now we define
\begin{equation}\label{manifold:eqn9}
F : = f + \epsilon
\end{equation}
where $ 0 < \epsilon < C $ is determined later. First we show that for any $ C $, we can choose $ \epsilon $ arbitrarily small so that $ \int_{M} F d\omega = 0 $. Set
\begin{equation*}
\epsilon : = \frac{1}{\text{Vol}_{g}(M)} \int_{B_{r}} -f \dvol \Rightarrow \int_{M} F d\omega = \int_{M} (f + \epsilon) d\omega = \int_{B_{r}} f \dvol + \epsilon \int_{M} d\omega = 0.
\end{equation*}
As we have shown in (\ref{manifold:eqn8}),
\begin{equation*}
\epsilon \leqslant \frac{1}{\text{Vol}_{g}(M)} \left\lvert \int_{B_{r}} f \dvol \right\rvert \leqslant \frac{1}{\text{Vol}_{g}(M)} C \Gamma r^{n} = O(r^{n}).
\end{equation*}
Thus if we choose $ r $ small enough then $ \epsilon $ is small enough.

Secondly we show that for any $ C, C' $, we can make $ \lVert F \rVert_{H^{s - 2}(M, g)} $ arbitrarily small. By Bishop-Gromov inequality and (\ref{manifold:eqn7}), we observe that
\begin{align*}
\lVert F \rVert_{H^{s - 2}(M, g)}^{2} & = \int_{M} \lvert f + \epsilon \rvert^{2} d\omega + \sum_{1 \leqslant \lvert \alpha \rvert \leqslant s - 2} \int_{B_{r}} \lvert \partial^{\alpha} f \rvert^{2} \dvol \\
& \leqslant C_{0} r^{n} + \sum_{1 \leqslant \lvert \alpha \rvert \leqslant s - 2} \int_{B_{r}} \frac{(C')^{2}}{r^{n - 2}} \dvol \leqslant C_{0} r^{n} + \sum_{1 \leqslant \lvert \alpha \rvert \leqslant s - 2} \frac{(C')^{2}}{r^{n - 2}} \Gamma r^{n} \leqslant (C')^{2} C_{0}'r^{2}
\end{align*}
for some $ C_{0}' $ depends only on the the order $ s - 2 = \frac{n}{2} - 1 $, the lower bound of Ricci curvature of $ M $, and the diameter of $ (M, g) $. Therefore, for any $ C, C' $, we can choose $ r $ small enough so that $ \lVert F \rVert_{H^{s - 2}(M, g)}^{2} $ is arbitrarily small.

It follows that this smooth function $ F $ we constructed satisfies all requirements mentioned above. Now we construct $ u \in \calC^{\infty}(M) $. Let $ u' $ be the solution of
\begin{equation}\label{manifold:eqn9}
-a\Delta_{g} u' = F \; {\rm in} \; (M, g).
\end{equation}
Due to Lax-Milgram \cite[Ch.~6]{Lax} and hypothesis (i) of $ F $, there exists such an $ u' $ that solves (\ref{manifold:eqn9}). Since $ F \in \calC^{\infty}(M) $, standard elliptic regularity implies that $ u' \in \calC^{\infty}(M) $. Recall the Sobolev embedding in Theorem \ref{pre:thm2}(iii) and $ H^{s} $-type elliptic regularity in Theorem \ref{pre:thm1}(i), we have fixed constants $ K' $ and $ C^{**} $ such that
\begin{align*}
\lvert u' \rvert \leqslant \lVert u' \rVert_{\calC^{0, \beta}(M)} \leqslant K' \lVert u' \rVert_{H^{s}(M, g)}, \frac{1}{2} - \frac{s}{n} = \frac{1}{2} - \frac{1}{2} - \frac{1}{n} < 0; \\
\lVert u' \rVert_{H^{s}(M, g)} \leqslant C^{*} \left( \lVert F \rVert_{H^{s - 2}(M, g)} + \lVert u' \rVert_{\calL^{2}(M, g)} \right) \leqslant C^{**} \lVert F \rVert_{H^{s - 2}(M, g)}.
\end{align*}
Note that the last inequality comes from the PDE (\ref{manifold:eqn9}) by pairing $ u ' $ on both sides, and apply Poincar\'e inequality. However, we should have $ \int_{M} u' d\omega = 0 $ to apply Poincar\'e inequality, this can be done by taking $ u' \mapsto u' + \epsilon $ if necessary. Thus $ K' $ depends only on $ n, \alpha, s, M, g $ and $ C^{**} $ depends only on $ s, -a\Delta_{g}, \lambda_{1}, M, g $, all of which are fixed once $ M, g, s $ are fixed.

Note that $ f(q) = - C $ for some $ C > 1 $ at the point $ q \in M $. First choose $ r $ small enough so that $ \epsilon < \frac{1}{2} $, it follows that
\begin{equation}\label{manifold:eqn10a}
F(q) = f(q) + \epsilon \leqslant -\frac{C}{2}.
\end{equation}
Now we further shrink $ r $ such that
\begin{equation*}
C^{**}K' \lVert F \rVert_{H^{s - 2}(M, g)} \leqslant  C^{**} K' \cdot C' \left( C_{0}' \right)^{\frac{1}{2}} r \leqslant \frac{C}{8}.
\end{equation*}
Thus by Sobolev embedding and elliptic regularity, it follows that
\begin{equation}\label{manifold:eqn10}
\lvert u' \rvert \leqslant C^{**} K' \lVert F \rVert_{H^{s - 2}(M, g)} \leqslant \frac{C}{8} \Rightarrow u' \in [-\frac{C}{8}, \frac{C}{8}], \forall x \in M.
\end{equation}
We now define $ u $ to be
\begin{equation}\label{manifold:eqn11}
u : = u' + \frac{C}{4}.
\end{equation}
By (\ref{manifold:eqn10}), it is immediate to see that $ u > 0 $ on $ M $ and $ u \in \calC^{\infty}(M) $ due to definition and smoothness of $ u' $. In particular $ u \in [\frac{C}{8}, \frac{3C}{8}] $. Furthermore, $ u $ solves (\ref{manifold:eqn9}). Finally, we define our $ H $ to be
\begin{equation}\label{manifold:eqn12}
H : = u^{1 - p} \left( -a\Delta_{g} u + S_{g} u \right), \forall x \in M.
\end{equation}
Clearly $ H \in \calC^{\infty}(M) $. By $ -a\Delta_{g} u = F $, (\ref{manifold:eqn10a}), (\ref{manifold:eqn10}) and (\ref{manifold:eqn11}), we observe that
\begin{equation*}
H(q) = u(q)^{1- p}\left( -a\Delta_{g} u(q) + S_{g}(q) u(q) \right) = u(q)^{1- p}\left( F(q) + S_{g}(q) u(q) \right) \leqslant u(q)^{1 - p} \left(-\frac{C}{2} + \frac{3C}{8} \right) < 0.
\end{equation*}
Based on our constructions of $ u $ and $ H $, (\ref{manifold:eqn12}) is equivalent to say that the following PDE
\begin{equation*}
-a\Delta_{g} u + S_{g} u = Hu^{p-1} \; {\rm in} \; (M, g)
\end{equation*}
has a real, positive, smooth solution. It follows that $ H $ is the scalar curvature of the metric $ u^{p-2} g $ on $ M $.
\end{proof}
\begin{remark}\label{manifold:renew}
We see from above that
\begin{equation*}
H(q) \leqslant u(q)^{1 - p} \left(-\frac{C}{2} + \frac{3C}{8} \right) < 0.
\end{equation*}
Furthermore, we observe from above that
\begin{equation*}
\lvert u' \rvert \leqslant C^{**} K' \lVert F \rVert_{H^{s - 2}(M, g)} \leqslant  C^{**} K \cdot C' \left( C_{0}' \right)^{\frac{1}{2}} r.
\end{equation*}
It follows that we can make $ \lvert u' \rvert < \epsilon' $ for any $ \epsilon' $ small enough. Thus taking
\begin{equation*}
u = u' + 2 \epsilon' \Rightarrow \lvert u \rvert \leqslant 3\epsilon'.
\end{equation*}
With fixed $ C $, we can make $ H(q) $ arbitrarily large since $ u^{1 - p} = u^{-\frac{n+2}{n-2}} $ would be arbitrarily large with arbitrarily small $ \epsilon' $.
\end{remark}
\medskip

An immediate analogy of Theorem \ref{manifold:thm4} is that when $ S_{g} \leqslant 0 $ everywhere and hence $ \eta_{1} < 0 $, there exists some $ H' \in \calC^{\infty}(M) $, positive somewhere, that is the prescribed scalar curvature of some $ \tilde{g} $ under conformal change.
\begin{corollary}\label{manifold:cor4} Let $ (M, g) $ be a closed manifold with $ n = \dim M \geqslant 3 $. Let $ S_{g} \leqslant 0 $ everywhere on $ M $ be the scalar curvature with respect to $ g $. Then there exists some smooth function $ H' \in \calC^{\infty}(M) $ which is positive somewhere such that $ H' $ is the scalar curvature with respect to some conformal change of $ g $.
\end{corollary}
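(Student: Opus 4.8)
\emph{Proof proposal.} The plan is to mirror the construction used in the proof of Theorem \ref{manifold:thm4}, interchanging the roles of ``positive'' and ``negative.'' First, since rescaling $g \mapsto \gamma g$ with a constant $\gamma > 0$ preserves the conformal class and the sign condition $S_g \leqslant 0$, while replacing $S_g$ by $\gamma^{-1} S_g$, we may assume after such a rescaling that $\sup_{x \in M} \lvert S_g \rvert \leqslant 1$ and still $S_g \leqslant 0$ everywhere. Next I would construct a smooth function $F \in \calC^{\infty}(M)$ with the three properties: (i) $F(q)$ is very large and positive at some fixed point $q \in M$; (ii) $\int_M F \, d\omega = 0$; and (iii) $\lVert F \rVert_{H^{s-2}(M,g)}$ is as small as we wish, where $s = \tfrac n2 + 1$ if $n$ is even and $s = \tfrac{n+1}{2}$ if $n$ is odd. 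Concretely, pick a normal ball $B_r(q)$ of small radius $r < 1$, choose a nonnegative bump $f \in \calC^{\infty}(M)$ supported in $B_r(q)$ with $f(q) = C > 1$, $\lvert f \rvert \leqslant C$, and $\lvert \partial^{\alpha} f \rvert \leqslant C'/r^{\lvert \alpha \rvert}$ for $\lvert \alpha \rvert \leqslant s-2$, and set $F := f - \epsilon$ with $\epsilon := \mathrm{Vol}_g(M)^{-1}\int_{B_r} f \, \dvol$, which forces (ii). Because $S_g \leqslant 0$ and $M$ is compact, $\mathrm{Ric}_g$ is bounded below, so Bishop--Gromov gives $\mathrm{Vol}_g(B_r) \leqslant \Gamma r^n$ with $\Gamma$ depending only on $n$ and $(M,g)$; exactly as in Theorem \ref{manifold:thm4} this yields $\epsilon = O(r^n)$ and $\lVert F \rVert_{H^{s-2}(M,g)}^{2} \leqslant (C')^2 C_0'' r^2$, both arbitrarily small once $r$ is small enough.

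Second, I would solve $-a\Delta_g u' = F$ on $(M,g)$; this is solvable by Lax--Milgram (equivalently the Fredholm alternative for $-a\Delta_g$, whose kernel is the constants) precisely because $\int_M F\,d\omega = 0$, and we normalize $\int_M u' \, d\omega = 0$. Then $u' \in \calC^{\infty}(M)$ by elliptic regularity, and combining Theorem \ref{pre:thm2}(iii), Theorem \ref{pre:thm1}(i), and the Poincar\'e inequality (valid since $\int_M u' = 0$) one gets
\[
\lvert u' \rvert \leqslant \lVert u' \rVert_{\calC^{0,\alpha}(M)} \leqslant K' \lVert u' \rVert_{H^{s}(M,g)} \leqslant C^{**} \lVert F \rVert_{H^{s-2}(M,g)},
\]
so by further shrinking $r$ we may force $\lvert u' \rvert \leqslant C/8$ on $M$. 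Set $u := u' + C/4$; then $u \in \calC^{\infty}(M)$, $u \in [C/8, 3C/8]$, in particular $u > 0$, and $-a\Delta_g u = -a\Delta_g u' = F$.

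Third, define $H' := u^{1-p}\bigl(-a\Delta_g u + S_g u\bigr) = u^{1-p}(F + S_g u) \in \calC^{\infty}(M)$. By the conformal transformation rule (\ref{intro:eqn1}), $H'$ is exactly the scalar curvature of $\tilde g = u^{p-2} g$, a genuine metric conformal to $g$ since $u > 0$ is smooth. It remains to check $H'(q) > 0$. Choosing $r$ small enough that $\epsilon < 1/2$ gives $F(q) = f(q) - \epsilon \geqslant C - \tfrac12 \geqslant \tfrac C2$; since $S_g(q) \leqslant 0$, $\lvert S_g(q) \rvert \leqslant 1$, and $u(q) \leqslant 3C/8$,
\[
F(q) + S_g(q) u(q) \geqslant \frac{C}{2} - \lvert S_g(q) \rvert\, u(q) \geqslant \frac{C}{2} - \frac{3C}{8} = \frac{C}{8} > 0,
\]
hence $H'(q) > 0$, which proves the corollary.

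The steps are routine given Theorem \ref{manifold:thm4}; the only point requiring care --- and what I would flag as the main obstacle --- is arranging $H'(q) > 0$ despite the unavoidable term $S_g(q) u(q)$, since the depth $C$ of the bump enters $F(q)$ and $u(q)$ with the same order and does \emph{not} by itself beat a large $\lvert S_g(q) \rvert$. This is precisely what the preliminary rescaling making $\sup_M \lvert S_g \rvert$ small is for. (As in Remark \ref{manifold:renew}, one may even make $H'(q)$ arbitrarily large by instead taking $u = u' + 2\epsilon'$ with $\epsilon'$ tiny.)
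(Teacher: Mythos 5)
Your proposal is correct and is exactly the argument the paper intends: its proof of Corollary \ref{manifold:cor4} is the one-line instruction to take $f \geqslant 0$ and repeat the proof of Theorem \ref{manifold:thm4} with signs reversed, which is precisely what you carry out (including the normalization $\sup_M \lvert S_g\rvert \leqslant 1$ that makes $F(q) + S_g(q)u(q) \geqslant C/2 - 3C/8 > 0$ go through). If anything, your write-up is more careful than the paper's, e.g.\ in noting that solvability of $-a\Delta_g u' = F$ on a closed manifold rests on $\int_M F\,d\omega = 0$.
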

\begin{proof}
Taking $ f \geqslant 0 $ with properties as in Theorem \ref{manifold:thm4} and do the same argument.
\end{proof}

With the help of Theorem \ref{manifold:thm4}, we can solve the Yamabe equation for the case $ S_{g} \geqslant 0 $ everywhere. Note that this amounts to say $ \eta_{1} > 0 $ due to (\ref{manifold:eqn2}).
\begin{theorem}\label{manifold:thm5} Let $ (M, g) $ be a closed manifold with $ n = \dim M \geqslant 3 $. Assume the scalar curvature $ S_{g} \geqslant 0 $ everywhere on $ M $. Then there exists some $ \lambda > 0 $ such that the Yamabe equation (\ref{intro:eqn2}) has a real, positive, smooth solution.
\end{theorem}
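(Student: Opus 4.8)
The plan is to reduce the claim to Theorem \ref{manifold:thms} after one preliminary conformal change of metric. First I would record that $\eta_{1} > 0$: since $S_{g} \geqslant 0$ everywhere, the numerator $\int_{M} (a\lvert \nabla_{g}\varphi \rvert^{2} + S_{g}\varphi^{2})\, d\omega$ in the characterization (\ref{manifold:eqn2}) is nonnegative for every $\varphi \neq 0$, so $\eta_{1} \geqslant 0$; and if $\eta_{1} = 0$ the positive first eigenfunction would have to be constant (so that $\int_{M} a\lvert \nabla_{g}\varphi \rvert^{2}\, d\omega = 0$), forcing $\int_{M} S_{g}\, d\omega = 0$ and hence $S_{g} \equiv 0$. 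We may therefore assume $S_{g} \not\equiv 0$ — if $S_{g} \equiv 0$ then $g$ itself already has constant scalar curvature — and conclude $\eta_{1} > 0$, in agreement with the remark preceding the statement.

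Next I would apply Theorem \ref{manifold:thm4}: there is a smooth function $H$ on $M$, negative at some point, together with a positive $u_{0} \in \calC^{\infty}(M)$ solving $-a\Delta_{g} u_{0} + S_{g} u_{0} = H u_{0}^{p-1}$; by the transformation law (\ref{intro:eqn1}) this says precisely that the conformal metric $\tilde{g} := u_{0}^{p-2} g$ has scalar curvature $S_{\tilde{g}} = H$, which is negative somewhere on $M$. Because the sign of the first eigenvalue of the conformal Laplacian is a conformal invariant (cf. Remark \ref{manifold:re1}), the first eigenvalue $\tilde{\eta}_{1}$ of $\Box_{\tilde{g}}$ is still positive. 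Thus $(M, \tilde{g})$ satisfies exactly the hypotheses of Theorem \ref{manifold:thms}: $S_{\tilde{g}} < 0$ somewhere and $\tilde{\eta}_{1} > 0$.

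Then Theorem \ref{manifold:thms} produces a constant $\lambda > 0$ and a positive $v \in \calC^{\infty}(M)$ with $-a\Delta_{\tilde{g}} v + S_{\tilde{g}} v = \lambda v^{p-1}$, i.e. $g' := v^{p-2}\tilde{g} = (u_{0} v)^{p-2} g$ has constant scalar curvature $\lambda > 0$. Setting $w := u_{0} v$, which is positive and smooth, and applying (\ref{intro:eqn1}) once more with base metric $g$, we obtain $-a\Delta_{g} w + S_{g} w = \lambda w^{p-1}$ on $M$, which is the assertion; the regularity and positivity of $w$ are inherited from $u_{0}$ and $v$.

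I expect this reduction itself to be routine; the real content has already been spent on its two inputs. The step I would watch most carefully is the transitivity bookkeeping — checking that the composition of the two conformal factors again lies in the conformal class of $g$ and that the two uses of (\ref{intro:eqn1}) compose correctly — together with the invocation of conformal invariance of $\text{sgn}(\eta_{1})$, which is exactly what transmits the hypothesis $S_{g} \geqslant 0$ (hence $\eta_{1} > 0$) to the metric $\tilde{g}$, where $S_{\tilde{g}}$ is no longer signed. The genuine difficulties lie upstream, in Theorem \ref{manifold:thm4} (arranging a conformal metric with somewhere-negative scalar curvature while keeping $\lVert F \rVert_{H^{s-2}(M,g)}$ small) and in Theorem \ref{manifold:thms} (the uniform $H^{1}$ and $\calL^{r}$ bounds on the family $\{u_{\beta}\}$ and the passage $\beta \to 0^{-}$).
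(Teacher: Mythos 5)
Your proposal is correct and follows essentially the same route as the paper: apply Theorem \ref{manifold:thm4} to pass to a conformal metric whose scalar curvature is negative somewhere, use conformal invariance of $\mathrm{sgn}(\eta_{1})$ to keep the first eigenvalue positive, solve the Yamabe equation there, and compose the two conformal factors. If anything you are slightly more careful than the paper, which cites Theorem \ref{manifold:thm3} (the perturbed equation) where the relevant input is Theorem \ref{manifold:thms}, exactly as you invoke it.
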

\begin{proof} After scaling we can, without loss of generality, assume $ \sup_{M} S_{g} \leqslant 1 $. We start with this metric. By Theorem 4.4, we conclude that there exists some smooth function $ H < 0 $ somewhere such that it is the scalar curvature with respect to the metric $ \tilde{g}_{1} = u^{p-2} g $ for some smooth $ u > 0 $ on $ M $. By Theorem \ref{pre:thm4}(i) the first eigenvalue for the conformal Laplacian $ -a\Delta_{\tilde{g}_{1}} + H $ is positive.

Consequently by Theorem \ref{manifold:thm3}, there exists some $ \lambda > 0 $ such that (\ref{intro:eqn2}) has some real, positive solution $ v \in \calC^{\infty}(M) $, i.e. $ \lambda $ is the scalar curvature of the metric $ \tilde{g} = v^{p-2} \tilde{g}_{1} $. Combining both conformal changes together, we see that
\begin{equation*}
\tilde{g} = v^{p-2} \tilde{g}_{1} = v^{p-2} u^{p-2} g = (uv)^{p-2} g.
\end{equation*}
Clearly $ uv > 0 $ and smooth on $ M $. We now conclude that $ \tilde{g} $ is associated with the positive constant scalar curvature $ \lambda $ under the conformal change $ \tilde{g} = (uv)^{p-2} g $.
\end{proof}

\section*{Acknowledgement}
The author would like to thank his advisor Prof. Steven Rosenberg for his support and mentorship.

\appendix

\section{Proof of The Claim}\label{APP}
In this section, we show that the claim (\ref{local:eqn18}) in Proposition \ref{local:prop3} holds. The key is a very careful analysis in terms of local expression of $ \sqrt{\det(g)} $ and $ g^{ij} $. This estimate is sharp in the sense that if we drop any negative term in numerator or any positive term in denominator with some specific order then we cannot detect the desired upper bound, see Remark \ref{local:re1}. Note that the quantity $ Q_{\epsilon, \Omega} $ is not a Yamabe quotient because of the perturbed term $ \beta $. Without loss of generality, we choose $ \Omega = B_{0}(r) $ be a ball of radius $ r $ and centered at $ 0 $. We fix the radius $ r $ due to the discussion in Proposition \ref{local:prop3}. We interchangeably use $ \Omega $ and $ B_{0}(r) $ when there will be no confusion.

When $ n = 3 $, We set $ \Omega = B_{0}(1) $ and thus we choose the cut-off function as
\begin{equation*}
\varphi_{\Omega}(x) = \cos \left(\frac{\pi \lvert x \rvert}{2} \right).
\end{equation*}

When $ n \geqslant 4 $, we choose $ \varphi_{\Omega}(x) = \varphi_{\Omega} \left( \lvert x \rvert \right) $ to be any radial bump function supported in $ \Omega $ such that $  \varphi_{\Omega}(x) \equiv 1 $ on $ B_{0} \left( \frac{r}{2} \right) $. Therefore, the test function we choose in $ \Omega $ is
\begin{equation}\label{APP:eqnt0}
u_{\epsilon, \Omega}(x) = \frac{\varphi_{\Omega}(x)}{\left(\epsilon + \lvert x \rvert^{2}\right)^{\frac{n - 2}{2}}}, n \geqslant 3.
\end{equation}
We will simply denote $ u_{\epsilon, \Omega}  = u $ when there would be no confusion. 
\medskip

When $ n \geqslant 4 $, denote
\begin{align*}
K_{1} & = (n - 2)^{2} \int_{\R^{n}} \frac{\lvert y \rvert^{2}}{( 1 + \lvert y \rvert^{2} )^{n}} dy; \\
K_{2} & =\left( \int_{\R^{n}} \frac{1}{( 1 + \lvert y \rvert^{2} )^{n}} dy \right)^{\frac{2}{p}}; \\
K_{3} & = \int_{\R^{n}} \frac{1}{(1 + \lvert y \rvert^{2})^{n - 2}} dy.
\end{align*}
We will keep using $ K_{1}, K_{2} $ when $ n = 3 $, but $ K_{3} $ is not integrable when $ n = 3 $. The best Sobolev constant $ T $ satisfies
\begin{equation}\label{APP:eqnt4}
T = \frac{K_{1}}{K_{2}}, n \geqslant 3.
\end{equation}
We use the notation $ T $ for all cases $ n \geqslant 3 $.
\medskip

\begin{theorem}\label{APP:thm1} Let $ \Omega = B_{0}(r) $ for small enough $ r $ with $ \dim \Omega \geqslant 3 $. Let $ \beta < 0 $ be any negative constant and $ T $ be the best Sobolev constant in (\ref{APP:eqnt4}). The quantity $ Q_{\epsilon, \Omega} $ satisfies
\begin{equation}\label{APP:eqn1}
Q_{\epsilon, \Omega} : =\frac{\lVert \nabla_{g} u_{\epsilon, \Omega} \rVert_{\calL^{2}(\Omega, g)}^{2} + \frac{1}{a} \int_{\Omega} \left(S_{g} + \beta \right) u_{\epsilon, \Omega}^{2} \sqrt{\det(g)} dx}{\lVert u_{\epsilon, \Omega} \rVert_{\calL^{p}(\Omega, g)}^{2}} < T.
\end{equation}
with the test functions given in (\ref{APP:eqnt0}).
\end{theorem}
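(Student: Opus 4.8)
\emph{Strategy.} I would fix $g$-normal coordinates centred at a point $q$ in the interior of the small region where $S_g<0$, so that $\Omega=B_0(r)$ is a geodesic ball; since $r$ is small, $\sqrt{\det g}(x)=1-\tfrac16 R_{kl}(q)x^kx^l+O(|x|^3)$ and $g^{ij}(x)=\delta^{ij}+O(|x|^2)$, and the quadratic part of $g^{ij}$ is annihilated upon contraction with $x_ix_j$ by the antisymmetry of the Riemann tensor in its first index pair, so that for the radial test function $u=u_{\epsilon,\Omega}$ one has $g^{ij}\partial_iu\,\partial_ju=(\partial_r u)^2(1+O(|x|^4))$. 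The plan is to expand both the numerator and the denominator of $Q_{\epsilon,\Omega}$ as $\epsilon\to0^+$: after rescaling $x=\sqrt\epsilon\,y$ the Euclidean integrals give $\lVert\nabla u\rVert_{\calL^2(\Omega,g)}^2=\epsilon^{(2-n)/2}K_1+\cdots$ and $\lVert u\rVert_{\calL^p(\Omega,g)}^2=\epsilon^{(2-n)/2}K_2+\cdots$, so the leading term of $Q_{\epsilon,\Omega}$ is $K_1/K_2=T$ by (\ref{APP:eqnt4}); everything then reduces to pinning down the sign of the first nonvanishing correction.

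\emph{The decisive cancellation.} Carrying the expansion one order further, three corrections of the same size appear: from $\sqrt{\det g}$ in the numerator, $-\tfrac{S_g(q)}{6n}\int_\Omega|\nabla u|^2|x|^2\,dx$; from the perturbation term, $\tfrac1a\int_\Omega(S_g+\beta)u^2\sqrt{\det g}\,dx=\tfrac{S_g(q)+\beta}{a}\int_\Omega u^2\,dx+(\text{lower order})$; and a third correction of the same order from $\sqrt{\det g}$ inside $\bigl(\int_\Omega u^p\sqrt{\det g}\,dx\bigr)^{2/p}$. After rescaling, each is an explicit multiple of $\epsilon^{(4-n)/2}$ (for $n\geq5$) whose coefficient is a Beta integral; writing $I_n=\int_{\R^n}(1+|y|^2)^{-n}\,dy$ and $J_n=\int_{\R^n}|y|^2(1+|y|^2)^{-n}\,dy$ (so $K_1=(n-2)^2J_n$ and $K_2=I_n^{2/p}$), the crux is the exact identity
\[
\frac{(n-2)^2}{6n}\int_{\R^n}\frac{|y|^4}{(1+|y|^2)^n}\,dy=\frac{K_3}{a}+\frac{1}{3np}\cdot\frac{J_nK_1}{I_n},
\]
which forces all $S_g(q)$-dependence to cancel across the three corrections and leaves
\[
Q_{\epsilon,\Omega}=T+\frac{\beta K_3}{aK_1}\,\epsilon+o(\epsilon)\qquad(n\geq5).
\]
Since $\beta<0$ and $K_1,K_3,a>0$, the correction is strictly negative, hence $Q_{\epsilon,\Omega}<T$ for all small $\epsilon$; this is precisely the mechanism announced in the introduction — the geometry, hence the Weyl tensor, drops out of the critical order, and only the sign of $\beta$ survives.

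\emph{Dimension-by-dimension bookkeeping.} The size of the decisive correction changes with $n$: it is $\epsilon$ for $n\geq5$; it is $\epsilon\log(1/\epsilon)$ for $n=4$ (there $\int_\Omega u^2\,dx$ and $\int_\Omega|\nabla u|^2|x|^2\,dx$ each produce a logarithm, the denominator correction is of the genuinely lower order $O(\epsilon)$, and the cancellation collapses to the identity $\tfrac{(n-2)^2}{6n}=\tfrac1a$, valid precisely at $n=4$); and it is $\epsilon^{1/2}$ for $n=3$, where $\int_\Omega u^2\,dx$ and $\int_\Omega|\nabla u|^2|x|^2\,dx$ converge to finite constants and one uses the explicit cut-off $\varphi_\Omega(x)=\cos(\pi|x|/2)$ on $B_0(1)$ to evaluate all the $O(1)$ tail terms, the negativity of the resulting $O(1)$ coefficient being secured by $S_g(q)<0$ together with $\beta<0$ (after, if necessary, scaling the metric to make $S_g$ suitably negative, as is done later for the global problem). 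In every case the cut-off error (integrals over $\{\nabla\varphi_\Omega\neq0\}$, where $u$ and $\nabla u$ stay bounded) and the higher Taylor remainders of the metric are of strictly lower order than the correction being isolated, once $r$ is fixed.

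\emph{Main obstacle.} The real difficulty is that the estimate is sharp (Remark \ref{local:re1}): it cannot be obtained by crude inequalities, because the critical-order contributions must cancel \emph{exactly} up to the $\beta$-term, so one is forced to compute the moments $\int_{\R^n}|y|^{2m}(1+|y|^2)^{-n}\,dy$ precisely through the Beta function and to verify the displayed identity (and its $n=3,4$ analogues) relating $K_1,K_2,K_3$ to the best Sobolev constant $T$; the $n=3$ evaluation of the $O(1)$ remainder with the $\cos$ cut-off is the most delicate point. Once these identities are in hand, the conclusion $Q_{\epsilon,\Omega}<T$ is an immediate consequence of $\beta<0$.
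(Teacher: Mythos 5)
Your proposal is correct and follows essentially the same route as the paper: the same test functions, the same normal-coordinate expansion, the same three-case split by dimension, and the same mechanism in which the $S_g(q)$-contributions cancel exactly at the critical order, leaving only the strictly negative $\beta$-term (of size $\epsilon$, $\epsilon|\log\epsilon|$, $\epsilon^{1/2}$ for $n\geqslant 5$, $n=4$, $n=3$, after the conformal normalization of $S_g(0)$ in the $n=3$ case). The moment identity you isolate is precisely the cancellation the paper verifies via integration by parts against $\Delta(1+|y|^2)^{2-n}$ together with Beta/Gamma identities and the Legendre duplication formula (it checks out: both sides equal $\tfrac{(n-2)(n+2)}{6(n-4)}\int_{\R^n}(1+|y|^2)^{-n}dy$), and the only slip is cosmetic — the final correction should read $\tfrac{\beta K_3}{aK_2}\epsilon$ rather than $\tfrac{\beta K_3}{aK_1}\epsilon$, which does not affect its sign.
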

The theorem will be proved in three cases: (i) $ n \geqslant 5 $; (ii) $ n = 4 $; (iii) $ n = 3 $.
\begin{proof} Let $ \Omega $ be a small enough geodesic ball centered at $ 0 $ with radius $ r $, as discussed above. It is well known that \cite[\S5]{PL}, \cite[\S3]{ESC} on normal coordinates $ \lbrace x_{i} \rbrace $ within a small geodesic ball of radius $ r $, we have:
\begin{equation}\label{APP:eqn1a}
\begin{split}
g^{ij} & = \delta^{ij} - \frac{1}{3} \sum_{r, s} R_{ijrs} x_{r}x_{s} + O\left(\lvert x \rvert^{3}\right); \\
\sqrt{\det(g)} & = 1 - \frac{1}{6} \sum_{i, j} R_{ij} x_{i}x_{j} + O\left( \lvert x \rvert^{3} \right).
\end{split}
\end{equation}
Here $ R_{ij} $ are the coefficients of Ricci curvature tensors. In local expression of $ \sqrt{\det(g)} $, the term $ \sum_{i, j} R_{ij} x_{i}x_{j} = \text{Ric}(x, x) $. Since the Ricci curvature tensor is a symmetric bilinear form, there exists an orthonormal principal basis $ \lbrace v_{1}, \dotso, v_{n} \rbrace $ on tangent space of each point in the geodesic ball such that its corresponding eigenvalues $ \lbrace \lambda_{i} \rbrace $ satisfies
\begin{equation*}
\sum_{i} \lambda_{i} = S_{g}(0).
\end{equation*}
Let $ \lbrace y_{1}, \dotso, y_{n} \rbrace $ be the corresponding normal coordinates with respect to the principal basis, the Ricci curvature tensor is diagonalized at the center $ 0 $, thus in a very small ball we have
\begin{equation}\label{APP:eqn1b}
\begin{split}
& R_{ij}(y) = \delta_{ij} R_{ij}(0) + o(1) = \delta_{ij} R_{ij}(0) + \sum_{\lvert \alpha \rvert = 1} \partial^{\alpha} R_{ij}(0) y^{\alpha} + O(\lvert y \rvert^{2}) \\
\Rightarrow & S_{g}(y) = R_{ij} g^{ij} = S_{g}(0) + \sum_{i, \lvert \alpha \rvert = 1} \partial^{\alpha} R_{ii}(0) y^{\alpha} + O(\lvert y \rvert^{2}); \\
& R_{ij}(y) = \delta_{ij} R_{ij}(0) + \sum_{\lvert \alpha \rvert = 1} \partial^{\alpha} R_{ij}(0) y^{\alpha} + O(\lvert y \rvert^{2}) \\
\Rightarrow & \sqrt{\det(g)} = 1 - \frac{1}{6} \sum_{i} \left(R_{ii}(0) + \sum_{\lvert \alpha \rvert = 1} \partial^{\alpha} R_{ii}(0) y^{\alpha} \right) y_{i}^{2} - \frac{1}{6} \sum_{i \neq j, \lvert \alpha \rvert = 1} \partial^{\alpha} R_{ij}(0) y^{\alpha} y_{i} y_{j} + o(\lvert y \rvert^{3}).
\end{split}
\end{equation}
The smallness $ o(1) $ above only depends on the smallness of $ r $ and is no larger than $ O(r) $ due to the Taylor expansion of the smooth functions $ R_{ij}(y) $ near $ y = 0 $:
\begin{equation*}
R_{ij}(y) = R_{ij}(0) + \sum_{\lvert \alpha \rvert = 1} \partial^{\alpha} R_{ij}(0) y^{\alpha} + O(\lvert y \rvert^{2}).
\end{equation*}
Since $ \varphi_{\Omega}(y) = \varphi_{\Omega}(\lvert y \rvert) $ is radial; note also that in normal coordinates $ \lbrace y_{i} \rbrace $, we also have
\begin{equation*}
s^{2} = \lvert y \rvert^{2} \Rightarrow g^{ss} \equiv 1 \Rightarrow \lvert \nabla_{g} u_{\epsilon, \Omega} \rvert^{2} = \lvert \partial_{s} u \rvert^{2}.
\end{equation*}
We apply (\ref{APP:eqn1b}) and have
\begin{equation}\label{APP:eqn2}
\begin{split}
\lVert \nabla_{g} u \rVert_{\mathcal{L}^{2}(\Omega, g)}^{2} & = \int_{\Omega} \lvert \nabla_{g} u \rvert^{2} \sqrt{\det(g)} dy \\
& \leqslant \int_{\Omega} \left( 1 - \frac{1}{6} \sum_{i} \left(R_{ii}(0) + \sum_{\lvert \alpha \rvert = 1} \partial^{\alpha} R_{ii}(0) y^{\alpha} \right) y_{i}^{2} \right) \lvert \partial_{s} u \rvert^{2} dy \\
& \qquad - \int_{\Omega} \left( \frac{1}{6} \sum_{i \neq j, \lvert \alpha \rvert = 1} \partial^{\alpha} R_{ij}(0) y^{\alpha} y_{i} y_{j} + o(\lvert y \rvert^{3}) \right) \lvert \partial_{s} u \rvert^{2} dy \\
& \leqslant \int_{\Omega} \lvert \partial_{s} u \rvert^{2} dy -\frac{1}{6n}  \int_{\Omega} S_{g}(0) \lvert y \rvert^{2} \lvert \partial_{s} u \rvert^{2} dy + \tilde{C}_{1} \int_{\Omega} \lvert y \rvert^{3} \lvert \partial_{s} u \rvert^{2} dy \\
& : = \lVert \partial_{s} u \rVert_{\calL^{2}(\Omega)}^{2} + A_{1} + A_{2}; \\
\frac{1}{a} \int_{\Omega} (S_{g} + \beta) u^{2} \sqrt{\det(g)} dx & \leqslant \frac{1}{a} \int_{\Omega} \left( S_{g}(0) + \beta \right) \lvert u \rvert^{2} dy + \frac{1}{a} \int_{\Omega} \sum_{i, \lvert \alpha \rvert = 1} \partial^{\alpha} R_{ii}(0) y^{\alpha} \lvert u \rvert^{2} dy \\
& \qquad + \tilde{C}_{2} \int_{\Omega} \lvert y \rvert^{2} \lvert u \rvert^{2} dy + \tilde{C}_{3} \int_{\Omega} \lvert y \rvert^{3} \lvert  u \rvert^{2} dy \\
& : =  \frac{1}{a} \int_{\Omega} \left( S_{g}(0) + \beta \right) \lvert u \rvert^{2} dy + B_{1} + B_{2} +B_{3}; \\
\lVert u \rVert_{\calL^{p}(\Omega, g)}^{p} & = \int_{\Omega} \lvert u \rvert^{p} \sqrt{\det(g)} dx = \int_{\Omega} \lvert u \rvert^{p} dx - \frac{1}{6n} \int_{\Omega} S_{g}(0) \lvert y \rvert^{2} \lvert u \rvert^{p} dy \\
& \qquad + \tilde{C}_{4} \int_{\Omega} \lvert y \rvert^{3} \lvert  u \rvert^{p} dy : = \lVert u \rVert_{\calL^{p}(\Omega)}^{p} + C_{1} + C_{2}.
\end{split}
\end{equation}
Right sides of inequalities in (\ref{APP:eqn2}) are all Euclidean norms and derivatives. We show that when $ \epsilon $ small enough, the test function in (\ref{APP:eqnt0}) satisfies
\begin{align*}
&\frac{\lVert \nabla_{g} u_{\epsilon, \Omega} \rVert_{\calL^{2}(\Omega, g)}^{2} + \frac{1}{a} \int_{\Omega} \left(S_{g} + \beta \right) u_{\epsilon, \Omega}^{2} \sqrt{\det(g)} dx}{\lVert u_{\epsilon, \Omega} \rVert_{\calL^{p}(\Omega, g)}^{2}} \\
& \qquad \leqslant \frac{\lVert \partial_{s} u_{\epsilon, \Omega} \rVert_{\calL^{2}(\Omega)}^{2} + \int_{\Omega} S_{g}(0) u_{\epsilon, \Omega}^{2} dx + A_{1} + A_{2}  + B_{1} + B_{2} + B_{3}}{\left( \lVert u_{\epsilon, \Omega} \rVert_{\mathcal{L}^{p}(\Omega)}^{p}  + C_{1}  + C_{2} \right)^{\frac{2}{p}}} < T.
\end{align*}
We show the inequality (\ref{APP:eqn1}) in three cases, classified by dimensions. Note that by (\ref{APP:eqnt0}), $ u = u_{\epsilon, \Omega} $ is radial, thus we have
\begin{equation*}
\partial_{s} u_{\epsilon, \Omega} = \partial_{s} \left( \frac{\varphi_{\Omega}(s)}{\left( \epsilon + \lvert s \rvert^{2} \right)^{\frac{n-2}{2}}} \right) = \frac{\partial_{s} \varphi_{\Omega}(\lvert y \rvert)}{\left( \epsilon + \lvert y \rvert^{2} \right)^{\frac{n-2}{2}}} - (n - 2) \frac{\varphi_{\Omega}(\lvert y \rvert)  y }{\left( \epsilon + \lvert y \rvert^{2} \right)^{\frac{n}{2}}}.
\end{equation*}
Thus the terms $ \lVert \partial_{s} u \rVert_{\calL^{2}(\Omega)}^{2} $, $ A_{1} $, $ A_{2} $ and $ A_{3} $ can be estimated as
\begin{align*}
\lVert \partial_{s} u \rVert_{\calL^{2}(\Omega)}^{2} & = \int_{\Omega} \frac{\left\lvert \partial_{s} \varphi_{\Omega} \right\rvert^{2}}{(\epsilon + \lvert y \rvert^{2})^{n - 2}} dy - 2(n - 2) \int_{\Omega} \frac{\varphi_{\Omega} \left( \partial_{s} \varphi_{\Omega} \cdot y \right)}{(\epsilon + \lvert y \rvert^{2})^{n - 1}} dy + (n - 2)^{2} \int_{\Omega} \frac{\varphi_{\Omega}^{2} \lvert y \rvert^{2}}{(\epsilon + \lvert y \rvert^{2})^{n}} dy \\
& = (n - 2)^{2} \int_{\R^{n}} \frac{\lvert y \rvert^{2}}{(\epsilon + \lvert y \rvert^{2})^{n}} dy + \int_{\Omega} \frac{\left\lvert \partial_{s} \varphi_{\Omega} \right\rvert^{2}}{(\epsilon + \lvert y \rvert^{2})^{n - 2}} dy - 2(n - 2) \int_{\Omega} \frac{\varphi_{\Omega} \left( \partial_{s} \varphi_{\Omega} \cdot y \right)}{(\epsilon + \lvert y \rvert^{2})^{n - 1}} dy \\
& \qquad + (n - 2)^{2}  \int_{\Omega} \frac{\left(\varphi_{\Omega}^{2} - 1\right) \lvert y \rvert^{2}}{(\epsilon + \lvert y \rvert^{2})^{n}} dy - (n - 2)^{2} \int_{\R^{n} \backslash \Omega} \frac{\lvert y \rvert^{2}}{(\epsilon + \lvert y \rvert^{2})^{n}} dy \\
& : = (n - 2)^{2} \int_{\R^{n}} \frac{\lvert y \rvert^{2}}{(\epsilon + \lvert y \rvert^{2})^{n}} dy + A_{0, 1} + A_{0, 2} + A_{0, 3} + A_{0, 4}; \\
A_{1} & = -\frac{S_{g}(0)}{6n} \int_{\Omega} \frac{\lvert y \rvert^{2} \left\lvert \partial_{s} \varphi_{\Omega} \right\rvert^{2}}{(\epsilon + \lvert y \rvert^{2})^{n - 2}} dy + \frac{S_{g}(0)(n - 2)}{3n} \int_{\Omega} \frac{\varphi_{\Omega} \lvert y \rvert^{2} \left( \partial_{s} \varphi_{\Omega} \cdot y \right)}{(\epsilon + \lvert y \rvert^{2})^{n - 1}} dy \\
& \qquad -\frac{S_{g}(0)(n - 2)^{2}}{6n} \int_{\Omega} \frac{\varphi_{\Omega}^{2} \lvert y \rvert^{4}}{(\epsilon + \lvert y \rvert^{2})^{n}} dy \\
& : = A_{1, 1} + A_{1, 2} + A_{1, 3}; \\
A_{2} & = \tilde{C}_{1} \int_{\Omega} \lvert y \rvert^{3} \lvert \partial_{s} u_{\epsilon, \Omega} \rvert^{2} dy = \tilde{C}_{1} \int_{\Omega} \frac{\lvert y \rvert^{3} \left\lvert \partial_{s} \varphi_{\Omega} \right\rvert^{2}}{(\epsilon + \lvert y \rvert^{2})^{n - 2}} dy -2 \tilde{C}_{1} (n - 2) \int_{\Omega} \frac{\varphi_{\Omega} \lvert y \rvert^{3} \left( \partial_{s} \varphi_{\Omega} \cdot y \right)}{(\epsilon + \lvert y \rvert^{2})^{n - 1}} dy \\
& \qquad + \tilde{C}_{1} (n - 2)^{2} \int_{\Omega} \frac{\varphi_{\Omega}^{2} \lvert y \rvert^{5}}{(\epsilon + \lvert y \rvert^{2})^{n}} dy \\
& : = A_{2, 1} + A_{2, 2} + A_{2, 3}.
\end{align*}
The other two terms above can be estimated as
\begin{align*}
\frac{1}{a} \int_{\Omega} \left( S_{g}(0) + \beta \right) u_{\epsilon, \Omega}^{2} dx & = \frac{1}{a} \int_{\Omega} \left( S_{g}(0) + \beta \right) \frac{\varphi_{\Omega}^{2}}{(\epsilon + \lvert y \rvert^{2})^{n - 2}} dy =  \frac{1}{a} \int_{\R^{n}} \left( S_{g}(0) + \beta \right) \frac{1}{(\epsilon + \lvert y \rvert^{2})^{n - 2}} dy \\
& \qquad -  \frac{1}{a}\int_{\R^{n} \backslash \Omega} \left( S_{g}(0) + \beta \right) \frac{1}{(\epsilon + \lvert y \rvert^{2})^{n - 2}} dy \\
& \qquad \qquad +  \frac{1}{a} \left( S_{g}(0) + \beta \right) \int_{\Omega} \frac{\varphi_{\Omega}^{2} - 1}{(\epsilon + \lvert y \rvert^{2})^{n - 2}} dy \\
& : =  \frac{1}{a} \int_{\R^{n}} \left( S_{g}(0) + \beta \right) \frac{1}{(\epsilon + \lvert y \rvert^{2})^{n - 2}} dy + B_{0, 1} + B_{0, 2}; \\
 \lVert u_{\epsilon, \Omega} \rVert_{\mathcal{L}^{p}(\Omega)}^{p} & = \int_{\Omega} \frac{\varphi_{\Omega}^{p}}{(\epsilon + \lvert y \rvert^{2})^{n}} dy = \int_{\R^{n}} \frac{1}{{(\epsilon + \lvert y \rvert^{2})^{n}} dy} - \int_{\R^{n} \backslash \Omega} \frac{1}{(\epsilon + \lvert y \rvert^{2})^{n}} dy \\
 & \qquad  + \int_{\Omega} \frac{ \varphi_{\Omega}^{p} - 1}{(\epsilon + \lvert y \rvert^{2})^{n}} dy \\
 & : = \int_{\R^{n}} \frac{1}{{(\epsilon + \lvert y \rvert^{2})^{n}} dy} + C_{0, 1} + C_{0, 2}.
\end{align*}
\medskip

\noindent {\bf Case I: $ n \geqslant 5 $.} Fix $ r $ to be small enough, independent of the choice of $ \epsilon $. Claim that
\begin{equation}\label{APP:eqn4}
\begin{split}
& \lVert \partial_{s} u_{\epsilon, \Omega} \rVert_{\calL^{2}(\Omega)}^{2} +  \frac{1}{a} \int_{\Omega} S_{g}(0) u_{\epsilon, \Omega}^{2} dx + A_{1} + A_{2} + B_{1} + B_{2} + B_{3} \\
& \qquad < (n - 2)^{2} \int_{\R^{n}} \frac{\lvert y \rvert^{2}}{(\epsilon + \lvert y \rvert^{2})^{n}} dy + \frac{(n -2)\beta}{4(n - 1)} \epsilon^{-\frac{4 - n}{2}} \int_{\R^{n}} \frac{1}{(1 + \lvert y \rvert^{2})^{n - 2}} dy + O\left(\epsilon^{\frac{5 - n}{2}} \right) \\
& \qquad \qquad - \epsilon^{\frac{4 - n}{2}} \frac{S_{g}(0) (n - 2)}{6} \frac{n-2}{n} \int_{B_{P}\left(\frac{r}{2} \epsilon^{-\frac{1}{2}} \right)} \frac{\lvert y \rvert^{2}}{( 1 + \lvert y \rvert^{2})^{n}} dy \\
& =  (n - 2)^{2} \int_{\R^{n}} \frac{\lvert y \rvert^{2}}{(\epsilon + \lvert y \rvert^{2})^{n}} dy + \frac{(n - 2)\beta}{4(n - 1)} \epsilon^{-\frac{4 - n}{2}} \int_{\R^{n}} \frac{1}{(1 + \lvert y \rvert^{2})^{n - 2}} dy + O\left(\epsilon^{\frac{5 - n}{2}} \right) + \Sigma_{2}; \\
& \lVert u_{\epsilon, \Omega} \rVert_{\mathcal{L}^{p}(\Omega)}^{p} + C_{1} + C_{2}  = \int_{\R^{n}} \frac{1}{(\epsilon + \lvert y \rvert^{2})^{n}} dy + O\left(\epsilon^{\frac{3 - n}{2}} \right) +  \Sigma_{1}.
\end{split}
\end{equation}
The term $ \Sigma_{1} $ in the last equation will be given later. Note $ \beta < 0 $ is some fixed constant, independent of $ \epsilon $.

Let's estimate the rest $ B_{i}, C_{i}, A_{i, j}, B_{i,j}, C_{i,j} $ except the crucial term $ A_{1, 3} $. Let $ \Omega' = B_{0}\left( r \epsilon^{-\frac{1}{2}} \right) $. Since $ \varphi_{\Omega} \equiv 1 $ on $ B_{0}\left(\frac{r}{2} \right) $, $ \lvert \partial_{s} \varphi_{\Omega} \rvert \leqslant \frac{L}{r} $ for some fixed constant $ L $, we observe that
\begin{align*}
A_{0, 1} & = O(1), A_{0, 2} = O(1), A_{0,3} \leqslant 0, A_{0, 4} \leqslant 0; \\
A_{1, 1} & =  O(1), \lvert A_{1, 2} \rvert =  O(1), A_{2, 1} =  O(1), \lvert A_{2,2} \rvert =  O(1); \\  
A_{2, 3} & = \tilde{C}_{1} (n - 2)^{2} \int_{\Omega} \frac{\varphi_{\Omega}^{2} \lvert y \rvert^{5}}{(\epsilon + \lvert y \rvert^{2})^{n}} dy = \epsilon^{\frac{5 - n}{2}} \tilde{C}_{1} (n - 2)^{2} \int_{\Omega'} \frac{\varphi_{\Omega}^{2} \lvert y \rvert^{5}}{(1 + \lvert y \rvert^{2})^{n}} dy \leqslant L_{1} \epsilon^{\frac{5 - n}{2}} \\
\Rightarrow & A_{2, 3} = O\left(\epsilon^{\frac{5 - n}{2}} \right); \\
\lvert B_{1} \rvert & \leqslant \left\lvert \frac{1}{a} \int_{\Omega} \sum_{i, \lvert \alpha \rvert = 1} \partial^{\alpha} R_{ii}(0) y^{\alpha} \lvert u \rvert^{2} dy \right\rvert \leqslant L_{2} \int_{\Omega} \frac{\lvert y \rvert}{(\epsilon + \lvert y \rvert^{2})^{n - 2}} dy \\
& \leqslant L_{2} \epsilon^{\frac{5 - n}{2}} \int_{\Omega'} \frac{\lvert y \rvert}{(1 + \lvert y \rvert^{2})^{n - 2}} dy \Rightarrow B_{1} = O\left(\epsilon^{\frac{5 - n}{2}} \right); \\
B_{2} & = \tilde{C}_{2} \int_{\Omega} \frac{\varphi_{\Omega}^{2} \lvert y \rvert^{2}}{(\epsilon + \lvert y \rvert^{2})^{ n - 2}} dy = \epsilon^{\frac{6 - n}{2}} \tilde{C}_{2} \int_{\Omega'} \frac{\varphi_{\Omega}^{2} \lvert y \rvert^{2}}{(1 + \lvert y \rvert^{2})^{ n - 2}} dy \leqslant L_{3} \epsilon^{\frac{6 - n}{2}} \\
\Rightarrow & B_{2} = O\left(\epsilon^{\frac{6 - n}{2}} \right); B_{3} \leqslant O\left(\epsilon^{\frac{7 - n}{2}}\right); \\
B_{0, 1}  & = O(1), B_{0, 2} = O(1); C_{0, 1} = O(1), C_{0, 2} = O(1); \\
C_{1} + C_{2} & = - \frac{1}{6n} \int_{\Omega} S_{g}(0) \lvert y \rvert^{2} \lvert u \rvert^{p} dy + \tilde{C}_{4} \int_{\Omega} \lvert y \rvert^{3} \lvert  u \rvert^{p} dy \\
& = -\frac{S_{g}(0)}{6n} \int_{\Omega} \frac{\lvert y \rvert^{2} \varphi_{\Omega}^{p}}{(\epsilon + \lvert y \rvert^{2})^{n}} dy + O \left( \epsilon^{\frac{3 - n}{2}} \right) \\
& = -\epsilon^{\frac{2 - n}{2}} \frac{S_{g}(0)}{6n} \int_{B_{0}\left(\left( \frac{r}{2} \right) \epsilon^{-\frac{1}{2}} \right)} \frac{\lvert y \rvert^{2}}{(1 + \lvert y \rvert^{2})^{n}} dy + O\left(1 \right) + O \left( \epsilon^{\frac{3 - n}{2}} \right)  \\
& : = \Sigma_{1} + O \left( \epsilon^{\frac{3 - n}{2}} \right).
\end{align*}
Here $ L_{i}, i = 1, \dotso, 3 $ are constants independent of $ r, \epsilon $.

For the crucial terms $ A_{1, 3} $, we show that
\begin{equation}\label{APP:eqn5}
\begin{split}
& A_{1, 3} + \frac{1}{a} \int_{\R^{n}} \left( S_{g}(0) + \beta \right) \frac{1}{(\epsilon + \lvert y \rvert^{2})^{n - 2}} dy \\
& \qquad < \beta \epsilon^{\frac{4 - n}{2}} \int_{\R^{n}}\frac{1}{( 1 + \lvert y \rvert^{2})^{n - 2}} dy - \epsilon^{\frac{4 - n}{2}} \frac{S_{g}(0) (n - 2)}{6} \cdot \frac{n- 2}{n} \int_{B_{0}\left(\frac{r}{2} \epsilon^{-\frac{1}{2}} \right)} \frac{\lvert y \rvert^{2}}{( 1 + \lvert y \rvert^{2})^{n}} dy \\
& \qquad \qquad + O(1) \\
& \qquad : =  \frac{(n - 2)\beta}{4(n - 1)} \epsilon^{\frac{4 - n}{2}} \int_{\R^{n}}\frac{1}{( 1 + \lvert y \rvert^{2})^{n - 2}} dy + \Sigma_{2} + O(1).
\end{split}
\end{equation}
The term $ A_{1, 3} $ can be bounded as
\begin{align*}
A_{1, 3} & = -\frac{S_{g}(0) (n - 2)^{2}}{6n} \int_{\Omega} \frac{\varphi_{\Omega}^{2} \lvert y \rvert^{4} }{(\epsilon + \lvert y \rvert^{2})^{n}} dy = - \epsilon^{\frac{4 - n}{2}} \frac{S_{g}(0) (n - 2)^{2}}{6n} \int_{\Omega'} \frac{\varphi_{\Omega}^{2} \lvert y \rvert^{4} }{(1 + \lvert y \rvert^{2})^{n}} dy \\
& = - \epsilon^{\frac{4 - n}{2}} \frac{S_{g}(0) (n - 2)^{2}}{6n} \int_{\Omega'} \frac{\varphi_{\Omega}^{2} \lvert y \rvert^{2} \left( \lvert y \rvert^{2} - \frac{n}{n - 2} \right) }{(1 + \lvert y \rvert^{2})^{n}} dy - \epsilon^{\frac{4 - n}{2}} \frac{S_{g}(0) (n - 2)^{2}}{6n} \int_{\Omega'} \frac{\varphi_{\Omega}^{2} \lvert y \rvert^{2} \cdot \frac{n}{n - 2} }{(1 + \lvert y \rvert^{2})^{n}} dy \\
& \leqslant - \epsilon^{\frac{4 - n}{2}} \frac{S_{g}(0)}{12n} \int_{\R^{n}} \lvert y \rvert^{2} \cdot \frac{2(n - 2)^{2} \left( \lvert y \rvert^{2} - \frac{n}{n - 2} \right) }{(1 + \lvert y \rvert^{2})^{n}} dy \\
& \qquad - \epsilon^{\frac{4 - n}{2}} \frac{S_{g}(0) (n - 2)^{2}}{6n} \int_{\Omega'} \frac{\varphi_{\Omega}^{2} \lvert y \rvert^{2} \cdot \frac{n}{n - 2} }{(1 + \lvert y \rvert^{2})^{n}} dy  : = \Gamma_{1} + \Gamma_{2}.
\end{align*}
The above inequality holds since the radius of $ \Omega' $ is much larger than $ 1 $, which follows that $  \lvert y \rvert^{2} - \frac{n}{n - 2} > 0 $ for $ y \in \Omega'^{c} $. It is direct to check that for Euclidean Laplacian $ -\Delta $, the function $ \frac{1}{(1 + \lvert y \rvert^{2})^{n - 2}} $ satisfies
\begin{equation*}
\Delta \left(  \frac{1}{(1 + \lvert y \rvert^{2})^{n - 2}} \right) = \frac{2(n - 2)^{2} \left( \lvert y \rvert^{2} - \frac{n}{n - 2} \right) }{(1 + \lvert y \rvert^{2})^{n}}.
\end{equation*}
Using this, we estimate $ \Gamma_{1} $ as
\begin{align*}
\Gamma_{1} & = - \epsilon^{\frac{4 - n}{2}} \frac{S_{g}(0)}{12n} \int_{\R^{n}} \lvert y \rvert^{2} \cdot \frac{2(n - 2)^{2} \left( \lvert y \rvert^{2} - \frac{n}{n - 2} \right) }{(1 + \lvert y \rvert^{2})^{n}} dy \\
& = - \epsilon^{\frac{4 - n}{2}} \frac{S_{g}(0)}{12n} \int_{\R^{n}} \lvert y \rvert^{2} \cdot \Delta \left( \frac{1}{(1 + \lvert y \rvert^{2})^{n - 2}} \right) dy = - \epsilon^{\frac{4 - n}{2}} \frac{S_{g}(0)}{12n} \int_{\R^{n}} \left( \Delta \lvert y \rvert^{2} \right) \cdot  \frac{1}{(1 + \lvert y \rvert^{2})^{n - 2}} dy \\
& = - \epsilon^{\frac{4 - n}{2}} \frac{S_{g}(0)}{6} \int_{\R^{n}} \frac{1}{(1 + \lvert y \rvert^{2})^{n - 2}} dy.
\end{align*}
For $ \Gamma_{2} $, we see that
\begin{align*}
\Gamma_{2} & = - \epsilon^{\frac{4 - n}{2}} \frac{S_{g}(0) (n - 2)^{2}}{6n} \int_{\Omega'} \frac{\varphi_{\Omega}^{2} \lvert y \rvert^{2} \cdot \frac{n}{n - 2} }{(1 + \lvert y \rvert^{2})^{n}} dy \\
& = - \epsilon^{\frac{4 - n}{2}} \frac{S_{g}(0) (n - 2)}{6} \int_{B_{0}\left(\frac{r}{2} \epsilon^{-\frac{1}{2}} \right)} \frac{\lvert y \rvert^{2}}{( 1 + \lvert y \rvert^{2})^{n}} dy - \epsilon^{\frac{4 - n}{2}} \frac{S_{g}(0) (n - 2)}{6} \int_{\Omega' \backslash B_{0}\left(\frac{r}{2} \epsilon^{-\frac{1}{2}} \right)} \frac{\varphi_{\Omega}^{2} \lvert y \rvert^{2}}{( 1 + \lvert y \rvert^{2})^{n}} dy \\
& = - \epsilon^{\frac{4 - n}{2}} \frac{S_{g}(0) (n - 2)}{6} \cdot \frac{n-2}{n} \int_{B_{0}\left(\frac{r}{2} \epsilon^{-\frac{1}{2}} \right)} \frac{\lvert y \rvert^{2}}{( 1 + \lvert y \rvert^{2})^{n}} dy \\
& \qquad - \epsilon^{\frac{4 - n}{2}} \frac{S_{g}(0) (n - 2)}{6} \cdot \frac{2}{n} \int_{B_{0}\left(\frac{r}{2} \epsilon^{-\frac{1}{2}} \right)} \frac{\lvert y \rvert^{2}}{( 1 + \lvert y \rvert^{2})^{n}} dy+ O\left(1 \right) \\
& < - \epsilon^{\frac{4 - n}{2}} \frac{S_{g}(0) (n - 2)}{6} \cdot \frac{n-2}{n} \int_{B_{0}\left( \frac{r}{2} \epsilon^{-\frac{1}{2}} \right)} \frac{\lvert y \rvert^{2}}{( 1 + \lvert y \rvert^{2})^{n}} dy \\
& \qquad - \epsilon^{\frac{4 - n}{2}} \frac{S_{g}(0)}{6} \cdot \frac{2(n - 2)}{n} \int_{\R^{n}} \frac{\lvert y \rvert^{2}}{( 1 + \lvert y \rvert^{2})^{n}} dy + O\left(1 \right) \\
& : = \Sigma_{2} + \Gamma_{2, 1} + O\left(1 \right).
\end{align*}
Consider the integration $  \int_{\R^{n}} \frac{\lvert y \rvert^{2}}{( 1 + \lvert y \rvert^{2})^{n}} dy $ in $ \Gamma_{2, 1} $. Recall that $ \omega_{n} $ is the area of the unit $ n - 1 $-sphere. We check with change of variables $ s = \tan (\theta) $ in some middle step that
\begin{align*}
\int_{\R^{n}} \frac{\lvert y \rvert^{2}}{( 1 + \lvert y \rvert^{2})^{n}} dy & = \omega_{n} \int_{0}^{\infty} \frac{s^{n + 1}}{( 1 + s^{2})^{n}} ds = \omega_{n} \int_{0}^{\frac{\pi}{2}} \frac{\tan^{n + 1}(\theta) \sec^{2}(\theta)}{( 1 + \tan^{2}(\theta))^{n}} d\theta \\
& = \omega_{n} \int_{0}^{\frac{\pi}{2}} \sin^{2\left(\frac{n}{2} + 1 \right) - 1} \cos^{2\left(\frac{n}{2} - 1 \right) - 1} d\theta = \omega_{n} B\left(\frac{n}{2} + 1, \frac{n}{2} - 1 \right) \\
& = \omega_{n} \frac{\Gamma\left( \frac{n}{2} + 1 \right) \Gamma \left( \frac{n}{2} - 1 \right)}{\Gamma(n)}.
\end{align*}
Here $ B(a, b) $ is the Beta function with parameters $ a, b $ and $ \Gamma(a) $ is the Gamma function with parameter $ a $. The term below satisfies
\begin{equation*}
\frac{1}{a} \int_{\R^{n}} S_{g}(0) \frac{1}{(\epsilon + \lvert y \rvert^{2})^{n - 2}} dy = \epsilon^{\frac{4 - n}{2}} \frac{1}{a} \cdot S_{g}(0) \int_{\R^{n}} \frac{1}{(1+ \lvert y \rvert^{2})^{n - 2}} dy.
\end{equation*}
Check the integration $ \int_{\R^{n}} \frac{\lvert y \rvert^{2}}{( 1 + \lvert y \rvert^{2})^{n}} dy $ above, we have
\begin{equation*}
\int_{\R^{n}} \frac{\lvert y \rvert^{2}}{( 1 + \lvert y \rvert^{2})^{n}} dy = \omega_{n} B\left( \frac{n}{2}, \frac{n}{2} - 2 \right) =  \omega_{n} \frac{\Gamma\left( \frac{n}{2} \right) \Gamma \left( \frac{n}{2} - 2 \right)}{\Gamma(n - 2)}.
\end{equation*}
By the standard relation $ \Gamma(z + 1) = z \Gamma(z) $ we conclude that
\begin{equation}\label{APP:eqns5}
\int_{\R^{n}} \frac{\lvert y \rvert^{2}}{( 1 + \lvert y \rvert^{2})^{n}} dy = \frac{n(n - 4)}{4(n - 1)(n - 2)} \int_{\R^{n}} \frac{1}{( 1 + \lvert y \rvert^{2})^{n - 2}} dy.
\end{equation}
Note that $ \frac{1}{a} = \frac{n - 2}{4(n - 1)} $. Applying (\ref{APP:eqns5}) as well as the estimates of $ \Gamma_{1} $ and $ \Gamma_{2} $ above, we conclude that
\begin{align*}
& A_{1, 3} + \frac{1}{a} \left( S_{g}(0) + \beta \right) \int_{\R^{n}} \frac{1}{(\epsilon + \lvert y \rvert^{2})^{n - 2}} dy \leqslant \Gamma_{1} + \Gamma_{2} + \epsilon^{\frac{4 - n}{2}} \frac{1}{a} \cdot \left( S_{g}(0) + \beta \right) \int_{\R^{n}} \frac{1}{(1+ \lvert y \rvert^{2})^{n - 2}} dy \\
& \qquad = - \epsilon^{\frac{4 - n}{2}} \frac{S_{g}(0)}{6} \int_{\R^{n}} \frac{1}{(1 + \lvert y \rvert^{2})^{n - 2}} dy + \Sigma_{2} \\
& \qquad \qquad + \frac{n(n - 4)}{4(n - 1)(n - 2)} \cdot \left( - \epsilon^{\frac{4 - n}{2}} \frac{S_{g}(0)}{6} \cdot \frac{2(n - 2)}{n} \right) \int_{\R^{n}} \frac{1}{(1+ \lvert y \rvert^{2})^{n - 2}} dy \\
& \qquad \qquad \qquad + \epsilon^{\frac{4 - n}{2}} \frac{n - 2}{4(n - 1)} \cdot \left( S_{g}(0) + \beta \right) \int_{\R^{n}} \frac{1}{(1+ \lvert y \rvert^{2})^{n - 2}} dy + O(1) \\
& = \frac{(n - 2)\beta}{4(n - 1)} \epsilon^{\frac{4 - n}{2}} \int_{\R^{n}} \frac{1}{(1+ \lvert y \rvert^{2})^{n - 2}} dy + \Sigma_{2} + O(1).
\end{align*}
It follows that (\ref{APP:eqn5}) holds. Therefore we conclude that the estimates in (\ref{APP:eqn4}) hold by combining all estimates above together. 
Recall $ K_{1}, K_{2} $ defined above, we apply estimates in (\ref{APP:eqn4}), we get
\begin{align*}
Q_{\epsilon, \Omega} & \leqslant \frac{(n - 2)^{2} \int_{\R^{n}} \frac{\lvert y \rvert^{2}}{(\epsilon + \lvert y \rvert^{2})^{n}} dy + \frac{(n - 2)\beta}{4(n - 1)} \epsilon^{\frac{4 - n}{2}} \int_{\R^{n}} \frac{1}{(1+ \lvert y \rvert^{2})^{n - 2}} dy + O\left(\epsilon^{\frac{5 - n}{2}} \right) + \Sigma_{2}}{\left( \int_{\R^{n}} \frac{1}{(\epsilon + \lvert y \rvert^{2})^{n}} dy + O \left( \epsilon^{\frac{3 - n}{2}} \right) + \Sigma_{1} \right)^{\frac{2}{p}} } \\
& = \frac{(n - 2)^{2} \epsilon^{\frac{2 - n}{2}} \int_{\R^{n}} \frac{\lvert y \rvert^{2}}{(1 + \lvert y \rvert^{2})^{n}} dy + \frac{(n - 2)\beta}{4(n - 1)} \epsilon^{\frac{4 - n}{2}} \int_{\R^{n}} \frac{1}{(1+ \lvert y \rvert^{2})^{n - 2}} dy + O\left(\epsilon^{\frac{5 - n}{2}} \right) + \Sigma_{2} }{\left( \epsilon^{\frac{-n}{2}} \int_{\R^{n}} \frac{1}{(1 + \lvert y \rvert^{2})^{n}} dy + O \left( \epsilon^{\frac{3 - n}{2}} \right) + \Sigma_{1} \right)^{\frac{2}{p}}} \\
& = \frac{K_{1} + \epsilon \frac{(n - 2)\beta}{4(n - 1)} K_{3} + O\left(\epsilon^{\frac{3}{2}}\right) + \epsilon^{\frac{n - 2}{2}} \Sigma_{2}}{K_{2} + O \left( \epsilon^{\frac{3}{2}} \right) + \frac{n - 2}{n} \epsilon^{\frac{n}{2}} \Sigma_{1}K_{2}^{-\frac{2}{n - 2}}}.
\end{align*}
The last term above is due to the Taylor expansion of the function $ f(x) = x^{\frac{2}{p}} $. To control the last terms in numerator and denominator above, respectively, we first recall that the ratio $ \frac{K_{1}}{K_{2}} $ is the square of the reciprocal of the best Sobolev constant of
\begin{equation*}
\lVert u \rVert_{\calL_{p}(\R^{n})} \leqslant E \lVert Du \rVert_{\calL^{2}(\R^{n})} \Rightarrow \inf_{E} E^{-2} = T = \frac{K_{1}}{K_{2}}.
\end{equation*}
Aubin and Talenti \cite{Aubin} showed that
\begin{equation*}
T^{-\frac{1}{2}} = \pi^{-\frac{1}{2}} n^{-\frac{1}{2}} \left( \frac{1}{n - 2} \right)^{\frac{1}{2}} \left( \frac{\Gamma(n)}{\Gamma\left(\frac{n}{2} \right)} \right)^{\frac{1}{n}}.
\end{equation*}

Recall that $ \beta < 0 $, independent of $ \epsilon $, thus $ Q_{\epsilon, \Omega} $ can be estimated as
\begin{align*}
Q_{\epsilon, \Omega} & \leqslant \frac{K_{1} + \epsilon \frac{(n - 2)\beta}{4(n - 1)} K_{3} + O\left(\epsilon^{\frac{3}{2}}\right) + \epsilon^{\frac{n - 2}{2}} \Sigma_{2}}{K_{2} + O \left( \epsilon^{\frac{3}{2}} \right) + \frac{n - 2}{n} \epsilon^{\frac{n}{2}} \Sigma_{1}K_{2}^{-\frac{2}{n - 2}}} \\
& \leqslant  \frac{K_{1} + \epsilon^{\frac{n - 2}{2}} \Sigma_{2}}{K_{2} + \frac{n - 2}{n} \epsilon^{\frac{n}{2}} \Sigma_{1}K_{2}^{-\frac{2}{n - 2}}} + O\left(\epsilon^{\frac{3}{2}} \right) - \epsilon \cdot \frac{(n - 2)\lvert \beta \rvert}{4(n - 1)} \frac{K_{3}}{K_{2}}.
\end{align*}
The last inequality is due to the fact that both $ \epsilon^{\frac{n}{2}} \Sigma_{1}K_{2}^{-\frac{2}{n -2 }} $ and $ \epsilon^{\frac{n - 2}{2}} \Sigma_{2} $ are bounded above and below by constant multiples of $ \epsilon $. Lastly we must show that
\begin{equation}\label{APP:eqn6a}
\frac{K_{1} + \epsilon^{\frac{n - 2}{2}} \Sigma_{2}}{K_{2} + \frac{n - 2}{n} \epsilon^{\frac{n}{2}} \Sigma_{1}K_{2}^{-\frac{2}{n - 2}}} \leqslant \frac{K_{1}}{K_{2}}.
\end{equation}
By the best Sobolev constant, we observe that
\begin{equation*}
\frac{K_{1}}{K_{2}} = T \Rightarrow K_{1} = K_{2} T.
\end{equation*}
Thus (\ref{APP:eqn6a}) is equivalent to
\begin{align*}
& \frac{K_{1} + \epsilon^{\frac{n - 2}{2}} \Sigma_{2}}{K_{2} + \frac{n - 2}{n} \epsilon^{\frac{n}{2}} \Sigma_{1}K_{2}^{-\frac{2}{n}}} \leqslant \frac{K_{1}}{K_{2}} \Leftrightarrow \frac{K_{1} + \epsilon^{\frac{n - 2}{2}} \Sigma_{2}}{K_{2} + \frac{n - 2}{n} \epsilon^{\frac{n}{2}} \Sigma_{1}K_{2}^{-\frac{2}{n - 2}}} - \frac{K_{1}}{K_{2}} \leqslant 0 \\
& \frac{K_{2}^{2} T + \epsilon^{\frac{n - 2}{2}} \Sigma_{2} K_{2} - K_{2}^{2} T - \frac{n - 2}{n} K_{2} T \epsilon^{\frac{n}{2}} \Sigma_{1}K_{2}^{-\frac{2}{n - 2}}}{\left( K_{2} + \frac{n - 2}{n} \epsilon^{\frac{n}{2}} \Sigma_{1}K_{2}^{-\frac{2}{n - 2}} \right)K_{2} } \leqslant 0 \\
\Leftrightarrow & \epsilon^{\frac{n - 2}{2}} \Sigma_{2} K_{2}  - \frac{n - 2}{n} K_{2} T \epsilon^{\frac{n}{2}} \Sigma_{1}K_{2}^{-\frac{2}{n - 2}} \leqslant 0 \\
\Leftrightarrow & \epsilon^{\frac{n - 2}{2}} \left( - \epsilon^{\frac{4 - n}{2}} \frac{S_{g}(0) (n - 2)}{6} \frac{n - 2}{n} \int_{B_{P}\left(\frac{r}{2} \epsilon^{-\frac{1}{2}} \right)} \frac{\lvert y \rvert^{2}}{( 1 + \lvert y \rvert^{2})^{n}} dy \right) K_{2} \\
& \qquad  -  \frac{n - 2}{n} K_{2} T \epsilon^{\frac{n}{2}} \left( -\epsilon^{\frac{2 - n}{2}} \frac{S_{g}(0)}{6n} \int_{B_{P}\left(\frac{r}{2} \epsilon^{-\frac{1}{2}} \right)} \frac{\lvert y \rvert^{2}}{(1 + \lvert y \rvert^{2})^{n}} dy \right) K_{2}^{-\frac{2}{n - 2}} \leqslant 0 \\
\Leftrightarrow & (n - 2) - \frac{ T K_{2}^{-\frac{2}{n - 2}}}{n} \leqslant 0.
\end{align*}
Recall that the best Sobolev constant $ T $ is of the form
\begin{equation*}
T = \pi n (n - 2) \left( \frac{\Gamma \left( \frac{n}{2} \right)}{\Gamma(n)} \right)^{\frac{2}{n}}
\end{equation*}
For the term $ K_{2}^{-\frac{2}{n - 2}} $, we have
\begin{equation*}
K_{2}^{-\frac{2}{n - 2}} = \left( \int_{\R^{n}} \frac{1}{(1 + \lvert y \rvert^{2} )^{n}} dy \right)^{-\frac{2}{n}}  = \left( \omega_{n} \int_{0}^{\infty} \frac{r^{n - 1}}{(1 + r^{2})^{n}} dy \right)^{-\frac{2}{n}} : = \left( \omega_{n} D_{1} \right)^{-\frac{2}{n}} 
\end{equation*}
Here $ \omega_{n} $ is the surface area of the $ (n - 1) $-sphere. It is well known that
\begin{equation*}
\omega_{n} = n \cdot \frac{\pi^{\frac{n}{2}}}{\Gamma\left( \frac{n}{2} + 1 \right)} = n \cdot \frac{2\pi^{\frac{n}{2}}}{n\Gamma\left(\frac{n}{2} \right)} = \frac{2\pi^{\frac{n}{2}}}{\Gamma\left(\frac{n}{2} \right)} \Rightarrow \omega_{n}^{-\frac{2}{n}} = \left( \frac{\Gamma \left( \frac{n}{2} \right)}{2\pi^{\frac{n}{2}}} \right)^{\frac{2}{n}} = \left( \Gamma \left(\frac{n}{2} \right) \right)^{\frac{2}{n}} 2^{-\frac{2}{n}} \pi^{-1}.
\end{equation*}
For $ D_{1} $, we use change of variables $ r = \tan \theta $, $ \theta \in [0, \frac{\pi}{2}] $, we have
\begin{align*}
D_{1} & = \int_{0}^{\infty} \frac{r^{n - 1}}{(1 + r^{2})^{n}} dy = \int_{0}^{\frac{\pi}{2}} \frac{\tan^{n - 1}(\theta)}{( 1 + \tan^{2}(\theta))^{n}} \sec^{2}(\theta) d\theta =  \int_{0}^{\frac{\pi}{2}} \frac{\sin^{n - 1}(\theta) \sec^{n + 1}(\theta)}{\sec^{2n}(\theta)} d\theta \\
& = \int_{0}^{\frac{\pi}{2}} \sin^{n - 1}(\theta) \cos^{n - 1}\theta d\theta = 2^{1 - n}  \int_{0}^{\frac{\pi}{2}} \sin^{n - 1}(2\theta) d\theta = 2^{- n} \int_{0}^{\pi} \sin^{n-1}(x) dx.
\end{align*}
We use $ 2\theta = x $ in the last equality. It is well-known that the last integration above is symmetric with respect to $ x = \frac{\pi}{2} $ and is connected to the Gamma function. To be precise,
\begin{align*}
D_{1} & = 2^{- n} \int_{0}^{\pi} \sin^{n-1}(x) dx = 2^{1 - n} \int_{0}^{\frac{\pi}{2}} \sin^{n-1}(x) dx = 2^{1 - n} \frac{\sqrt{\pi} \Gamma \left( \frac{n -1 + 1}{2} \right)}{2 \Gamma \left(\frac{n - 1}{2} + 1 \right)} \\
\Rightarrow D_{1}^{-\frac{2}{n}} & = \left( 2^{n} \pi^{-\frac{1}{2}} \frac{\Gamma \left( \frac{n}{2} + \frac{1}{2} \right)}{ \Gamma \left(\frac{n}{2} \right)} \right)^{\frac{2}{n}} = 2^{2} \pi^{-\frac{1}{n}} \Gamma \left( \frac{n}{2} + \frac{1}{2} \right)^{\frac{2}{n}} \Gamma \left(\frac{n}{2} \right)^{-\frac{2}{n}}.
\end{align*}
Packing terms together, we have
\begin{align*}
\frac{ T K_{2}^{-\frac{2}{n - 2}}}{n} & =  n^{-1} \pi n (n - 2) \left( \frac{\Gamma \left( \frac{n}{2} \right)}{\Gamma(n)} \right)^{\frac{2}{n}} \cdot \left( \Gamma \left(\frac{n}{2} \right) \right)^{\frac{2}{n}} 2^{-\frac{2}{n}} \pi^{-1} \\
& \qquad \cdot 2^{2} \pi^{-\frac{1}{n}} \left( \Gamma \left( \frac{n}{2} + \frac{1}{2} \right) \right)^{\frac{2}{n}} \left( \Gamma \left(\frac{n}{2} \right) \right)^{-\frac{2}{n}} \\
& =  (n - 2) 2^{2 - \frac{2}{n}} \pi^{-\frac{1}{n}} \left( \Gamma \left( \frac{n}{2} + \frac{1}{2} \right)  \Gamma \left(\frac{n}{2} \right)  \right)^{\frac{2}{n}} \left( \Gamma(n) \right)^{-\frac{2}{n}}
\end{align*}
The Legendre duplication formula for Gamma functions says
\begin{equation*}
\Gamma \left(z+ \frac{1}{2} \right) \Gamma \left( z \right) = 2^{1 - 2z} \sqrt{\pi} \Gamma(2z).
\end{equation*}
Take $ z = \frac{n}{2} $ above, we have
\begin{align*}
\frac{ T K_{2}^{-\frac{2}{n - 2}}}{n} & =  (n - 2) 2^{2 - \frac{2}{n}} \pi^{-\frac{1}{n}} \left( \Gamma \left( \frac{n}{2} + \frac{1}{2} \right)  \Gamma \left(\frac{n}{2} \right)  \right)^{\frac{2}{n}} \left( \Gamma(n) \right)^{-\frac{2}{n}} \\
& =  (n - 2) 2^{2 - \frac{2}{n}} \pi^{-\frac{1}{n}} \left( 2^{1 - 2 \frac{n}{2}} \sqrt{\pi} \Gamma(n) \right)^{\frac{2}{n}} \left( \Gamma(n) \right)^{-\frac{2}{n}} \\
& =  (n - 2).
\end{align*}
Hence we conclude that
\begin{equation*}
(n - 2) - \frac{ T K_{2}^{-\frac{2}{n - 2}}}{n} = (n - 2) -  (n - 2) = 0 \Rightarrow \frac{K_{1} + \epsilon^{\frac{n - 2}{2}} \Sigma_{2}}{K_{2} + 
\frac{n - 2}{n}\epsilon^{\frac{n}{2}} \Sigma_{1}K_{2}^{-\frac{2}{n - 2}}} \leqslant \frac{K_{1}}{K_{2}}.
\end{equation*}
Therefore (\ref{APP:eqn6a}) holds. It follows that when $ \epsilon $ is small enough,
\begin{equation*}
Q_{\epsilon, \Omega} \leqslant \frac{K_{1}}{K_{2}} + O\left(\epsilon^{\frac{3}{2}} \right) - \epsilon \cdot \frac{(n - 2)\lvert \beta \rvert}{4(n - 1)} \frac{K_{3}}{K_{2}} < \frac{K_{1}}{K_{2}} = T.
\end{equation*}
Thus the inequality in (\ref{APP:eqn1}) holds when $ n \geqslant 5 $.
\medskip

\noindent {\bf Case II: $ n = 4 $.} In this case, we keep notations to be the same as $ n \geqslant 5 $ cases. Again we fix $ r $. Most estimates of $ A_{i,j}, B_{i}, B_{i,j}, C_{i}, C_{i, j} $ with $ n = 4 $ are the same as above:
\begin{align*}
A_{0, 1} & = O(1), A_{0, 2} = O(1), A_{0,3} \leqslant 0, A_{0, 4} \leqslant 0; \\
A_{1, 1} & =  O(1), \lvert A_{1, 2} \rvert =  O(1), A_{2, 1} =  O(1), \lvert A_{2,2} \rvert =  O(1); A_{2, 3} = O\left(\epsilon^{\frac{1}{2}} \right); \\
\lvert B_{1} \rvert & = O\left(\epsilon^{\frac{1}{2}} \right); B_{2} = O\left(\epsilon \right); B_{3} = O\left(\epsilon^{\frac{3}{2}}\right); \\
C_{0, 1} & = O(1), C_{0, 2} = O(1); C_{1} \geqslant 0, C_{2} = O\left(\epsilon^{-\frac{1}{2}} \right).
\end{align*}
Here $ C_{1} > 0 $ due to the fact that $ S_{g}(0) < 0 $. We do not need $ B_{0, 1} $ and $ B_{0, 2} $ since the key term
\begin{equation*}
\frac{1}{a} \cdot \left(S_{g}(0) + \beta \right) \int_{\Omega} \frac{\varphi_{\Omega}^{2}}{(\epsilon + \lvert y \rvert)^{2})^{2}} dy
\end{equation*}
is not integrable. Note that the term $ A_{1, 3} $ with $ n = 4 $ can be estimated as
\begin{align*}
A_{1, 3} & = -\frac{S_{g}(0) (n - 2)^{2}}{6n} \int_{\Omega} \frac{\varphi_{\Omega}^{2} \lvert y \rvert^{4} }{(\epsilon + \lvert y \rvert^{2})^{n}} dy = -\frac{S_{g}(0)}{6} \int_{\Omega} \frac{\varphi_{\Omega}^{2} \lvert y \rvert^{4} }{(\epsilon + \lvert y \rvert^{2})^{4}} dy \\
& \leqslant -\frac{S_{g}(0)}{6} \int_{\Omega} \frac{\varphi_{\Omega}^{2} }{(\epsilon + \lvert y \rvert^{2})^{2}} dy
\end{align*}
since the ratio $ \frac{\lvert y \rvert^{2}}{\epsilon + \lvert y \rvert^{2}} < 1 $ as always. Thus with $ a = \frac{n - 2}{4(n - 1)} = \frac{1}{6} $ when $ n = 4 $, we can see
\begin{equation*}
A_{1, 3} + \frac{1}{a} \cdot \left(S_{g}(0) + \beta \right) \int_{\Omega} \frac{\varphi_{\Omega}^{2}}{(\epsilon + \lvert y \rvert)^{2})^{2}} dy \leqslant \frac{\beta}{6}  \int_{\Omega} \frac{\varphi_{\Omega}^{2}}{(\epsilon + \lvert y \rvert)^{2})^{2}} dy.
\end{equation*}
Applying exactly the same calculation as in \cite[Lemma.~1.1]{Niren3}, we conclude that
\begin{equation*}
\int_{\Omega} \frac{\varphi_{\Omega}^{2}}{(\epsilon + \lvert y \rvert)^{2})^{2}} dy \geqslant L_{5} \lvert \log \epsilon \rvert
\end{equation*}
with some positive constant $ L_{5} $ independent of $ \epsilon $. It follows that
\begin{equation}\label{APP:eqn7}
A_{1, 3} + \frac{1}{a} \cdot \left(S_{g}(0) + \beta \right) \int_{\Omega} \frac{\varphi_{\Omega}^{2}}{(\epsilon + \lvert y \rvert)^{2})^{2}} dy \leqslant -L_{5} \lvert \beta \rvert \lvert \log \epsilon \rvert.
\end{equation}
Applying (\ref{APP:eqn7}) and all other estimates above, we have
\begin{align*}
Q_{\epsilon, \Omega} & \leqslant \frac{4 \int_{\R^{n}} \frac{\lvert y \rvert^{2}}{(\epsilon + \lvert y \rvert^{2})^{4}} dy -L_{5} \lvert \beta \rvert \lvert \log \epsilon \rvert  + O(1) + O\left(\epsilon^{\frac{1}{2}} \right)}{\left(\int_{\R^{n}} \frac{1}{(\epsilon + \lvert y \rvert^{2})^{4}} dy + O(1) + O\left(\epsilon^{-\frac{1}{2}} \right) \right)^{\frac{1}{2}}} \\
& \leqslant \frac{4\epsilon^{-1} \int_{\R^{n}} \frac{\lvert y \rvert^{2}}{(1 + \lvert y \rvert^{2})^{2}} dy -L_{5} \lvert \beta \rvert \lvert \log \epsilon \rvert + O\left(1 \right)}{\left( \epsilon^{-2} \int_{\R^{n}} \frac{1}{(1 + \lvert y \rvert^{2})^{n}} dy + O\left(\epsilon^{-\frac{1}{2}} \right) \right)^{\frac{1}{2}}} \\
& \leqslant \frac{K_{1} -L_{5} \lvert \beta \rvert \epsilon \lvert \log \epsilon \rvert + O\left(\epsilon^{\frac{3}{2}} \right)}{K_{2} + O\left(\epsilon^{\frac{3}{2}} \right)} \\
& = \frac{K_{1}}{K_{2}} + O\left(\epsilon \right) - \frac{L_{5} \lvert \beta \rvert}{K_{2}} \epsilon \lvert \log \epsilon \rvert.
\end{align*}
It follows that when $ \epsilon $ small enough, we have
\begin{equation*}
Q_{\epsilon, \Omega} < T.
\end{equation*}
Thus (\ref{APP:eqn1}) holds for $ n = 4 $.
\medskip

\noindent {\bf Case III : $ n = 3 $.} This case is inspired by the argument in Brezis and Nirenberg \cite[Lemma~1.3]{Niren3}, but not the same. We have to do some conformal change so that
\begin{equation}\label{APP:eqn8}
\frac{S_{g}(0)}{8} + \frac{\pi^{2}}{4} = 0 \; {\rm on} \; B_{0}(1).
\end{equation}
This can be done due to Theorem \ref{manifold:thm4} and Remark \ref{manifold:renew}. Thus
\begin{equation*}
\frac{S_{g}(0) + \beta}{8} + \frac{\pi^{2}}{4} < 0 \; {\rm on} \; B_{0}(1)
\end{equation*}
since $ \beta < 0 $. Note that the first eigenvalue $ \lambda_{1} $ of $ -\Delta_{g} $ is almost the same as the first eigenvalue of Euclidean Laplacian when the geodesic ball is small enough. Since $ S_{g} \lambda_{1}^{-1} $ is scaling invariant, we can scale the metric, i.e. scale the radius of the ball without changing the relation in (\ref{APP:eqn8}) as well as the relation in (\ref{local:eqns1}) with a small enough $ \beta $. Thus it suffices to set $ \Omega = B_{0}(1) $. We would like to point out that the choice of $ S_{g}(0) $ in (\ref{APP:eqn8}) is compatible with the choice of $ S_{g}(0)$ in (\ref{local:eqns1}). We estimate $ A_{1} $ first. Recall that
\begin{align*}
A_{1} & = -\frac{S_{g}(0)}{18} \int_{\Omega} \frac{\lvert y \rvert^{2} \left\lvert \partial_{s} \varphi_{\Omega} \right\rvert^{2}}{\epsilon + \lvert y \rvert^{2}} dy + \frac{S_{g}(0)}{9} \int_{\Omega} \frac{\varphi_{\Omega} \lvert y \rvert^{2} \left( \partial_{s} \varphi_{\Omega} \cdot y \right)}{(\epsilon + \lvert y \rvert^{2})^{2}} dy -\frac{S_{g}(0)}{18} \int_{\Omega} \frac{\varphi_{\Omega}^{2} \lvert y \rvert^{4}}{(\epsilon + \lvert y \rvert^{2})^{3}} dy \\
& : = A_{1, 1} + A_{1, 2} + A_{1, 3}.
\end{align*}
For $ A_{1, 1} $, we have
\begin{align*}
A_{1,1} & = -\frac{S_{g}(0)}{18} \int_{\Omega} \frac{\lvert y \rvert^{2} \left\lvert \partial_{s} \varphi_{\Omega} \right\rvert^{2}}{\epsilon + \lvert y \rvert^{2}} dy =\frac{\pi^{2}}{9} \omega_{3} \cdot \frac{\pi^{2}}{4} \int_{0}^{1} \frac{\sin^{2} \left(\frac{\pi s}{2} \right) s^{4}}{\epsilon + s^{2}} ds \leqslant \frac{\pi^{2}}{9} \omega_{3} \cdot \frac{\pi^{2}}{4} \int_{0}^{1}  \sin^{2} \left(\frac{\pi s}{2} \right) s^{2} ds \\
& = \omega_{3} \cdot \frac{\pi^{4}}{36} \left( \frac{1}{6} + \frac{1}{\pi^{2}} \right).
\end{align*}
For $ A_{1, 2} $, we have
\begin{align*}
A_{1, 2} & =  \frac{S_{g}(0)}{9} \int_{\Omega} \frac{\varphi_{\Omega} \lvert y \rvert^{2} \left( \partial_{s} \varphi_{\Omega} \cdot y \right)}{(\epsilon + \lvert y \rvert^{2})^{2}} dy = \frac{2\pi^{2}}{9} \omega_{3} \frac{\pi}{4} \int_{0}^{1} \frac{\sin (\pi s) s^{5}}{(\epsilon + s^{2})^{2}} ds \leqslant \frac{2\pi^{2}}{9} \omega_{3} \frac{\pi}{4} \int_{0}^{1} \sin (\pi s) s ds \\
& = \omega_{3} \frac{\pi^{3}}{18} \cdot \frac{1}{\pi} =  \omega_{3} \frac{\pi^{2}}{18}.
\end{align*}
For $ A_{1,3} $, we have
\begin{align*}
A_{1, 3} & = -\frac{S_{g}(0)}{18} \int_{\Omega} \frac{\varphi_{\Omega}^{2} \lvert y \rvert^{4}}{(\epsilon + \lvert y \rvert^{2})^{3}} dy = \frac{\pi^{2}}{9} \omega_{3} \int_{0}^{1} \frac{\cos^{2} \left(\frac{\pi s}{2} \right) s^{6}}{(\epsilon + s^{2})^{3}} ds \leqslant  \frac{\pi^{2}}{9} \omega_{3} \int_{0}^{1} \cos^{2} \left(\frac{\pi s}{2} \right) ds \\
& = \omega_{3} \frac{\pi^{2}}{18}.
\end{align*}
Here $ L_{6} > 0 $ is some constant independent of $ \epsilon, r $. For $ A_{2}, C_{1}, C_{2}, C_{0, 1}, C_{0, 2} $, a simple estimate shows that
\begin{equation*}
A_{2} = O\left(\epsilon^{\frac{1}{2}}\right), C_{1} > 0, C_{2} = O(1), C_{0, 1} = O(1), C_{0, 2} = O(1).
\end{equation*}
Based on Brezis and Nirenberg \cite{Niren3}, the term $ \lVert \partial_{s} u \rVert_{\calL^{2}(\Omega)}^{2} $ can be written as
\begin{equation*}
\lVert \partial_{s} u \rVert_{\calL^{2}(\Omega)}^{2} = \omega_{3} \frac{\pi^{2}}{4} \int_{0}^{1} \frac{ \sin^{2} \left( \frac{\pi s}{2} \right) s^{2}}{\epsilon + s^{2}} ds + 3\omega_{3} \epsilon \int_{0}^{1} \frac{ \cos^{2} \left( \frac{\pi s}{2} \right) s^{2}}{(\epsilon + s^{2})^{3}} ds : = E_{1} + E_{2}.
\end{equation*}
Due to Brezis and Nirenberg again, we have
\begin{equation*}
E_{1} = \omega_{3} \frac{\pi^{2}}{4} \int_{0}^{1} \sin^{2} \left( \frac{\pi s}{2} \right) ds + O(\epsilon).
\end{equation*}
For the term $ E_{2} $, we estimate as
\begin{align*}
E_{2} & = 3\omega_{3} \epsilon \int_{0}^{1} \frac{ \cos^{2} \left( \frac{\pi s}{2} \right) s^{2}}{(\epsilon + s^{2})^{3}} ds \\
& = 3\omega_{3} \epsilon \int_{0}^{\infty} \frac{s^{2}}{(\epsilon + s^{2})^{3}} ds - 3\omega_{3} \epsilon \int_{1}^{\infty} \frac{s^{2}}{(\epsilon + s^{2})^{3}} ds - 3 \omega_{3} \epsilon \int_{0}^{1} \frac{ \sin^{2} \left( \frac{\pi s}{2} \right) s^{2}}{(\epsilon + s^{2})^{3}} ds \\
& : = 3\omega_{3} \epsilon \int_{0}^{\infty} \frac{s^{2}}{(\epsilon + s^{2})^{3}} ds + E_{2, 1} + E_{2, 2}.
\end{align*}
Using polar coordinates $ s = \epsilon^{\frac{1}{2}} \tan(\theta) $, the term $ E_{2, 1} $ can be computed as
\begin{align*}
E_{2, 1} & = - 3\omega_{3} \epsilon \int_{1}^{\infty} \frac{s^{2}}{(\epsilon + s^{2})^{3}} ds = -3 \omega_{3} \epsilon^{-\frac{1}{2}} \int_{\arctan \left(\epsilon^{-\frac{1}{2}} \right)}^{\frac{\pi}{2}} \frac{\tan^{2}(\theta) \sec^{2}(\theta)}{\sec^{6}(\theta)} d\theta \\
& = -3 \omega_{3} \epsilon^{-\frac{1}{2}} \int_{\arctan \left(\epsilon^{-\frac{1}{2}} \right)}^{\frac{\pi}{2}} \frac{1}{4} d\theta + o(1) \\
& = -\frac{3}{4} \omega_{3} \epsilon^{-\frac{1}{2}} \left(\frac{\pi}{2} - \arctan \left(\epsilon^{-\frac{1}{2}} \right) \right) + o(1) = -\frac{3}{4} \omega_{3} + o(1).
\end{align*}
The last inequality is due to the Taylor expansion of $ \arctan(x) $ at $ x = +\infty $, which is
\begin{equation*}
\arctan(x) \bigg|_{x = +\infty} = \frac{\pi}{2} - \frac{1}{x} + \frac{1}{3x^{3}} + o\left(\frac{1}{x^{3}} \right).
\end{equation*}
Again by Brezis and Nirenberg, we know that
\begin{equation}\label{APP:eqn9}
\frac{3\pi}{16} = \int_{0}^{\infty} \frac{s^{4}}{(1 + s^{2})^{3}} ds = 3\int_{0}^{\infty} \frac{s^{2}}{(1 + s^{2})^{3}} ds.
\end{equation}
Thus with (\ref{APP:eqn9}), the Taylor expansion of $ \sin(x) $ at origin, we have
\begin{align*}
E_{2, 2} & = - 3 \omega_{3} \epsilon \int_{0}^{1} \frac{ \sin^{2} \left( \frac{\pi s}{2} \right) s^{2}}{(\epsilon + s^{2})^{3}} ds = -3\omega_{3} \epsilon \frac{\pi^{2}}{4} \int_{0}^{1} \frac{s^{4}}{(\epsilon + s^{2})^{3}} ds + O(\epsilon) \\
& \geqslant - \frac{9\pi^{3}}{64} \omega_{3} + O(\epsilon).
\end{align*}
By a direct calculation, it follows that
\begin{equation*}
A_{1} + E_{2, 1} + E_{2, 2} \leqslant O(\epsilon).
\end{equation*}
By (\ref{APP:eqn9}), we can see that
\begin{equation*}
3\omega_{3} \epsilon \int_{0}^{\infty} \frac{s^{2}}{(\epsilon + s^{2})^{3}} ds = 3\omega_{3} \epsilon^{-\frac{1}{2}} \int_{0}^{\infty} \frac{s^{2}}{(1 + s^{2})^{3}} ds = \omega_{3} \epsilon^{-\frac{1}{2}} \int_{0}^{\infty} \frac{s^{4}}{(1 + s^{2})^{3}} ds = \epsilon^{-\frac{1}{2}} K_{1}.
\end{equation*}
Exactly following Brezis and Nirenberg \cite[Lemma~1.3]{Niren3}, we have
\begin{align*}
\lVert u \rVert_{\calL^{6}(\Omega)}^{2} & = \epsilon^{-\frac{1}{2}} K_{2} + O\left(\epsilon^{\frac{1}{2}} \right); \\
\frac{S_{g}(0) + \beta }{8} \int_{\Omega} \frac{\varphi_{\Omega}^{2}(y)}{\epsilon + \lvert y \rvert^{2}} dy & = \frac{S_{g}(0) + \beta }{8} \omega_{3} \int_{0}^{1} \cos^{2} \left(\frac{\pi s}{2} \right) ds + O\left(\epsilon^{\frac{1}{2}} \right). 
\end{align*}
Lastly, note that
\begin{equation*}
\int_{0}^{1} \cos^{2} \left(\frac{\pi s}{2} \right) ds = \int_{0}^{1} \sin^{2} \left(\frac{\pi s}{2} \right) ds.
\end{equation*}
Thus the quantity $ Q_{\epsilon, \Omega} $ can be estimated as
\begin{align*}
Q_{\epsilon, \Omega} & \leqslant \frac{\epsilon^{-\frac{1}{2}} K_{1} + \omega_{3}\int_{0}^{1} \cos^{2} \left(\frac{\pi s}{2} \right) ds \left(\frac{\pi^{2}}{4} + \frac{S_{g}(0) + \beta }{8} \right) +  O\left(\epsilon^{\frac{1}{2}} \right) }{\epsilon^{-\frac{1}{2}} K_{2}  + O\left(\epsilon^{\frac{1}{2}} \right)} \\
& \leqslant \frac{K_{1}}{K_{2}} + \frac{\epsilon^{\frac{1}{2}}}{K_{2}} \omega_{3}\int_{0}^{1} \cos^{2} \left(\frac{\pi s}{2} \right) ds \left(\frac{\pi^{2}}{4} + \frac{S_{g}(0) + \beta}{8} \right) + O(\epsilon).
\end{align*}
By (\ref{APP:eqn8}), we know that $ \frac{\pi^{2}}{4} + \frac{S_{g}(0) + \beta}{8} < 0 $ hence
\begin{equation*}
Q_{\epsilon, \Omega} < T
\end{equation*}
When $ \epsilon $ is small enough. All cases are now proven.
\end{proof}

\bibliographystyle{plain}
\bibliography{Yamabes}

\end{document}